\newtheorem{thm}{Theorem}[section]
\newtheorem*{thm*}{Theorem}
\newtheorem{dfn}[thm]{Definition} 
\newtheorem*{dfn*}{Definition}
\newtheorem{cor}[thm]{Corollary}
\newtheorem*{cor*}{Corollary}
\newtheorem{prop}[thm]{Proposition} 
\newtheorem*{prop*}{Proposition} 
\newtheorem*{properties*}{Properties} 
\newtheorem{lem}[thm]{Lemma} 
\newtheorem*{lem*}{Lemma}
\newtheorem*{claim*}{Claim} 
\newtheorem*{fact*}{Fact}
\newtheorem*{qst*}{Question}
\newtheorem*{pb*}{Problem}
\theoremstyle{remark}
\newtheorem*{algo*}{Algorithm} 
\newtheorem*{rem*}{Remark}
\newtheorem{rem}[thm]{Remark}
\newtheorem*{example*}{Example}
\newtheorem{example}[thm]{Example}
\newenvironment{SauveCompteurs}[1]{%
\newcommand{\monparametre}{#1}
\openexport{\monparametre_sauve}
  \Export{thm}\Export{section}\Export{subsection}\Export{subsubsection}
\closeexport}{}
\newenvironment{UtiliseCompteurs}[1]{%
\newcommand{\monparametre}{#1}
\openexport{\monparametre_aux}
  \Export{thm}\Export{section}\Export{subsection}\Export{subsubsection}
\closeexport
\Import{\monparametre_sauve}%
\renewcommand{\label}[1]{}
}{\Import{\monparametre_aux}}
\newcounter{numEnonceTmpInterne}
\newenvironment{enonce*}[1]{\theoremstyle{plain}\stepcounter{numEnonceTmpInterne}%
\def\a{enoncetmp\alph{numEnonceTmpInterne}}%
\newtheorem*{\a}{#1}\begin{\a}}{\end{\a}}
\edef\@tempa#1#2{\def#1{\mathaccent\string"\noexpand\accentclass@#2 }}
\@tempa\rond{017}
\newcommand{\es}{\emptyset}
\renewcommand{\phi}{\varphi} 
\newcommand{\m} {^{-1}} 
\newcommand{\eps} {\varepsilon}
\newcommand {\ra} {\rightarrow}
\newcommand{\imp} {\Rightarrow}
\newcommand{\normal} {\vartriangleleft}
\renewcommand{\subsetneq}{\varsubsetneq}
\newcommand{\ie} {i.e.\ }
\newcommand {\cala} {{\mathcal {A}}}   
\newcommand {\calb} {{\mathcal {B}}}   
\newcommand {\calc} {{\mathcal {C}}}   
\newcommand {\cald} {{\mathcal {D}}}   
\newcommand {\cale} {{\mathcal {E}}}   
\newcommand {\calf} {{\mathcal {F}}}   
\newcommand {\calg} {{\mathcal {G}}}   
\newcommand {\calh} {{\mathcal {H}}}
\newcommand {\calk} {{\mathcal {K}}}
\newcommand {\calp} {{\mathcal {P}}}   
\newcommand {\calq} {{\mathcal {Q}}}   
\newcommand {\calr} {{\mathcal {R}}}   
\newcommand {\calt} {{\mathcal {T}}}
\newcommand {\calz} {{\mathcal {Z}}}
\newcommand {\bbN} {{\mathbb {N}}}
\newcommand {\bbQ} {{\mathbb {Q}}}   
\newcommand {\bbR} {{\mathbb {R}}}
\newcommand {\bbZ} {{\mathbb {Z}}}   
\newcommand{\grp}[1]{\langle #1 \rangle}
\newcommand{\Stab} {{\mathrm{Stab}}}
\newcommand{\Fix}{\mathop{\mathrm{Fix}}}
\newcommand{\Out} {{\mathrm{Out}}}
\newcommand{\Aut} {{\mathrm{Aut}}}
\newcommand{\Mod} {{\mathrm{Mod}}}
\newcommand{\Zmax}{\calz_{\mathrm{max}}}
\newcommand{\Inc}{\mathrm{Inc}}
\newcommand{\fleche}{\mathop{\nnearrow}}
\newcommand{\Tcan}{T_{\mathrm{can}}}
\newcommand{\Gcan}{\Gamma_{\mathrm{can}}}
\newcommand{\Z}{{\mathbb {Z}}}
\newcommand{\inc}{\subset}
\newcommand{\Rt}{$\R$-tree}
\newcommand {\R} {{\mathbb {R}}}
\newcommand{\Autu}{\Aut}
\newcommand{\Outu}{\Out}
\newcommand{\Tw}{\calt}
\newcommand{\mo} {{\mathrm{Mod}}}
\newcommand{\mk}[1]{{#1}^{\mathrm{(t)}}}
\renewcommand{\PrintReviews}[1]{}
\renewcommand{\PrintDOI}[1]{}  
\begin{document}

\title{Splittings and automorphisms of relatively hyperbolic groups}
\author{Vincent Guirardel and Gilbert Levitt}

\maketitle

\begin{abstract}
  We study automorphisms of a relatively hyperbolic group $G$.
When $G$ is one-ended, 
 we describe $\Out(G)$ using a preferred JSJ tree over subgroups that are virtually cyclic or parabolic.
In particular, when $G$ is toral relatively hyperbolic,
$\Out(G)$ is virtually built out of mapping class groups and subgroups of $GL_n(\bbZ)$ fixing certain basis elements.
 When more general parabolic groups are allowed, these subgroups of $GL_n(\bbZ)$ have to be replaced by McCool groups: automorphisms of parabolic groups
acting trivially (i.e.\ by conjugation) on certain subgroups.

Given a malnormal quasiconvex subgroup $P$ of a hyperbolic group $G$,
we view $G$ as hyperbolic relative to $P$ and
we apply the previous analysis to describe the group  $\Out(P\fleche G)$  of automorphisms of $P$ that extend to $G$:
it is virtually a McCool group.  If $\Out(P\fleche G)$ is infinite, then $P$ is a vertex group in a splitting of $G$.
If $P$ is torsion-free, then $\Out(P\fleche G)$ is of type VF, in particular finitely presented.

We also determine when $\Out(G)$ is infinite, for  $G$ relatively hyperbolic. The interesting case
is when $G$ is infinitely-ended and has torsion. When $G$ is hyperbolic, we show that $\Out(G)$ is infinite
if and only if $G$ splits over a maximal virtually cyclic subgroup with infinite center.
In general we show that infiniteness of $\Out(G)$ comes from the existence of a splitting with infinitely
many twists, or having a vertex group that is maximal parabolic with infinitely many automorphisms acting
trivially on incident edge groups.
\end{abstract}

 \section{Introduction}
 
 This paper studies  automorphisms of hyperbolic and relatively hyperbolic groups in relation with their splittings.
  The first  result in this direction is due to Paulin \cite{Pau_arboreal}:   if $G$ is a hyperbolic group with $\Out(G)$   infinite,
  then $G$ has an action on an $\bbR$-tree with  virtually cyclic (possibly finite) arc stabilizers. 
 Rips theory then implies that  $G$ splits over a  virtually cyclic group.

Understanding the global structure of $\Out(G)$ requires   techniques which depend on the number of ends of $G$. 
   If $G$ is one-ended, there is  an $\Out(G)$-invariant JSJ decomposition, and its study leads to   Sela's description of  $\Out(G)$ 
  as a virtual extension of a direct product of mapping class groups
  by a virtually abelian group \cite{Sela_structure,Lev_automorphisms}.  
  If $G$ has infinitely many ends, one does not get such a precise description because there is no $\Out(G)$-invariant   splitting. One may study $\Out(G)$ by letting it act on a suitable space of splittings, the most famous being Culler-Vogtmann's outer space for $\Out(F_n)$.

Before moving on to relatively hyperbolic groups, here is a basic problem about which we get new results in the context of hyperbolic, and even free, groups.
Given a finitely generated subgroup $P$  of a group $G$, 
consider the group $\Out(P \fleche G)\inc\Out(P)$ consisting of outer automorphisms of $P$ which extend to automorphisms of $G$.
 What can one say about this group ?
For instance, is it finitely generated? finitely presented? 
This question was asked by D.\ Calegari for automorphisms of free groups, and we answer it when $P$ is malnormal. 

 The answer is related to splittings through the following simple remark: 
if $P$ is a vertex group in a splitting of $G$, say $G=A*_{C_1}P*_{C_2} B$, 
then any automorphism of $P$ which acts \emph{trivially} (\ie as  conjugation by some element $p_i\in P$) 
on each incident edge group $C_i$, 
extends to $G$ 
(this is the algebraic translation of the fact that any self-homeomorphism of a closed subset $Y\subset X$  which is the identity
on the frontier of $Y$ extends by the identity to a homeomorphism of $X$).

The following theorem says that this phenomenon accounts for almost all of $\Out(P \fleche G)$.

\begin{thm}[see Corollary \ref{cor_induit}] 
 \label{calhyp}
 Let $P$ be a quasiconvex malnormal subgroup of a hyperbolic group $G$. If $\Out(P \fleche G)$ is infinite, then $P$ is a vertex group in a splitting of $G$, and the group of outer automorphisms of $P$ acting trivially 
on incident edge groups has finite index in $\Out(P \fleche G)$.
\end{thm}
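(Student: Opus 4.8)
The plan is to view $G$ as hyperbolic relative to $P$ (legitimate since $P$ is malnormal and quasiconvex), and then feed this into the general relatively-hyperbolic machinery that the abstract advertises. The key link is that an outer automorphism of $P$ extends to $G$ precisely when it is compatible with how $P$ sits inside $G$ as a maximal parabolic subgroup. I want to reduce the statement to an assertion about splittings of $G$ over suitable subgroups, where $P$ appears as a vertex group. Let me sketch the chain of implications.

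\medskip

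\noindent\textbf{From infinitude of $\Out(P\fleche G)$ to a splitting.}
First I would establish the dichotomy: either $\Out(P\fleche G)$ is finite, or $G$ admits a nontrivial splitting in which $P$ is a vertex group. The mechanism should be a Paulin-style degeneration argument. If $\Out(P\fleche G)$ is infinite, pick an infinite sequence of pairwise non-conjugate automorphisms $\phi_n\in\Aut(P)$ that extend to $\widehat\phi_n\in\Aut(G)$. Letting $\Aut(G)$ act on a Cayley graph (or a space on which $G$ acts hyperbolically relative to $P$) and rescaling, one extracts in the limit an action of $G$ on an $\R$-tree $T$ with controlled arc stabilizers. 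The malnormality and quasiconvexity of $P$ are what guarantee the limit tree is nontrivial and that the arc stabilizers are small (virtually cyclic or parabolic), so that a relative version of Rips theory applies and produces a splitting of $G$. The crucial point is to arrange the degeneration so that $P$ is \emph{elliptic} in $T$: this is where the automorphisms restricting to $P$ (rather than arbitrary automorphisms of $G$) is used, and it forces $P$ to be conjugate into a vertex group of the resulting graph-of-groups decomposition.

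\medskip

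\noindent\textbf{From the splitting back to $\Out(P\fleche G)$, and the finite-index claim.}
For the finite-index statement I would argue in the converse direction combined with rigidity. By the remark preceding the theorem, every outer automorphism of $P$ acting trivially on the incident edge groups of the splitting extends to $G$, giving an inclusion of the ``trivial-on-edges'' subgroup into $\Out(P\fleche G)$. To see this subgroup has finite index, I would fix a canonical (JSJ-type, $\Out$-invariant) splitting of $G$ relative to $P$ and show that any $\phi\in\Out(P\fleche G)$, after composing with an element of the trivial-on-edges subgroup, must preserve each incident edge group up to conjugacy and induce only finitely many possibilities on the finite collection of edge-group-inclusions. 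Concretely, an extending automorphism $\widehat\phi$ permutes the (finitely many) conjugacy classes of maximal parabolic subgroups and respects the peripheral structure, so its restriction to $P$ permutes the incident edge groups among themselves; the subgroup acting trivially on all of them is then visibly of finite index. The finiteness of the relevant index is where the quasiconvex malnormal hypothesis pays off, via the canonicity and local finiteness of the JSJ machinery developed earlier in the paper.

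\medskip

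\noindent\textbf{Main obstacle.}
I expect the hard part to be the first implication, specifically ensuring that the limiting $\R$-tree is genuinely nontrivial with $P$ elliptic, and that the arc stabilizers are tame enough for Rips theory to yield an honest splitting rather than a degenerate action. One must rule out the possibility that the infinitely many extending automorphisms all differ by inner automorphisms of $G$ restricted to $P$ (which would not produce a splitting), and this is precisely where non-conjugacy in $\Out(P)$ together with malnormality must be leveraged. Once the splitting exists and $P$ is a vertex group, the finite-index conclusion should follow more formally from the structure theory, but controlling the degeneration to keep $P$ elliptic and the tree nontrivial is the technical crux, and I would expect to invoke the paper's earlier results on JSJ decompositions of relatively hyperbolic groups to carry it through cleanly.
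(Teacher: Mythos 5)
Your high-level frame (view $G$ as hyperbolic relative to $P$, invoke the relatively hyperbolic machinery) matches the paper, but both halves of your argument have genuine gaps, and both are filled in the paper by the same engine, namely Theorem \ref{thm_sci}. First gap: your Bestvina--Paulin/Rips degeneration only produces a splitting of $G$ in which $P$ is \emph{elliptic}, i.e.\ contained in a vertex group; the theorem claims $P$ \emph{is} a vertex group, which is strictly stronger and cannot be arranged by refining or modifying an arbitrary splitting. The paper gets it from a trichotomy on the canonical elementary JSJ tree $\Tcan$ relative to $P$: the vertex group $G_v\supseteq P$ cannot be flexible QH (every elementary subgroup of such a vertex group is virtually cyclic, by Remark \ref{rem_UEQH}, and $P$ is not), and it cannot be rigid, because $G_v$ is itself relatively hyperbolic with respect to the induced peripheral structure (Lemma \ref{rh}) and Theorem \ref{thm_sci} applied to $G_v$ shows that the image of $\Out^0(G;\calp)$ in $\Out(G_v)$ is then finite; since $P$ equals its normalizer, this would make $\Out(P\fleche G)$ finite, a contradiction. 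Hence $G_v$ is elementary, so $G_v=P$ by maximality of parabolic subgroups. You assert only that ellipticity "forces $P$ to be conjugate into a vertex group" and then silently treat $P$ as a vertex group in your second half; the promotion step is exactly what is missing.

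Second gap: your finite-index argument --- extending automorphisms permute the incident edge groups, so the subgroup acting trivially on them is "visibly of finite index" --- does not follow. Acting trivially means acting as conjugation by an element of $P$ on each incident edge group, which is far stronger than permuting their conjugacy classes; the discrepancy is measured by the outer automorphisms induced on the edge groups themselves, and here the incident edge groups of $P$ in $\Tcan$ are arbitrary infinite finitely generated subgroups of $P$, which may well have infinite $\Out$. The paper's Proposition \ref{imro_rel} handles this by observing that an incident edge $\varepsilon$ with $\Out(G_\varepsilon)$ infinite must have a \emph{rigid} vertex $x$ at its other end (it cannot be QH, as $G_\varepsilon$ would then be virtually cyclic); Theorem \ref{thm_sci} makes the image of $\Out^0(G;\calp)$ in $\Out(G_x)$ finite, so after passing to a finite-index subgroup every extending automorphism acts on $G_x$ as conjugation by some $g\in G_x$, and almost malnormality of $G_\varepsilon$ in $G_x$ forces $g\in G_\varepsilon$, i.e.\ the automorphism acts trivially on $G_\varepsilon$. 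So the rigidity theorem, not "canonicity and local finiteness of the JSJ", is what pays for the finite index; without it your argument stalls at exactly the two points where the theorem has content.
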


We call the group of outer automorphisms of $P$ acting trivially on a family of subgroups a  \emph{McCool group of $P$}, because of McCool's paper about subgroups of $\Out(F_n)$ fixing a finite set of conjugacy classes \cite{McCool_fp}. 
In this language, 
Theorem \ref{calhyp}
says that $\Out(P\fleche G)$ is virtually a McCool group of $P$. 
It is a theme of this paper 
that 
many groups of automorphisms may be understood in terms of McCool groups, and that many results concerning the full group $\Out(G)$
also apply to McCool groups (see also \cite{GL_McCool}).

The groups considered by McCool are finitely presented \cite{McCool_fp}.
In fact, they  have a finite index subgroup with a finite classifying space \cite{CuVo_moduli}.
 In \cite{GL_McCool} we extend these results to all McCool groups of torsion-free hyperbolic groups (and more generally of toral relatively hyperbolic groups).  From this, one deduces that $\Out(P \fleche G)$ has a finite index subgroup with a finite classifying space when $G$ and $P$ are as in Theorem \ref{calhyp},  with $G$  torsion-free.

Our hypotheses for Theorem \ref{calhyp}, namely quasiconvexity  and malnormality of $P$, 
imply that $G$ is hyperbolic relative to $P$ (see \cite{Bow_relhyp}).
In fact, Theorem \ref{calhyp} is just a special case of a result describing 
$\Out(P \fleche G)$ as a virtual McCool group when $G$ is relatively hyperbolic 
and  $P$ is  a maximal parabolic subgroup   (see Theorem \ref{caleg_general} for a precise statement). 
\\

This paper also addresses the question of whether    $\Out(G)$ is finite or infinite. 
It turns out that the answer is much simpler when $G$ is torsion-free, owing to the fact that $\Out(G)$ is then infinite whenever $G$ has infinitely many ends  (see Lemma \ref{freep}). 

Things are more complicated when torsion is allowed. For instance, characterizing virtually free groups with  $\Out(G)$   infinite is a non-trivial problem which was solved by Pettet \cite{Pettet_virtually}. The following theorem gives a different characterization.
We say that a 
 subgroup of $G$ is $\Zmax$ if it is  maximal for inclusion among virtually cyclic subgroups
with infinite center.

\begin{thm} [see Theorems \ref{thm_twist_hyp}  and \ref{thm_MC_infini}] \label{twinf}
Let $G$ be a hyperbolic group. 
Then $\Out(G)$ is infinite if and only 
$G$ splits over  a $\Zmax$ subgroup $C$; in this case,   any element $c\in C$ of infinite order defines a Dehn twist which has infinite order in $\Out(G)$. 

Moreover, one may decide algorithmically whether $\Out(G)$ is finite or infinite.
\end{thm}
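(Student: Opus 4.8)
The plan is to prove the two implications separately, organizing everything around Bowditch's canonical JSJ decomposition over virtually cyclic subgroups and the Sela--Levitt description of $\Out(G)$, and then to address decidability. I begin with the easy direction together with the infinite-order assertion (this is the content of Theorem~\ref{thm_twist_hyp}). Suppose $G$ splits over a $\Zmax$ subgroup $C$, and let $c\in C$ have infinite order; since $C$ is virtually cyclic with infinite center we may take $c$ central in $C$, so the Dehn twist $\tau_c$ is a well-defined automorphism (for $G=A*_C B$ it fixes $A$ and conjugates $B$ by $c$; for an HNN extension it sends the stable letter $t$ to $tc$). The real point is that $\tau_c$ has infinite order in $\Out(G)$. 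I would argue by contradiction: if $\tau_c^n$ were inner, say realized by $\mathrm{ad}_h$, then on an adjacent vertex group $A$ one gets that $h$ centralizes $A$, and on $B$ one gets that $h^{-1}c^n$ centralizes $B$; when the vertex groups are non-elementary their centralizers are finite, forcing $c^n$ into the finite center of a vertex group, which contradicts $c$ having infinite order. The elementary cases are handled using that a maximal virtually cyclic subgroup of a hyperbolic group is almost malnormal and equal to its own commensurator, so $c^n$ cannot be absorbed. Thus maximality together with infinite center is exactly the non-degeneracy needed for $\tau_c$ to have infinite order.

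For the forward direction I split on the number of ends. In the one-ended case $G$ has a canonical JSJ decomposition over virtually cyclic subgroups, and by Sela--Levitt $\Out(G)$ is, up to finite index, an extension of a product of mapping class groups of the quadratically hanging (QH) vertices by the group of Dehn twists along the edges. If $\Out(G)$ is infinite, then either some QH vertex has infinite mapping class group or the twist group is infinite. In the first case the underlying surface-orbifold carries an essential non-peripheral simple closed curve whose stabilizer is a cyclic-type (hence infinite-center) virtually cyclic subgroup over which $G$ splits. In the second case, since the twists along an edge with group $C'$ are governed by the center of $C'$, infiniteness forces an edge whose group has infinite center. Either way $G$ splits over a virtually cyclic subgroup $C_0$ of infinite center. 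I then promote $C_0$ to a $\Zmax$ subgroup: choosing $g\in C_0$ of infinite order, the stabilizer of the ordered fixed-point pair $(g^+,g^-)\in\partial G$ is the maximal cyclic-type virtually cyclic subgroup $C\supseteq C_0$, and since $[C:C_0]<\infty$ one checks that $G$ splits over $C$ (indeed the JSJ edge groups may be taken maximal).

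The infinitely-ended case is where I expect the main obstacle, and it is flagged in the abstract as the case ``$G$ infinitely-ended with torsion.'' Here I would first apply Stallings--Dunwoody to write $G$ as a finite graph of groups with finite edge groups and one-ended or finite vertex groups. Finite edge groups have finite center, so no infinite-order twists come from this splitting, and finiteness of the graph of groups means that vertex permutations contribute only finitely to $\Out(G)$. The difficulty is to prove that infiniteness of $\Out(G)$ must therefore originate in a $\Zmax$ splitting: one reduces to a one-ended vertex group $G_v$ by observing that the stabilizer of the Stallings decomposition acts on $G_v$ through automorphisms that are trivial on the incident finite edge groups — a McCool-type group, which is presumably the object of Theorem~\ref{thm_MC_infini} — and then feeds $G_v$ into the one-ended analysis to obtain a cyclic-type virtually cyclic splitting refining to a splitting of $G$. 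The torsion-specific bookkeeping (for virtually free $G$ this is exactly Pettet's theorem) is the genuinely hard technical core, since one must rule out that infiniteness arises purely from the interplay of torsion with the finite-edge-group structure.

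Finally, for decidability I would use that hyperbolic groups have solvable word and conjugacy problems and that both the number of ends and the relevant canonical decompositions are algorithmically computable: the Stallings--Dunwoody splitting over finite groups is effective, and the canonical virtually cyclic JSJ of a one-ended hyperbolic group is constructible from a finite presentation. Given these, deciding infiniteness of $\Out(G)$ reduces to finitely many effectively-checkable conditions on the decomposition, namely whether some vertex is QH with infinite mapping class group and whether some edge group (or its enlargement to a $\Zmax$ subgroup) has infinite center. The nontrivial input is the computability of these canonical decompositions, which is available for hyperbolic groups; the rest is a finite inspection.
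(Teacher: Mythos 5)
Your reduction in the infinitely-ended case rests on a false claim: that ``finite edge groups have finite center, so no infinite-order twists come from this splitting'', i.e.\ that twists along an edge are governed by the center of the edge group. Twists around an edge $e$ near a vertex $v$ are parametrized by the centralizer $Z_{G_v}(G_e)$ of the edge group \emph{in the vertex group}, not by $Z(G_e)$; when $G_e$ is finite this centralizer is typically infinite (if $G_e$ is trivial it is all of $G_v$), and Stallings--Dunwoody decompositions very often carry twists of infinite order --- see Example \ref{Pett}, where $G=D_6*_C\left(D_4*_CD_4\right)$ has an infinite-order twist around an edge group of order $2$. So infiniteness of $\Out(G)$ need not ``originate'' in the one-ended vertex groups at all, and your reduction to the one-ended analysis collapses exactly at the point you flag as the technical core. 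Converting such twists over finite groups into a $\Zmax$ splitting is the actual content of the paper's proof of Theorem \ref{thm_twist_hyp}: Theorem \ref{thm_marked} first produces a one-edge splitting $G=A*_CB$ (or HNN) over a possibly \emph{finite} virtually cyclic group with infinite group of twists; one then chooses an infinite-order element $t\in Z_A(C)$ and \emph{folds} to obtain $G=A*_{\grp{C,t}}\grp{B,t}$, a splitting over a virtually cyclic group with infinite center, and folds once more over the $\Zmax$ subgroup containing it. A second, related gap: the Stallings--Dunwoody tree is not $\Out(G)$-invariant (only its deformation space is), so ``the stabilizer of the Stallings decomposition'' does not see all of $\Out(G)$, even virtually. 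The paper circumvents this with the tree of cylinders for splittings over finite groups (Proposition \ref{prop_induct} and Corollary \ref{propc_induct}), which yields either a collapse with infinite group of twists or a non-trivial invariant tree, and then an accessibility argument. (Your one-ended analysis and your easy direction are essentially sound, though even there the twists must be described via centralizers; this happens to be harmless in the one-ended case because an infinite virtually cyclic subgroup with finite center has finite centralizer in a hyperbolic group.)

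Your decidability argument has the same blind spot: it reduces everything to a ``finite inspection'' of computable canonical decompositions, but in the infinitely-ended case there is no canonical splitting to inspect, and the criterion of Theorem \ref{thm_twist_hyp} is existential over \emph{all} $\Zmax$ splittings of $G$. The paper's Theorem \ref{thm_MC_infini} instead runs two semi-algorithms in parallel: one halts if and only if $\Out(G)$ is finite (compute a finite generating set of $\Out(G)$ and solve its word problem via simultaneous conjugacy \cite{DG2,BriHow_conjugacy}, then detect stabilization $B_R=B_{R+1}$ of balls in the Cayley graph of $\Out(G)$); the other halts if $\Out(G)$ is infinite, by enumerating one-edge splittings over $\Zmax$ subgroups, using that one can decide whether a given finitely generated subgroup is $\Zmax$. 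No computation of a JSJ decomposition is needed, and none would suffice when $G$ has infinitely many ends.
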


The first assertion
answers a question asked by D.\ Groves.
See \cite{Carette_automorphism} for a related result proved
 independently by M.\ Carette, and
  \cites{Lev_automorphisms, DG2} for the one-ended case.

Let us now consider (in)finiteness of $\Out(G)$ when $G$ is relatively hyperbolic. 
Suppose that $G$ is  hyperbolic with respect to a finite family $\calp$ of finitely generated subgroups $P_i$.
Since automorphisms of $G$ need not respect   $\calp$ (for instance, $G$ may be free and $\calp$ may consist of any finitely generated malnormal subgroup), we consider the group $\Out(G;\calp)$ consisting of automorphisms mapping each $P_i$ to a conjugate (in an arbitrary way). 
Note that $\Out(G;\calp)$ has finite index in the full group $\Out(G)$
 when the groups $P_i$ are small but not virtually cyclic, more generally when they are not themselves relatively hyperbolic in  a nontrivial way \cite{MiOs_fixed}.

Given a splitting of $G$, we have already pointed out that any automorphism of a vertex group
acting trivially on incident edge groups extends to an automorphism of $G$.
\emph{Twists} around edges of the splitting also provide automorphisms of $G$.
For instance, if $G=A*_C B$, and $a\in A$ centralizes $C$, there is an automorphism of 
$G$ equal to conjugation by $a$ on $A$ and to the identity on $B$.
Note that we do not require that $C$ be virtually cyclic or that $a\in C$
(see Subsection \ref{arb}).

The following result says that 
infiniteness of $\Out(G;\calp)$ comes from twists or from a McCool group of a parabolic group.

\begin{thm}[see Corollary \ref{cor_outu_infini}] \label{twinfr} 
Let
$G$ be hyperbolic relative to a family $\calp=\{P_1,\dots,P_n\}$ of finitely generated subgroups. 
  Then  $\Out(G;\calp)$ is infinite if and only if
$G$ has a splitting over virtually cyclic or parabolic subgroups,  with each $P_i$   contained in a conjugate of a vertex group,   such that one of the following holds:
 \begin{itemize}
\item 
  the group of  twists of the splitting is infinite;
\item
   some $P_i$ is a vertex group and there are infinitely many outer automorphisms of $P_i$ acting trivially 
on incident edge groups.
\end{itemize}
 \end{thm}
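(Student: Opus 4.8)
The plan is to prove both implications by means of the canonical JSJ tree $T$ of $G$ over virtually cyclic and parabolic subgroups relative to $\calp$ (the tree produced by the structure theory underlying the abstract). By canonicity $T$ is invariant under $\Out(G;\calp)$, and each $P_i$, being a maximal parabolic, is elliptic in $T$ and hence contained in a conjugate of a vertex group; moreover the parabolic vertex groups of $T$ are exactly the conjugates of the $P_i$, while the remaining vertices are either rigid or quadratically hanging (QH).

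For the \emph{if} direction I would argue directly from the two hypotheses. If the group of twists $\calt$ of the given splitting is infinite, recall that $\calt$ is by construction a subgroup of $\Out(G)$; since a twist acts on each vertex group by a (possibly trivial) conjugation, it sends each $P_i$ to a conjugate, so $\calt\inc\Out(G;\calp)$ and we are done. If instead some $P_i$ is a vertex group carrying infinitely many outer automorphisms trivial on its incident edge groups, then by the extension remark of the introduction each such automorphism extends to $G$ by the identity on the complementary part, and the extension preserves every $P_j$. The induced map from the McCool group of $P_i$ into $\Out(G;\calp)$ is injective, because malnormality of the maximal parabolic $P_i$ forces any element of $G$ conjugating the extension to lie in $P_i$; hence its infinite domain produces an infinite subgroup of $\Out(G;\calp)$.

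The substance is the \emph{only if} direction. Assuming $\Out(G;\calp)$ infinite, I would apply to it the standard exact sequence (due to Levitt) for the stabilizer of the invariant tree $T$: a finite-index subgroup $\Out(G;\calp)^0$ fits into
\[
1\lra \calt \lra \Out(G;\calp)^0 \lra \prod_v \Out(G_v;\Inc_v),
\]
where the product is over the finitely many vertex orbits and $\Inc_v$ records the incident edge groups. Since the permutation part of the action on the quotient graph is finite, infiniteness of $\Out(G;\calp)$ forces either $\calt$ to be infinite --- giving the first alternative immediately --- or the image in some $\Out(G_v;\Inc_v)$ to be infinite. I would then split into cases by the type of $v$. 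If $v$ is rigid, the defining rigidity of the canonical JSJ gives that $\Out(G_v;\Inc_v)$ is finite, so this case cannot occur. If $v$ is parabolic, then $G_v$ is conjugate to some $P_i$ and infiniteness of $\Out(G_v;\Inc_v)$ is precisely the second alternative.

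The hard part will be the flexible (QH) case, where $G_v$ is a quadratically hanging subgroup and $\Out(G_v;\Inc_v)$ is the infinite mapping class group of the underlying $2$-orbifold, and where there is no ready alternative to invoke. Here I would convert this mapping-class-group infiniteness into twists by refining the splitting: infiniteness of the relative mapping class group forces the orbifold to carry an essential simple closed curve, and splitting $G_v$ along the corresponding cyclic subgroup refines $T$ to a splitting of $G$ over virtually cyclic and parabolic subgroups in which the $P_i$ remain elliptic (they sit at vertices disjoint from the refined region). The Dehn twist along that curve centralizes the new edge group and has infinite order in the group of twists of the refinement, so the refined splitting satisfies the first alternative. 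Verifying that the refinement keeps the edge groups of the allowed type, keeps each $P_i$ inside a vertex group, and that the refined twist has genuinely infinite order in $\Out(G)$, is the delicate bookkeeping I expect to be the main obstacle.
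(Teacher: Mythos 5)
Your ``if'' direction is correct, and when $G$ is one-ended relative to $\calp$ your ``only if'' outline does essentially track the paper (invariant tree, exact sequence, case analysis, with the QH case converted into twists along an essential curve). The fatal gap is that the theorem does \emph{not} assume relative one-endedness, and your proof has no mechanism for the remaining case. The object your whole argument rests on --- a canonical $\Out(G;\calp)$-invariant JSJ tree whose vertices are rigid, QH, or parabolic --- exists only when $G$ is one-ended relative to $\calp$: it is the tree of cylinders of the elementary JSJ deformation space, and both its canonicity and the rigid/QH dichotomy for flexible vertices (Subsection \ref{cano}) require that $G$ has no splitting over a finite group relative to $\calp$. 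When $G$ does split over a finite group relative to $\calp$ (the ``interesting case'' of the abstract: infinitely-ended with torsion, e.g.\ virtually free groups), there is no $\Out(G;\calp)$-invariant splitting at all, so the exact sequence you invoke has nothing to be applied to, and flexible vertices of a (non-canonical) JSJ need not be QH. This is exactly why the paper takes a different route: Corollary \ref{cor_ker} reduces infiniteness of $\Out(G;\calp)$ either to infiniteness of some $\Out(P_i\fleche (G;\calp))$ --- which Theorem \ref{caleg_general} converts into the second alternative --- or to infiniteness of the \emph{marked} group $\Out(G;\mk\calp)$, which is Theorem \ref{thm_marked}; and the multi-ended case of Theorem \ref{thm_marked} needs tools absent from your proposal: Linnell's accessibility to get an invariant deformation space over finite groups of bounded order, the tree-of-cylinders dichotomy of Proposition \ref{prop_induct} (either a collapse with infinite group of twists, or a nontrivial invariant tree with finite edge stabilizers), and an induction on vertex groups that terminates by Dunwoody's accessibility.

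Two further points, relevant even in the relatively one-ended case. First, ``the defining rigidity of the canonical JSJ gives that $\Out(G_v;\Inc_v)$ is finite'' is not a definition: rigid means universally elliptic, and finiteness of the relative outer automorphism group of a rigid vertex is Theorem \ref{thm_sci} (via Proposition \ref{imro}), the Bestvina--Paulin/Rips theorem occupying Section \ref{sec_Rips}, whose proof requires new work (hypostability) because the $P_i$ need be neither slender nor finitely presented. Second, at a parabolic vertex your exact sequence only gives an infinite image in $\Out(G_v;\Inc_v)$, i.e.\ automorphisms preserving incident edge groups up to conjugacy, whereas the second alternative requires $\Out(G_v;\mk\Inc_v)$ infinite, i.e.\ automorphisms acting \emph{trivially} on them; the passage from one to the other is the finite-index assertion of Proposition \ref{imro}, which again uses Theorem \ref{thm_sci} at the rigid endpoints of edges whose groups have infinite $\Out$. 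Both points are legitimate if you are allowed to quote the structure theory (Theorem \ref{thm_struct_m}), but they are not free, and as written your proof treats them as such.
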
 
 
 As mentioned above, one can get similar results characterizing the infiniteness of McCool groups of $G$.
 We refer to  Section \ref{outinfi}, in particular Theorem \ref{thm_marked} and Corollary \ref{cor_outu_infini}, 
 for more detailed statements. 
 \\

 Let us now discuss the techniques that we use. We assume that $G$ is hyperbolic relative to $\calp$, and we distinguish two cases according to the number of ends
(technically,  we 
consider relative one-endedness, but we will ignore this in the introduction).

When $G$ is one-ended, we use a canonical $\Out(G;\calp)$-invariant decomposition $\Gcan$ of $G$, namely  (see  Subsection \ref{cano}) the JSJ decomposition over \emph{elementary} (i.e.\ parabolic or virtually cyclic) subgroups relative to parabolic subgroups  (i.e.\  parabolic subgroups have to be contained in conjugates of vertex groups).

  One may thus generalize  the description of $\Out(G)$ given by Sela for $G$ hyperbolic, and
express $\Out(G;\calp)$ in terms of mapping class groups, McCool groups of maximal parabolic subgroups, and a group of twists $\calt$. For simplicity we restrict   to a special case here (see Section \ref{gen} for a general statement).

\begin{thm}[see Corollary \ref{limgp}]\label{thm_limgp_intro}
  Let $G$ be toral relatively hyperbolic and  one-ended.
Then some finite index subgroup $\Out^1(G)$ of $\Out(G)$ fits in an exact sequence
\begin{equation*}
1\ra \Tw \ra   \Out^1(G) \ra \prod_{i=1}^p MCG^0(\Sigma_i) \times \prod_{k=1}^m 
  GL_{r_k,n_k}(\bbZ) 
 \ra 1 
\end{equation*}
where $\Tw$ is finitely generated free abelian,    $MCG^0(\Sigma_i)$ is  the group of isotopy classes of homeomorphisms of a compact surface $\Sigma_i$     mapping each boundary component to itself in an orientation-preserving way,  and  $GL_{r,n}(\bbZ) $ 
is the group of automorphisms of $\bbZ^{r+n}$ fixing the first $n$ generators. 
\end{thm}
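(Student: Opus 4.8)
The plan is to let $\Out(G)$ act on the canonical JSJ decomposition $\Gcan$ of Subsection~\ref{cano} and to read the exact sequence off the way automorphisms are forced to act on its vertex and edge groups. Since $G$ is one-ended toral relatively hyperbolic, the maximal parabolic subgroups are exactly the maximal non-cyclic abelian subgroups; these are permuted by every automorphism, so $\Out(G)=\Out(G;\calp)$ and $\Gcan$ is genuinely $\Out(G)$-invariant. The underlying graph is finite, so the subgroup $\Out^1(G)$ of automorphisms acting trivially on it — fixing every vertex and oriented edge, and fixing each boundary component of each surface vertex in an orientation-preserving way — has finite index. First I would record the three types of vertex group of $\Gcan$: rigid vertices, quadratically hanging (surface) vertices $G_v=\pi_1(\Sigma_i)$ whose boundary subgroups are the incident edge groups, and maximal parabolic vertices $G_v\cong\bbZ^{m}$.

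Next I would invoke the general twist exact sequence for an invariant graph-of-groups decomposition (the specialization of the structure theorem of Section~\ref{gen}),
$$1 \to \Tw \to \Out^1(G) \to \prod_v \Out(G_v;\Inc_v^{(t)}) \to 1,$$
where $\Out(G_v;\Inc_v^{(t)})$ is the McCool group of outer automorphisms of $G_v$ acting trivially (by conjugation) on each incident edge group. Left exactness is the assertion that an automorphism inducing an inner automorphism on every vertex group is a product of twists; surjectivity is precisely the extension remark from the introduction, that a vertex automorphism acting as conjugation on each incident edge group extends by the identity to $G$.

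Then I would identify the factors. For a rigid vertex, relative rigidity of the JSJ gives that $\Out(G_v;\Inc_v^{(t)})$ is finite, so after passing to a further finite-index subgroup (still called $\Out^1(G)$) these factors vanish. For a surface vertex, a mapping class fixing each boundary component orientation-preservingly acts on $\pi_1(\Sigma_i)$ as conjugation on each boundary subgroup, and conversely; since boundary Dehn twists act trivially on $\pi_1(\Sigma_i)$ — they are absorbed into $\Tw$, where they become nontrivial by conjugating across the edge — one gets $\Out(G_v;\Inc_v^{(t)})\cong MCG^0(\Sigma_i)$. For a parabolic vertex $G_v\cong\bbZ^{m}$, acting trivially on an edge group means fixing it pointwise; fixing the rank-$n_k$ subgroup generated by the incident edge groups pointwise forces fixing its primitive closure pointwise, so this subgroup is a direct summand $\bbZ^{n_k}$ and the McCool group is exactly $GL_{r_k,n_k}(\bbZ)$ with $r_k=m-n_k$. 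Assembling these yields the displayed sequence with quotient $\prod_i MCG^0(\Sigma_i)\times\prod_k GL_{r_k,n_k}(\bbZ)$.

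Finally I would check that $\Tw$ is finitely generated free abelian: it is generated by the finitely many twists around edges of the finite graph; these twists commute because, in the toral case, all edge groups and their centralizers in the adjacent vertex groups are abelian; and the group is torsion-free since $G$ is. The main obstacle is the twist exact sequence itself — showing that twists exhaust the kernel (no automorphism can be invisible on all vertex groups without being a twist) and that the map is onto the full product. Surjectivity is delivered by the extension remark, but the kernel computation, combined with finiteness of the rigid factors, is where the real work of the JSJ machinery enters; the explicit identification of the surface and abelian factors is then bookkeeping in mapping class group theory and the arithmetic of $\bbZ^{m}$.
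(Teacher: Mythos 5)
Your overall route is the same as the paper's: $\Gcan$ is $\Out(G)$-invariant, one restricts to a finite-index subgroup whose image under $\rho$ is a product of McCool groups of the vertex groups and whose kernel is the group of twists, the rigid (and virtually cyclic) factors are finite and get killed by a further finite-index passage, and the QH and abelian factors are identified with $MCG^0(\Sigma_i)$ and $GL_{r_k,n_k}(\bbZ)$; this is exactly Theorem \ref{thm_struct_m} specialized as in Corollary \ref{limgp}. Two small points of looseness: your graph-trivial $\Out^1(G)$ maps a priori only into $\prod_v\Out(G_v;\Inc_v)$, so one must pass to the preimage of $\prod_v\Out(G_v;\mk\Inc_v)$ and invoke Proposition \ref{imro} (i.e.\ Theorem \ref{thm_sci} at rigid vertices, plus finiteness of $\Out$ of virtually cyclic groups) to see that this costs only finite index --- you essentially acknowledge this; and $\Tcan$ may also have maximal loxodromic vertices $G_v\cong\Z$, absent from your list of three vertex types, though these contribute trivially.

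The genuine gap is your final step, torsion-freeness of $\Tw$. The claim that $\Tw$ is ``torsion-free since $G$ is'' is a non sequitur: $\Tw$ is a subgroup of $\Out(G)$, not of $G$, and outer automorphism groups of torsion-free groups can have plenty of torsion. Concretely, $\Tw$ is the quotient of the finitely generated free abelian group $\prod_{e\in E}Z_{G_{o(e)}}(G_e)$ by the subgroup generated by the vertex and edge relations, and such a quotient can have torsion even when $G$ is torsion-free and all vertex and edge groups are abelian: the paper's $BS(2,4)$ example in Section \ref{modul} is a torsion-free group with a splitting having cyclic vertex and edge groups whose group of twists is a nontrivial finite abelian group. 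So torsion-freeness must be proved by actually computing the relations for $\Tcan$, which is what the paper does: at a non-elementary vertex $x$ one has $Z(G_x)=1$, so there is no vertex relation there, and $Z_{G_x}(G_\varepsilon)=G_\varepsilon=Z(G_\varepsilon)$, so each edge relation absorbs the twists near $x$ into the twists near the elementary endpoint; what remains is $\Tw\cong\prod_v (G_v)^{|E_v|-1}$, the product running over the elementary (abelian) vertices $v$, with $E_v$ the set of incident oriented edges and $G_v$ embedded diagonally --- and this is visibly finitely generated free abelian. Without this computation (or some substitute for it) the ``free abelian'' clause of the statement is unproven.
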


 More generally,   McCool groups of a one-ended   toral relatively hyperbolic group $G$
have a similar description
(see Corollary \ref{cor_MCtoral}).
  A more general statement  (without restriction on the parabolic subgroups) is given in Theorems \ref{thm_struct_m} and \ref{thm_struct_m_rel}.

We also show that the \emph{modular group} of $G$,   introduced by Sela \cite{Sela_hopf,Sela_diophantine1} 
and usually defined by considering all suitable splittings of $G$, may be seen on the single splitting $\Gcan$. We refer to Section \ref{modul} for details. 

To prove Theorem \ref{calhyp}  when $G$ is one-ended, 
one applies the previous analysis, viewing  $G$ as hyperbolic relative to $P$. Note that we use a JSJ decomposition which is relative (to $P$), and over subgroups which are not small (any subgroup of $P$ is allowed).  

 Another example of the usefulness of relative JSJ decompositions is to prove the Scott conjecture about fixed subgroups of automorphisms of free groups. The proof that we give in Section \ref{fixed}, though not really new, is simplified by the use of the   cyclic JSJ decomposition relative to the fixed subgroup. 
 
 We therefore work consistently in a relative context. We fix another family of finitely generated subgroups $\calh=\{H_1,\dots,H_{q}\}$, and we define $\Out (G;\calp,\mk\calh)$ as the  group of automorphisms mapping $P_i$ to a conjugate (in an arbitrary way) and acting trivially on $H_j$  (\ie 
as conjugation by an element $g_j$ of $G$).
 
 In order to understand the structure of the automorphism group of a one-ended  relatively hyperbolic group from its
canonical JSJ decomposition,
one needs to control automorphisms of rigid vertex groups. This is made possible by 
the following generalisation of Paulin's theorem mentioned above: 

  \begin{thm}[see Theorem \ref{thm_sci}] \label{thm_sci3}
   Let $G$ be hyperbolic relative to  $\calp=\{P_1,\dots,P_n\}$,  with $P_i$ finitely
    generated and $P_i\neq G$. 
    Let $\calh=\{H_1,\dots,H_q\}$ be another family of
    finitely generated subgroups. 

If $\Out(G;\calp,\mk\calh)$ is infinite, then $G$ splits
    over an elementary  (virtually cyclic or parabolic) subgroup relative to $\calp\cup\calh$.
  \end{thm}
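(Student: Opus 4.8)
This is the marked, relatively hyperbolic analogue of Paulin's theorem quoted in the introduction, and I would prove it by the Bestvina--Paulin method. Since $G$ is hyperbolic relative to $\calp$ with each $P_i\neq G$, it acts properly by isometries on a $\delta$-hyperbolic space $X$ on which each conjugate of a $P_i$ stabilizes a horoball and every parabolic element has zero translation length; the cusped space of Groves--Manning is the natural choice. Assuming $\Out(G;\calp,\mk\calh)$ is infinite, I would fix a finite generating set $S$ of $G$, choose pairwise distinct classes $\Phi_n$ in this group, lift them to automorphisms $\alpha_n$, and form the twisted actions $g\cdot_n x=\alpha_n(g)x$. Rescaling the metric on $X$ by $\lambda_n=\max_{s\in S}d(x_n,\alpha_n(s)x_n)$, for suitable basepoints $x_n$, produces a sequence of pointed $G$-spaces.

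The first step is to check $\lambda_n\to\infty$: if it stayed bounded, the $\alpha_n$ would move $S$ a bounded amount and hence lie in finitely many classes, contradicting that the $\Phi_n$ are distinct. Passing to a Gromov--Hausdorff (or ultra-) limit then yields an isometric $G$-action on a space $T$ which, because $X$ is $\delta$-hyperbolic and $\delta/\lambda_n\to 0$, is $0$-hyperbolic, i.e. an $\R$-tree; the normalization forbids a global fixed point. The crucial point is that the asymptotic translation length is a conjugation invariant. For $h\in H_j$, the element $\alpha_n(h)$ is conjugate to $h$ because $\Phi_n$ acts trivially on $H_j$, so its length in $X$ is the constant $\ell_X(h)$; for $p$ in a conjugate of some $P_i$, the element $\alpha_n(p)$ is conjugate to a parabolic element and so has length $0$. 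Dividing by $\lambda_n\to\infty$, every element of each $H_j$ and each $P_i$ acts on $T$ with zero translation length, hence (these subgroups being finitely generated, and parabolic in the second case) fixes a point of $T$. Thus the action of $G$ on $T$ is relative to $\calp\cup\calh$.

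It remains to analyze arc stabilizers and to invoke Rips theory. Using the geometry of the cusped space one shows the action is superstable and that no nondegenerate arc of $T$ is stabilized by a subgroup containing two independent loxodromic elements: a weak acylindricity of the $G$-action on $X$ prevents a non-elementary subgroup from stabilizing a long segment. Hence every arc stabilizer is elementary, i.e. virtually cyclic or parabolic. Feeding this nontrivial superstable action with elementary arc stabilizers, in which $\calp\cup\calh$ is elliptic, into the structure theorem for actions on $\R$-trees (Rips theory in the form of Guirardel's decomposition), I would extract a nontrivial simplicial action, and therefore a splitting of $G$ over an elementary subgroup relative to $\calp\cup\calh$, as desired.

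The main obstacle is the control of arc stabilizers in the presence of parabolics: one must rule out non-elementary stabilizers while simultaneously allowing infinitely generated parabolic subgroups to stabilize arcs, which needs the finer geometry of the cusped space rather than the naive hyperbolic argument, together with the superstability required to run the machine. A secondary point of care is upgrading zero translation length to an actual fixed point for the $H_j$ and the $P_i$ (ruling out a fixed end), so that the final splitting is genuinely relative to $\calp\cup\calh$.
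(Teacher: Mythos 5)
Your first step is sound and is essentially the paper's Theorem \ref{th_BS}: the Bestvina--Paulin rescaling, the verification that the scaling factors go to infinity, the translation-length argument forcing every element of each $P_i$ and $H_j$ to be elliptic in the limit tree $T$ (whence the finitely generated groups $P_i,H_j$ are elliptic, after ruling out a fixed end), and the deduction that arc stabilizers are elementary (locally elementary implies elementary, by uniqueness of maximal elementary subgroups, Lemma \ref{borne}).

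The gap is in the second step, where you assert that ``one shows the action is superstable'' and then feed $T$ into the structure theory for stable or superstable actions. This is precisely what fails in general. The $P_i$ are only assumed finitely generated, so a parabolic subgroup may contain strictly increasing chains of subgroups (for instance a copy of $\Z^{(\Q)}$); arc stabilizers of $T$ can be arbitrary subgroups of parabolics, so they need not satisfy any ascending chain condition, and the action need not be stable or superstable --- the paper exhibits, right after Lemma \ref{lem_elem_hypostable}, an action relative to $\calp$ with elementary arc stabilizers whose arc stabilizers vary strictly along nested arcs. No geometry of the cusped space can prevent this, since the failure occurs entirely inside a single parabolic subgroup, which is an arbitrary finitely generated group. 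Compounding this, the Rips machinery you invoke requires either finite presentability of $G$ (Bestvina--Feighn, and Guirardel's approximation theorem), which fails here because $G$ is only finitely presented \emph{relative} to $\calp$, or, for merely finitely generated $G$, the ascending chain condition together with control of unstable arc stabilizers, which also fails. This is exactly the content of the paper's Section \ref{sec_Rips}: stability is replaced by the weaker notion of \emph{hypostability} (Definition \ref{hs}), one proves that any action relative to $\calp$ with elementary arc stabilizers is hypostable (Lemma \ref{lem_elem_hypostable}, using almost malnormality and ellipticity of maximal parabolic subgroups), and one generalizes the Bestvina--Feighn theorem to hypostable actions of relatively finitely presented groups (Theorem \ref{bf2}), the key device being to place the fixed points of the parabolic subgroups inside the finite resolving subtree so that their infinitely many relators are carried by leaves. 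Without this (or an equivalent substitute) your argument only proves the theorem when the $P_i$ are slender and $G$ is finitely presented, i.e.\ Corollary \ref{po}, not the general statement.
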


Note that there is no quasiconvexity or malnormality assumption on groups in $\calh$, but the automorphisms that we consider have to act
trivially on them  (see also Remark \ref{lent}).

The theorem is proved  using the Bestvina-Paulin method (extended to relatively hyperbolic groups in \cite{BeSz_endomorphisms}) 
to get an action on an \Rt\ $T$, and then  applying Rips theory as developed in \cite{BF_stable} to get a splitting. 
There are technical difficulties in the second step because $G$ may  fail to be finitely presented (the $P_i$'s are not required to be finitely presented), 
and the action on $T$ may fail to  be stable if the $P_i$'s are not  
slender; it only satisfies a weaker property which we call \emph{hypostability}, 
and in the last section we generalize \cite{BF_stable} to hypostable actions of relatively finitely presented groups.

 Theorem \ref {thm_sci3} explains why McCool groups appear in Theorems \ref{twinfr} and \ref{thm_limgp_intro}.
Indeed, given a rigid vertex group $G_v$ in a JSJ decomposition of a one-ended group,
  Theorem \ref{thm_sci3} implies that only finitely many outer automorphisms of $G_v$ extend to automorphisms of $G$.
In turn, this implies that, after passing to a finite index subgroup, automorphisms of $G$ act
trivially on edge groups of the JSJ decomposition. See Subsection \ref{autar} 
for details.

When $G$ is not one-ended, one has to consider splittings over finite groups. 
We do not have an exact sequence  as 
in
Theorem \ref{thm_limgp_intro} 
 because there is no $ \Outu (G;\calp)$-invariant splitting. In order to prove Theorems \ref{twinf} and \ref{twinfr}, we use the tree of cylinders introduced in \cite{GL4} to obtain a non-trivial splitting over finite groups which is invariant or has an infinite group of twists (Corollary \ref{propc_induct}).

  The paper is organized as follows. Section \ref{prelim}  consists of preliminaries (JSJ decompositions, automorphisms of a tree, trees of cylinders). Section \ref{relhyp} contains generalities about relatively hyperbolic groups. We point out that vertex groups of a splitting over relatively quasiconvex subgroups are relatively quasiconvex, and that the canonical JSJ decomposition $\Gcan$ has finitely generated edge groups. In Section \ref{gen} we study the structure of the automorphism group of a one-ended relatively hyperbolic group.
  Section \ref{modul} is devoted to the modular group. In Section \ref{sec_induced} we study extendable automorphisms; Theorem \ref{calhyp} is a special case of Theorem  \ref{caleg_general}.  Section \ref{outinfi} is devoted to the question of whether groups like $\Out (G;\calp,\mk\calh)$ and $\Out (G; \mk\calp,\mk\calh)$ are finite or infinite.  Section \ref{fixed} contains a proof of the Scott conjecture, and a partial generalization to relatively hyperbolic groups. Theorem \ref{thm_sci3} is proved in Section \ref{sec_Rips}.

Acknowledgments. We thank D.\ Groves and D.\ Calegari for asking stimulating questions, and  the organizers of the 2007 Geometric Group Theory program at MSRI where this research was started. We also thank the referee for helpful comments. 
This work was partly supported by 
ANR-07-BLAN-0141, ANR-2010-BLAN-116-01,
  ANR-06-JCJC-0099-01,
ANR-11-BS01-013-01.

\tableofcontents

\section{Preliminaries}\label{prelim}

 Unless mentioned otherwise, $G$ will always be a finitely generated group.
 
 Given a group $A$ and a subgroup $B$, we denote by $Z(A)$ the center of $A$, by $Z_A(B)$ the centralizer of $B$ in $A$, and  by $N_A(B)$ the normalizer of $B$ in $A$.  
 We  write $B^g$ for $gBg\m$.
 
A subgroup $B\inc A$ is \emph{malnormal} if $B^g\cap B$ is trivial for   all $g\notin B$, \emph{almost malnormal}  if $B^g\cap B$ is finite for all $g\notin B$.
 
A group is  \emph{virtually cyclic} if it has a cyclic subgroup of finite index; it may be finite or infinite. Its outer automorphism group is finite.

 A group $G$ is \emph{slender} if  $G$ and  all its subgroups are finitely generated. We say that $G$ is \emph{small}  if it contains no non-abelian free group (see \cite{BF_bounding} for a slightly weaker  definition).
 
  Let $\calp$ be a   family of subgroups $P_i$. 
   In most cases, $\calp$ will be a finite collection of finitely generated groups $\calp=\{P_1,\dots,P_n\}$.

The group $G$ is   \emph{finitely presented relative to $\calp=\{P_1,\dots, P_n\} $} 
if it is the quotient of $P_1*\dots* P_n*F $ by the normal closure of a finite subset, with $F$ a finitely generated free group. If $G$ is finitely presented relative to $\calp=\{P_1,\dots, P_n\} $, and if $g_1,\dots,g_n\in G$,
then $G$ is also finitely presented relative to $\{P_1^{g_1},\dots,P_n^{g_n}\}$.

Let $H$ be a finitely generated subgroup. If $G$ is    finitely presented relative to $\calp$,   then $G$ is    finitely presented relative to $\calp\cup\{H\}$; the converse is not true in general.

 \subsection{Relative automorphisms}\label{sec_relaut}
 
 Given $G$ and $\calp$, we denote by 
$\Aut (G;\calp)$ the subgroup of $\Aut(G)$ consisting of automorphisms mapping each $P_i$ to a conjugate, and by  $\Out (G;\calp)$ its image in $\Out(G)$. If $\calp=\{P_1,\dots, P_n\}$, we   use the notations $\Autu (G;P_1,\dots, P_n)$ and $\Out (G;P_1,\dots, P_n)$. We also write $\Out (G; \calp, \calq)$ for $\Out (G; {\calp\cup\calq})$.

We define
$\Out (G;\mk\calp)\inc\Outu (G;\calp)$ by restricting to automorphisms whose restriction to each $P_i$ agrees with conjugation by some element   $g_i$ of $G$ (we call them \emph{marked automorphisms}, 
or  \emph{automorphisms acting trivially on $\calp$}). An outer automorphism $\Phi$ belongs to $\Out  (G;\mk\calp)$ if and only if, for each $i$, it has a representative $\alpha_i\in \Aut(G)$ equal to the identity on $P_i$. 

The group $\Out (G;\mk\calp)$ is denoted    by PMCG in \cite{Lev_automorphisms}, by  $\Out_m(G;\calp)$ in \cite{DG2}, and is called a (generalized) McCool group in \cite{GL_McCool}.

Note that $\Out(G;\mk G)$ is trivial, and that $\Out (G;\mk\calp)$ has finite index in $\Out (G;\calp)$ if $\calp$ is   a finite collection of   finite groups  (more generally, of groups $P$ with $\Out(P)$ finite).

If  $\calh=\{H_1,\dots,H_{q}\}$ 
is another family of subgroups,
we define
$\Out (G;\calp,\mk\calh)=\Out(G;\calp)\cap\Out(G;\mk\calh)$. We allow $\calp$ or $\calh$ to be empty, 
 in which case we do not write it. 

 The groups defined above do not change if we replace each $P_i$ or $H_j$ by a conjugate, 
or if we add conjugates of the $P_i$'s to $\calp$ or  conjugates of the $H_j$'s to $\calh$.

\subsection{Splittings}

A \emph{splitting} of a group $G$ is an isomorphism between $G$ and the fundamental group of a graph of groups $\Gamma$. Equivalently, using Bass-Serre theory, we view a splitting of $G$ as an action of $G$ on a simplicial tree $T$, with $T/G=\Gamma$. This tree is well-defined up to equivariant isomorphism,  
and two splittings are considered equal if there is an equivariant isomorphism between the corresponding Bass-Serre trees.

The group $\Out(G)$ acts on the set of splittings of $G$  (by changing the isomorphism between $G$ and $\pi_1(\Gamma)$, or precomposing the action on $T$).

Trees will always be simplicial trees with an action of $G$ without inversion.  We usually assume that the splitting is  \emph{minimal} (there is no proper $G$-invariant subtree). Since $G$ is  assumed to be finitely generated, this implies that $\Gamma$ is  a finite graph. 

A splitting is \emph{trivial} if $G$ fixes a point in $T$ (minimality then implies that $T$ is a point).

A splitting is \emph{relative to $\calp$} if every $P_i$   is conjugate to a subgroup of a vertex group, or equivalently if $P_i$ is  \emph{elliptic}  (i.e.\ fixes a point) in $T$. If  $\calp=\{P_1,\dots,P_{n}\}$, we also say that the splitting is relative to $P_1,\dots, P_n$.

 The group $G$ \emph{splits  over a subgroup $A$} (relative to $\calp$) if there is a non-trivial 
minimal splitting (relative to $\calp$) such that 
 $A$  is  an edge group. The group is \emph{one-ended relative to $\calp$} if it does not split over a finite group relative to $\calp$. 

If $\Gamma$ is a graph of groups, we denote by $V$ its set of vertices, and by $E$ its set of oriented edges. The origin of an edge $e\in E$ is denoted by $o(e)$. A vertex $v$ or an edge $e$ carries a group $G_v$ or $G_e$, and there is an inclusion  $i_e:G_e\to G_{o(e)}$

\subsection{Trees  and deformation spaces \cite{For_deformation,GL2}}\label{defsp}

In this subsection we consider trees rather than graphs of groups. We   denote by $G_v$ or $G_e$ the stabilizer of a vertex or an edge.

We often restrict edge stabilizers of $T$ by requiring that they belong to a family $\cala$ of subgroups of $G$ which is stable under taking subgroups and under conjugation. We then say that $T$ is an \emph{$\cala$-tree}. For instance, $\cala$ may be the set of finite subgroups, of cyclic subgroups,  of abelian subgroups, of elementary subgroups of a relatively hyperbolic group (see Section \ref{relhyp}).  We then speak of  cyclic, abelian, elementary trees (or splittings).
 
Besides restricting edge stabilizers, we also  often restrict to trees $T$ \emph{relative to $\calp$}: every $P_i$ is elliptic in $T$. We then say that $T$ is an \emph{$(\cala,\calp)$-tree}.

A tree $T'$ is a \emph{collapse} of $T$ if it is obtained from $T$ by collapsing 
each edge in a certain $G$-invariant
collection 
to a point; conversely, we say that $T$ \emph{refines} $T'$. 
In terms of graphs of groups, one passes from $\Gamma=T/G$ to $\Gamma'=T'/G$ by collapsing edges;
  for each vertex $v'\in\Gamma'$, 
the vertex group $G_{v'}$ is the fundamental group of the graph of groups occuring as the preimage of $v'$ in $\Gamma$.

Given two trees $T$ and $T'$, we say that $T$ \emph{dominates} $T'$ if there is a $G$-equivariant map $f:T\to T'$, or equivalently if every subgroup which is elliptic in $T$ is also elliptic in $T'$.  In particular, $T$ dominates any collapse $T'$.  

Two trees  belong to the same \emph{deformation space} if they dominate each other. In other words, a deformation space $\cald$ is   the set of all trees having a given family of subgroups as their elliptic subgroups. We denote by $\cald(T)$ the deformation space  containing a tree $T$, and by $\Out(\cald)\inc\Out(G)$ the group of automorphisms leaving $\cald$ invariant. The set of $\cala$-trees contained in a deformation space is called a deformation space over $\cala$ (and usually denoted by $\cald$ also).

A tree is \emph{reduced} if $G_e\ne G_v,G_w$ whenever an edge $e$ has its endpoints $v,w$ in different $G$-orbits. Equivalently, no tree obtained from $T$ by collapsing the orbit of an edge belongs to the same deformation space as $T$. If $T$ is not reduced, one may collapse edges so as to obtain a reduced tree in the same deformation space.

Any two  reduced trees in a 
deformation space over finite groups may be joined by slide moves (see  \cite{For_deformation,GL2} for definitions). 
In particular, they have the same set of edge and vertex stabilizers.

\subsection{Induced structures} \label{ind}

\begin{dfn} [Incident edge groups $\Inc_v$]  \label{iv}
Given a vertex $v$ of a graph of groups $\Gamma$, we denote by $\Inc_v$ the collection of all subgroups $i_e(G_e)$  of $G_v$, for $e$   an  edge with origin $v$. We call $\Inc_v$ the set of \emph{incident edge groups.} We also use the notation $\Inc_{G_v}$.
\end{dfn}

Similarly, if $v$ is a vertex of a  (minimal) tree, 
there are finitely many $G_v$-orbits of incident edges,
and 
 $\Inc_v$ is the family of stabilizers of some representatives of these orbits.
This is a finite collection of subgroups of $G_v$,  each  well-defined up to conjugacy.

Any splitting $\Gamma_v$ of $G_v$ relative to $\Inc_v$ extends (non-uniquely) to a splitting $\Lambda$ of $G$, whose edges are those of $\Gamma_v$ together with those of $\Gamma$;    an edge $e$ of $\Gamma$ incident to $v$ is attached to a vertex of $\Gamma_v$ whose group contains $G_e$ (up to conjugacy). We call this \emph{refining}  $\Gamma $ at  $v$ using $\Gamma_v$. One recovers $\Gamma$  from $\Lambda$ by collapsing edges of 
$\Gamma_v$.

\begin{lem} \label{mar}  Consider subgroups $H\inc K\inc G$  such that, if $g\in G$ and   $H^g\inc K$, then $ g\in K$ 
(this holds in particular when $K$ is a vertex stabilizer of a tree $T$,   and $H$ is a subgroup which fixes no edge of $T$).
\begin{itemize*}
\item
If $H'\inc K$ is conjugate to $H$ in $G$, it is conjugate to $H$ in $K$. 
\item
If $\alpha\in \Aut(G)$ leaves $K$ invariant and 
 maps $H$ to $gHg\m$, then $g\in K$. \qed
 \end{itemize*}

\end{lem}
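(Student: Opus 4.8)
The plan is to notice that, once the stated hypothesis on $H\inc K$ is in force, both bullet points are immediate, so the only genuine content is the verification of the parenthetical claim about trees. Recall the hypothesis: for every $g\in G$, $H^g\inc K$ implies $g\in K$. I would first check this in the tree setting. Suppose $K=G_v$ is the stabilizer of a vertex $v$ of $T$ and that $H$ fixes no edge of $T$. If $g\in G$ satisfies $H^g\inc G_v$, then $H\inc g\m G_v g=G_{g\m v}$, so $H$ fixes the vertex $g\m v$; and since $H\inc K=G_v$, it also fixes $v$. A subgroup fixing two vertices of a tree fixes the arc joining them pointwise, hence fixes every edge on that arc. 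As $H$ fixes no edge, the arc is reduced to a point, whence $g\m v=v$ and $g\in G_v=K$. This confirms that the hypothesis holds in the indicated situation.

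Next I would dispatch the two assertions, each being a direct application of the hypothesis. For the first, if $H'\inc K$ is conjugate to $H$ in $G$, write $H'=H^g$ for some $g\in G$; then $H^g=H'\inc K$, so the hypothesis gives $g\in K$, and therefore $H'$ is conjugate to $H$ by the element $g\in K$, i.e.\ conjugate in $K$. For the second, suppose $\alpha\in\Aut(G)$ leaves $K$ invariant and $\alpha(H)=H^g$. Since $H\inc K$ and $\alpha(K)=K$, we have $\alpha(H)\inc K$, that is $H^g\inc K$; applying the hypothesis once more yields $g\in K$.

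The only step with any substance is the tree computation in the parenthetical, and even there the work reduces to the standard fact that a subgroup fixing two points of a simplicial tree fixes the geodesic between them; the rest is bookkeeping with the normalizing condition. I therefore expect no real obstacle, the main point being simply to record the two consequences of the hypothesis cleanly and to spell out why the hypothesis is automatic for a vertex stabilizer and an edge-free subgroup.
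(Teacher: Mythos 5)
Your proof is correct; the paper states this lemma with no proof at all (declaring it "trivial, but very useful"), and your argument — unwinding the hypothesis for the two bullets and checking the parenthetical via the standard fact that a subgroup fixing two vertices of a tree fixes the path (hence the edges) between them — is exactly the routine verification the authors intend. Note only that with the paper's convention $H^g=gHg^{-1}$ your conjugation bookkeeping is consistent throughout, so nothing needs adjusting.
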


This lemma is trivial, but very useful.
Given a vertex stabilizer $G_v$ of a tree $T$,
it  allows us to define a   family 
 $\calp_{ | G_v}$   as follows (like $\Inc_v$, it 
is a 
 finite set of subgroups of $G_v$,  each well-defined up to conjugacy).
\begin{dfn}[Induced structure  $\calp_{|G_v}$] \label{indu} 
Let $\calp=\{P_i\}$ be a collection of subgroups of $G$, and let $G_v$ be a vertex stabilizer in a
tree $T$ relative to $\calp$.
For each $i$ such that $P_i$  fixes a point in the orbit of $v$, but fixes no edge of $T$,  let  $\Tilde P_i\subset G_v$ be a conjugate of $P_i$.
When defined, $\Tilde P_i$ is unique up to conjugacy in $G_v$ by Lemma \ref{mar}.
We define $\calp_{|G_v}$ as this collection of subgroups $\Tilde P_i\subset G_v$;
 we define $\calp_{|G_v}$ similarly if $G_v$ is a vertex group of $\Gamma=T/G$. 
\end{dfn}

 \begin{rem} \label{tric}
  Given $v$ and $i$, one of the following  always holds:  $P_i$ fixes a vertex of $T$ not in the orbit of $v$, 
 or some conjugate of $P_i$ fixes $v$ and  an  edge incident to $v$, 
or $P_i$ is conjugate to a group in $\calp_{ | G_v}$.
\end{rem}

 \begin{rem} 
   In this definition, $\calp_{|G_v}$ depends not only on $\calp$ and $G_v$, but also 
on the incident edge groups of $G_v$.
In practice, we will not work with $\calp_{|G_v}$ alone, but with  $\calq_v=\calp_{|G_v}\cup\Inc_v$. This is  
the case  for instance in the following lemma.
 \end{rem}

\begin{lem} \label{extens}
If $\Gamma_v$ is a  splitting of $G_v$ relative to $\calq_v=\Inc_v\cup\calp_{ | G_v}$, 
 refining  $\Gamma$ at $v$  
 using   $\Gamma_v$   yields a   splitting $\Lambda$ of $G$ which is relative to $\calp$.  
\end{lem}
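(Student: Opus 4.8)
The plan is to pass to Bass--Serre trees and check that every $P_i$ is elliptic in $\Lambda$. Since $\Gamma_v$ is relative to $\calq_v=\Inc_v\cup\calp_{|G_v}$, it is in particular relative to $\Inc_v$, so the refining is already a well-defined splitting of $G$ (each incident edge group is elliptic in $\Gamma_v$, hence contained in a vertex group to which the corresponding edge can be reattached). Thus the only thing left to verify is relativeness to $\calp$, i.e.\ ellipticity of each $P_i$.

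I would write $S$ for the Bass--Serre tree of $\Lambda$. By construction $S$ refines $T$: collapsing the edges of $S$ coming from $\Gamma_v$ recovers $T$, yielding a $G$-equivariant map $\pi\colon S\to T$ whose only non-degenerate fibers lie over the orbit of $v$. Concretely, for a vertex $w$ of $T$ not in the orbit of $v$ the fiber $\pi^{-1}(w)$ is a single vertex with the same stabilizer $G_w$; the fiber over $v$ is a copy of the Bass--Serre tree of $\Gamma_v$, on which $G_v$ acts; and each edge $e$ of $T$ incident to $v$ lifts to an edge of $S$ with the same stabilizer $G_e$, now joining $\pi^{-1}(w)$ to a vertex of $\pi^{-1}(v)$ whose stabilizer contains a conjugate of $G_e$.

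Then I would fix $i$ and apply the trichotomy of Remark~\ref{tric}. If $P_i$ fixes a vertex $w$ of $T$ not in the orbit of $v$, then $P_i\subset G_w$ fixes the unique vertex $\pi^{-1}(w)$ of $S$, so $P_i$ is elliptic in $S$. If some conjugate of $P_i$ fixes $v$ together with an incident edge $e$, then after conjugating $P_i\subset G_e$; since $e$ lifts to an edge of $S$ with stabilizer $G_e$, this conjugate of $P_i$ fixes that edge and is elliptic, hence so is $P_i$. Finally, if $P_i$ is conjugate to some $\Tilde P_i\in\calp_{|G_v}$, then because $\Gamma_v$ is relative to $\calq_v\supseteq\calp_{|G_v}$ the group $\Tilde P_i$ fixes a vertex of the Bass--Serre tree of $\Gamma_v$, i.e.\ a vertex of $\pi^{-1}(v)\subset S$; thus $\Tilde P_i$, and hence $P_i$, is elliptic in $S$. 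As these three alternatives are exhaustive, every $P_i$ is elliptic in $\Lambda$, which is therefore relative to $\calp$.

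The argument is essentially bookkeeping, and I do not expect a genuine obstacle beyond keeping the trichotomy of Remark~\ref{tric} straight and matching each alternative with the correct location in $S$. The one point worth flagging is the complementary role of the two pieces of $\calq_v$: relativeness of $\Gamma_v$ to $\Inc_v$ is what makes the refining well-defined, while relativeness to $\calp_{|G_v}$ is exactly what is needed to dispose of the third case above.
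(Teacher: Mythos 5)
Your proof is correct and follows exactly the paper's argument: the refinement exists because $\Gamma_v$ is relative to $\Inc_v$, and ellipticity of each $P_i$ in $\Lambda$ is checked case by case via the trichotomy of Remark~\ref{tric}. The paper states this in two sentences; you have simply spelled out the Bass--Serre tree bookkeeping that those two sentences compress.
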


\begin{proof} The refinement is possible because $\Gamma_v$ is relative to $\Inc_v$. 
 Each $P_i$ is elliptic in  $\Lambda$ by Remark \ref{tric}.
\end{proof}

\subsection{JSJ decompositions \cite{GL3a}}\label{JSJ}

Fix  $\cala$ as in Subsection \ref{defsp}, and a (possibly empty) family  $\calp$
of subgroups. All trees considered here are $(\cala,\calp)$-trees. 

A subgroup $H$ is \emph{universally elliptic} if it is elliptic (fixes a point) in every tree. A tree is universally elliptic if its edge stabilizers are. 
A tree $T$ is a \emph{JSJ tree} over $\cala$ relative to $\calp$ if it is universally elliptic  and  dominates every universally elliptic tree    (see Section 4 of  \cite{GL3a}).  JSJ trees exist if $G$ is finitely presented relative to $\calp$.
The set of JSJ trees, if non-empty, is a deformation space  called the \emph{JSJ deformation space} over $\cala$ relative to $\calp$.
  When $\cala$ is the set of cyclic (abelian, elementary...)  subgroups, we refer to the cyclic (abelian, elementary...) JSJ.
 
When $\cala$ is the set of finite subgroups, and $\calp=\es$, 
the JSJ deformation space is the \emph{Stallings-Dunwoody  deformation space}, characterized by the property that  its trees have   vertex stabilizers with at most one end   (see Section 6 of  \cite{GL3a}).  
Because it is a deformation space over finite groups, all its reduced trees have the same edge and vertex stabilizers (see Subsection \ref{defsp}). 
This space exists if  and only if $G$ is accessible, in particular when $G$ is finitely presented  \cite{Dun_accessibility}.  
If $G$ is torsion-free, the Stallings-Dunwoody deformation space is
the \emph{Grushko deformation space}; edge stabilizers are trivial, vertex  stabilizers are freely indecomposable and non-cyclic.

More generally, the JSJ deformation space over finite subgroups relative to $\calp$ will be called the Stallings-Dunwoody deformation space relative to $\calp$. 
  We will also consider JSJ spaces over finite subgroups of cardinality bounded by some $k$; these exist whenever $G$ is finitely generated by Linnell's accessibility  \cite{Linnell}.

If $T $ is a tree (in particular, if it is a JSJ tree), a vertex stabilizer $G_v$ of $T $  not belonging to $\cala$ (or $v$ itself)  is \emph{rigid} if it is  universally elliptic.
Otherwise, $G_v$ (or $v$) is \emph{flexible}.
In many situations, flexible vertex stabilizers  $G_v$ of JSJ trees are  {quadratically hanging} subgroups   (see Section 7 of  \cite{GL3a}). 

\begin{dfn} [QH vertex] \label{dqh}
 A vertex stabilizer  $G_v$ (or $v$) is \emph{quadratically hanging},   or \emph{QH},
  (relative to $\calp$)  if 
there is a normal subgroup $F\normal G_v$ (called the \emph{fiber} of $G_v$)
such that $G_v/F$ is isomorphic to the fundamental group  $\pi_1(\Sigma)$ of a hyperbolic $2$-orbifold $\Sigma$ (usually with boundary); moreover, if $H\inc G_v$ is an incident edge stabilizer, or is the intersection of $G_v$ with a conjugate of a    group in $\calp$, then the image of $H$ 
 in $\pi_1(\Sigma)$ is   finite or contained in a  \emph{boundary subgroup}  
(a subgroup conjugate to the fundamental group of a boundary component).  
\end{dfn}

\begin{dfn}[Full boundary subgroups] 
\label{bv}  Let $G_v$ be a QH vertex stabilizer. For each boundary component of $\Sigma$, we select a    representative for the conjugacy class of its fundamental group in $\pi_1(\Sigma)$,
  and we consider  its full preimage in $G_v$.
This defines a finite family $\calb_v$ of subgroups of $G_v$. 
\end{dfn}
 If  $G_v$ is QH with finite fiber,   
 every infinite  incident edge stabilizer  
is virtually cyclic and (up to conjugacy in $G_v$) contained with finite index in a group of $\calb_v$. 
  If $G$ is one-ended relative to $\calp$, then   every incident edge stabilizer  is infinite, so 
  is contained in a group of $\calb_v$.

\begin{rem}\label{rem_UEQH}
 If $G_v$ is a \emph{flexible} QH vertex stabilizer
  with finite fiber, and $H\inc G_v$ is universally elliptic, 
then the image of $  H$  in $\pi_1(\Sigma)$ is finite or contained in a boundary subgroup (see Proposition 7.6 of \cite{GL3a}; this requires a technical assumption on $\cala$, which holds in all cases considered in the present paper).
  In particular, $H$ is virtually cyclic. 
\end{rem}

\subsection{The automorphism group of a tree   \cite{Lev_automorphisms}}\label{arb}

Let $T$ be a tree with a minimal  action of $G$.   We   assume that $T$ is not a line with $G$ acting by translations. 

We denote by $\Aut(T)\inc\Aut(G)$ the group 
of automorphisms $\alpha$  \emph{leaving $T$ invariant}: 
  there exists an  isomorphism  
$f_\alpha:T\ra T$ which is   $\alpha$-equivariant  in the sense that $f_\alpha(gx)=\alpha(g)f_\alpha(x)$ for $g\in G$ and $x\in T$.

Following \cite{Lev_automorphisms}, 
we describe
the image $\Out(T)$ of $\Aut(T)$ in $\Out(G)$   in terms of the graph of groups $\Gamma=T/G$.   Our assumptions on $T$ imply  that $\Gamma$ is minimal, and is not a mapping torus (as defined in \cite{Lev_automorphisms}).

The group $\Out(T)$ acts on   the finite graph $\Gamma=T/G$, 
 and we define $\Out^0(T)$ as the finite index subgroup acting trivially. 
We use the notations $\Out(T;\calp,\mk\calh)=\Out(T)\cap\Out(G;\calp,\mk\calh)$,
and $\Out^0(T;\calp,\mk\calh)=\Out^0(T)\cap\Out(G;\calp,\mk\calh)$   (see Section \ref{sec_relaut}).

\paragraph{Action on vertex groups.}

 If $v\in V$ is a vertex of $\Gamma$, there is a  natural map $\rho_v:\Out^0(T)\to  \Out(G _v)$ defined as follows.  Let $\Phi\in\Out^0(T)$. When $N_G(G_v) $ acts on $G_v$ by inner automorphisms, in particular when $G_v$ fixes a unique point in $T$ (in this case  $N_G(G_v)=G_v$), one defines $\rho_v(\Phi)$ simply by choosing any representative  $\alpha\in\Aut(T)$   of $\Phi$ leaving $G_v$ invariant and   considering its restriction to $G_v$. In general, one has to choose $\alpha$ more carefully:  one  fixes  a vertex $\tilde v$ of $T$ mapping to $v$   such that the stabilizer of $\Tilde v$ is $G_v$, one chooses $\alpha$ so that $f_\alpha$ fixes $\tilde v$, and 
$\rho_v(\Phi)$ is represented by  the restriction $\alpha_{|G_{v}}$. 

 If $e$ is an edge of $\Gamma$, one may define $\rho_e:\Out^0(T)\to  \Out(G _e)$ similarly.

 Let $\rho :\Out^0(T)\to \prod_{v\in V}\Out(G _v)$ be the product map. 
 As   pointed out 
 in    Subsection 2.3 of \cite{Lev_automorphisms}, the image $\rho (\Out^0(T))$ contains $\prod_{v\in V}\Out  (G_v;\mk\Inc_v)$ 
and is contained in $\prod_{v\in V}\Out (G_v;\Inc_v)$.  More precisely:
 
\begin{lem}\label{automind} Let $\calp,\calh$ be two  
families of  
subgroups of $G$.
  Let $T$ be a tree relative to $\calp\cup \calh$. 
Then  
  \begin{equation}\label{eq_incl}
\Out(G_v;\mk\Inc_v,\calp_{|G_v},\mk\calh_{|G_v})
\subset \rho_v (\Out^0(T;\calp,\mk \calh))
\subset  
\Out(G_v;\Inc_v,\calp_{|G_v},\mk\calh_{|G_v})  \end{equation}
for every $v\in V$, and 
 $$\prod_{v\in V} 
\Out(G_v;\mk\Inc_v,\calp_{|G_v},\mk\calh_{|G_v})
\subset \rho (\Out^0(T;\calp,\mk \calh))
\subset  \prod_{v\in V}
\Out(G_v;\Inc_v,\calp_{|G_v},\mk\calh_{|G_v}).$$

 If $\Out(G_e)$ is  finite for all edges   (resp.\  for all edges $e$ incident to $v$),    
all  inclusions (resp.\ all inclusions  in   (\ref{eq_incl})) have  images of finite index.
\end{lem}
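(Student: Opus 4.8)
The plan is to prove the two inclusions in (\ref{eq_incl}) for a fixed vertex $v$; the product statements then follow by running the same arguments simultaneously at all vertices, using the corresponding product inclusions of \cite{Lev_automorphisms}. Throughout I would fix, for a given $\Phi\in\Out^0(T)$, a representative $\alpha\in\Aut(T)$ with $f_\alpha$ fixing a chosen lift $\tilde v$ of $v$, so that $\alpha(G_v)=G_v$ and $\rho_v(\Phi)=[\alpha_{|G_v}]$. The non-relative inclusions $\Out(G_v;\mk\Inc_v)\subset\rho_v(\Out^0(T))\subset\Out(G_v;\Inc_v)$ are already recorded before the statement, so it only remains to track the two extra conditions coming from $\calp$ and $\mk\calh$. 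Here $\calp_{|G_v}$ and $\calh_{|G_v}$ are well defined because $T$ is relative to $\calp\cup\calh$, and, crucially, Lemma \ref{mar} applies to every $\widetilde P_i\in\calp_{|G_v}$ and $\widetilde H_j\in\calh_{|G_v}$ since by construction these fix no edge of $T$.

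For the right-hand inclusion, let $\Phi\in\Out^0(T;\calp,\mk\calh)$. Given $\widetilde P_i\in\calp_{|G_v}$, the image $\alpha(\widetilde P_i)$ lies in $\alpha(G_v)=G_v$ and, as $\alpha$ maps $P_i$ to a conjugate, is conjugate to $P_i$ and hence to $\widetilde P_i$ in $G$; the first part of Lemma \ref{mar} then makes it conjugate to $\widetilde P_i$ in $G_v$, so $\rho_v(\Phi)$ preserves $\calp_{|G_v}$. Given $\widetilde H_j\in\calh_{|G_v}$, the hypothesis $\Phi\in\Out(G;\mk\calh)$ furnishes a representative equal to the identity on $\widetilde H_j$, so $\alpha$ acts on $\widetilde H_j$ as conjugation by some $h\in G$, i.e. $\alpha(\widetilde H_j)=h\widetilde H_j h\m$; since $\alpha$ leaves $G_v$ invariant, the second part of Lemma \ref{mar} forces $h\in G_v$, whence $\rho_v(\Phi)$ acts trivially on $\widetilde H_j$. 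Together with $\rho_v(\Phi)\in\Out(G_v;\Inc_v)$ this yields $\rho_v(\Phi)\in\Out(G_v;\Inc_v,\calp_{|G_v},\mk\calh_{|G_v})$.

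For the left-hand inclusion, start from $\psi\in\Out(G_v;\mk\Inc_v,\calp_{|G_v},\mk\calh_{|G_v})$. Since $\psi$ acts trivially on $\Inc_v$, the construction of \cite{Lev_automorphisms} extends it by the identity to some $\Phi\in\Out^0(T)$ with $\rho_v(\Phi)=\psi$, represented by an $\alpha$ that is the identity on every fundamental-domain vertex group other than $G_v$ and on the incident edge groups. It remains to verify $\Phi\in\Out(G;\calp,\mk\calh)$. Given $P_i$, replace it by a conjugate fixing a lift $\tilde w$ in a fundamental domain (this does not affect whether $\Phi(P_i)$ is conjugate to $P_i$). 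If $w\neq v$ then $\alpha$ is the identity on $G_{\tilde w}\supset P_i$; if $w=v$ then by Remark \ref{tric} either $P_i$ is contained in a conjugate of an incident edge group, on which $\alpha$ is the identity, or $P_i$ is conjugate in $G_v$ to a member of $\calp_{|G_v}$, which $\psi$ sends to a conjugate. In every case $\Phi(P_i)$ is conjugate to $P_i$. The same case analysis, using that $\psi$ acts trivially on $\calh_{|G_v}$ and that these groups are unchanged if each $H_j$ is replaced by a conjugate, shows $\Phi$ acts trivially on each $H_j$. The product version is obtained by extending a given family $(\psi_v)_v$ simultaneously.

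Finally, for the finite-index assertions I would reduce to the non-relative statement of \cite{Lev_automorphisms}: when $\Out(G_e)$ is finite for the relevant edges, restriction to the incident edge groups realizes $\Out(G_v;\mk\Inc_v)$ as a finite-index subgroup of $\Out(G_v;\Inc_v)$, the obstruction being governed by the finite group $\prod_{e}\Out(i_e(G_e))\cong\prod_e\Out(G_e)$. Intersecting both groups with the fixed subgroup $\Out(G_v;\calp_{|G_v},\mk\calh_{|G_v})$ preserves finite index, so the two outer groups in (\ref{eq_incl}) differ by finite index; sandwiching $\rho_v(\Out^0(T;\calp,\mk\calh))$ between them shows every inclusion in (\ref{eq_incl}) has finite-index image, and likewise for the products. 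I expect the main subtlety to be the bookkeeping of the marked (``acting trivially'') conditions under restriction to $G_v$: the real content is that the conjugating elements witnessing triviality can be taken inside $G_v$, which is exactly what Lemma \ref{mar} supplies and what one should not take for granted.
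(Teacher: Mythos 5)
Your proof is correct and follows essentially the same route as the paper's: the non-relative inclusions from \cite{Lev_automorphisms}, Lemma \ref{mar} (with $K=G_{\tilde v}$ and the groups in $\calp_{|G_v}$, $\calh_{|G_v}$ fixing no edge) for the right-hand inclusions, Remark \ref{tric} for the left-hand ones, and Proposition 2.3 of \cite{Lev_automorphisms} plus intersection for the finite-index claims. The only cosmetic difference is that the extension ``by the identity'' should be described as acting by \emph{conjugation} on each $G_w$, $w\ne v$, and on edge groups (which suffices for your case analysis), rather than literally as the identity.
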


Recall that $\calp_{|G_v}$ was defined in Definition \ref{indu}.

\begin{proof}
The   inclusion $ \prod_{v\in V}\Out(G_v;\mk\Inc_v)\subset \rho (\Out^0(T))$ is proved in \cite{Lev_automorphisms} by extending any
  $\Phi_v\in \Out (G_v;\mk\Inc_v)$ 
``by the identity''  to get  
  $\Phi\in\Out^0(T)$,  with $\Phi_v=\rho_v(\Phi)$, acting as a conjugation on each edge group 
and on each $G_w$ for $w\ne v$.   The     left hand side inclusions  in the lemma
follow from Remark \ref{tric}.

The inclusion $\rho_v(\Out^0(T))\subset \Out(G_v;\Inc_v)$
follows from the fact  that, given an edge $e$ of $T$ containing the lift $\tilde v$ of  $v$ used to define $\rho_v$,
 any $\Phi\in \Out^0(T)$ has a representative  $\alpha$ such that   $f_\alpha$ fixes $e$; this representative induces an automorphism of $G_{\tilde v}$ leaving $G_e$ invariant. To prove the right hand side inclusions, apply Lemma \ref{mar} with $K=G_{\tilde v}$, recalling that groups in    
$\calh_{|G_v}$ or $\calp_{|G_v}$ fix a unique point in $T$.

 If  $\Out(G_e)$ is finite for all incident edge groups,
  $\Out(G_v;\mk\Inc_v)$ has finite index in $\Out(G_v;\Inc_v)$ (see Proposition 2.3 in \cite{Lev_automorphisms}).
Intersecting with $\Out(G_v;\calp_{|G_v},\mk\calh_{|G_v})$,
we get that $\Out(G_v;\mk\Inc_v,\calp_{|G_v},\mk\calh_{|G_v})$
 has finite index in
$ \Out(G_v;\Inc_v,\calp_{|G_v},\mk\calh_{|G_v})$.
This conludes the proof.
\end{proof}

 \begin{rem}\label{automind2} 
We can also consider automorphisms which do not leave $T$ invariant,  but 
 only leave some vertex stabilizer $G_v$ invariant. 
Assuming that $G_v$ equals its normalizer, there is  a natural map $\rho_v:\Out(G;G_v)\to\Out(G_v)$ and 
$$ 
\Out(G_v;\mk\Inc_v,\calp_{|G_v},\mk\calh_{|G_v})
\subset \rho_v (\Out(G;G_v,\calp,\mk \calh))
\subset  
\Out(G_v; \calp_{|G_v},\mk\calh_{|G_v}).$$

\end{rem}

\paragraph{Twists.}

  As in Subsection 2.5 of  \cite{Lev_automorphisms},
 we now consider the kernel of the  product map
$\rho :\Out^0(T)\to \prod_{v\in V}\Out(G _v)$. 
It consists of  automorphisms in $\Out^0(T)$ having, for each $v$,  
a representative in  $\Aut(T)$ whose restriction to  $G_v$ is the identity.
 If $T$ is relative to  $\calp$,  the group $\ker\rho$ is contained in 
$\Out(T;\mk\calp
)
$.

To study  $\ker\rho$, we need to introduce the \emph{group of twists} $\calt$ associated to $T$ or equivalently to $\Gamma$ (we write $\Tw(T)$ or $\Tw(\Gamma)$ if there is a risk of confusion). 

Let $e $ be 
a separating edge of $\Gamma$ with origin $v$ and endpoint $w$. Then $G=A*_{G_e}B$ with $G_v\inc A$ and $G_w\inc B$. Given $g\in Z_{G_v}(G_e)$, one defines the  
\emph{twist by $g$ around $e$ near $v$} as the (image in $\Out(G)$ of the) automorphism of $G$ equal to the identity on $B$ and to conjugation by $g$ on $A$. There is a similar definition   in the case of an HNN extension $G=A*_C=\grp{ A,t\mid tct\m=\phi(c),c
\in C}$: given $g\in Z_A(C)$,
  the twist by $g$ is the identity on $A$ and   sends $t$ to $tg$.

The group  $\calt$ is the subgroup of $\Out(G)$ generated by all twists. 
It is a quotient of $\prod_{e\in E}Z_{G_{o(e)}}(G_e)$ and is contained in $\ker\rho$. 
The following facts follow directly from Section 2 of \cite{Lev_automorphisms}.

\begin{lem}  \label{lem_virt_nobitwist}
\begin{enumerate}
\item
If every $\Out(G_e)$ is finite, then $\calt$ has finite index in $\ker \rho$.
\item Assume that every non-oriented
edge $e$ of $\Gamma$ has an endpoint $v$ such that $N_{G_v}(G_e)$ acts on $G_e$ by inner automorphisms (this holds in particular if $G_v$ is abelian, or if $G_e$ is malnormal in $G_v$, or if $G_e$ is infinite and almost malnormal  in $G_v$). Then $\calt=\ker\rho$. \qed
\end{enumerate}
\end{lem}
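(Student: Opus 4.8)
The plan is to compare $\calt$ and $\ker\rho$ edge by edge, following the analysis of the group of twists in Section 2 of \cite{Lev_automorphisms}. Since we already know that $\calt\subset\ker\rho$ and that $\calt$ is the image of the twist homomorphism $\prod_{e\in E}Z_{G_{o(e)}}(G_e)\to\Out(G)$, it suffices to understand the quotient $\ker\rho/\calt$. First I would fix $\Phi\in\ker\rho$ and, for each vertex $v$, choose a lift $\tilde v\in T$ with stabilizer $G_v$ together with a representative $\alpha_v\in\Aut(T)$ of $\Phi$ such that $f_{\alpha_v}$ fixes $\tilde v$ and $\alpha_v$ restricts to the identity on $G_v$; this is possible precisely because $\rho_v(\Phi)=1$, as in the discussion of $\rho_v$ above, and such a representative is unique up to conjugation by an element of $Z(G_v)$.

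The key step is to extract, for each non-oriented edge $e=\{v,w\}$ of $\Gamma$, a well-defined invariant in $\Out(G_e)$. Along a lift $\tilde e$ of $e$ the representatives $\alpha_v$ and $\alpha_w$ represent the same outer automorphism, so they differ by an inner automorphism $i_{g_e}$; because $\alpha_v$ and $\alpha_w$ fix $G_v$ and $G_w$ pointwise and leave $T$ invariant inducing the identity on $\Gamma$, the element $g_e$ must normalize $G_e$ (the kind of control provided by Lemma \ref{mar}). Its class modulo $G_e\cdot Z_{G_v}(G_e)$ is then a well-defined element $\theta_e(\Phi)$ of the subgroup $\Delta_e:=\Im\big(N_{G_v}(G_e)\to\Out(G_e)\big)$, and one checks that the images of $N_{G_v}(G_e)$ and of $N_{G_w}(G_e)$ in $\Out(G_e)$ coincide, so $\Delta_e$ does not depend on the chosen side. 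Collecting these gives a homomorphism $\theta=(\theta_e):\ker\rho\to\prod_e\Delta_e$ whose kernel is exactly $\calt$: if every $\theta_e(\Phi)$ is trivial then each $g_e$ lies in $G_e\cdot Z_{G_v}(G_e)$, so $\Phi$ is a product of twists, modulo the indeterminacies coming from the centers $Z(G_v)$ and $Z(G_e)$, which are themselves absorbed into $\calt$. This is precisely the content of Section 2 of \cite{Lev_automorphisms}.

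Granting this description, both assertions are immediate. For (1), each $\Delta_e$ is a subgroup of $\Out(G_e)$, hence finite as soon as $\Out(G_e)$ is finite; then $\prod_e\Delta_e$ is finite and $\calt=\ker\theta$ has finite index in $\ker\rho$. For (2), it suffices to show that $\Delta_e$ is trivial for every $e$. If some endpoint $v$ of $e$ has $N_{G_v}(G_e)$ acting on $G_e$ by inner automorphisms, then the image of $N_{G_v}(G_e)$ in $\Out(G_e)$ is trivial, and since the two sides give the same subgroup $\Delta_e$ this forces $\Delta_e=1$; hence $\theta$ is trivial and $\calt=\ker\rho$. The three listed sufficient conditions are verified directly: if $G_v$ is abelian then every $n\in N_{G_v}(G_e)\subset G_v$ centralizes $G_e$, so acts as the identity; if $G_e$ is malnormal in $G_v$, or infinite and almost malnormal in $G_v$, then $G_e^n=G_e$ is incompatible with $G_e^n\cap G_e$ being trivial (resp.\ finite) unless $n\in G_e$, whence $N_{G_v}(G_e)=G_e$ acts by inner automorphisms.

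The main obstacle is the identification $\ker\theta=\calt$, that is, showing that the vanishing of all the edge invariants really forces $\Phi$ to be a product of twists. This requires careful bookkeeping of the chosen lifts and representatives, of the kernel of the twist homomorphism (the bitwists arising from $Z(G_e)$ and the central elements $Z(G_v)$), and of the modifications needed for loop (HNN) edges, where $G_e$ has two embeddings in $G_v$ and the two-sided comparison must be adapted. Rather than reproduce this, I would cite Section 2 of \cite{Lev_automorphisms} for the structural statement and limit the new work to the edge-wise normalizer computations above, which is where the hypotheses of (1) and (2) enter.
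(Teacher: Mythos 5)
Your high-level plan --- reduce to an edge-by-edge invariant valued in subgroups of $\Out(G_e)$, show its kernel is exactly $\calt$, and then let the hypotheses of (1) and (2) act on those subgroups --- is indeed how this lemma is established, and it matches the paper, which gives no proof at all beyond the citation to Section 2 of \cite{Lev_automorphisms}. However, the invariant you actually construct is the wrong one, and the key identity $\ker\theta=\calt$ fails for it. Test your definition on an amalgam $G=A*_{C}B$: a representative $\alpha$ of $\Phi\in\ker\rho$ preserving $A$, $B$ and $C$ acts on $A$ as conjugation by some $a\in N_A(C)$ and on $B$ as conjugation by some $b\in N_B(C)$, and these two conjugations restrict to the \emph{same} automorphism of $C$ (both equal $\alpha|_C$). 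Your one-sided representatives are then $\alpha_v=\mathrm{conj}_{a^{-1}}\circ\alpha$ and $\alpha_w=\mathrm{conj}_{b^{-1}}\circ\alpha$, so the comparison element is $g_e=a^{-1}b$, which \emph{centralizes} $C$. Hence $g_e$ induces the trivial automorphism of $G_e$ for every $\Phi\in\ker\rho$: your $\theta$ is identically trivial, $\ker\theta=\ker\rho$, and the asserted identity $\ker\theta=\calt$ would prove $\ker\rho=\calt$ unconditionally. That is false in general (for the amalgam of two infinite dihedral groups over their common cyclic subgroup $\bbZ$, the automorphism inverting the edge group is a bitwist in $\ker\rho$ but not a twist), which is precisely why the lemma needs its hypotheses.

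The invariant that works is not the comparison element between the two sides but the common outer action on the edge group: $\theta_e(\Phi):=[\mathrm{conj}_a|_C]=[\mathrm{conj}_b|_C]\in\Out(C)$, with $\alpha$, $a$, $b$ as above. This is well defined (changing $\alpha$ while keeping it edge-preserving changes $a$ by an element of $C\cdot Z(A)$, which does not affect the outer class), and it lies in $\Im\bigl(N_{G_v}(G_e)\to\Out(G_e)\bigr)\cap\Im\bigl(N_{G_w}(G_e)\to\Out(G_e)\bigr)$. Note that these two images need \emph{not} coincide, contrary to what you claim (take $C=\bbZ$ malnormal in $B$ but inverted by an element of $A$); only their intersection is relevant, and that suffices. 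The vanishing of this invariant does characterize $\calt$: if $\mathrm{conj}_a|_C=\mathrm{conj}_c|_C$ with $c\in C$, then $c^{-1}a\in Z_A(C)$ and $c^{-1}b\in Z_B(C)$, and composing $\alpha$ with conjugation by $c^{-1}$ exhibits $\Phi$ as a product of twists around $e$. With this correction (and the usual adaptation at HNN edges), your deductions of (1) and (2) go through as written, including your verification of the three sufficient conditions in (2), which is correct.
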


The kernel of the epimorphism from $\prod_{e\in E}Z_{G_{o(e)}}(G_e)$ to $\calt$ is the image of a natural map $$j:\prod_{v\in V}Z(G_v)\times\prod_{e\in\cale}Z(G_e)\to\prod_{e\in E}Z_{G_{o(e)}}(G_e)$$
where $\cale$ is the set of non-oriented edges of $\Gamma$   (see Proposition 3.1 of  \cite{Lev_automorphisms}). The image of an element of $Z(G_v)$ is called a \emph{vertex relation} at $v$, the image of an element of $Z(G_e)$ is   an \emph{edge   relation}.

 For instance, if $\Gamma$ is a  non-trivial amalgam $G=A*_C B$, then $\Tw$ is the image of the map $p:Z_A(C)\times Z_B(C)\to\Out (G)$ sending $(a,b)$ to the class of the automorphism acting on $A$ as conjugation by $a$ and on $B$ as conjugation by $b$. The kernel of $p$ is generated by the elements      $(a,1)$ with $a\in Z(A)$ and $(1,b)$ with $b\in Z(B)$ (vertex relations), together with the elements
  $(c,c)$ with $c\in Z(C)$ (edge relations). 

\begin{lem}\label{lem_twist0}
Let $e$ be an edge of $\Gamma$ with origin $v$. If $Z(G_e)$   and $Z(G_v)$ are finite, but
  $Z_{G_v}(G_e)$ is  infinite, 
then  the image of  $Z_{G_v}(G_e)$ in $\Tw $  is  infinite. 
In particular, $\Tw $ is infinite. 
\end{lem}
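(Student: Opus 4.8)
The plan is to exploit the explicit presentation of the group of twists recalled just above the statement. There is a surjection $q:\prod_{e'\in E}Z_{G_{o(e')}}(G_{e'})\onto \Tw$ whose kernel is $\Im(j)$, and the image of $Z_{G_v}(G_e)$ in $\Tw$ is precisely $q(F_e)$, where $F_e=Z_{G_{o(e)}}(G_e)=Z_{G_v}(G_e)$ is the factor of the product indexed by the oriented edge $e$ (here $o(e)=v$). Viewing the product as a direct product and $q$ as a homomorphism, one has $q(F_e)\cong F_e/(F_e\cap\Im(j))$. Since $F_e=Z_{G_v}(G_e)$ is infinite by hypothesis, it suffices to prove that $F_e\cap\Im(j)$ is finite; the infiniteness of $\Tw$ then follows from $\Tw\supset q(F_e)$.

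Next I would compute $F_e\cap\Im(j)$ using only the $e$-coordinate of the relations. By definition this intersection consists of those $x\in Z_{G_v}(G_e)$ such that the tuple carrying $x$ in coordinate $e$ and trivial elsewhere lies in $\Im(j)$. For such an $x$ there exist $z_u\in Z(G_u)$ for $u\in V$ and $w_{e'}\in Z(G_{e'})$ for $e'\in\cale$ whose image under $j$ is this tuple. Reading off the $e$-coordinate, the only vertex relation contributing to it is the one at $o(e)=v$, and the only edge relation contributing is the one at the underlying non-oriented edge of $e$; hence $x=z_v\cdot i_e(w_e)$ with $z_v\in Z(G_v)$ and $w_e\in Z(G_e)$. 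This already gives $F_e\cap\Im(j)\subset Z(G_v)\cdot i_e(Z(G_e))$, with no need to understand the intersection precisely or to inspect the remaining coordinates.

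Finally I would observe that $Z(G_v)\cdot i_e(Z(G_e))$ is finite: both $Z(G_v)$ and $Z(G_e)$ are finite by hypothesis, and since $z_v$ is central in $G_v$ it commutes with $i_e(w_e)\in G_v$, so this product set is a subgroup of order at most $|Z(G_v)|\cdot|Z(G_e)|$. Therefore $F_e\cap\Im(j)$ is finite, $q(F_e)\cong F_e/(F_e\cap\Im(j))$ is infinite, and $\Tw$ is infinite.

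The main obstacle — and the only step needing genuine care — is the assertion that the $e$-coordinate of an element of $\Im(j)$ is exactly $z_v\cdot i_e(w_e)$ with no further terms. This relies on the explicit formula for $j$ from Proposition~3.1 of \cite{Lev_automorphisms}, namely that each coordinate of $j$ is the product of a single vertex contribution (from the origin vertex) and a single edge contribution (from the underlying edge). I would therefore add a short verification that this description, and hence the displayed identity for the $e$-coordinate, remains valid in the degenerate cases where $e$ is a loop or a non-separating edge, so that both orientations sharing a vertex or an underlying edge introduces no extra contribution to coordinate $e$; in every case the argument only uses that a single vertex term $z_v$ and a single edge term $i_e(w_e)$ appear.
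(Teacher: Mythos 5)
Your proof is correct and follows essentially the same route as the paper: the paper's proof consists of citing \cite[Lemma 3.2]{Lev_GBS} for the fact that the kernel of the natural map $Z_{G_v}(G_e)\to\Tw$ is contained in $\langle Z(G_v),\, i_e(Z(G_e))\rangle$, and then concluding exactly as you do, since a product of two commuting finite subgroups is a finite subgroup. The only difference is that you establish this containment directly, by reading off the $e$-coordinate of the vertex and edge relations in the presentation of $\Tw$, rather than invoking the citation — a legitimate and self-contained unpacking of the same argument.
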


Note that $Z_{G_v}(G_e)$ is infinite if $N_{G_v}(G_e)$ is infinite and $G_e$ is finite.

\begin{proof}
It is pointed out in \cite[Lemma 3.2]{Lev_GBS} that  the image of  $Z_{G_v}(G_e)$ in $\Tw$ maps onto the quotient $Z_{G_v}(G_e)/\langle Z(G_v), Z(G_e)\rangle$. Since $Z(G_v)$ and $Z(G_e)$ are commuting finite subgroups, this quotient is infinite.
\end{proof}

Let $\Gamma$ be a graph of groups with fundamental group $G$, and $\Gamma_0\subset\Gamma$ a connected subgraph. We view $\Gamma_0$ as a graph of groups, with fundamental group $G_0\inc G$ and     associated group of twists $\Tw(\Gamma_0)\inc\Out(G_0)$.

\begin{lem}\label{lem_extension_twist}
If $\Tw (\Gamma_0)$ is infinite, then so is $\Tw(\Gamma)$.
\end{lem}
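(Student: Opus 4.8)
The plan is to collapse $\Gamma_0$ and recover $\Tw(\Gamma_0)$ as the image, under restriction to $G_0$, of twists that genuinely live in $\Tw(\Gamma)$. Let $\Gamma'$ be the graph of groups obtained from $\Gamma$ by collapsing the connected subgraph $\Gamma_0$ to a single vertex $v_0$. By the refinement description recalled in Subsections \ref{defsp} and \ref{ind}, the vertex group $G_{v_0}$ is exactly $G_0=\pi_1(\Gamma_0)$, and $\Gamma$ is obtained from $\Gamma'$ by refining at $v_0$ using $\Gamma_0$. Writing $T,T'$ for the Bass--Serre trees of $\Gamma,\Gamma'$ and fixing a lift $\tilde v_0\in T'$ with stabilizer $G_0$, its preimage $Y\subset T$ under the collapse $T\to T'$ is a $G_0$-invariant subtree realizing the splitting $\Gamma_0$ of $G_0$.

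First I would introduce the subgroup $\Tw_0\inc\Tw(\Gamma)$ generated by the twists of $\Gamma$ around the edges of $\Gamma_0$. For $e\in E(\Gamma_0)$ the twisting element $g$ lies in $Z_{G_{o(e)}}(G_e)$ with $o(e)$ a vertex of $\Gamma_0$, hence $g\in G_0$, and a lift of $e$ may be chosen inside $Y$. Thus the corresponding twist is ``supported inside the blob $Y$'': it only rearranges edges of $Y$ and acts by conjugation on the external edge groups at $v_0$, so it represents an element of $\Out^0(T')$ fixing $\tilde v_0$. Consequently the restriction map $\rho_{v_0}\colon\Out^0(T')\to\Out(G_0)$ of Subsection \ref{arb} is defined on all of $\Tw_0$.

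The heart of the argument is the identification $\rho_{v_0}(\theta^g_e)=\tau^g_e$, where $\theta^g_e\in\Tw_0$ is the twist of $\Gamma$ around $e$ by $g$ and $\tau^g_e\in\Tw(\Gamma_0)$ is the twist of $\Gamma_0$ around $e$ by the same $g$. Granting this, $\rho_{v_0}(\Tw_0)$ contains every generator of $\Tw(\Gamma_0)$, so $\rho_{v_0}(\Tw_0)=\Tw(\Gamma_0)$ is infinite; since a group surjecting onto an infinite group is infinite, $\Tw_0$, and \emph{a fortiori} $\Tw(\Gamma)$, is infinite.

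The main obstacle is precisely this identification, because the defining recipe for a twist differs according to whether $e$ is separating (amalgam case) or not (HNN case), and an edge separating in $\Gamma_0$ may become non-separating in $\Gamma$. The way around this is to use the intrinsic, tree-level description of a twist: it is determined by the orbit of the chosen lift $\tilde e$ and the element $g$, and is unchanged under collapsing the remaining edge-orbits. Since $\tilde e\subset Y$ and $g\in G_0$, the automorphism $\theta^g_e$ restricted to $G_0$ acts on the invariant subtree $Y$ by exactly the local gluing data defining the twist of the $G_0$-tree $Y$ around $\tilde e$ by $g$, namely $\tau^g_e$; this holds up to an inner automorphism of $G_0$, which is all that is required in $\Out(G_0)$. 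I expect the careful verification that $\theta^g_e$ fixes $Y$ setwise and matches $\tau^g_e$ there, uniformly in the two cases, to be the only delicate point.
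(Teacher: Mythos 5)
Your argument is correct, but it is genuinely different from the paper's, which is a two-line algebraic argument. The paper uses the presentation of the group of twists recalled in Subsection \ref{arb}: $\Tw(\Gamma)$ is the quotient of $\prod_{e\in E}Z_{G_{o(e)}}(G_e)$ by the vertex and edge relations, and the coordinate projection onto $\prod_{e\in E_0}Z_{G_{o(e)}}(G_e)$ visibly sends relations of $\Gamma$ to relations of $\Gamma_0$ (relations based at vertices and edges outside $\Gamma_0$ simply die), so it descends to an epimorphism $\Tw(\Gamma)\twoheadrightarrow\Tw(\Gamma_0)$ defined on all of $\Tw(\Gamma)$. In substance this is the \emph{same} homomorphism you construct: your $\rho_{v_0}$ kills twists around edges outside $\Gamma_0$ (they become twists of $\Gamma'$, hence act by conjugation on $G_{v_0}$) and sends $\theta^g_e$ to $\tau^g_e$ for $e\subset\Gamma_0$. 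The difference is how one certifies it. The presentation-based route makes the verification trivial precisely because the generators-and-relations description treats separating and non-separating edges uniformly; this is exactly the uniformity problem you flag as your delicate point, and which in your approach forces a case analysis (amalgam, HNN, and the mixed case where an edge separating in $\Gamma_0$ becomes non-separating in $\Gamma$, where the restricted twist only agrees with $\tau^g_e$ up to an inner automorphism of $G_0$). Your identification does check out in all cases, and your logic (surjection of the subgroup $\Tw_0\le\Tw(\Gamma)$ onto the infinite group $\Tw(\Gamma_0)$) is sound; what the geometric route buys is a conceptual interpretation of the epimorphism as a restriction map, at the cost of the case-by-case computation and of implicitly invoking the standing hypotheses of Subsection \ref{arb} (minimality, $T'$ not a line or mapping torus) needed for $\rho_{v_0}$ to be defined, which the paper's algebraic proof never touches.
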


\begin{proof}  
Let $E_0$ be the set of oriented edges of $\Gamma_0$. The projection from $\prod_{e\in E}Z_{G_{o(e)}}(G_e)$ to $\prod_{e\in E_0}Z_{G_{o(e)}}(G_e)$ obtained by keeping only the factors  with $e\inc \Gamma_0$ is compatible with the vertex and edge relations, so induces 
an epimorphism from $\Tw (\Gamma)$ to $\Tw(\Gamma_0)$.
\end{proof}

\begin{lem}\label{lem_collapse_twist} 
If  $\Gamma$ is a graph of groups with $\Tw (\Gamma)$ infinite, there is an edge $e$ such that the graph of groups $\Gamma_e$ obtained by collapsing every edge except $e$ has  $\Tw (\Gamma_e)$ infinite. 
\end{lem}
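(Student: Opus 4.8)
The plan is to reduce the infiniteness of $\Tw(\Gamma)$ to a single factor of its product presentation, and then to recognise that factor inside the twist group of the corresponding one-edge collapse.

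First I would exploit that $\Tw(\Gamma)$ is a quotient of the product $P=\prod_{e\in E}Z_{G_{o(e)}}(G_e)$ over the \emph{finite} set $E$ of oriented edges of $\Gamma$. Write $B_e\inc\Tw(\Gamma)$ for the image of the factor $Z_{G_{o(e)}}(G_e)$. Surjectivity of $P\to\Tw(\Gamma)$ shows that every element of $\Tw(\Gamma)$ is a product $\prod_{e\in E}b_e$ with $b_e\in B_e$; hence $\Tw(\Gamma)$ is the set-product $B_{e_1}\cdots B_{e_N}$, so $|\Tw(\Gamma)|\le\prod_{e}|B_e|$. Since $E$ is finite, infiniteness of $\Tw(\Gamma)$ forces at least one $B_e$ to be infinite. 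I fix such an oriented edge $e$ (and work with its underlying geometric edge when collapsing).

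Next I would identify $B_e$ with a subgroup of $\Tw(\Gamma_e)$. The crucial observation is that the twist by $g\in Z_{G_{o(e)}}(G_e)$ around $e$ is defined only through the induced one-edge splitting $G=A*_{G_e}B$ (or its HNN analogue), where $A,B$ are the fundamental groups of the two sides of $e$ in $\Gamma$; but this one-edge splitting \emph{is} $\Gamma_e$, with $A=\hat G_{o(e)}$ and $B$ the collapsed vertex group on the other side. Thus the automorphism ``identity on $B$, conjugation by $g$ on $A$'' is literally the same element of $\Out(G)$ whether read off $\Gamma$ or off $\Gamma_e$. Since $G_{o(e)}\inc\hat G_{o(e)}$ gives $Z_{G_{o(e)}}(G_e)\inc Z_{\hat G_{o(e)}}(G_e)$, these $g$ are admissible twisting elements for $\Gamma_e$, so $B_e\inc\Tw(\Gamma_e)$. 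As $B_e$ is infinite, so is $\Tw(\Gamma_e)$, which proves the lemma.

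I expect the only genuine point needing care to be the identification in the second step: one must check that the twist around a fixed edge depends on $\Gamma$ solely through the decomposition induced across that edge, so that collapsing the remaining edges leaves each individual twist unchanged as an outer automorphism of $G$. Everything else is the elementary cardinality bound for a finite product of subsets. One could instead phrase the second step using the projection epimorphism of Lemma \ref{lem_extension_twist} applied to the subgraph carrying $e$, but the direct identification of twists is cleaner and avoids tracking vertex and edge relations.
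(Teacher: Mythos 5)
Your proof is correct and follows essentially the same route as the paper's: since the edge set is finite, some factor $Z_{G_{o(e)}}(G_e)$ must have infinite image in $\Out(G)$, and twists of $\Gamma$ around $e$ are literally twists of the one-edge collapse $\Gamma_e$ because they only depend on the induced one-edge splitting. The paper's argument is a two-sentence version of exactly this, so no further comparison is needed.
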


\begin{proof}  
There is an edge $e$ such that $Z_{G_{o(e)}}(G_e)$ has infinite image in $\Out(G)$. Twists of $\Gamma$ around $e$ are also twists of $\Gamma_e$.
\end{proof}

\subsection{Trees of cylinders  \cite{GL4}}\label{defcyl}

We   recall   some basic properties of the tree of cylinders (see    Section 4 of \cite{GL4} for details).
Besides $\cala$ (and possibly $\calp$), 
we have to fix  
 a conjugacy-invariant subfamily $\cale\inc\cala$ and an admissible equivalence relation $\sim$ on $\cale$. Rather than giving  a general definition, we describe the examples that will be used in this paper   (with $\cala$ consisting of all finite, elementary, or abelian subgroups respectively):

 \begin{enumerate}
 \item $\cale$ consists of all finite subgroups of a fixed order $k$, and $\sim$ is equality.
 \item $G$ is relatively hyperbolic (see Section \ref{relhyp}), $\cale$ consists of all infinite elementary subgroups (parabolic or loxodromic), and $\sim$ is co-elementarity: $A\sim B$ if and only if $\langle A,B\rangle$ is elementary.
 \item $G$ is a torsion-free CSA group, $\cale$ consists of all infinite abelian subgroups, and $\sim$ is commutation: $A\sim B$ if and only if $\langle A,B\rangle$ is abelian (recall that $G$ is CSA if centralizers of non-trivial elements are abelian and malnormal).
 \end{enumerate}

 Let $T$ be a tree with edge stabilizers in $\cale$.  We declare two (non-oriented) edges  $e,f$ to be equivalent if $G_e\sim G_f$. 
The union of all edges in  an equivalence class is a subtree $Y$, called a  \emph{cylinder} of $T$. Two distinct cylinders meet in at most one point.
The  \emph{tree of cylinders} $T_c$ of $T$ is the bipartite tree such that   $V_0(T_c)$ is the set of vertices $x$ of $T$ which belong to at least two cylinders, 
$V_1(T_c)$ is the set  of cylinders $Y$ of $T$, and there is   an edge $\varepsilon=(x,Y)$ between $x$ and $Y$ in $T_c$ if and only if $x\in Y$. In other words, one obtains $T_c$ from $T$ by replacing each cylinder $Y$ by the cone on its boundary (defined as the set of vertices of $Y$ belonging to some  other cylinder).
  Note that $T_c$  may be trivial even if $T$ is not.

 The tree $T_c$  is dominated by $T$ (in particular, it is relative to $\calp$ if $T$ is). It  only depends on the deformation space $\cald$ containing $T$ (we sometimes say that it is the tree of cylinders of $\cald$).  In particular, $T_c$ is  invariant under   any automorphism of $G$ leaving $\cald$ invariant. 
 
The stabilizer of a vertex $x\in V_0(T_c)$ is the stabilizer of $x$, viewed as a vertex of $T$. 
The stabilizer $G_Y$ of a vertex $Y\in V_1(T_c)$ is the stabilizer $G_\calc$ of the equivalence class 
$\calc\in\cale/\sim$ containing   stabilizers of edges in $Y$,  for the action of $G$ on $\cale$ by conjugation   (see Subsection 5.1 of \cite{GL4}). It is the normalizer of a finite subgroup in case 1, a maximal elementary (resp.\ abelian) subgroup in case 2 (resp.\ 3). Note that $A\inc G_\calc$ if 
  $A\in\cale$ and $\calc$ is its equivalence class.

The stabilizer of an edge $\eps=(x,Y)$ of $T_c$ is  $G_\varepsilon=G_x\cap G_Y$; it is elliptic in $T$. In cases 2 and 3, $G_\varepsilon$ belongs to $\cala$. 
 But, in case 1, it may happen that  edge stabilizers  of $T_c$ are not in $\cala$, so
we also consider  
 the \emph{collapsed tree of cylinders} $T_c^*$ 
obtained from $T_c$ by collapsing   each edge  whose stabilizer does not belong to $\cala$ (see Subsection 5.2 of \cite{GL4}). 
It is an $(\cala,\calp)$-tree  if $T$ is, and $(T_c^*)_c^*=T_c^*$.

\section{Relatively hyperbolic groups}\label{relhyp}

In this  section we assume that $G$ is hyperbolic relative to a finite family  $\calp=\{P_1,\dots,P_n\}$ of finitely generated subgroups;  we say that $\calp$ is the \emph{parabolic structure}. 

The group $G$   is finitely generated. It is not necessarily finitely presented, but it is finitely presented relative to $\calp$   \cite{Osin_relatively},  so JSJ decompositions relative to $\calp$ exist. In particular,  there is a  deformation space of relative Stallings-Dunwoody decompositions (see Subsection \ref{JSJ}).

\subsection{Generalities}\label{gene}
We refer to \cite{Hruska_quasiconvexity} for equivalent definitions of relative hyperbolicity. In particular, $G$ acts properly discontinuously on a proper  geodesic $\delta$-hyperbolic space $X$ 
(which may be taken to be a graph \cite{GroMan_dehn}), the action is cocompact in the complement of a $G$-invariant union $\calb$ of disjoint horoballs, and the $P_i$'s are representatives of conjugacy classes of stabilizers of horoballs.  
  Any horoball $B\in\calb$ has a unique point at infinity   $\xi$, and the stabilizer of $\xi$   (for the action of $G$ on $\partial X$)   coincides with the stabilizer of $B$.

  For each constant $M>0$, one can change the system of horoballs so that any two distinct horoballs are at distance at least $M$.
Indeed, for each horoball $B$ with stabilizer $P$ defined by a horofunction $h$, the function $h'(x)=\sup_{g\in P} h(gx)$
is another (well-defined) horofunction which is $P$-equivariant; then $B'=h'{}\m([R,\infty))$ is a new $P$-invariant horoball such that   $d(B',X\setminus B)\geq M$ for $R$ large enough. 
Doing this for a chosen horoball in each orbit, and extending by equivariance, one gets a system of horoballs
at distance at least $M$ from each other.

A subgroup of $G$ is \emph{parabolic} if it is contained in a conjugate of some $P_i$, \emph{loxodromic}  if it is infinite, virtually cyclic, and not parabolic,  \emph{elementary} if it is parabolic  or virtually cyclic (finite  or loxodromic).   Any small subgroup is elementary.
The group $G$ itself is elementary if it is virtually cyclic or equal to a $P_i$.  We say that $A$ is an elementary subgroup of $B$ if it is elementary and contained in $B$.

 One may remove any virtually cyclic  
  subgroup  from $\calp$, without destroying relative hyperbolicity (see e.g.\  \cite[Cor 1.14]{DrSa_tree-graded}). Conversely, one may add to $\calp$    a finite subgroup or a maximal loxodromic  subgroup (see e.g.\ \cite{Osin_elementary}). These operations  do  not change the set of elementary (or relatively quasiconvex, as defined below)  subgroups, and it is sometimes convenient  (as in \cite{Hruska_quasiconvexity}) to assume that every $P_i$ is infinite.   Any infinite $P_i$ is a maximal elementary subgroup.

The following lemma is folklore,  but we have not found it in the literature.

 \begin{lem}\label{borne} 
 Given    a relatively hyperbolic group $G$, there exists a number  $M$ such that any 
  elementary subgroup $H\inc G$ of cardinality $>M$ is contained in a unique maximal elementary subgroup $E(H)$. There are finitely many conjugacy classes of non-parabolic finite subgroups.  
  \end{lem}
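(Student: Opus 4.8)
The plan is to work inside the proper geodesic $\delta$-hyperbolic space $X$ on which $G$ acts properly discontinuously, cocompactly outside the $G$-invariant union of horoballs $\calb$. As noted before the statement, I may shrink the horoballs so that any two are at distance at least some large constant $M_0$; write $Y=X\setminus\calb$ for the thick part, on which the action is properly discontinuous and cocompact.

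First I would settle the assertion about finite subgroups, which will also produce half of the bound $M$. A finite subgroup $F\inc G$ has a bounded orbit, hence a quasi-center $c\in X$ that is moved a bounded amount (depending only on $\delta$) by every element of $F$. If $c$ lies in $Y$, then cocompactness and proper discontinuity of the action on $Y$ show that, up to conjugacy, $F$ is one of finitely many finite subgroups, and in particular $|F|$ is bounded by some $M_1$. If instead $c$ lies deep inside a horoball $B$, then since the horoballs are $M_0$-separated while the orbit $F\cdot c$ has diameter below $M_0$, every $f\in F$ satisfies $fB=B$; thus $F$ stabilizes $B$ and is parabolic. This proves that there are only finitely many conjugacy classes of non-parabolic finite subgroups, and that every finite subgroup of cardinality $>M_1$ is parabolic.

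Next I would produce, for every elementary $H$ with $|H|>M_1$, an \emph{infinite} maximal elementary subgroup containing it. If $H$ is infinite parabolic it fixes a unique parabolic point $\xi\in\partial X$ and so lies in the stabilizer $P=\Stab(\xi)$, a conjugate of a maximal $P_i$; if $H$ is infinite loxodromic it has two fixed points $\xi_\pm\in\partial X$ and lies in the (virtually cyclic) stabilizer of the pair $\{\xi_+,\xi_-\}$; if $H$ is finite with $|H|>M_1$ it is parabolic by the previous step, hence again contained in a conjugate of some infinite $P_i$. In each case the ambient maximal elementary subgroup is infinite. The key observation is that two \emph{distinct} infinite maximal elementary subgroups have disjoint limit sets in $\partial X$: two maximal parabolics sharing their parabolic point coincide, a maximal loxodromic and a maximal parabolic cannot share a fixed point (conical versus bounded-parabolic), and two maximal loxodromics sharing one fixed point $\eta$ both lie in the virtually cyclic $\Stab(\eta)$ and hence coincide.

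The heart of the matter is then a uniform bound $M_2$ with $|E_1\cap E_2|\le M_2$ for all distinct infinite maximal elementary $E_1\ne E_2$. Such an intersection fixes every limit point of $E_1$ and of $E_2$; by the previous paragraph these sets are disjoint and nonempty, so $E_1\cap E_2$ coarsely fixes a bi-infinite geodesic $\ell$ joining a limit point of $E_1$ to one of $E_2$. This $\ell$ cannot remain inside the $M_0$-separated horoballs, so it meets $Y$; every element of $E_1\cap E_2$ moves a chosen $p\in\ell\cap Y$ by a bounded amount, and proper discontinuity together with cocompactness on $Y$ bound $|E_1\cap E_2|$ independently of $E_1,E_2$. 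Setting $M=\max(M_1,M_2)$ concludes: given $H$ elementary with $|H|>M$, the second paragraph furnishes an infinite maximal elementary $E(H)\supset H$, and any further maximal elementary subgroup $E'\supset H$ is also infinite (a finite $E'$ would sit properly inside an infinite elementary group, contradicting maximality); if $E'\ne E(H)$ then $H\inc E(H)\cap E'$ forces $|H|\le M_2<|H|$, so $E(H)$ is unique.

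I expect the uniform intersection bound $M_2$ to be the main obstacle: one must argue carefully that the joint stabilizer of two disjoint boundary configurations coarsely fixes a geodesic meeting the thick part, handling finite-order elements fixing boundary points with the usual $\delta$-thin-triangle estimates. If one prefers to avoid these estimates, this displacement bound can instead be extracted directly from acylindricity of the action of a relatively hyperbolic group on its cusped space, which packages exactly the needed control.
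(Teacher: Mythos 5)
Your proposal is correct in outline, and its first half \emph{is} the paper's own argument: the quasi-centre dichotomy (quasi-centre in the thick part versus in a horoball) is exactly how the paper obtains both the finiteness of conjugacy classes of non-parabolic finite subgroups and the parabolicity of finite subgroups of cardinality $>M$. Where you genuinely diverge is uniqueness of $E(H)$. The paper never leaves the quasi-centre framework: for infinite $H$ it simply cites the literature (Osin), and for finite large $H$ it works with the sets $X_r$ of points moved less than $r$ by $H$; the key point is that geodesics joining two points of $X_{5\delta}$, or a point of $X_{5\delta}$ to a boundary point fixed by $H$, stay in $X_{9\delta}$, and $M$ is chosen so that $X_{9\delta}\inc\calb$. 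This puts $X_{5\delta}$ in a single horoball $B_0$, giving existence ($E(H)=\Stab(B_0)$) and uniqueness ($H$ fixes no boundary point other than the point at infinity of $B_0$) in one stroke. You instead derive uniqueness, uniformly for finite and infinite $H$, from boundary dynamics: disjointness of limit sets of distinct infinite maximal elementary subgroups together with a uniform bound $M_2$ on their intersections. In effect you first prove a two-subgroup version of the uniform almost-malnormality which the paper records \emph{afterwards} as Corollary \ref{almar} (there attributed to Dru\c{t}u--Sapir), and you run the implication in the opposite direction. Your route is longer and imports convergence-group facts (conical versus bounded parabolic points; loxodromics sharing one fixed point share both), but it treats finite and infinite $H$ on the same footing, avoids the citation for the infinite case, and yields almost-malnormality as a byproduct; the paper's route is shorter and essentially self-contained.

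One step of your intersection bound must be made honest, and it is the one you flagged. From ``$E_1\cap E_2$ coarsely fixes $\ell$'' it does not follow that its elements move a point $p\in\ell\cap Y$ a bounded amount: an element preserving $\ell$ up to bounded Hausdorff distance could coarsely \emph{translate} along it. The repair is twofold. First, since the limit sets are disjoint, $E_1\cap E_2$ contains no loxodromic and no parabolic element (the fixed point(s) of such an element would lie in both limit sets), so it is a torsion group; second, a finite-order element that coarsely translates a geodesic has uniformly bounded translation length (if $f$ of order $n$ translates by $t$ up to error $E$, then $f^n=\mathrm{id}$ translates by roughly $nt$ up to error $nE$, forcing $t\le 2E$, say, independently of $n$). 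One should also pass to the subgroup of index at most $4$ of $E_1\cap E_2$ fixing, rather than permuting, the relevant endpoints, which only affects $M_2$ by a bounded factor. With these observations your displacement bound, and hence the whole argument, goes through; note that the acylindricity alternative does not dispense with them, since acylindricity likewise requires bounded displacement of two far-apart points of $\ell$.
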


  \begin{proof}  We may assume that every $P_i$ is infinite. 
   Let $H$ be elementary.
  The existence of $E(H)$  is well-known if $H$ is infinite (see for instance \cite{Osin_relatively}), 
so assume  $H$ is finite. 
We also assume that the distance between any two distinct horoballs in $\calb$ is bigger than $6\delta$. 
Given $r>0$, let $X_r$ be the set of points of $X$ which are moved less than $r$ by $H$.
It follows from Lemma 3.3 p.\  460 of \cite{BH_metric} (existence of quasi-centres) that 
$X_{5\delta}$ is nonempty. 

Arguing as in the proof of Lemma 1.15 page 407 and Lemma 3.3 page 428 in \cite{BH_metric}, 
one sees that any geodesic joining two points of $X_{5\delta}$, or a point of  $X_{5\delta}$ to a fixed point of $H$ in $\partial X$,
 is contained in $X_{9\delta}$.

   If $X_{9\delta}$ meets $X\setminus \calb$,
    properness of the action of $G$ on $X\setminus \calb$ implies that there are only finitely many possibilities for $H$ up to conjugacy, so we can choose $M $ to ensure $X_{9\delta}\inc  \calb$.
This implies that $X_{5\delta}$ is contained in a unique horoball $B_0$ of $\calb$. This horoball is $H$-invariant since horoballs are $6\delta$-apart, so $H$ fixes the point at infinity   $\xi_0 $ of $B_0$ and is contained in the maximal parabolic subgroup $E(H)=\Stab (B_0)$.  In particular, $H$ is parabolic.  

There remains to prove  uniqueness of $E(H)$.  It suffices to check that $H$ cannot fix any point $\xi \ne \xi _0$ in $\partial X$. If it did, a geodesic joining $\xi $ to a point of $X_{5\delta}$ would be contained in $X_{9\delta}$ and meet $X\setminus \calb$. This contradicts our choice of $M$. 
  \end{proof}

 Since maximal elementary subgroups are equal to their normalizer, we get:
\begin{cor}[{\cite[Lemma 4.20]{DrSa_groups}}]\label{almar}
 Maximal elementary subgroups  $E$ are \emph{uniformly almost malnormal}: if $E\cap gEg\m$ has cardinality $>M$, then $g\in E$.  \qed
\end{cor}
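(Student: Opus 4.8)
The plan is to deduce this directly from Lemma \ref{borne}, using its uniqueness clause together with the self-normalizing property announced just before the statement. Set $H = E\cap gEg\m$. First I would check that $H$ is elementary: it is a subgroup of the elementary group $E$, and every subgroup of a parabolic (resp.\ virtually cyclic) group is again parabolic (resp.\ virtually cyclic), so $H$ is parabolic or virtually cyclic, hence elementary. By hypothesis $\abs{H}>M$, so Lemma \ref{borne} applies and furnishes a \emph{unique} maximal elementary subgroup $E(H)$ containing $H$.

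The key step is to identify $E(H)$ in two different ways. On one hand $H\inc E$ and $E$ is itself maximal elementary, so uniqueness forces $E(H)=E$. On the other hand $H\inc gEg\m$, and $gEg\m$ is again maximal elementary: conjugation by $g$ is an automorphism of $G$ preserving inclusions and preserving the (conjugation-invariant) class of elementary subgroups, hence it carries maximal elementary subgroups to maximal elementary subgroups. Applying uniqueness a second time gives $E(H)=gEg\m$.

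Combining the two identifications yields $E=gEg\m$, that is $g\in N_G(E)$. Since maximal elementary subgroups are equal to their normalizer, $N_G(E)=E$, and therefore $g\in E$, which is exactly the asserted uniform almost malnormality.

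There is no substantial obstacle here beyond the two inputs already in hand: the uniqueness part of Lemma \ref{borne} and the self-normalizing property of maximal elementary subgroups. The only points deserving a moment's care are the verification that $H$ is genuinely elementary, so that Lemma \ref{borne} is applicable, and that the conjugate $gEg\m$ remains maximal elementary; both are immediate from the definitions, so the corollary is essentially a formal consequence of the lemma.
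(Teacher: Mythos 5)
Your proof is correct and is exactly the argument the paper intends: the corollary is stated as an immediate consequence of Lemma \ref{borne} together with the self-normalizing property of maximal elementary subgroups, and your identification $E=E(H)=gEg\m$ via the uniqueness clause is precisely that deduction, just written out in full.
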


\subsection{Quasiconvexity}\label{qconv} 

\begin{dfn}[Hruska \cite{Hruska_quasiconvexity}] Let $X$ and $\calb$ be as above, and $C>0$. 
A subspace $Y\inc X$ is relatively $C$-quasiconvex if, given $y,y'\in Y$, any geodesic $[y,y']\subset X$ 
has the property that $[y,y']\setminus \calb$ lies in the $C$-neighbourhood of $Y$. The space $Y$ is relatively quasiconvex if it is relatively $C$-quasiconvex for some $C$.
  A subgroup $H<G$ is relatively quasiconvex if some (equivalently, every)  $H$-orbit is relatively quasiconvex in $X$.
\end{dfn}

\begin{prop}\label{gvqc}
  Let $G$ be hyperbolic relative to $\calp=\{P_1,\dots, P_n\}$,  with $P_i$ finitely generated.
If $G$ acts on a simplicial tree $T$ relative to $\calp$ 
with relatively  quasiconvex edge stabilizers, then vertex stabilizers are relatively  quasiconvex.
\end{prop}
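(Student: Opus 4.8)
The plan is to verify Hruska's metric criterion directly in the space $X$ with horoball system $\calb$ from Subsection \ref{gene}. Fix a basepoint $x_0\in X\setminus\calb$ and a lift $\tilde v\in T$ of the vertex under consideration, so that $\Stab(\tilde v)=G_v$. It suffices to produce a single constant $C$ such that, for all $g,g'\in G_v$, any geodesic $[gx_0,g'x_0]$ has the part lying outside $\calb$ in the $C$-neighbourhood of the orbit $G_vx_0$. Two inputs come for free: each maximal parabolic has relatively quasiconvex orbit (these orbits accompany the cusped regions), and by hypothesis every edge stabiliser $G_e$ is relatively quasiconvex, uniformly over the finitely many $G$-orbits of edges, so the edge spaces $G_ex_0$ are relatively $C_0$-quasiconvex for one fixed $C_0$. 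A harmless first reduction is to collapse in $T$ every edge not incident to $\tilde v$: this leaves $G_v$ and the incident edge stabilisers $\Inc_v$ unchanged, keeps the splitting relative to $\calp$, and reduces the incidence pattern to a star around $\tilde v$.

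The engine of the proof is a surgery on geodesics driven by a coarse, $G$-equivariant Lipschitz map $\phi\colon X\to T$. One sets $\phi(gx_0)=g\tilde v$ and extends; because the splitting is relative to $\calp$, every $P_i$ is elliptic in $T$, so each horoball is carried into a uniformly bounded subset of $T$, and $\phi$ is coarsely Lipschitz on $X\setminus\calb$. Now take $g,g'\in G_v$, so that both endpoints of $\gamma=[gx_0,g'x_0]$ have $\phi$-image $\tilde v$. If some $p\in\gamma\setminus\calb$ had $\phi(p)$ outside the star of $\tilde v$, then, $T$ being a tree, the path $\phi\circ\gamma$ would cross the first edge $e_0$ of the segment $[\tilde v,\phi(p)]$ at least twice; the key geometric claim is that at each such crossing $\gamma$ passes within a bounded distance of the edge space $G_{e_0}x_0$. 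Since $e_0$ is incident to $\tilde v$ we have $G_{e_0}\inc G_v$, so these crossing points already lie near $G_vx_0$. Feeding this into the $\delta$-hyperbolicity of $X$ and the attendant nearest-point projection estimates, one pushes every excursion of $\gamma$ back onto the quasiconvex edge spaces and concludes that all of $\gamma\setminus\calb$ stays in a bounded neighbourhood of $G_vx_0\cup\bigcup_{e\ni\tilde v}G_ex_0=G_vx_0$.

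The hard part is precisely the separation property asserted above: that $\phi$ sends the portion of $X$ over a small neighbourhood of an edge $e$ into a uniform neighbourhood of $G_ex_0$, i.e.\ that the quasiconvex edge spaces genuinely separate $X$ (outside horoballs) in the pattern prescribed by $T$. Establishing this needs three ingredients: the uniform quasiconvexity of the $G_ex_0$; control of how two distinct edge spaces can fellow-travel, for which Corollary \ref{almar} (uniform almost malnormality of maximal elementary subgroups, giving bounded coarse intersection of distinct edge spaces) is the correct tool; and careful control of excursions into $\calb$, so that discarding the parts of $\gamma$ inside horoballs never costs more than a bounded amount — this is exactly where the \emph{relative} formulation of quasiconvexity, measuring only $\gamma\setminus\calb$, is indispensable. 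The remaining work is bookkeeping: checking uniformity of all constants across the finitely many orbits of vertices and edges, and verifying that the induced peripheral subgroups of $G_v$ (the groups $\calp_{|G_v}$ of Definition \ref{indu}) behave compatibly with the above, so that the criterion is met with a genuinely global constant $C$.
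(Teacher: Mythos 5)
Your overall strategy (compare $X$ with $T$ via an equivariant map, force excursions of geodesics to return through edge spaces, then use relative quasiconvexity of the edge groups) is the right one, but as written the proof has a genuine gap at exactly the point you flag as ``the hard part'': the separation property for the coarse map $\phi$ is never established, and the ingredients you propose for it do not suffice. In particular, Corollary \ref{almar} is not the correct tool here: it concerns \emph{maximal elementary} subgroups, whereas in Proposition \ref{gvqc} the edge stabilizers are only assumed relatively quasiconvex (elementary edge groups are merely a special case). For general relatively quasiconvex edge groups, ``bounded coarse intersection of distinct edge spaces'' is simply false: two distinct edges of $T$ can have equal infinite stabilizer (take an amalgam over a subgroup that is not almost malnormal, e.g.\ $F(a,b)*_{\langle a^2\rangle}F(c,d)$, where $e$ and $ae$ are distinct edges with the same stabilizer), so the translates $G_ex_0$ and $aG_ex_0$ coarsely coincide. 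Even for elementary edge groups the claim fails, since edges in a common cylinder have co-elementary stabilizers. Thus the central claim of your argument --- that a jump of $\phi\circ\gamma$ across an edge $e_0$ can only happen within bounded distance of $G_{e_0}x_0$ --- is left as an unproved assertion whose proposed proof mechanism breaks down; it is essentially equivalent in difficulty to the proposition itself.

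The paper's proof avoids this entirely by making the map to the tree \emph{continuous} rather than coarse: since each $P_i$ and each (finite) point stabilizer is elliptic in $T$, there is a genuinely continuous $G$-equivariant map $f\colon X\to T$ sending vertices to vertices, edges to edge paths, and \emph{constant on each horoball}. One then defines $Q_e=f\m(m_e)$ (preimage of the midpoint of $e$) and $Q_v=f\m\bigl(\bar B(v,\tfrac12)\bigr)$. Cocompactness of the $G_e$-action on $Q_e$ and of the $G_v$-action on $Q_v\setminus\calb$ transfers relative quasiconvexity from the subgroup orbit $G_ex_0$ to the set $Q_e$, uniformly over the finitely many orbits of edges. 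Now separation is a purely topological fact, free of charge: if $\gamma$ is a geodesic between two points of $Q_v$ and $\gamma_0$ is a maximal subarc of $\gamma\setminus Q_v$, then $f\circ\gamma_0$ lives in a single component of $T\setminus \bar B(v,\tfrac12)$, so by continuity both endpoints of $\gamma_0$ lie in the \emph{same} $Q_e$, and relative $C$-quasiconvexity of $Q_e$ puts $\gamma_0\setminus\calb$ within $C$ of $Q_e\subset Q_v$. No $\delta$-hyperbolicity of $X$, no projection estimates, and no malnormality or bounded-intersection statements are needed anywhere. If you replace your coarse extension of the orbit map by this continuous $f$ (which your own setup almost constructs --- the key extra step is insisting that $f$ be constant on horoballs, which is possible precisely because the splitting is relative to $\calp$), your ``hard part'' evaporates and the rest of your argument goes through.
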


The proposition applies in particular if edge stabilizers are elementary, since elementary subgroups are relatively quasiconvex. It was proved by Bowditch  
\cite[Proposition 1.2]{Bo_cut} and Kapovich \cite[Lemma 3.5]{Kapovich_quasiconvexity} for $G$ hyperbolic. We generalize Bowditch's argument.

\begin{proof}
 As usual, we assume that $G$ acts on $T$  minimally and without    inversion.
Since $G$ is finitely generated,
the graph $T/G$ is   finite.  We also assume that $X$ is a  connected graph, 
 and  edges of $T$ have length 1.

Since $P_i$ is elliptic in $T$, and stabilizers of points of $X$ are finite, hence elliptic in $T$,
there exists an equivariant map $f:X\ra T$  sending vertices to vertices, mapping  each edge linearly to an edge path, 
and 
 constant on each horoball of $\calb$.

For each edge $e$ of $T$, let $m_e$ be the midpoint of $e$, and $Q_e=f\m(m_e)$.
Let $v$ be a vertex of $T$,
and let $E_v$ be the set of edges of $T$ with origin $v$.
Let $Q_v\subset X$ be the preimage under $f$ of the closed ball of radius $\frac12$ around $v$ in $T$.
Note that $Q_e\subset Q_v$ for all $e\in E_v$ and $Q_e\cap\calb=\es$.
  Also note that  $Q_e\neq\es$ by minimality of $T$ and connectedness of $X$.

 If $f(x)=f(hx)$ for   $x\in X$ and $h\in G$, then $h$ fixes $ f(x)$. Since $G
$ acts cocompactly
on $X\setminus\calb$, it follows that $G_v$ acts cocompactly on    $Q_v\setminus \calb $ and $G_e$ acts cocompactly on $Q_e=Q_e\setminus \calb $.    
In particular, $Q_e$ is the $G_e$-orbit of a finite set. 
Relative quasiconvexity of $G_e$ implies that $Q_e$ is relatively quasiconvex. 
Since $T/G$  
 is  a finite graph, there exists a common constant $C$ such that all subsets $Q_e$ are relatively $C$-quasiconvex.

We now fix a vertex $v$, and we show that $G_v$ is relatively quasiconvex. Choose $x\in Q_v\setminus\calb$. 
Since $G_v$ acts cocompactly on    $Q_v\setminus \calb $,  the Hausdorff distance between $Q_v\setminus \calb$ and the $G_v$-orbit of $x$ is finite, so it suffices to prove that $Q_v$ is relatively quasiconvex. Let $\gamma$ be a geodesic of $X$ joining two points of $Q_v$, and let $\gamma_0$ be a maximal subgeodesic contained in $\gamma\setminus Q_v$.  Considering the image of $\gamma$ in $T$, we see that 
 both  endpoints of $\gamma_0$  belong to the same $Q_e$, for some    $e\in E_v$. Thus  $\gamma_0\setminus\calb$ is $C$-close to $Q_e$, hence to $Q_v$. This shows   that $Q_v$ is relatively $C$-quasiconvex.
\end{proof}

A relatively quasiconvex subgroup is   relatively hyperbolic in a natural way (\cite[Theorem 9.1]{Hruska_quasiconvexity}). 
In particular:
\begin{lem} \label{Hrufi}
If $ G_v$ is an infinite  vertex stabilizer of a tree with finite edge stabilizers, it is   hyperbolic relative to the family $\calp_{ | G_v}$ of Definition \ref{indu}.
\end{lem}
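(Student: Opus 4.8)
The plan is to derive relative hyperbolicity of $G_v$ from Proposition \ref{gvqc} together with Hruska's theorem that relatively quasiconvex subgroups inherit a relatively hyperbolic structure, and then to identify the resulting peripheral structure with $\calp_{|G_v}$. As in Subsection \ref{gene}, I would first reduce to the case where every $P_i$ is infinite (finite parabolic subgroups may be removed from $\calp$, and correspondingly from $\calp_{|G_v}$, without affecting relative hyperbolicity), so that the maximal parabolic subgroups of $G$ are exactly the conjugates $P_i^g$.

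Since finite subgroups are elementary and hence relatively quasiconvex, the tree $T$ has relatively quasiconvex edge stabilizers, so Proposition \ref{gvqc} applies and $G_v$ is relatively quasiconvex in $G$. By \cite[Theorem 9.1]{Hruska_quasiconvexity}, $G_v$ is then relatively hyperbolic, its peripheral subgroups being the finitely many $G_v$-conjugacy classes of \emph{infinite} intersections $G_v\cap P_i^g$.

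The main point, and where care is needed, is to check that this induced structure is exactly $\calp_{|G_v}$. Since $T$ is relative to $\calp$, each $P_i^g$ is elliptic in $T$; being infinite it cannot fix an edge (edge stabilizers are finite), so it fixes a unique vertex $w$. Any element of $G_v\cap P_i^g$ then fixes both $v$ and $w$, hence every edge of the segment $[v,w]$; as edge stabilizers are finite, this forces $G_v\cap P_i^g$ to be finite unless $w=v$, in which case $P_i^g\subset G_v$ and $G_v\cap P_i^g=P_i^g$. Thus the infinite intersections are precisely the conjugates of the $P_i$ fixing $v$, which by Definition \ref{indu} are the representatives $\Tilde P_i$ of $\calp_{|G_v}$. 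Finally, Lemma \ref{mar}, applied with $K=G_v$ and $H$ a conjugate of $P_i$ (which fixes no edge since it is infinite), ensures that all conjugates of a fixed $P_i$ contained in $G_v$ form a single $G_v$-conjugacy class. Hence the two finite families of peripheral subgroups agree up to conjugacy in $G_v$, which completes the identification and the proof.
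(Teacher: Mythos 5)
Your proof is correct and follows essentially the same route as the paper: the paper's proof is the one-line observation that the lemma follows from \cite[Theorem 9.1]{Hruska_quasiconvexity} (with $G_v$ relatively quasiconvex by Proposition \ref{gvqc}), ``adding finite groups belonging to $\calp_{|G_v}$ to the parabolic structure if needed,'' which is exactly your argument with the finite/infinite bookkeeping done in the opposite direction (you strip finite $P_i$'s from $\calp$ at the outset, the paper adds finite groups at the end). Your explicit identification of Hruska's induced peripheral structure with $\calp_{|G_v}$ via ellipticity, finiteness of edge stabilizers, and Lemma \ref{mar} is a correct filling-in of details the paper leaves implicit.
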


\begin{proof} This follows from Theorem 9.1 of \cite{Hruska_quasiconvexity}, adding finite groups belonging to   $\calp_{ | G_v}$   to the 
parabolic structure if needed.
\end{proof}

\subsection{The canonical JSJ decomposition}\label{cano}

  In this section we recall the description of  the  canonical relative JSJ decomposition.
  The content of the word canonical is that the JSJ tree  (not just the JSJ deformation space) is invariant under automorphisms.

 Let  $G$ be  hyperbolic relative to $\calp$, and denote by $\cala$ the family of elementary subgroups. 
In this subsection we fix another (possibly empty) family of finitely generated subgroups  $\calh=\{H_1,\dots, H_q\}$ 
and we assume that $G$ is one-ended relative to $\calp\cup\calh$.

We consider the canonical   $\Out(G;\calp\cup\calh)$-invariant  JSJ tree $\Tcan$ defined   (under the name $T_c^*$) in Theorem 13.1 of \cite{GL3b} (see also   Theorem 7.5 of \cite{GL4}).  
It   is the tree of cylinders  (see Subsection \ref{defcyl}) of  the JSJ deformation space $\cald$ over elementary subgroups relative to $\calp\cup\calh$, and it belongs to $\cald$. It is $\Out(G;\calp\cup\calh)$-invariant  because 
  the JSJ deformation space $\cald$ is.  

  Being a tree of cylinders,  $\Tcan$ is bipartite, with   vertices 
$x\in V_0(\Tcan)$  and $Y\in V_1(\Tcan)$.   
 Stabilizers of vertices in $V_0(\Tcan)$ are non-elementary, and stabilizers of vertices in $V_1(\Tcan)$ are maximal elementary subgroups.   Non-elementary vertex stabilizers may be rigid or flexible (see Subsection \ref{JSJ}), and flexible vertex stabilizers are QH with finite fiber (see Theorem 13.1 of \cite{GL3b}). Elementary vertex stabilizers   are infinite by one-endedness, they may be parabolic or loxodromic. Thus there are exactly  four  possibilities for a   vertex $v\in \Tcan$: 
\begin{enumerate}
\item[0.a.]  \emph{rigid}: $G_v$ is non-elementary and is elliptic in every $(\cala, \calp\cup\calh)$-tree.
\item[0.b.]  \emph{(flexible) QH}: $G_v$ is  non-elementary and  not universally elliptic. Then $v$ is a flexible QH vertex with finite fiber 
 as in Subsection \ref{JSJ}.   
 \item[1.a.]   \emph{maximal parabolic}: $G_v$ is conjugate to a $P_i$. 
\item[1.b.]   \emph{maximal loxodromic}:  $G_v$ is a maximal virtually cyclic subgroup of $G$, and $G_v$ is not parabolic. 

\end{enumerate}

\begin{rem} 
A QH vertex $v\in V_0(\Tcan)$ is flexible, except in a
few   cases; for instance, if $G$ is torsion-free, the only exceptional case is when the underlying surface is a pair of pants (thrice punctured sphere). In these cases we view $v$ as rigid rather than QH. This should not cause confusion. In particular, Propositions \ref {imro} and \ref{imro_rel} would remain valid with $v$ viewed as QH. 
\end{rem}

 If $\eps=(x,Y)$ is an edge, then  $G_\eps=G_x\cap G_Y$ is an infinite  maximal elementary subgroup of   $G_x$  (but $G_\varepsilon$ may fail to be maximal  elementary  in $G$ and $G_Y$). 
In particular, $G_\varepsilon$ is always almost malnormal in $G_x$, so that Assertion 2 of Lemma \ref{lem_virt_nobitwist} applies to $\Tcan$,  showing  $\calt=\ker\rho$.

 Let now $v$ be a QH vertex. We claim that, \emph{if $\calh=\es$ and every $P_i$ is infinite, then $\calb_v=\Inc_v\cup \calp_{|G_v} $, and $\calb_v=\Inc_v$ if   no $P_i$ is virtually cyclic}  (see  Definitions \ref {iv}, \ref{indu} and \ref{bv}).

Groups in $\Inc_v\cup \calp_{|G_v} $ are infinite maximal elementary subgroups of $G_v$, and groups in  $\calb_v$ are virtually cyclic, so  $\Inc_v\cup \calp_{|G_v}\inc  \calb_v $ by Definition \ref{dqh}.  Conversely, because of one-endedness,
every boundary component  of $\Sigma$ is \emph{used} by $\Inc_v\cup \calp_{|G_v} $  \cite[Subsection 2.5 and Theorem 13.1]{GL3b}: 
for any  full boundary subgroup $B\in\calb_v$, there exists a subgroup $H\in \Inc_v\cup \calp_{|G_v} $ 
such that some $G_v$-conjugate of $H$  is a finite index subgroup of  $B$ (hence equals $B$). This proves the converse. Since groups in $\calb_v $ are virtually cyclic, $\calb_v=\Inc_v$ if   no $P_i$ is virtually cyclic. This proves the claim.

When $\calh\ne\es$, we still have $\Inc_v\cup \calp_{|G_v}\inc  \calb_v $. If $H_j$ is infinite, the intersection of any of its conjugates with $G_x$   
is contained   in  a full boundary subgroup, 
in particular  is virtually cyclic.  Conversely, a group of $\calb_v$ belongs to $\Inc_v\cup \calp_{|G_v} $
or contains with finite index a $G_v$-conjugate of a group $H\in \calh_{|G_v} $.

  This analysis implies that
a group $P_i $ which is not virtually cyclic is contained in a rigid $G_x$ 
 or is equal to some $G_Y$ (which may be contained in a rigid $G_x$). A group $H_j$ which is not virtually cyclic is contained in a rigid $G_x$ or in  a $G_Y$.

\begin{lem}\label{fingen}
$\Tcan$ has finitely generated edge (hence vertex) stabilizers.
\end{lem}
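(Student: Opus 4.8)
The plan is to use the four-fold classification of the vertices of $\Tcan$ to reduce to a single delicate case, and to treat that case by combining relative quasiconvexity with the finite relative presentation of $G$. First I would recall that every edge $\eps=(x,Y)$ joins a non-elementary vertex $x\in V_0(\Tcan)$ to a maximal elementary vertex $Y\in V_1(\Tcan)$, with $G_\eps=G_x\cap G_Y$. If $G_Y$ is loxodromic it is virtually cyclic, so $G_\eps$ is virtually cyclic, hence finitely generated. Thus I may assume $G_Y=P_i^g$ is maximal parabolic. If moreover $x$ is a QH vertex, then since the fiber is finite the full boundary subgroups in $\calb_x$ (Definition \ref{bv}) are virtually cyclic, and by the analysis of $\calb_v$ preceding the lemma $G_\eps$ is contained in one of them; so again $G_\eps$ is virtually cyclic. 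This leaves only the edges $\eps=(x,Y)$ with $x$ \emph{rigid} and $G_Y=P_i^g$ parabolic, where $G_\eps=G_x\cap P_i^g$.

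Next I would bring in relative quasiconvexity. Since edge stabilizers are elementary, hence relatively quasiconvex, Proposition \ref{gvqc} shows that $G_x$ is relatively quasiconvex, so by \cite{Hruska_quasiconvexity} (compare Lemma \ref{Hrufi}) it is itself relatively hyperbolic, with peripheral structure the family $\calp_{|G_x}$ of its infinite intersections with conjugates of the $P_j$. As $G_\eps$ is an infinite parabolic subgroup that is maximal elementary in $G_x$, it is exactly one of these maximal parabolic subgroups, i.e.\ a member of $\calp_{|G_x}$. Hence finite generation of $G_\eps$ is equivalent to finite generation of the peripheral subgroups of $G_x$, and, since a relatively hyperbolic group is finitely generated relative to its peripheral subgroups, to finite generation of $G_x$ itself.

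The hard part will be exactly this finite generation, and I expect it to be the main obstacle, because relative quasiconvexity alone is insufficient: \emph{every} subgroup of a parabolic is relatively quasiconvex, yet such subgroups need not be finitely generated. The extra input must come from the facts that $\Tcan$ is canonical (a genuine relative JSJ, not an arbitrary splitting) and that $G$ is finitely presented relative to $\calp$. The mechanism I would use is that each non-virtually-cyclic $P_i$ occurs as a vertex group $G_Y$ of the finite graph of groups $\Gamma=\Tcan/G$, so that $G$ is finitely presented relative to the collection of vertex groups of $\Gamma$; tracking the finitely many defining relators through the edges of $\Gamma$ by a Bass--Serre/folding argument should show that each edge group $G_\eps$ is generated by the finitely many edge-group letters appearing in those relators together with their images under the finitely many edge monomorphisms. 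This would yield finite generation of the parabolic edge groups.

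Finally, the parenthetical ``hence vertex'' follows formally. Once all edge groups are known to be finitely generated, each $G_x$ is relatively quasiconvex with peripheral family $\calp_{|G_x}$ consisting of the finitely generated parabolic edge groups $G_\eps$ together with any full $P_j$ it contains (which are finitely generated by hypothesis); being finitely generated relative to this finite family of finitely generated subgroups, $G_x$ is finitely generated.
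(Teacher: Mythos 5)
Your reductions to the case of an edge $\eps=(x,Y)$ with $x$ rigid and $G_Y=P_i^g$ parabolic are correct (though the paper does not need them), but the step you yourself flag as the hard part is where the argument genuinely breaks down, and the mechanism you propose cannot repair it. Your relator-tracking argument uses only that the $P_i$ occur among the vertex groups of $\Gamma$ and that $G$ is finitely presented relative to $\calp$; it uses nothing that distinguishes $\Tcan$ from an arbitrary elementary splitting of $G$ relative to $\calp$. But the statement it would then establish is false: let $P$ be finitely generated with an infinitely generated subgroup $C$ (say $P=F_2$ and $C$ its commutator subgroup), and let $G=P*\Z$, which is hyperbolic relative to $\calp=\{P\}$ and finitely presented relative to $\calp$. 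Then $G=P*_{C}(C*\Z)$ is a non-trivial minimal elementary splitting relative to $\calp$ in which $P$ is a vertex group, yet the edge group $C$ is not finitely generated. (In particular, your intermediate claim that $G$ is finitely presented relative to the vertex groups of $\Gamma$ is not automatic: identifying the two copies of $C$ in this example requires infinitely many relators.) Your other reformulation --- that finite generation of $G_\eps$ is ``equivalent'' to finite generation of $G_x$ --- is circular: $G_x$ is a vertex stabilizer whose finite generation is exactly as unknown as that of the edge groups, and the direction you would need ($G_x$ f.g.\ implies its peripheral subgroups f.g.) is a non-trivial theorem of Osin, not a formal consequence of $G_x$ being finitely generated relative to its peripherals.

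What is missing is the paper's key input: by Theorem 5.1 of \cite{GL3a}, since $G$ is finitely presented relative to $\calp\cup\calh$, the elementary JSJ deformation space contains \emph{some} tree $T_J$ whose edge stabilizers are finitely generated, and $\Tcan$ is by construction the tree of cylinders of $T_J$. The paper then transports finite generation from $T_J$ to $\Tcan$: for an edge $\eps=(x,Y)$ with $G_Y=P_i$, view the cylinder $Y$ as a subtree of $T_J$; since $P_i$ is elliptic in $T_J$ and preserves $Y$, it fixes a vertex $y\in Y$. If $y=x$ then $G_\eps=G_Y=P_i$ is finitely generated by hypothesis; if $y\neq x$, the initial edge $e$ of $[x,y]$ lies in $Y$, so $G_e\inc G_Y$, and the chain $G_e\inc G_x\cap G_Y\inc G_x\cap G_y\inc G_e$ forces $G_\eps=G_e$, which is finitely generated by the choice of $T_J$. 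So the finiteness comes from comparing $\Tcan$ with a well-chosen tree in the same deformation space via the cylinder structure, not from any presentation-counting argument carried out on $\Gamma$ itself.
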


We do not assume that the $P_i$'s are slender, so  there may exist infinitely generated elementary subgroups.

\begin{proof}
Since $G$ is finitely presented relative to  $\calp\cup\calh$, 
there is an elementary  JSJ tree $T_J$  
relative to $\calp\cup\calh$ having finitely generated edge stabilizers (\cite{GL3a}, Theorem 5.1). 
The tree $\Tcan$ is the tree of cylinders of $T_J$.

Consider an edge $\eps=(x,Y)$  of $\Tcan$, and $G_\eps=G_x\cap G_Y$.  We view $Y$ as a subtree of $T_J$ containing $x$. 
If $G_Y$ is   virtually cyclic, then $G_\eps$ is obviously finitely generated, so we can assume that $G_Y$ is a
maximal parabolic group $P_i$. 

Since $G_Y=P_i$ is elliptic in $T_J$  and leaves $Y$ invariant, it fixes a vertex $y\in Y$. 
If $y=x$, then $G_Y\subset G_x$, so $G_\eps=G_Y=P_i$ is finitely generated.
If $y\neq x$, let $e$ be the  initial edge of the segment $[x,y]$  in $T_J$.   
It is contained in $Y$, so $G_e\subset G_Y$, hence $G_e\subset G_x\cap G_Y
\subset G_x\cap G_y \subset G_e$.
It follows that  $G_\eps=G_x\cap G_Y=G_e$ is finitely generated. 
\end{proof}

By Proposition \ref{gvqc}, vertex groups of $\Tcan$ are relatively quasiconvex, hence relatively hyperbolic.  We make the parabolic structure explicit.

\begin{lem} \label{rh} 
If $x\in V_0(\Tcan)$, the group 
$G_x$ is hyperbolic relative to the finite family of finitely generated subgroups $\calq_x=\Inc_x\cup  \calp_{ | G_x}$, where $\Inc_x$ is a set of representatives of conjugacy classes of  incident edge stabilizers and $\calp_{ | G_x}$ is the induced structure   (see  Definitions  \ref{iv} and \ref{indu}).
\end{lem}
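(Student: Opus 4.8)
The plan is to apply Lemma~\ref{Hrufi} in a relative setting, realizing $G_x$ as a relatively quasiconvex (hence relatively hyperbolic) subgroup and then identifying its parabolic structure explicitly. First I would recall that by Proposition~\ref{gvqc}, since $\Tcan$ has elementary (hence relatively quasiconvex) edge stabilizers and is relative to $\calp$ (actually to $\calp\cup\calh$, but here $\calh$ plays no role in the statement), the vertex stabilizer $G_x$ is relatively quasiconvex in $G$. By Theorem~9.1 of \cite{Hruska_quasiconvexity}, a relatively quasiconvex subgroup is intrinsically relatively hyperbolic, with parabolic subgroups given by the infinite intersections $G_x\cap P^g$ (up to conjugacy) of $G_x$ with the conjugates of the ambient parabolics. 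So the content of the lemma is to check that this intrinsic parabolic structure is exactly $\calq_x=\Inc_x\cup\calp_{|G_x}$.

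The key step is therefore to analyze, for each $i$ and each $g\in G$, the intersection $G_x\cap P_i^g$ and show that when it is infinite it falls into one of the two declared families. I would use the bipartite structure of $\Tcan$: $P_i^g$ is elliptic in $\Tcan$ (the tree is relative to $\calp$), so it fixes some vertex. If $P_i^g$ fixes a vertex in the orbit of $x$ but no incident edge, then the relevant conjugate contributes a group of $\calp_{|G_x}$ by Definition~\ref{indu}. If $P_i^g$ fixes an edge incident to $x$, then its infinite intersection with $G_x$ is contained in (a conjugate of) an incident edge stabilizer $G_\eps$, which is in $\Inc_x$. The remaining case, where $P_i^g$ fixes only vertices outside the orbit of $x$, gives finite (indeed, by the tree structure, edge-stabilizer-bounded) intersection with $G_x$ and contributes no new infinite parabolic. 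This is precisely the trichotomy of Remark~\ref{tric}. Conversely, I must verify that every group of $\Inc_x$ and of $\calp_{|G_x}$ is genuinely parabolic in $G_x$: groups in $\calp_{|G_x}$ are conjugates of the $P_i$ lying in $G_x$, hence parabolic; and an infinite incident edge group $G_\eps=G_x\cap G_Y$ with $G_Y$ maximal parabolic is parabolic, while if $G_Y$ is loxodromic then $G_\eps$ is virtually cyclic and can be absorbed into the parabolic structure or left as a (virtually cyclic) extra factor, as in the proof of Lemma~\ref{Hrufi} where one adds groups if needed.

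Two routine points remain. I would invoke Lemma~\ref{fingen} to guarantee that the edge groups $\Inc_x$, and hence all members of $\calq_x$, are finitely generated, so that $\calq_x$ is a legitimate finite family of finitely generated subgroups making $G_x$ relatively hyperbolic in the usual sense. I would also use Lemma~\ref{mar} (with $K=G_x$) to ensure that each $\Tilde P_i\in\calp_{|G_x}$ and each incident edge group is well-defined up to conjugacy in $G_x$, so that $\calq_x$ is unambiguous. The main obstacle I anticipate is bookkeeping the precise correspondence between the intrinsic peripheral structure furnished abstractly by \cite{Hruska_quasiconvexity} and the combinatorially-defined family $\calq_x$: one must confirm that no infinite parabolic of $G_x$ is missed and that no spurious parabolic is introduced, which is exactly where the malnormality encoded in Lemma~\ref{mar} and the almost-malnormality of edge stabilizers in $\Tcan$ (noted just before Lemma~\ref{fingen}) do the essential work.
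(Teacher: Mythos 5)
Your overall route is the same as the paper's: Proposition \ref{gvqc} gives relative quasiconvexity of $G_x$, Hruska's Theorem 9.1 identifies the intrinsic peripheral structure of $G_x$ with the infinite intersections $G_x\cap P_i^g$, and the work is to match these with $\calq_x$. But your case analysis fails at the crucial point. You claim that when $P_i^g$ fixes only vertices outside the orbit of $x$, the intersection $G_x\cap P_i^g$ is finite because it is contained in an incident edge stabilizer. That inference is false for $\Tcan$: its edge stabilizers $G_\eps=G_x\cap G_Y$ are \emph{infinite} maximal elementary subgroups of $G_x$ (as recalled in Subsection \ref{cano}), so being edge-stabilizer-bounded does not give finiteness. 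Worse, this discarded case is exactly where the incident edge groups come from: generically, a maximal parabolic $P_i^g$ equal to the stabilizer $G_Y$ of a $V_1$-vertex $Y$ adjacent to $x$ (with $G_Y\not\subset G_x$) fixes no vertex in the orbit of $x$ and no edge incident to $x$, yet $G_x\cap P_i^g=G_\eps$ is an infinite member of $\Inc_x$. So as written your argument never shows that Hruska's peripheral family agrees with $\calq_x$. Conversely, the case to which you attribute $\Inc_x$ (where $P_i^g$ fixes an edge $\eps=(x,Y)$ incident to $x$) is degenerate, and your conclusion there is too weak: mere containment of the intersection in $G_\eps$ would not suffice for Theorem 9.1, whose peripheral family must consist of the intersections themselves up to adding finite or maximal loxodromic subgroups; in fact, for $P_i$ infinite, $P_i^g\subset G_\eps\subset G_Y$ together with maximality of the elementary subgroup $P_i^g$ forces $P_i^g=G_Y=G_\eps$, so containment is equality.

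The missing ingredient is almost malnormality of maximal elementary subgroups (Lemma \ref{borne}, Corollary \ref{almar}), which is how the paper argues. Assume $P_i$ is infinite (finite $P_i$ contribute nothing), so $P_i^g$ is a maximal elementary subgroup, and recall that stabilizers of vertices adjacent to $x$ are maximal elementary as well. If $G_x\cap P_i^g$ is infinite, it is contained in a \emph{unique} maximal elementary subgroup of $G$; hence either $P_i^g$ coincides with the stabilizer $G_Y$ of a vertex adjacent to $x$, in which case $G_x\cap P_i^g=G_\eps$ is exactly an incident edge stabilizer, conjugate in $G_x$ to a member of $\Inc_x$, or $P_i^g$ has $x$ as its unique fixed point, in which case $G_x\cap P_i^g=P_i^g$ is conjugate in $G_x$ to a member of $\calp_{|G_x}$ (Definition \ref{indu} and Lemma \ref{mar}). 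With this dichotomy replacing your trichotomy, the rest of your argument (finite generation via Lemma \ref{fingen}, and adding the finite or loxodromic maximal virtually cyclic members of $\calq_x$ to the peripheral structure, as in Lemma \ref{Hrufi}) goes through as in the paper.
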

 
\begin{proof} By   Theorem 9.1 of  \cite{Hruska_quasiconvexity}, we have to consider infinite groups of the form $G_x\cap gP_ig\m$. Recall that stabilizers of vertices adjacent to $x$ are maximal elementary subgroups, and distinct maximal elementary subgroups have finite intersection.
Thus  $gP_ig\m$ must be the stabilizer of an adjacent vertex, or have  $x$ as unique fixed point,    so $G_x\cap gP_ig\m$ is conjugate  in $G_x$ to a group in $\Inc_x\cup  \calp_{ | G_x}$.  Conversely, a group in $\Inc_x\cup  \calp_{ | G_x}$ which is not an infinite group    of the form $G_x\cap gP_ig\m$  is finite  or is   a loxodromic  maximal virtually cyclic subgroup of $G_x$. Such groups may be added to the parabolic structure.
\end{proof}

\subsection{Rigid groups have finitely many automorphisms} \label{rig}
\begin{SauveCompteurs}{thm_sci}
  \begin{thm} \label{thm_sci} 
   Let $G$ be hyperbolic relative to finitely generated subgroups $\calp=\{P_1,\dots,P_n\}$, 
 with $P_i\ne G$. 
Let $\calh=\{H_1,\dots,H_q\}$ be another family of finitely generated subgroups. If $\Out(G;\calp,\mk\calh)$ is infinite, then $G$ splits over an elementary subgroup relative to $\calp\cup\calh$.
  \end{thm}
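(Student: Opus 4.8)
The plan is to combine two classical ingredients in the relative setting: the Bestvina--Paulin method to build an isometric action of $G$ on an $\bbR$-tree $T$ out of the infinitely many automorphisms, and then the Rips machine to promote that action to a genuine simplicial splitting. The whole point of the statement is that the limit tree $T$ automatically remembers two things — that its arc stabilizers are elementary, and that it is relative to $\calp\cup\calh$ — so that the splitting extracted at the end is elementary and relative to $\calp\cup\calh$.

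First I would fix a finite generating set $S$ of $G$ and the proper action of $G$ on the $\delta$-hyperbolic space $X$ of Subsection~\ref{gene}. Using that $\Out(G;\calp,\mk\calh)$ is infinite, I would choose representatives $\alpha_n\in\Aut(G;\calp,\mk\calh)$ of infinitely many distinct outer classes, and, after composing each $\alpha_n$ with a suitable inner automorphism (equivalently, choosing a basepoint), arrange that $\lambda_n:=\max_{s\in S}d(x_0,\alpha_n(s)x_0)$ is minimal. Properness of the action on $X\setminus\calb$ forces $\lambda_n\to\infty$, since otherwise only finitely many outer classes could occur. The twisted actions $g\cdot_n x=\alpha_n(g)x$ are isometric for the rescaled metrics $d/\lambda_n$, and passing to an equivariant Gromov--Hausdorff limit (as in \cite{BeSz_endomorphisms}) yields an isometric $G$-action on an $\bbR$-tree $T$. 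The normalization guarantees that $G$ fixes no point of $T$, and the relatively hyperbolic Bestvina--Paulin package of \cite{BeSz_endomorphisms} gives that arc stabilizers in $T$ are elementary. The hypothesis $P_i\neq G$ is what keeps this construction nondegenerate.

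Next I would check that $T$ is relative to $\calp\cup\calh$. For $P_i\in\calp$, the image $\alpha_n(P_i)$ is a conjugate of $P_i$, hence again parabolic; since parabolic orbits sit inside horoballs and behave boundedly in the rescaled metrics, each $P_i$ is elliptic in $T$ (this is exactly the ellipticity of parabolics established in \cite{BeSz_endomorphisms}). The argument for $\calh$ is different in flavour: each $\alpha_n$ restricts on $H_j$ to conjugation by a single element $g_{j,n}$, so every $h\in H_j$ keeps its translation length, $\ell_{d/\lambda_n}(\alpha_n(h))=\ell_d(h)/\lambda_n\to 0$, whence every element of $H_j$ is elliptic in $T$; using that $H_j$ is finitely generated one concludes that $H_j$ itself is elliptic. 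Thus $T$ is a nontrivial elementary $G$-tree relative to $\calp\cup\calh$.

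Finally I would run Rips theory on $T$ to obtain the desired simplicial splitting over an elementary subgroup, relative to $\calp\cup\calh$. This last step is where I expect the real difficulty to lie, and where the work of Section~\ref{sec_Rips} is needed. The obstacle is that $G$ is only finitely presented relative to $\calp$ (by Osin), not finitely presented, and that the limit action on $T$ need not be stable in the sense of \cite{BF_stable} because the $P_i$ are not assumed slender; it only satisfies the weaker \emph{hypostability} property. I would therefore invoke the generalization of the Rips machine to hypostable actions of relatively finitely presented groups. Applied to $T$, this machine decomposes the action into simplicial, surface and axial pieces, each of which yields a splitting whose edge groups are elementary and which is relative to $\calp\cup\calh$; nontriviality of the $G$-action on $T$ guarantees that the resulting splitting is nontrivial. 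The crux is precisely to verify that the limit action is hypostable, so that this generalized machine applies.
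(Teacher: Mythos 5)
Your proposal follows the paper's own proof essentially verbatim: the Bestvina--Paulin/Belegradek--Szczepa\'nski limit tree with elementary arc stabilizers (Theorem \ref{th_BS}, including the same ellipticity arguments for $\calp$ via parabolicity and for $\calh$ via conjugation-invariance of translation length), followed by the hypostable Rips machine for relatively finitely presented groups (Theorem \ref{bf2} and Corollary \ref{cor_RipsRH}). The one step you flag but do not carry out --- hypostability of the limit action --- is exactly the paper's Lemma \ref{lem_elem_hypostable}, a short argument combining Lemma \ref{borne} and almost malnormality of maximal elementary subgroups with the fact that $T$ is relative to $\calp$, so that any hyperbolic element whose translate of a suitable subarc overlaps it would have to be elliptic.
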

\end{SauveCompteurs}

The hypothesis means that there are infinitely many (classes of) automorphisms which map each $P_i$ to a conjugate (in an arbitrary way) and act on each $H_j$ as conjugation by an element of $G$. 

The proof of the theorem has two steps. First, using the Bestvina-Paulin method (see \cite{Pau_arboreal}), 
extended by Belegradek-Szczepa\'nski \cite{BeSz_endomorphisms} to relatively hyperbolic groups, one constructs an action of $G$ on an $\R$-tree $T$.  Rips theory then yields a splitting. 
This is fairly standard but there are technical difficulties, in particular because the action on $T$ is not necessarily stable if the $P_i$'s are not assumed to be slender. Details are in Section \ref{sec_Rips}.

\section{Automorphisms of one-ended relatively hyperbolic groups} \label{gen}

Let   $G$ be hyperbolic relative to   $\calp=\{P_1,\dots,P_n\}$,   with $P_i$  infinite and finitely generated.  
We assume that $G$ is one-ended relative to $\calp $.  In Subsection \ref{autar} we study $\Out(G;\calp)$ through its action on the canonical JSJ tree.  This leads to the main results of this section, which are stated in Subsection \ref{aut_1bout}.
In Subsection \ref{rela} we   study automorphisms  which 
 act trivially on  another family $\calh$.

\subsection{Automorphisms of the  canonical JSJ splitting}\label{autar}

Let $\Tcan$ be  the canonical  $\Out(G;\calp)$-invariant JSJ tree as in Subsection \ref{cano} (with $\calh=\es$), 
 and $\Gcan=\Tcan/G$.    Edge stabilizers are infinite elementary subgroups. Vertex stabilizers may be 
 rigid (non-elementary), QH with finite fiber,  maximal parabolic (conjugate to a  $P_i$), or maximal loxodromic (virtually cyclic). A rigid or QH stabilizer $G_x$  fixes a unique point in $\Tcan$, hence is equal to its normalizer; incident edge stabilizers are maximal elementary subgroups of $G_x$.

We study $\Out(G; 
\calp)$ through its action on $\Tcan$ as in Subsection \ref{arb}. 
In general, $\Out(G; \calp)$ is a proper subgroup of $\Out(\Tcan)$. 

We define finite index subgroups $\Out^0 (G;\calp )$ and $\Out ^0(G;\mk\calp )$ by taking the intersection of 
$\Out (G;\calp )$ and  $\Out (G;\mk\calp )$ with the group $\Out^0(\Tcan)$ 
 consisting of automorphisms acting trivially on the graph  $\Gcan=\Tcan/G$. 

By the second  assertion of Lemma \ref{lem_virt_nobitwist}, the kernel of  $\rho:\Out^0(\Tcan)\to \prod _{v\in V} \Out(G _v)$ is the group of twists $\calt$. Note that  $\calt\inc \Out ^0(G;\mk\calp) 
  \inc \Out ^0(G;\calp) 
  $ since every $P_i $ is elliptic in $\Tcan$ and a twist acts as a conjugation on any vertex stabilizer. 

The group   $\Tw $ is the image in $\Out(G)$ of
a finite direct product  $\prod _{e\in E } 
Z_{G_{o(e)}}(G_e)$.  Each factor is virtually cyclic or contained in a conjugate of some $P_i$.  

We  now consider the image of  $\Out^0 (G;\calp )$ and $\Out ^0(G;\mk\calp )$ by $\rho_v:\Out^0(\Tcan)\to  \Out(G _v)$, for $v$ a vertex of $\Gcan$ (viewed as a vertex of $\Tcan$ with stabilizer $G_v$). 
Using Theorem \ref{thm_sci}, we shall show that both images are finite if $G_v$ is rigid. If $G_v=P_i$, the index of $\Out(G_v;\mk\Inc_v)$ in the image of 
$\Out^0 (G;\calp )$ is  finite   because $w$ is rigid whenever $e=vw$ is an edge with $\Out(G_e)$ infinite. More precisely:

\begin{prop} \label{imro} The images of  $\Out^0 (G;\calp )$ and $\Out ^0(G;\mk\calp )$ by $\rho_v:\Out^0(\Tcan)\to  \Out(G _v)$ may be described as follows:
\begin{itemize*}
\item If $G_v$ is virtually cyclic or rigid, both images are finite.
\item
If $G_v$ is a QH vertex stabilizer, 
both images contain $\Out(G_v;\mk\calb_v)$
with finite index. \item   
If $G_v$ is (conjugate to) $P_i$, the image of   $\Out^0(G;\mk\calp)$ is trivial. 
 The image of $\Out^0(G;\calp)$ 
contains  $\Out(G_v;\mk\Inc_v) $ with finite index.
\end{itemize*}
 If $e$ is any edge of $\Tcan$, 
the images of  $\Out^0 (G;\calp )$ and $\Out ^0(G;\mk\calp )$ by $\rho_e:\Out^0(\Tcan)\to  \Out(G _e)$ are finite.
\end{prop}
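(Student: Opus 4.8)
The plan is to extract every $\rho_v$- and $\rho_e$-image from the two-sided inclusions of Lemma~\ref{automind} (used with $\calh=\es$), and then to argue vertex type by vertex type, the one serious input being Theorem~\ref{thm_sci} at rigid vertices. Since $\Tcan$ is $\Out(G;\calp)$-invariant we have $\Out^0(G;\calp)=\Out^0(\Tcan)\cap\Out(G;\calp)$, and similarly for $\mk\calp$, so Lemma~\ref{automind} gives at each vertex $v$
\[\Out(G_v;\mk\Inc_v,\calp_{|G_v})\subset\rho_v(\Out^0(G;\calp))\subset\Out(G_v;\Inc_v,\calp_{|G_v})\]
and, after adding the marking of $\calp$,
\[\Out(G_v;\mk\Inc_v,\mk\calp_{|G_v})\subset\rho_v(\Out^0(G;\mk\calp))\subset\Out(G_v;\Inc_v,\mk\calp_{|G_v}).\]
If $G_v$ is virtually cyclic (the maximal loxodromic case) then $\Out(G_v)$ is finite, so both images are finite at once.

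At a rigid $v$, I would show that $G_v$, which is hyperbolic relative to $\calq_v=\Inc_v\cup\calp_{|G_v}$ by Lemma~\ref{rh}, admits no non-trivial splitting over an elementary subgroup relative to $\calq_v$. Indeed such a splitting is relative to $\calq_v$, so refining $\Gcan$ at $v$ (Lemma~\ref{extens}) would produce a non-trivial elementary $(\cala,\calp)$-tree in which $G_v$ is not elliptic, contradicting the universal ellipticity of the rigid vertex. Every member of $\calq_v$ is a proper elementary subgroup of the non-elementary group $G_v$ and is finitely generated (Lemma~\ref{fingen}), so the contrapositive of Theorem~\ref{thm_sci} yields that $\Out(G_v;\calq_v)=\Out(G_v;\Inc_v,\calp_{|G_v})$ is finite; as this is the unmarked upper bound above (which contains the marked upper bound), both images are finite.

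At a QH vertex one has $\calb_v=\Inc_v\cup\calp_{|G_v}$, consisting of virtually cyclic groups, by the analysis of Subsection~\ref{cano}. Thus $\Out(G_v;\mk\calb_v)=\Out(G_v;\mk\Inc_v,\mk\calp_{|G_v})$ is contained in both lower bounds. Because all incident edge groups are virtually cyclic, hence have finite outer automorphism group, the finite-index clause of Lemma~\ref{automind} shows that $\Out(G_v;\mk\calb_v)$ has finite index in $\Out(G_v;\calb_v)=\Out(G_v;\Inc_v,\calp_{|G_v})$, and hence in the smaller $\Out(G_v;\Inc_v,\mk\calp_{|G_v})$. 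Since these are the respective upper bounds, both images contain $\Out(G_v;\mk\calb_v)$ with finite index.

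Finally, at a maximal parabolic vertex $G_v=P_i$ the group $P_i$ fixes $v$ but no incident edge, and by almost malnormality of maximal elementary subgroups (Corollary~\ref{almar}) no other maximal parabolic fixes $v$, so $\calp_{|G_v}=\{G_v\}$; hence both bounds for $\rho_v(\Out^0(G;\mk\calp))$ collapse to the trivial group $\Out(G_v;\mk{G_v})$, while for $\Out^0(G;\calp)$ they reduce to $\Out(G_v;\mk\Inc_v)\subset\rho_v(\Out^0(G;\calp))\subset\Out(G_v;\Inc_v)$ (preserving the whole group $G_v$ is automatic). To get the finite index — and simultaneously the edge statement — I would first check that $\rho_e(\Out^0(G;\calp))$ is finite for every edge $e=(x,Y)$: if $x$ is rigid this follows from the rigid case, since $\rho_e$ factors through the finite image $\rho_x(\Out^0(G;\calp))$ by restriction to $G_e$, and if $x$ is QH then $G_e$ is virtually cyclic with $\Out(G_e)$ finite. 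Applying this to the edges at $v$ and recording the action of $\rho_v(\Phi)$ on $\Inc_v$ by the tuple $(\rho_e(\Phi))_{e\ni v}$, one obtains a homomorphism out of $\rho_v(\Out^0(G;\calp))$ with finite image and kernel exactly $\Out(G_v;\mk\Inc_v)$, giving the finite index. The genuinely hard ingredient is Theorem~\ref{thm_sci} feeding the rigid case; the subtlest bookkeeping point is this last one, where incident edge groups of $P_i$ may fail to be virtually cyclic and one must exploit that a neighbor $x$ with $\Out(G_e)$ infinite is forced to be rigid.
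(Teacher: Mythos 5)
Your handling of the first three bullets coincides with the paper's proof (the same two-sided inclusions from Lemma~\ref{automind}, the same use of Lemmas~\ref{rh}, \ref{fingen}, \ref{extens} and Theorem~\ref{thm_sci} at rigid vertices, the same finite-index observation for $\calb_v$ at QH vertices), and your idea of reducing the parabolic case to finiteness of the edge images is in the spirit of what the paper does. The problem is the very last step.

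You define a map on $\rho_v(\Out^0(G;\calp))$ by sending $\rho_v(\Phi)$ to the tuple $(\rho_e(\Phi))_{e\ni v}$. For this to be well defined you need $\ker\rho_v\cap\Out^0(G;\calp)\subset\ker\rho_e$, i.e.\ that $\rho_e$ factors through $\rho_v$ -- and at a parabolic vertex this is exactly what may fail. An incident edge group $G_e$ is a maximal elementary subgroup of the \emph{non-elementary} endpoint $G_x$, but it need not be self-normalizing (or almost malnormal) in $G_v=P_i$; the paper warns about precisely this: there is no natural map from $\Out(G_v;\Inc_v)$ to $\Out(G_e)$ when $N_{G_v}(G_e)$ acts non-trivially on $G_e$. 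Concretely, if $\Phi\in\ker\rho_v$ has a representative $\beta$ restricting to the identity on $G_v$, the isomorphism $f_\beta$ fixes $\tilde v$ but may send $\tilde e$ to $k\tilde e$ for some $k\in N_{G_v}(G_e)\setminus G_e$ (a $V_1$-vertex of a tree of cylinders can have several incident edges with the \emph{same} stabilizer, e.g.\ when $G_e$ is normal in $P_i$), and then $\rho_e(\Phi)$ is the class of conjugation by $k\m$ on $G_e$, which can be non-inner. For the same reason, your claim that the kernel is ``exactly'' $\Out(G_v;\mk\Inc_v)$ is unjustified. Note that your factoring claim at a rigid $V_0$-endpoint $x$ is correct, but only because $G_e$ is maximal elementary, hence self-normalizing, in $G_x$ (equivalently, distinct edges of $\Tcan$ at a $V_0$-vertex have non-conjugate stabilizers); you assert it without this justification, and it is exactly the almost-malnormality ingredient the paper invokes.

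The repair is short and lands you on the paper's actual argument: work upstairs. The tuple map $\theta=(\rho_e)_{e\ni v}\colon\Out^0(G;\calp)\to\prod_{e\ni v}\Out(G_e)$ \emph{is} a homomorphism, since each $\rho_e$ is defined on $\Out^0(\Tcan)$; its image is finite by your edge analysis (every edge at $v$ has its $V_0$-endpoint rigid or QH), so $K=\ker\theta\cap\Out^0(G;\calp)$ has finite index in $\Out^0(G;\calp)$. If $\Phi\in K$, then for each incident edge one chooses a representative $\alpha$ with $f_\alpha$ fixing that edge and $\alpha_{|G_e}$ inner in $G_e$; since such an $\alpha$ also fixes $\tilde v$, this exhibits, for each group of $\Inc_v$ separately, a representative of $\rho_v(\Phi)$ acting on it as conjugation by an element of $G_v$, whence $\rho_v(K)\subset\Out(G_v;\mk\Inc_v)$. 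Combined with the lower bound $\Out(G_v;\mk\Inc_v)\subset\rho_v(\Out^0(G;\calp))$ of Lemma~\ref{automind}, this gives the finite index. This is in substance what the paper does: for each incident edge $\varepsilon$ with $\Out(G_\varepsilon)$ infinite, it takes a representative fixing $\varepsilon$, uses finiteness of the image at the rigid neighbour to make it act on $G_x$ as conjugation by some $g$, and concludes $g\in G_\varepsilon$ by almost malnormality of $G_\varepsilon$ in $G_x$.
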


See Definition \ref{bv} for the definition of   the family of  full boundary subgroups $\calb_v$,  and recall that $\calb_v= \Inc_v$ if no $P_i$ is virtually cyclic. 

\begin{proof}

  If $G_v$ is virtually cyclic,    $\Out(G_v)$ is finite.
  If $G_v$ is rigid, finiteness follows from Theorem \ref{thm_sci}, as we now explain. We have seen (Lemma \ref{rh}) that $G_v$ is hyperbolic relative to a finite family  
$\calq_v=\Inc_v\cup\calp_{|G_v}$ consisting of  incident edge groups   and  conjugates of the $P_i$'s having $v$ as unique fixed point.   These groups are finitely generated by Lemma \ref{fingen}.
 By Lemma \ref{automind},  the group $\rho_v(\Out^0 (G;\calp))$ is contained in $\Out(G_v;\calq_v)$.
If it is infinite, 
$G_v$ splits over an elementary subgroup relative to $\calq_v$ by Theorem \ref{thm_sci} 
(applied with $\calp=\calq_v$  and $\calh=\es$). 
By Lemma \ref{extens}, this splitting may be used to refine $\Tcan$, yielding an elementary  splitting of   $G$ relative to $\calp$ in which $G_v$ is not elliptic. This contradicts rigidity.

 If $G_v$ is    QH,
first note that 
$\Out(G_v;\mk\calb_v)=\Out(G_v;\mk\Inc_v, \mk\calp_{|G_v})$  and 
$\Out(G_v; \calb_v)=\Out(G_v; \Inc_v, \calp_{|G_v})$ because  $\calb_v=\Inc_v\cup \calp_{|G_v}$ 
(see Subsection \ref{cano},  recalling that groups in $\calp$ are assumed to be infinite).

Lemma \ref{automind} then yields
$$\Out(G_v;\mk\calb_v)\subset 
\rho_v(\Out^0 (G;\mk\calp))
\subset \rho_v(\Out^0 (G;\calp))
\subset\Out(G_v;\calb_v).$$
We conclude by observing that $\Out(G_v;\mk\calb_v)$ has finite index in $\Out(G_v;\calb_v)$
because groups in $\calb_v$ are virtually cyclic, hence have finite outer automorphism group.

If $G_v$ is (conjugate to) $P_i$, the image of   $\Out^0(G;\mk\calp)$ is clearly trivial. 
Since $\calp_{|G_v}$  equals $\{G_v\}$ or is empty, the image of $\Out^0(G;\calp)$ 
contains  $\Out(G_v;\mk\Inc_v)=\Out(G_v; \mk\Inc_v, \calp_{|G_v})$, and we have to show that the index is finite. 
If $\Out(G_\varepsilon)$ is finite for every incident edge $\varepsilon$, 
this follows from  Lemma \ref{automind}. 
In general, we have to control the action of automorphisms on   $G_\varepsilon$ for incident edges $\varepsilon$ with $\Out(G_\varepsilon)$  infinite. Note that   there is no natural map from $\Out(G_v; \Inc_v)$ to $\Out(G_\varepsilon)$  if $N_{G_v}(G_\varepsilon)$ acts non-trivially on $G_\varepsilon$.

 Infiniteness of $\Out(G_\varepsilon)$ implies that the other endpoint of $\varepsilon$ is a rigid vertex $x$: it cannot be QH since $G_\varepsilon$ would then be virtually cyclic. As explained above,   the image of $\Out^0(G;\calp)$ in $\Out(G_x)$ is finite. Any  $\Phi\in\Out^0(G;\calp)$ has a representative $\alpha$ leaving $G_x$ and $G_v$ invariant  (the associated map $f_\alpha$ fixes $\varepsilon$). Replacing $ \Outu^0(G;\calp)$ by a finite index subgroup, we may suppose that 
$\alpha$ acts on $G_x$ as conjugation by some element $g$. This $g$  must be in $G_x$ since $G_x$ equals its normalizer, and in fact in $G_\varepsilon$ because $G_\varepsilon$ is almost malnormal in $G_x$. This shows that $
\Phi$ 
maps into $\Out(G_v;\mk G_\varepsilon)$. Arguing in this way for each incident edge proves the result  for the image of $\rho_v$. 

 Since any edge of $\Tcan$ has a vertex $v$ with $G_v$ virtually cyclic or conjugate to a $P_i$, the previous argument also shows finiteness for images by $\rho_e$. 
\end{proof}

 \begin{dfn}  Let $\Sigma$ be a compact 2-dimensional  hyperbolic orbifold. 
  The \emph{extended mapping class group} $MCG^*(\Sigma)$ is  the group of outer automorphisms of $\pi_1(\Sigma )$ preserving the set of boundary subgroups. 
  
If  $v$ is  
 a QH vertex of $\Tcan$ with underlying orbifold $\Sigma$ and finite fiber $F$,
we define  $MCG^0_{\Tcan}(\Sigma )$   as the group
  $\Out(G_v;\mk\calb_v)$. 
This group   
maps with finite kernel  onto a finite index subgroup of   $MCG^*(\Sigma)$   \cite[pp.\ 240 and 268]{DG2}. 
\end{dfn}

As noted in \cite{Fujiwara_outer,DG2}, 
one can understand the extended mapping class group of an orbifold with mirrors
in terms of the extended mapping class group of a suborbifold without mirrors.

If $G$ is torsion free, $\Sigma $ is a surface,  $MCG^*(\Sigma)$ is the group of isotopy classes of homeomorphisms, and $MCG^0_{\Tcan}(\Sigma )$ is  the group of isotopy classes of homeomorphisms   which map each boundary component to itself in an orientation-preserving way   (in this case it only depends on $\Sigma$ since the fiber is trivial).

Mapping class groups are usually infinite, but there are exceptions. In the torsion-free case, the exceptions are 
the pair of pants and the twice punctured projective plane  \cite[Cor 4.6]{Korkmaz_MCG};  
all other hyperbolic surfaces contain
an essential 2-sided simple closed curve not bounding a M\"obius band, so there is  a Dehn twist of  infinite order. 
As a QH vertex, a pair of pants is rigid; every simple closed curve is homotopically trivial or boundary parallel. A twice punctured projective plane is flexible, but every 2-sided simple closed curve is homotopically trivial, boundary parallel, or bounds a M\"obius band, so there is no non-trivial Dehn twist.

\subsection{Automorphisms of  $G$}\label{aut_1bout}

Motivated by the previous subsection, we define a subgroup $\Out^1(G;\mk\calp)\subset \Out^0(G;\mk\calp)$ as the set of $\Phi$ such that  $\rho_v(\Phi)$ is trivial  if  $G_v$ is virtually cyclic, rigid, or conjugate to a $P_i$,
and    $\rho_v(\Phi)\in\Out(G_v;\mk\calb_v)=MCG^0_{\Tcan}(\Sigma )$ if $G_v$ is QH.  Proposition \ref{imro} shows that this subgroup    has finite index. 
We define a finite index subgroup
$\Out^1(G; \calp)\subset \Out^0(G; \calp)$ similarly,  allowing $\rho_v(\Phi)\in \Out(G_v;\mk\Inc_v)$
if $G_v$ is conjugate to a $P_i$.

 We may now sum up the  discussion in Subsection \ref{autar} as:

  \begin{thm}\label{thm_struct_m}
  Let $G$ be hyperbolic relative to 
  $\calp=\{P_1,\dots,P_n\}$,  with $P_i$ infinite and  finitely
    generated. Assume that $G$ is one-ended relative to $\calp$.

Then  $\Out(G;\mk\calp)$ and $\Out(G;\calp)$ have   finite index subgroups
  $\Out^1(G;\mk\calp)$  and $\Out^1(G;\calp)$ which fit  in   exact sequences
$$1\ra \Tw \ra   \Out^1(G;\mk\calp) \ra \prod_{i=1}^p MCG^0_{\Tcan}(\Sigma_i)\ra 1$$
$$1\ra \Tw \ra   \Outu^1(G;\calp) \ra \prod_{i=1}^p MCG^0_{\Tcan}(\Sigma_i) \times \prod _j \Out(P_j;\mk\Inc_{P_j})\ra 1$$
where:
\begin{itemize*}
\item $\Tw $ is the group of twists of the canonical elementary JSJ decomposition $\Tcan$ relative to $\calp$; it is   a quotient  of a finite direct product where each factor is  a subgroup of $G$ which is virtually cyclic or contained in a $P_i$;

\item
$\Sigma_1,\dots,\Sigma_p$ are the 2-orbifolds occuring in  flexible QH vertices   $v$ of $\Tcan$, and $MCG^0_{\Tcan}(\Sigma_i)$ 
maps with finite kernel onto a finite index subgroup of the extended mapping class group   $MCG^*(\Sigma_i)$; 
\item 
the last product is taken only over those $P_j$'s which occur as vertex stabilizers of $\Tcan$, and $\Inc_{P_j}$ is the set of incident edge groups.  \qed
\end{itemize*}

\end{thm}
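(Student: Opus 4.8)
The plan is to read off both exact sequences directly from the action of the relevant automorphism groups on $\Tcan$, using the product map $\rho\colon\Out^0(\Tcan)\to\prod_{v\in V}\Out(G_v)$ of Subsection \ref{arb}. Recall from Subsection \ref{aut_1bout} that $\Out^1(G;\mk\calp)$ (resp.\ $\Out^1(G;\calp)$) is the subgroup of $\Out^0(G;\mk\calp)$ (resp.\ $\Out^0(G;\calp)$) cut out by the conditions that $\rho_v(\Phi)$ be trivial on virtually cyclic and rigid vertices, lie in $MCG^0_{\Tcan}(\Sigma_i)=\Out(G_v;\mk\calb_v)$ on QH vertices, and (for the unmarked version) lie in $\Out(G_v;\mk\Inc_v)$ on parabolic vertices; Proposition \ref{imro} shows these are of finite index in $\Out(G;\mk\calp)$ and $\Out(G;\calp)$ respectively. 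It remains to identify the kernel and the image of the restriction of $\rho$ to each of these subgroups.

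For the kernel, I would invoke the second assertion of Lemma \ref{lem_virt_nobitwist}: as noted in Subsection \ref{cano}, each edge stabilizer $G_\eps=G_x\cap G_Y$ is infinite and almost malnormal in the non-elementary endpoint group $G_x$, so $\ker\rho=\Tw$ on all of $\Out^0(\Tcan)$. Moreover $\Tw\inc\Out^1(G;\mk\calp)\inc\Out^1(G;\calp)$, since a twist acts by conjugation on every vertex group and therefore has $\rho_v(\Phi)$ trivial for every $v$, vacuously meeting all the defining conditions of $\Out^1$. Hence the kernel of $\rho$ restricted to either $\Out^1$ is exactly $\Tw$.

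For the image, the defining conditions force $\rho(\Out^1(G;\mk\calp))\inc\prod_{i=1}^p MCG^0_{\Tcan}(\Sigma_i)$ (only flexible QH vertices contribute) and $\rho(\Out^1(G;\calp))\inc\prod_{i=1}^p MCG^0_{\Tcan}(\Sigma_i)\times\prod_j\Out(P_j;\mk\Inc_{P_j})$, the last product being over parabolic vertex stabilizers. Surjectivity I would obtain from the left-hand inclusion of Lemma \ref{automind}: each prescribed factor lies in the corresponding marked-incident group ($MCG^0_{\Tcan}(\Sigma_i)=\Out(G_v;\mk\Inc_v,\mk\calp_{|G_v})$ at a QH vertex, $\Out(P_j;\mk\Inc_{P_j})$ at a parabolic vertex), hence extends ``by the identity'' to an element of $\Out^0(\Tcan)$ acting as a conjugation off the chosen vertex; as the supports are distinct vertices these extensions compose to a single $\Phi$ realizing the prescribed $\rho$-image. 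One checks that $\Phi$ lands in the correct marked class because it acts trivially on each $P_i$: on a QH vertex $\Out(G_v;\mk\calb_v)$ already fixes $\calp_{|G_v}\inc\calb_v$, and off the QH vertices $\Phi$ is a conjugation. Splicing kernel and image yields the two short exact sequences; the listed properties of $\Tw$ (a quotient of a product $\prod_e Z_{G_{o(e)}}(G_e)$ with virtually cyclic or parabolic factors) and of $MCG^0_{\Tcan}(\Sigma_i)$ are recorded in Subsections \ref{arb}, \ref{autar} and the definition preceding the theorem.

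The genuinely hard input --- finiteness of $\rho_v(\Out^0)$ at rigid vertices, which rests on Theorem \ref{thm_sci} via a refinement argument --- is already packaged inside Proposition \ref{imro}, so the proof here is essentially bookkeeping. I expect the only delicate point to be the last one: ensuring that the extension-by-identity at the flexible and parabolic vertices does not perturb the prescribed action on the parabolic subgroups, which is precisely why the QH factor is taken to be $\Out(G_v;\mk\calb_v)$ rather than $\Out(G_v;\calb_v)$.
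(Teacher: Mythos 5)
Your proposal is correct and follows essentially the same route as the paper: the theorem is exactly the summary of the discussion in Subsections \ref{autar} and \ref{aut_1bout}, with Proposition \ref{imro} (resting on Theorem \ref{thm_sci}) giving finite index of $\Out^1$, Assertion 2 of Lemma \ref{lem_virt_nobitwist} identifying $\ker\rho=\Tw$ via almost malnormality of edge stabilizers in the non-elementary endpoint groups, and the left-hand inclusion of Lemma \ref{automind} (extension by the identity, together with $\calb_v=\Inc_v\cup\calp_{|G_v}$ at QH vertices) giving surjectivity onto the product. Your closing remark about why the QH factor must be $\Out(G_v;\mk\calb_v)$ rather than $\Out(G_v;\calb_v)$ is precisely the point the paper handles through Remark \ref{tric} in the proof of Lemma \ref{automind}.
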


 Note that $\Tw$ is   slender   (resp.\ small, virtually solvable, virtually nilpotent, virtually abelian) if the $P_i$'s are. Also note that $\Outu(G;\calp)$ has finite index in $\Out(G)$ if the $P_i$'s are small but not virtually cyclic, since they may be characterized up to conjugacy as   maximal among the subgroups of $G$ which are small but not virtually cyclic. More generally, this holds if  no $P_i$ is relatively hyperbolic  \cite[Lemma 3.2]{MiOs_fixed}.

Recall that $G$ is \emph{toral relatively hyperbolic} if it is torsion-free  and hyperbolic relative to a finite family   $\calp$ of finitely generated abelian  subgroups. 
Limit groups, and more generally groups acting freely on $\R^n$-trees, are toral relatively hyperbolic \cite{Dah_combination,Gui_limit}. 

\begin{cor} \label{limgp}
  Let $G$ be toral relatively hyperbolic and  one-ended.
Then some finite index subgroup $\Out^1(G)$ of $\Out(G)$ fits in an exact sequence 
$$1\ra \Tw \ra   \Out^1(G) \ra \prod_{i=1}^p MCG^0(\Sigma_i) \times  \prod_{k=1}^m 
  GL_{r_k,n_k}(\bbZ) 
 \ra 1$$
where $\Tw$ is finitely generated free abelian,    $MCG^0(\Sigma_i)$ is  the group of isotopy classes of homeomorphisms of a compact surface $\Sigma_i$     mapping each boundary component to itself in an orientation-preserving way,  and  $GL_{r,n}(\bbZ) =M_{r,n}(\bbZ)\rtimes GL_{r}(\bbZ)$
is the group of automorphisms of $\bbZ^{n+r}$ fixing the first $n$ generators. 
\end{cor}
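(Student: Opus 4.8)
The plan is to specialize the second exact sequence of Theorem~\ref{thm_struct_m} to the toral case and then identify each of its factors explicitly, using that $G$ is torsion-free with free abelian peripheral subgroups. First I would fix the parabolic structure: since $G$ is toral relatively hyperbolic, I take $\calp=\{P_1,\dots,P_n\}$ to be representatives of the conjugacy classes of maximal \emph{non-cyclic} abelian subgroups, so that each $P_i$ is infinite, finitely generated, isomorphic to some $\bbZ^{m_i}$ with $m_i\geq 2$, and $G$ is hyperbolic relative to $\calp$ (the cyclic maximal abelian subgroups may be dropped from the parabolic structure). As the $P_i$ are exactly the subgroups maximal among those that are small but not virtually cyclic, every automorphism of $G$ permutes their conjugacy classes, so $\Out(G;\calp)$ has finite index in $\Out(G)$ (the remark following Theorem~\ref{thm_struct_m}, or \cite[Lemma 3.2]{MiOs_fixed}). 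Since $G$ is torsion-free, splitting over a finite subgroup means splitting freely, and the freely indecomposable abelian $P_i$ are automatically elliptic in any free splitting; hence $G$ one-ended is the same as $G$ one-ended relative to $\calp$, and Theorem~\ref{thm_struct_m} applies. I then set $\Out^1(G):=\Out^1(G;\calp)$, a finite index subgroup of $\Out(G)$, sitting in
\[
1\ra \Tw \ra \Out^1(G) \ra \prod_{i=1}^p MCG^0_{\Tcan}(\Sigma_i)\times\prod_j \Out(P_j;\mk\Inc_{P_j})\ra 1.
\]

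Next I would identify the two products. Since $G$ is torsion-free, every flexible QH vertex of $\Tcan$ has trivial fiber, so each $\Sigma_i$ is a surface and, as recorded in the excerpt, $MCG^0_{\Tcan}(\Sigma_i)=\Out(\pi_1\Sigma_i;\mk\calb_{v_i})$ is exactly $MCG^0(\Sigma_i)$, the isotopy classes of homeomorphisms fixing each boundary component with orientation. For the abelian factors, write $P_j\cong\bbZ^{m_j}$. As $P_j$ is abelian, inner automorphisms are trivial and acting trivially on a subgroup means fixing it pointwise, so $\Out(P_j;\mk\Inc_{P_j})$ is the group of automorphisms of $\bbZ^{m_j}$ fixing $K_j=\langle\Inc_{P_j}\rangle$ pointwise. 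Let $\bar K_j$ be the isolator (primitive closure) of $K_j$, of rank $n_j$; since $\bbZ^{m_j}$ is torsion-free, any automorphism fixing $K_j$ pointwise fixes $\bar K_j$ pointwise as well, and $\bar K_j$ is a direct summand. Choosing an adapted basis $\bbZ^{m_j}=\bar K_j\oplus\bbZ^{r_j}$ with $r_j=m_j-n_j$ identifies this group with the automorphisms of $\bbZ^{n_j+r_j}$ fixing the first $n_j$ generators, i.e. with $GL_{r_j,n_j}(\bbZ)$; in block form these are the matrices $\left(\begin{smallmatrix} I_{n_j} & B\\ 0 & D\end{smallmatrix}\right)$ with $D\in GL_{r_j}(\bbZ)$ and $B$ arbitrary, giving the asserted $M_{r_j,n_j}(\bbZ)\rtimes GL_{r_j}(\bbZ)$. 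Reindexing the $P_j$ that occur as vertex stabilizers as $P_1,\dots,P_m$ produces the product $\prod_{k=1}^m GL_{r_k,n_k}(\bbZ)$.

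Finally I would analyze $\Tw$. By Theorem~\ref{thm_struct_m} it is a quotient of the finite product $\prod_{e\in E}Z_{G_{o(e)}}(G_e)$ by the vertex and edge relations. A toral relatively hyperbolic group is CSA with all abelian subgroups finitely generated free abelian, so each factor $Z_{G_{o(e)}}(G_e)$ (being the centralizer of the infinite group $G_e$, hence abelian) is f.g.\ free abelian and $\Tw$ is a finitely generated abelian group. Moreover the non-elementary rigid and QH vertex groups, being non-abelian in a CSA group, are centerless, so all vertex relations coming from such vertices vanish and only the elementary vertices and the edge groups contribute relations.

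The remaining point, and the one I expect to be the main obstacle, is to upgrade ``finitely generated abelian'' to ``finitely generated \emph{free} abelian''; this is where toral-ness (rather than mere relative hyperbolicity) is essential. It amounts to showing that the sublattice of $\prod_e Z_{G_{o(e)}}(G_e)$ generated by the surviving (edge and elementary-vertex) relations is a direct summand, so that the quotient is torsion-free. The natural route is to use that in the canonical tree of cylinders the edge group $G_\varepsilon=G_x\cap G_Y$ is isolated (root-closed) in the adjacent maximal abelian vertex group $G_Y$, so that the diagonal edge relations embed primitively; granting this, no torsion is created and $\Tw$ is finitely generated free abelian. Together with the two identifications above, this yields the stated exact sequence and completes the proof.
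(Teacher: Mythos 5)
Your setup and the identification of the two product factors are correct and follow the paper's own route: drop the cyclic parabolics so that $\Out(G;\calp)$ has finite index, apply Theorem~\ref{thm_struct_m}, identify $MCG^0_{\Tcan}(\Sigma_i)$ with $MCG^0(\Sigma_i)$ (trivial fiber), and your isolator argument identifying $\Out(P_j;\mk\Inc_{P_j})$ with $GL_{r_j,n_j}(\bbZ)$ is a correct expansion of a step the paper merely asserts. The gap is exactly where you predicted, and the route you propose to close it does not work, for two reasons. First, the key claim that $G_\varepsilon=G_x\cap G_Y$ is root-closed in $G_Y$ is false in general: let $\Sigma$ be a once-punctured torus with boundary curve $c$ and let $G=\pi_1(\Sigma)*_{\langle c\rangle}\bbZ^2$ with $c$ identified to $(2,0)$. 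This group is toral relatively hyperbolic (by Dahmani's combination theorem) and one-ended, its canonical decomposition is this amalgam, and the edge group $\langle (2,0)\rangle$ is not root-closed in the abelian vertex group $\bbZ^2$. (The paper itself warns in Subsection~\ref{cano} that $G_\varepsilon$ need not be maximal elementary in $G_Y$.) Second, even granting primitivity of each family of relations, the inference ``each relation sublattice is primitive, hence the quotient is torsion-free'' is invalid: a sum of primitive sublattices of $\bbZ^N$ need not be primitive, e.g.\ $\bbZ(1,0)+\bbZ(1,2)\subset\bbZ^2$ has quotient $\bbZ/2\bbZ$. Note moreover that the primitivity of a single diagonal $\{(c,c):c\in G_\varepsilon\}\subset G_\varepsilon\times G_Y$ holds automatically by torsion-freeness, without any root-closedness of $G_\varepsilon$ in $G_Y$; so the condition you isolate was never the relevant one.

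What makes the computation work --- and this is the paper's actual argument --- is a block decomposition of the relation lattice coming from CSA-ness, not primitivity of the individual relations. Since non-elementary vertex groups are non-abelian CSA they are centerless (so no vertex relations at $V_0$-vertices, as you observed), and since $G_\varepsilon$ is maximal abelian in $G_x$ it is self-centralizing there: $Z_{G_x}(G_\varepsilon)=G_\varepsilon$. Hence $\prod_{e}Z_{G_{o(e)}}(G_e)$ is a product of blocks $G_\varepsilon\times G_Y$, one per non-oriented edge $\varepsilon=(x,Y)$; each edge relation is the diagonal of its own block, and $(G_\varepsilon\times G_Y)/\Delta G_\varepsilon\cong G_Y$ via $(a,b)\mapsto b-a$. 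After this quotient, the only remaining relations are the diagonals $\Delta G_Y\subset (G_Y)^{|E_Y|}$, again one per $V_1$-vertex $Y$ and in disjoint blocks, so $\Tw\cong\prod_{Y}(G_Y)^{|E_Y|-1}$, which is free abelian because each $G_Y$ is. These successive per-block quotients are what replace your primitivity claim; in the example above they give $\Tw=0$ even though $G_\varepsilon$ is not isolated in $G_Y$. To repair your write-up, replace the last paragraph by this explicit computation (it uses only $Z(G_x)=1$ and $Z_{G_x}(G_\varepsilon)=G_\varepsilon$, both consequences of CSA, together with the bipartite structure of $\Tcan$).
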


 See Theorem 5.3 of \cite{BuKhMi_isomorphism} and Theorem 6.5 of \cite {GL1} for the case of limit groups, based on results from \cite{KhMy_effective} and \cite{BuKhMi_isomorphism}.

\begin{proof}  We may assume that no $P_i\in\calp$ is cyclic, so $\Outu(G;\calp)$ has finite index in $\Out(G)$.
If $P_j$ is isomorphic to  $\Z^{a}$, then $\Out(P_j;\mk\Inc_{P_j})$ is isomorphic to  some  $GL_{r,n}(\bbZ)
 $ with $r+n=a$, so the exact sequence follows from Theorem \ref{thm_struct_m}. We know that the group of twists $\Tw$  of $\Tcan$  is finitely generated and   abelian. There remains to check that it is torsion-free.

 Recall  from Subsection \ref{arb} that $\Tw$  is generated by the product of all $Z_{G_{o(e)}}(G_e)$, subject to edge and vertex relations. Denoting an edge of  $\Gcan=\Tcan/G$ by
$ \varepsilon=(x,Y)$ with   $x\in V_0(\Gcan)$ and $Y\in V_1(\Gcan)$,
there is no relation at the vertex $x$ since $Z(G_x)$ is trivial. 
Moreover, $Z_{G_x}(G_\varepsilon)=G_\varepsilon$, so the edge relation identifies the twists around $\varepsilon$ near $x$ with   twists near $Y$. Thus $\Tw$ is the direct product, over vertices $v$ of 
 $ \Gcan$ carrying an  abelian group, of
$(\prod_{e\in E_v} Z_{G_v}(G_e))/Z(G_v)$ where $E_v$ is the set of  oriented edges with origin  $v$ and 
$Z(G_v)$ is embedded diagonally.
Since $G_v$ is abelian, $Z_{G_v}(G_e)= Z(G_v)=G_v$,  so $\Tw$ is isomorphic to a finite direct product  $\prod (G_v)^{ | E_v | -1}$
of   abelian vertex groups. It is therefore torsion-free. 
\end{proof}

\begin{cor} \label{VFL}
  If $G$ is a  toral relatively hyperbolic group,
  $\Out(G)$ is virtually torsion-free and  has a finite index subgroup with a finite classifying space.
\end{cor}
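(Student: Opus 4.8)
The plan is to prove the stronger statement that $\Out(G)$ is of type \emph{VF}, i.e.\ that it has a finite-index subgroup with a finite classifying space; since a finite classifying space has finite cohomological dimension, any such subgroup is torsion-free, so virtual torsion-freeness will come for free. Both conclusions are invariant under passing to finite-index sub- and supergroups, so throughout I am free to replace $\Out(G)$ by any convenient finite-index subgroup.

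First I would dispose of the one-ended case, which is where Corollary \ref{limgp} does the work. That corollary produces a finite-index subgroup $\Out^1(G)$ fitting in
\[
1\ra \Tw \ra \Out^1(G) \ra \prod_{i=1}^p MCG^0(\Sigma_i)\times\prod_{k=1}^m GL_{r_k,n_k}(\bbZ)\ra 1,
\]
with $\Tw$ finitely generated free abelian. Call the quotient $Q$. Each factor of $Q$ is VF: the mapping class groups $MCG^0(\Sigma_i)$ are VF by Harer's work (a cocompact spine of Teichm\"uller space yields a finite model after passing to a torsion-free finite-index subgroup), while $GL_{r,n}(\bbZ)=\bbZ^{rn}\rtimes GL_r(\bbZ)$ is VF because $GL_r(\bbZ)$ is arithmetic, hence VF by Borel--Serre, $\bbZ^{rn}$ is of type F, and type F is closed under extension. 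A finite direct product of VF groups is VF, so $Q$ is VF.

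Now I would choose a finite-index subgroup $Q_0\le Q$ with a finite classifying space; being of type F it is torsion-free. Let $\Gamma_0\le\Out^1(G)$ be its preimage, a finite-index subgroup sitting in $1\to\Tw\to\Gamma_0\to Q_0\to1$. Two observations finish this case. First, $\Gamma_0$ is \emph{torsion-free}: a finite-order element maps to a torsion element of the torsion-free group $Q_0$, hence lies in $\Tw$, which is itself torsion-free. Second, $\Gamma_0$ is of \emph{type F}, being an extension of the type-F group $\Tw\cong\bbZ^d$ by the type-F group $Q_0$ (closure of type F under extensions, via the fibration of classifying spaces). Thus $\Gamma_0$ is a finite-index subgroup of $\Out(G)$ with a finite classifying space. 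The crucial point is that the kernel $\Tw$ is torsion-free, which makes the virtual torsion-freeness automatic and bypasses any analysis of finite-by-(type F) groups.

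For the general case I would reduce to one-ended factors. Since $G$ is torsion-free, its Grushko decomposition writes $G=G_1*\dots*G_k*F_r$ with each $G_i$ one-ended and freely indecomposable and $F_r$ free; the collection $\{G_i\}$ of conjugacy classes of factors is canonical, so $\Out(G)$ permutes it and the subgroup preserving each class has finite index. Each $G_i$, as a free factor, is again toral relatively hyperbolic and one-ended, so $\Out(G_i)$ is VF by the previous step. To assemble these into VF-ness of $\Out(G)$ I would invoke the outer space for free products: the subgroup $\Out(G;\{G_i\})$ acts properly and cocompactly on a contractible finite-dimensional complex (its spine) whose cell stabilizers are commensurable to direct products of the factor groups $\Out(G_i)$ with finite groups. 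A group acting properly cocompactly on a contractible finite-dimensional complex with VF cell stabilizers is itself VF, yielding the conclusion. This last step is the main obstacle: it relies on the construction and contractibility of the outer space for free products and on the identification of its cell stabilizers, machinery external to the one-ended analysis above, whereas the one-ended case is essentially formal once Corollary \ref{limgp} is in hand.
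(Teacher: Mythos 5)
Your treatment of the one-ended case is correct and is essentially the paper's argument: the published proof simply cites Corollary \ref{limgp}, and your unpacking (VF-ness of the $MCG^0(\Sigma_i)$ and of $GL_{r_k,n_k}(\bbZ)$, closure of type F under products and extensions, torsion-freeness coming from the fact that $\Tw$ is free abelian) is the intended, standard verification.

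The gap is in the infinitely-ended case, and it is concentrated in your description of the cell stabilizers of the outer space of the free product $G=G_1*\dots*G_k*F_r$. They are \emph{not} commensurable to products of the $\Out(G_i)$ with finite groups. A point of that space is a marked graph of groups with \emph{trivial} edge groups, so for every edge $e$ with origin $v$ one has $Z_{G_v}(G_e)=G_v$: the group of twists of such a splitting is a quotient of $\prod_{e}G_{o(e)}$, and the stabilizer of a reduced Grushko splitting contains, for instance when $G=G_1*G_2$, the subgroup $G_1/Z(G_1)\times G_2/Z(G_2)$ of $\Out(G)$. So the stabilizers are built not only from the $\Out(G_i)$'s but also from the factors $G_i$ themselves (modulo their centers), and to make the assembly work one needs the groups $G_\ell$ \emph{and} $G_\ell/Z(G_\ell)$ to have finite classifying spaces. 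For toral relatively hyperbolic groups this is a theorem of Dahmani \cite{Dah_classifying}, an essential input that your argument never invokes, and it is exactly what the paper's proof cites before applying Theorem 5.2 of \cite{GL1}. A secondary problem is your closing principle: an action that is proper (finite point stabilizers) cannot have infinite VF stabilizers, and the statement that a cocompact action on a contractible finite-dimensional complex with VF stabilizers forces VF is not a quotable theorem. Indeed the paper itself warns that even the argument of \cite{GL1} was insufficient as written: one needs finite classifying spaces for the stabilizer groups themselves, not merely for finite-index subgroups of them, and this is repaired in \cite{GL_McCool} by first restricting to a suitable finite-index subgroup of $\Out(G)$. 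Your proposal, which waves at this step as "machinery external to the one-ended analysis," leaves precisely this delicate point unaddressed.
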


\begin{proof} This follows from Corollary \ref{limgp} if $G$ is one-ended. In general,   we write 
$G=G_1*\dots*G_q*F$ with $G_\ell$ one-ended and $F$ free. All groups $G_\ell$ and $G_\ell/Z(G_\ell)$ have a finite classifying space  \cite{Dah_classifying}, so 
we
can apply  Theorem 5.2 of 
\cite{GL1}.   (We mention here that the arguments given in \cite{GL1} are insufficient to get a finite classifying space: there should exist finite classifying spaces for the groups   $\Out^S(G)$ themselves (rather than for finite index subgroups); this is achieved by   restricting to some finite index subgroup of $\Out(G)$, see \cite {GL_McCool}  for details.) 
\end{proof}

\subsection{The relative case}\label{rela}

We   generalize the analysis   of Subsection \ref{autar}
to a relative situation. 

Let $G,\calp$ be as above, 
and  fix another 
family of finitely generated subgroups $\calh=\{H_1,\dots, H_q\}$.
Assume that $G$ is one-ended relative to $\calp\cup\calh$,
and 
 let now $\Tcan$ be
 the canonical elementary JSJ tree relative to $\calp\cup\calh$.

  \begin{thm}\label{thm_struct_m_rel}
Under these hypotheses,
  $\Out(G;\mk\calp,\mk\calh)$ and $\Out(G;\calp,\mk\calh)$ have   finite index subgroups
  $\Out^1(G;\mk\calp,\mk\calh)$ and $\Out^1(G;\calp,\mk\calh)$     which fit  in   exact sequences
$$1\ra \Tw \ra   \Out^1(G;\mk\calp,\mk\calh) \ra \prod_{i=1}^p MCG^1_{\Tcan}(\Sigma_i)\ra 1$$
$$1\ra \Tw \ra  \Out^1(G;\calp,\mk\calh) \ra \prod_{i=1}^p MCG^1_{\Tcan}(\Sigma_i) \times \prod_j  \Out(P_j;\mk\Inc_{P_j},\mk \calh_{|P_j})\ra 1$$
 as in Theorem \ref{thm_struct_m}.

The group  $MCG^1_{\Tcan}(\Sigma_i)$ equals $\Out(G_v;\mk\calb_v, \mk\calf_v)$,
where $\calf_v$ is a set 
of representatives of   conjugacy classes of finite subgroups in $G_v$;
it  is a finite index subgroup    of  $MCG^0_{\Tcan}(\Sigma_i)=\Out(G_v;\mk\calb_v)$.
\end{thm}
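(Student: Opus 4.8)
The plan is to follow the proof of Theorem \ref{thm_struct_m} essentially verbatim, replacing Proposition \ref{imro} by its relative counterpart Proposition \ref{imro_rel} in which every automorphism group carries the extra marking $\mk\calh$ and every vertex group $G_v$ is equipped with the induced family $\calh_{|G_v}$ alongside $\Inc_v$ and $\calp_{|G_v}$. As before, I would let $\Out(G;\calp,\mk\calh)$ and $\Out(G;\mk\calp,\mk\calh)$ act on $\Tcan$ (now the canonical tree relative to $\calp\cup\calh$), pass to the finite-index subgroups acting trivially on $\Gcan=\Tcan/G$, and study the product map $\rho:\Out^0(\Tcan)\to\prod_v\Out(G_v)$. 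Since edge stabilizers of $\Tcan$ are almost malnormal in the adjacent non-elementary vertex group, Assertion 2 of Lemma \ref{lem_virt_nobitwist} still yields $\ker\rho=\Tw$; moreover twists act as conjugations on every vertex group, so $\Tw$ lies in $\Out(G;\mk\calp,\mk\calh)$ and in the kernel of each $\rho_v$. Thus both exact sequences will have $\Tw$ as kernel, and the entire content is the computation of the $\rho_v$-images.

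The vertex-by-vertex analysis repeats that of Proposition \ref{imro}, inserting $\calh_{|G_v}$ throughout. For $G_v$ virtually cyclic the image is finite because $\Out(G_v)$ is. For $G_v$ rigid, Lemma \ref{rh} presents it as relatively hyperbolic with finitely generated parabolic structure $\calq_v=\Inc_v\cup\calp_{|G_v}$ (Lemma \ref{fingen}), Lemma \ref{automind} places $\rho_v(\Out^0(G;\calp,\mk\calh))$ inside $\Out(G_v;\calq_v,\mk\calh_{|G_v})$, and Theorem \ref{thm_sci} applied to $G_v$ with parabolic structure $\calq_v$ and marked family $\calh_{|G_v}$ shows that if this image were infinite then $G_v$ would split over an elementary subgroup relative to $\calq_v\cup\calh_{|G_v}$; by Lemma \ref{extens} this refines $\Tcan$ into an elementary splitting of $G$ relative to $\calp\cup\calh$ in which $G_v$ is not elliptic, contradicting rigidity. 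For $G_v$ conjugate to $P_j$ the image of $\Out^0(G;\mk\calp,\mk\calh)$ is trivial, while the image of $\Out^0(G;\calp,\mk\calh)$ contains $\Out(P_j;\mk\Inc_{P_j},\mk\calh_{|P_j})$ with finite index, the control of incident edges with infinite $\Out(G_\eps)$ being exactly as in Proposition \ref{imro} (the other endpoint is rigid, and almost malnormality of $G_\eps$ forces the conjugating element into $G_\eps$ after passing to a finite-index subgroup).

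The genuinely new point, and the step I expect to be the main obstacle, is the QH vertex, where the object $MCG^1_{\Tcan}(\Sigma)=\Out(G_v;\mk\calb_v,\mk\calf_v)$ appears. Here I would first record, as in Subsection \ref{cano} for non-empty $\calh$, that every $H\in\calh_{|G_v}$ either lies with finite index in a full boundary subgroup of $\calb_v$ or has finite image in $\pi_1(\Sigma)$; in the latter case, since the fiber $F$ is finite, $H$ is itself a finite subgroup. Consequently, acting trivially on $\calb_v$ and on the finite subgroups $\calf_v$ forces trivial action on $\calh_{|G_v}$ (a conjugation on a boundary subgroup restricts to a conjugation on any subgroup, while $\calf_v$ absorbs the finite $H$), so that $MCG^1_{\Tcan}(\Sigma)\subset\Out(G_v;\mk\Inc_v,\calp_{|G_v},\mk\calh_{|G_v})$; the reverse containment up to finite index follows from the right-hand inclusion of Lemma \ref{automind} together with the fact that groups in $\calb_v$ are virtually cyclic. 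The claim $MCG^1_{\Tcan}(\Sigma)\subset MCG^0_{\Tcan}(\Sigma)$ of finite index is then the assertion that imposing $\mk\calf_v$ is a finite-index condition, which holds because $G_v$ has only finitely many conjugacy classes of finite subgroups (the finite subgroups of the $2$-orbifold group $\pi_1(\Sigma)$ fall into finitely many classes and $F$ is finite, cf.\ Lemma \ref{borne}) and each such subgroup has finite automorphism group.

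Finally I would assemble the exact sequences exactly as for Theorem \ref{thm_struct_m}: define $\Out^1(G;\mk\calp,\mk\calh)$ and $\Out^1(G;\calp,\mk\calh)$ by requiring $\rho_v$ to be trivial at virtually cyclic, rigid and parabolic vertices (allowing $\Out(P_j;\mk\Inc_{P_j},\mk\calh_{|P_j})$ at parabolic vertices in the second case) and to lie in $MCG^1_{\Tcan}(\Sigma)$ at QH vertices. The image computations above show these are finite-index subgroups; surjectivity of $\rho$ onto the displayed products follows from the left-hand inclusion of Lemma \ref{automind} by extending any prescribed collection $(\rho_v)$ ``by the identity''; and the kernel is $\Tw$ as noted. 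The description of $\Tw$ as a quotient of a product of centralizers $Z_{G_{o(e)}}(G_e)$ whose factors are virtually cyclic or parabolic is unchanged from Theorem \ref{thm_struct_m}.
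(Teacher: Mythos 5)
Your strategy coincides with the paper's: reduce the theorem to the relative analogue of Proposition \ref{imro} (namely Proposition \ref{imro_rel}), with the same identification of the kernel ($\ker\rho=\Tw$ via almost malnormality of edge stabilizers and Lemma \ref{lem_virt_nobitwist}), the same analysis at rigid, virtually cyclic and parabolic vertices, the same assembly of the exact sequences by extension ``by the identity'', and the same forward inclusion $\Out(G_v;\mk\calb_v,\mk\calf_v)\subset\Out(G_v;\mk\Inc_v,\calp_{|G_v},\mk\calh_{|G_v})$ at QH vertices (infinite groups of $\calh_{|G_v}$ lie in boundary subgroups, finite ones are absorbed by $\calf_v$).

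There is, however, one genuine gap, located exactly where the relative case differs from the absolute one. At QH vertices you assert that the ``reverse containment up to finite index follows from the right-hand inclusion of Lemma \ref{automind} together with the fact that groups in $\calb_v$ are virtually cyclic''. Virtual cyclicity only yields that, \emph{for an automorphism already known to send each group of $\calb_v$ to a conjugate}, acting trivially on it is a finite-index condition. But a priori an element of $\Out(G_v;\Inc_v,\calp_{|G_v},\mk\calh_{|G_v})$ need not preserve, even up to conjugacy, the groups of $\calb_v\setminus(\Inc_v\cup\calp_{|G_v})$: these are the boundary subgroups used only by $\calh$, and the definition of that automorphism group does not refer to them at all. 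This is precisely what the paper has to argue: such a $B\in\calb_v$ contains with finite index a conjugate $H'$ of some $H\in\calh_{|G_v}$; since $H'$ is infinite and $B$ is the \emph{unique} maximal elementary subgroup of $G_v$ containing $H'$ (cf.\ Lemma \ref{borne}), any automorphism acting trivially on $H$ maps $B$ to a conjugate. This gives the inclusion $\Out(G_v;\Inc_v,\calp_{|G_v},\mk\calh_{|G_v})\subset\Out(G_v;\calb_v)$, and only then do finiteness of $\Out(B)$ for the (finitely many) $B\in\calb_v$ and finiteness of $\calf_v$ yield the finite-index conclusion, hence the ``finite index subgroup'' statements of the theorem. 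You already observed the needed structural fact (infinite groups of $\calh_{|G_v}$ sit with finite index inside boundary subgroups); what is missing is this converse use of it, without which the finite-index claim at QH vertices, and therefore the theorem itself, is not established.
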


 Note that $\calf_v$ is a finite set since the QH vertex group $G_v$ maps onto a 2-orbifold group with finite kernel.   The family $\calf_v$ is not needed if all groups in $\calh$ are infinite (see the proof below).

 The theorem  is proved as in the absolute case, replacing Proposition \ref{imro} by the following result.

\begin{prop} \label{imro_rel} The images of  $\Out^0 (G;\calp,\mk\calh )$ and $\Out ^0(G;\mk\calp,\mk\calh )$ 
by $\rho_v:\Out^0(\Tcan)\to  \Out(G _v)$ may be described as follows:
\begin{itemize*}
\item If $G_v$ is virtually cyclic or rigid, both images are finite.
\item If $G_v$ is a QH vertex stabilizer, 
both images contain $\Out(G_v;\mk\calb_v,\mk \calf_v)$
with finite index (where $\calf_v$ is  as in Theorem \ref{thm_struct_m_rel}
and $\calb_v$ is as in 
Definition \ref{bv}). 

\item   
If $G_v$ is (conjugate to) $P_i$, the image of   $\Out^0(G;\mk\calp,\mk\calh)$ is trivial. 
 The image of $\Out^0(G;\calp,\mk\calh)$ 
contains  $\Out(G_v;\mk\Inc_v,\mk\calh_{|G_v}) $ with finite index.
\end{itemize*}
\end{prop}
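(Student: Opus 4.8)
The plan is to run the argument of Proposition \ref{imro} case by case, carrying the extra family $\calh$ through by means of its induced structure $\calh_{|G_v}$. The backbone is Lemma \ref{automind}, which already sandwiches the relevant images:
\[
\Out(G_v;\mk\Inc_v,\mk\calp_{|G_v},\mk\calh_{|G_v})
\subset \rho_v(\Out^0(G;\mk\calp,\mk\calh))
\subset \rho_v(\Out^0(G;\calp,\mk\calh))
\subset \Out(G_v;\Inc_v,\calp_{|G_v},\mk\calh_{|G_v}),
\]
the outer inclusions coming from Lemma \ref{automind} (grouping $\calp$ with the marked family on the left), so in each case the task is to identify the correct McCool group of $G_v$ and to check that it has finite index in this pair.

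For $G_v$ virtually cyclic, $\Out(G_v)$ is finite. For $G_v$ rigid I would argue exactly as in Proposition \ref{imro}: by Lemmas \ref{rh} and \ref{fingen}, $G_v$ is hyperbolic relative to the finite family of finitely generated subgroups $\calq_v=\Inc_v\cup\calp_{|G_v}$, and $\calh_{|G_v}$ is finitely generated, so the image $\rho_v(\Out^0(G;\calp,\mk\calh))$ lies in $\Out(G_v;\calq_v,\mk\calh_{|G_v})$. Were this infinite, Theorem \ref{thm_sci} applied to $G_v$ (with parabolic structure $\calq_v$ and marked family $\calh_{|G_v}$) would produce an elementary splitting of $G_v$ relative to $\calq_v\cup\calh_{|G_v}$; refining $\Tcan$ at $v$ via Lemma \ref{extens}, together with Remark \ref{tric} applied to $\calh$ to keep each $H_j$ elliptic, would yield an elementary splitting of $G$ relative to $\calp\cup\calh$ in which $G_v$ is not elliptic, contradicting rigidity.

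The QH case is where $\calf_v$ enters, and it is the step I expect to require the most care. Using the analysis of Subsection \ref{cano}, $\Inc_v$, $\calp_{|G_v}$, and the infinite members of $\calh_{|G_v}$ are all conjugate into full boundary subgroups of $\calb_v$, while the finite members of $\calh_{|G_v}$ are finite subgroups of $G_v$, hence conjugate into $\calf_v$. Consequently, an automorphism acting trivially on $\calb_v$ (a representative restricting to the identity on each full boundary subgroup $B$) also restricts to the identity on every subgroup of $B$, so it acts as a conjugation on $\Inc_v$, $\calp_{|G_v}$, and the infinite part of $\calh_{|G_v}$; and a representative equal to the identity on some $F\in\calf_v$ acts as a conjugation on any finite $H$ conjugate to $F$ (a direct computation with the conjugator). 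This gives the lower bound $\Out(G_v;\mk\calb_v,\mk\calf_v)\subset\Out(G_v;\mk\Inc_v,\mk\calp_{|G_v},\mk\calh_{|G_v})$. For finite index I would note that $\rho_v(\Out^0(G;\calp,\mk\calh))\subset\Out(G_v;\calb_v)$ — preserving $\Inc_v,\calp_{|G_v}$ up to conjugacy and $\calh_{|G_v}$ trivially pins down each boundary subgroup of $\calb_v$ — and that $\Out(G_v;\mk\calb_v,\mk\calf_v)$ has finite index in $\Out(G_v;\calb_v)$, since boundary subgroups are virtually cyclic and finite subgroups have finite automorphism group, so both marking conditions cost only finite index.

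Finally, for $G_v$ conjugate to some $P_i$ the image of $\Out^0(G;\mk\calp,\mk\calh)$ is trivial, because the induced automorphism of $G_v=P_i$ is inner; and the lower bound $\Out(G_v;\mk\Inc_v,\mk\calh_{|G_v})$ for $\Out^0(G;\calp,\mk\calh)$ comes from Lemma \ref{automind} with $\calp_{|G_v}=\{G_v\}$ or empty. The finite-index claim is verbatim the delicate argument of Proposition \ref{imro}: for incident edges $\varepsilon$ with $\Out(G_\varepsilon)$ finite it follows from Proposition 2.3 of \cite{Lev_automorphisms}, while for $\varepsilon$ with $\Out(G_\varepsilon)$ infinite the opposite endpoint $x$ is rigid, so by the rigid case its image in $\Out(G_x)$ is finite; after passing to a finite-index subgroup, representatives act on $G_x$ as conjugation by some $g$, which lies in $G_\varepsilon$ by almost malnormality (Corollary \ref{almar}), forcing trivial action on $G_\varepsilon$. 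The marking $\mk\calh_{|G_v}$ is carried along unchanged throughout. The main obstacle is thus the bookkeeping of the QH case — in particular verifying that $\calf_v$ exactly accounts for the finite members of $\calh_{|G_v}$ — together with reusing the rigid-endpoint control of edge groups with infinite outer automorphism group in the parabolic case.
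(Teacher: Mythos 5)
Your proposal is correct and follows essentially the same route as the paper's proof: like the paper, you treat the statement as Proposition \ref{imro} with $\calh$ carried along via $\calh_{|G_v}$, using the Lemma \ref{automind} sandwich, Theorem \ref{thm_sci} with Lemmas \ref{rh}, \ref{fingen}, \ref{extens} in the rigid case, the rigid-endpoint control of edge groups with infinite $\Out(G_\varepsilon)$ in the parabolic case, and in the QH case the facts from Subsection \ref{cano} that $\Inc_v\cup\calp_{|G_v}$ and the infinite part of $\calh_{|G_v}$ sit in $\calb_v$ while the finite part is absorbed by $\calf_v$, with uniqueness of maximal elementary subgroups giving the upper bound $\Out(G_v;\calb_v)$ and the finite index of $\Out(G_v;\mk\calb_v,\mk\calf_v)$ therein finishing the argument. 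This is exactly the paper's proof.
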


\begin{proof}
We only mention the differences with the proof of Proposition \ref{imro}.

 If $v$ is a (non-elementary) rigid vertex,
the images of  $\Out^0 (G;\calp,\mk\calh )$ and $\Out ^0(G;\mk\calp,\mk\calh )$ 
by $\rho_v$ are contained in 
in $\Out(G_v;\calq_v,\mk\calh_{|G_v})$ by Lemma \ref{automind}.
It follows that the images are finite since, otherwise,
Theorem \ref{thm_sci} would yield a splitting relative to 
$\calq_v\cup\calh_{|G_v}$, which extends to a splitting of $G$ relative to $\calp\cup\calh$
by Lemma \ref{extens}.

 When  $G_v$ is conjugate to a parabolic group $P_j$,  Lemma \ref{automind}
says that the image of 
$\Out^0(G;\calp,\mk\calh)$ contains $\Out(G_v;\mk\Inc_ v, \mk \calh_{|G_v})$,
and the index is finite for the same reason as before. 

 When $G_v$ is QH,  
we write
$$\Out(G_v;\mk\Inc_v,\calp_{|G_v},\mk\calh_{|G_v})
\subset \rho_v (\Out^0(T;\calp,\mk \calh))
\subset  
\Out(G_v;\Inc_v,\calp_{|G_v},\mk\calh_{|G_v})$$
using Lemma \ref{automind}.

The proof in the non-relative case relied on   the equality  $\calb_v=\Inc_v\cup \calp_{|G_v}$. 
Here    (see Subsection \ref{cano}) we have $\Inc_v\cup \calp_{|G_v}\inc \calb_v$, and a group  $B\in\calb_v$ not in $\Inc_v\cup \calp_{|G_v} $ contains with finite index a group $H'$ conjugate to some $H\in \calh_{|G_v} $. Since $B$ is the only maximal elementary subgroup of $G_v$ containing $H'$, any automorphism preserving $H'$ preserves $B$, so   $\Out(G_v;\Inc_v,\calp_{|G_v},\mk\calh_{|G_v})\subset \Out(G_v;\calb_v)$.

If all groups in $\calh$ are infinite, the intersection of any conjugate of $H_j$ with $G_v$ is contained in a full boundary subgroup, so $\Out(G_v;\mk\calb_v )\subset \Out(G_v;\mk\Inc_v,\calp_{|G_v},\mk\calh_{|G_v})$.
Otherwise 
$\calh_{|G_v}$ may contain   finite groups (fixing  $v$ but  no other vertex of  $\Tcan$) and we can only write $\Out(G_v;\mk\calb_v,\mk \calf_v )\subset \Out(G_v;\mk\Inc_v,\calp_{|G_v},\mk\calh_{|G_v})$.
The proposition follows because the index of $\Out(G_v;\mk\calb_v,\mk \calf_v) $ in $ \Out(G_v;\calb_v)$ is finite.
\end{proof}

 \begin{rem} \label{rips}
 Because we use  Theorem \ref{thm_sci} to control automorphisms of rigid groups,
we do not have a similar result concerning $\Out(G;\mk \calp\cup\calh)$ or 
$\Out(G;\calp\cup\calh)$: 
we have to impose that automorphisms act trivially on $\calh$.  We also need finite generation of groups in $\calh$.
\end{rem}

Arguing as in the previous subsection, one gets:

\begin{cor}\label{cor_MCtoral}
Let $G$ be toral relatively hyperbolic, one-ended relative to a family $\calh=\{H_1,\dots, H_q\}$ of finitely generated subgroups. Then 
$ \Out(G; \mk\calh) $ has a finite index subgroup $ \Out^1(G;\mk\calh)$  fitting in an exact sequence
as in Corollary \ref{limgp}. \qed
\end{cor}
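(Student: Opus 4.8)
The plan is to imitate the proof of Corollary \ref{limgp}, substituting the relative structure theorem (Theorem \ref{thm_struct_m_rel}) for its absolute counterpart and supplying the two extra ingredients forced by the presence of $\calh$. First I would let $\calp$ be the parabolic structure of $G$, chosen so that every $P_i$ is a maximal non-cyclic abelian subgroup. As recalled after Theorem \ref{thm_struct_m}, the $P_i$'s are then characterized up to conjugacy as the subgroups maximal among those that are small but not virtually cyclic, so $\calp$ is permutation-invariant under $\Out(G)$ and $\Out(G;\calp)$ has finite index in $\Out(G)$; intersecting with $\Out(G;\mk\calh)$ shows that $\Out(G;\calp,\mk\calh)$ has finite index in $\Out(G;\mk\calh)$. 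Moreover, since splittings relative to $\calp\cup\calh$ form a subclass of those relative to $\calh$, one-endedness of $G$ relative to $\calh$ immediately yields one-endedness relative to $\calp\cup\calh$, so Theorem \ref{thm_struct_m_rel} applies.

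That theorem provides a finite-index subgroup $\Out^1(G;\calp,\mk\calh)$ of $\Out(G;\calp,\mk\calh)$ sitting in an exact sequence with kernel $\Tw$ and quotient $\prod_i MCG^1_{\Tcan}(\Sigma_i)\times\prod_j\Out(P_j;\mk\Inc_{P_j},\mk\calh_{|P_j})$. I would take this as the desired $\Out^1(G;\mk\calh)$ (it is of finite index in $\Out(G;\mk\calh)$ by the previous paragraph), so the work reduces to identifying the three factors. Because $G$ is torsion-free, the QH fibers are trivial, each $\Sigma_i$ is a genuine surface, and the finite families $\calf_v$ of Theorem \ref{thm_struct_m_rel} are trivial; hence $MCG^1_{\Tcan}(\Sigma_i)=MCG^0_{\Tcan}(\Sigma_i)=MCG^0(\Sigma_i)$, the group of isotopy classes of homeomorphisms fixing each boundary component in an orientation-preserving way. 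For the group of twists I would repeat verbatim the argument of Corollary \ref{limgp}: non-elementary vertex stabilizers of $\Tcan$ have trivial center (centralizers in a toral relatively hyperbolic group being abelian, hence elementary), so there are no vertex relations at the $V_0$-vertices and the edge relations collapse the $V_0$-side twists onto the abelian $V_1$-side. This presents $\Tw$ as a finite direct product $\prod_v (G_v)^{\abs{E_v}-1}$ over the abelian vertices $v$, which is finitely generated free abelian, in particular torsion-free.

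The only genuinely new verification compared with Corollary \ref{limgp}, and the step I would write out most carefully, is the identification of each parabolic factor with a group $GL_{r_k,n_k}(\bbZ)$. Here $P_j\cong\bbZ^{a}$, and $\Out(P_j;\mk\Inc_{P_j},\mk\calh_{|P_j})$ is the subgroup of $GL_a(\bbZ)=\Out(\bbZ^a)$ fixing pointwise every subgroup in $\Inc_{P_j}\cup\calh_{|P_j}$, equivalently fixing pointwise the subgroup $K$ they generate; the novelty over Corollary \ref{limgp} is simply that $\calh_{|P_j}$ now contributes to $K$. Since $P_j$ is torsion-free, an automorphism fixing $K$ pointwise also fixes its isolator $\bar K=\{x\in P_j\mid x^m\in K\text{ for some }m\geq 1\}$ pointwise; choosing a basis of $\bbZ^a$ adapted to $\bar K=\langle e_1,\dots,e_n\rangle$ then exhibits the factor as $GL_{r,n}(\bbZ)$ with $n=\Rk \bar K$ and $r=a-n$. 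Combining these identifications with the exact sequence of Theorem \ref{thm_struct_m_rel} gives the stated sequence, completing the argument.
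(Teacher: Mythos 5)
Your proof is correct and follows exactly the route the paper intends: the paper's own justification is the single phrase ``arguing as in the previous subsection,'' i.e.\ apply Theorem \ref{thm_struct_m_rel} in place of Theorem \ref{thm_struct_m} and repeat the proof of Corollary \ref{limgp}, which is precisely your argument. Your write-up supplies the details the paper leaves implicit (finite index of $\Out(G;\calp,\mk\calh)$ in $\Out(G;\mk\calh)$ via maximality of the non-cyclic abelian parabolics, triviality of the families $\calf_v$ in the torsion-free case, and the isolator argument identifying $\Out(P_j;\mk\Inc_{P_j},\mk\calh_{|P_j})$ with $GL_{r,n}(\bbZ)$), all of which are correct.
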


\section{The modular group}\label{modul}

 The goal of this  section is to show that the modular group, usually defined by considering all suitable splittings of  a group $G$, may be seen on   a single splitting, namely the canonical JSJ decomposition.

\subsection{Definitions and examples}

    Let $G$ be hyperbolic relative to $\calp=\{P_1,\dots, P_n\}$, where each $P_i$ is 
    finitely generated. Without loss of generality, we   assume that no $P_i$ is 
virtually cyclic (in particular, $P_i$ is infinite). Let $\calh$ be another finite   family of finitely generated subgroups  $H_j$ such that every $P_i$ which contains  a free group $F_2$ is contained in a group of $\calh$. 

In particular, we may take $\calh=\calp$, or $\calh=\es$ if every $P_i$ is small. 
 We will assume that $G$ is one-ended relative to  $ \calh$ (equivalently, relative to $\calp\cup\calh$  since  every $P_i$ is one-ended or contained in a group of $\calh$).    

    We consider  trees $T$ 
  with elementary edge stabilizers,
  which are relative to $\calh$  (universal ellipticity will be with respect to these trees,  unless indicated otherwise). They are not necessarily relative to $P_i$ if $P_i$ is small, but our assumption on $\calh$ implies that 
 elementary subgroups   which are not small 
 have a   conjugate   contained in a group in $\calh$,
 so  are  universally elliptic.    
  We shall associate a modular group $\mo(T)\inc\Out(T)\inc\Out(G )$     to such a  tree  $T$.

 \begin{lem}\label{lem_elemQH}
 If $v$ is a flexible  QH vertex with finite fiber, then any elementary subgroup of $G_v$ is virtually cyclic. 
\end{lem}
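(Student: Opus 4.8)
The plan is to pass to the quotient by the finite fiber and reduce everything to a statement inside a hyperbolic group. Write $F$ for the finite fiber of the QH vertex $v$ (Definition \ref{dqh}) and let $q\colon G_v\to G_v/F\cong\pi_1(\Sigma)$ be the quotient map onto the fundamental group of the hyperbolic $2$-orbifold $\Sigma$. Since $F$ is finite, a subgroup $H\subset G_v$ is virtually cyclic if and only if $q(H)$ is, and (because $F$ is finite and a non-abelian free group is torsion-free) $q(H)$ contains a non-abelian free group precisely when $H$ does. So I would reduce the lemma to showing that $q(H)$ is virtually cyclic for every elementary $H\subset G_v$. As $\pi_1(\Sigma)$ is a non-elementary hyperbolic group, every subgroup of it is either virtually cyclic or contains a free group of rank $2$; hence it suffices to rule out the case $q(H)\supset F_2$.

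Next I would split according to the type of the elementary subgroup $H$ (recall from Subsection \ref{gene} that \emph{elementary} means parabolic or virtually cyclic). If $H$ is virtually cyclic there is nothing to prove, so assume $H$ is parabolic, say $H\subset gP_ig^{-1}$, and suppose for contradiction that $q(H)$ contains a copy of $F_2$. Then $H$ contains a non-abelian free group (using finiteness of $F$), and therefore so does $P_i$. By the standing assumption of this section, a $P_i$ containing $F_2$ is contained in a conjugate of some $H_j\in\calh$, so $H$ lies in $G_v\cap g'H_jg'^{-1}$ for a suitable $g'$. Since $v$ is QH relative to $\calh$, Definition \ref{dqh} forces the image $q(G_v\cap g'H_jg'^{-1})$ to be finite or contained in a boundary subgroup; as boundary subgroups of a compact hyperbolic $2$-orbifold are virtually cyclic, $q(H)$ would be virtually cyclic, contradicting $q(H)\supset F_2$. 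Hence $q(H)$, and with it $H$, is virtually cyclic.

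The delicate point, which I expect to be the main obstacle, is that the trees here are relative to $\calh$ only and \emph{not} to $\calp$, so the QH structure of $v$ does not \emph{a priori} control the intersection of $G_v$ with conjugates of the small parabolics $P_i\notin\calh$. The way around this is exactly the section's hypothesis on $\calh$: any parabolic large enough to produce a free group in $q(H)$ is swallowed by $\calh$ and thus controlled by the QH condition, while every remaining parabolic is small and so already has virtually cyclic image by hyperbolicity of $\pi_1(\Sigma)$. I would then only need to verify the two routine facts used above, namely that a finite-by-$F_2$ group contains $F_2$ and that boundary subgroups of a hyperbolic $2$-orbifold are virtually cyclic, and to make sure the QH condition is invoked with the correct relative family $\calh$.
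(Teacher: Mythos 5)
Your proof is correct, and its skeleton is the same as the paper's: argue by contradiction, use the dichotomy that a subgroup of the finite-by-orbifold group $G_v$ is virtually cyclic or contains $F_2$, and invoke the standing hypothesis that any $P_i$ containing $F_2$ lies in a group of $\calh$. The difference is the concluding ingredient. The paper notes that the offending subgroup, being conjugate into a group of $\calh$, is \emph{universally elliptic} (all trees in this section are relative to $\calh$), and then applies Remark \ref{rem_UEQH} (i.e.\ Proposition 7.6 of \cite{GL3a}), which controls universally elliptic subgroups of \emph{flexible} QH vertex stabilizers. You instead conclude directly from Definition \ref{dqh}: the subgroup sits in $G_v\cap g'H_jg'^{-1}$, an intersection of $G_v$ with a conjugate of a group of $\calh$, whose image in $\pi_1(\Sigma)$ is finite or contained in a boundary subgroup by the very definition of QH relative to $\calh$. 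Your route is more elementary --- it bypasses the nontrivial Proposition 7.6 of \cite{GL3a} --- and it never uses flexibility, so it in fact proves the conclusion for an arbitrary QH vertex with finite fiber relative to $\calh$. What the paper's route buys is consistency with the universal-ellipticity formalism running through Section \ref{modul}, where Remark \ref{rem_UEQH} is needed in any case. Both treatments handle correctly the subtlety you single out: since the trees are only relative to $\calh$, the QH structure says nothing about intersections with conjugates of the small $P_i$'s, and the hypothesis on $\calh$ is exactly what closes that gap.
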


Recall (Definition \ref{dqh}) that  $G_v$  maps onto 
$\pi_1(\Sigma)$ with finite kernel $F$; flexibility of $G_v$ is equivalent to the 2-orbifold $\Sigma$ containing an essential 1-suborbifold.  Since $T$ is only assumed to be relative to $\calh$, Definition \ref{dqh} only restricts intersections of $G_v$ with conjugates of groups in $\calh$.
 
\begin{proof}
 Assume on the contrary that some subgroup $E<G_v$ is elementary, but not virtually cyclic.
Then $E$ contains $F_2$ and is parabolic.  As pointed out before, our assumption on $\calh$ implies that 
$E$ is universally elliptic. This contradicts Remark \ref{rem_UEQH}
saying that such a group has to be virtually cyclic.
\end{proof}

    \begin{rem*}  If $G_v$ is a flexible QH vertex stabilizer with elementary fiber $F$, then $F$ is finite. Indeed,  if $F $ is infinite, then $G_v$ is elementary by almost malnormality of maximal elementary subgroups (Corollary \ref{almar}). Since it contains $F_2$, it is universally elliptic, contradicting flexibility.    
 \end{rem*}   

  \begin{dfn}    We say that a vertex $v$ of $T$ (or of $\Gamma=T/G$) is \emph{modular} if  $G_v$ is flexible and QH (relative to $\calh$) with finite fiber,  
  or $G_v$ is elementary. Note that  $G_v$ cannot be both.
   \end{dfn}

    Recall (Subsection \ref{arb}) the maps $\rho_v: \Out^0(T)\to \Out(G_v)$ defined on the finite index subgroup of $\Out(T)$ consisting of automorphisms acting trivially on  $\Gamma=T/G$. 
    
    \begin{dfn}
     We define $\mo(T)$   by saying that $\Phi\in\Out^0(T)$ belongs to $\mo(T)$ 
if  it satisfies the following conditions:
    
    \begin{itemize}
    \item If $v$ is not modular, $\rho_v(\Phi)$ is trivial.
    \item If $G_v$ is elementary, $\rho_v(\Phi)\in\Out(G_v;\mk\Inc_v)$; in other words, $\rho_v(\Phi)$ acts on each incident edge group  as a conjugation. 
    \item  If $G_v$ is   QH   with finite fiber, and flexible, then $\rho_v(\Phi)\in\Out(G_v;\mk\calb_v)$, with $\calb_v$ consisting of  full preimages of boundary subgroups of $\pi_1(\Sigma)$ as in  Definition \ref{bv}. 
    \end{itemize}
    \end{dfn}
    
 Note that $\mo(T)$ contains the group of twists $\Tw(T)$,   and that   automorphisms in $\Mod(T)$ need
not preserve $\calh$.
 
    We  have assumed that $G$ is one-ended relative to $\calh$, 
so we can consider 
the canonical  elementary JSJ
tree $\Tcan$ relative to $\calp\cup\calh$ as in  Subsection \ref{cano}.
Note that $\Mod(\Tcan)$ has finite index in $\Out(G; \calp)$ when $\calh=\calp$ 
 (it contains the group $\Out^1(G;\calp)$ defined in 
 Subsection  \ref{aut_1bout}, possibly strictly because of  vertices with $G_v$ virtually cyclic).

    \begin{thm}  \label{thm_mod}
      Let $G$ be hyperbolic relative to $\calp=\{P_1,\dots, P_n\}$, with each $P_i$  
    finitely generated, not virtually cyclic. Let $\calh$ be a  family of subgroups such that every $P_i$ which contains $F_2$ is contained in a group of $\calh$, and $G$ is one-ended relative to $\calh$. 
    
    If $T$ is any elementary splitting of $G$ relative to $\calh$, then $\mo(T)\inc\mo(\Tcan)$, where $\Tcan$ is  the canonical elementary JSJ tree relative to $\calp\cup\calh$.
 \end{thm}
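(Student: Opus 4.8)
The plan is to fix an arbitrary $\Phi\in\Mod(T)$ and prove $\Phi\in\Mod(\Tcan)$. Since by definition $\Mod(\Tcan)\subset\Out^0(\Tcan)$, this splits into two tasks: (i) the invariance statement $\Phi\in\Out^0(\Tcan)$, so that the maps $\rho_v=\rho_v^{\Tcan}\colon\Out^0(\Tcan)\to\Out(G_v)$ are defined; and (ii) the verification that each $\rho_v(\Phi)$ satisfies the defining condition of $\Mod(\Tcan)$ at every vertex $v$ of $\Gcan$.

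A useful structural observation to record first is that \emph{every} elementary subgroup of $G$ is elliptic in $\Tcan$: an infinite one is contained in a maximal elementary subgroup, which either fixes a type-$1$ vertex of the tree of cylinders or lies in a rigid vertex, and finite subgroups are always elliptic. Combined with the universal ellipticity of the JSJ tree $\Tcan$ (its edge groups are elliptic in $T$) and the fact that edge groups of $T$ are elementary (hence elliptic in $\Tcan$), this yields a common refinement $\hat T$ of $T$ and $\Tcan$. Because rigid vertex groups of $\Tcan$ are universally elliptic, they admit no splitting relative to their induced structure, so $\hat T$ refines $\Tcan$ only inside its flexible QH and elementary vertices; the extra edges inside a QH vertex correspond to essential one-suborbifolds of $\Sigma$.

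For (ii) I would argue vertex by vertex on $\Tcan$. If $G_v$ is rigid (the non-modular case) I use universal ellipticity directly: $G_v$ is elliptic in $T$, hence conjugate into a vertex group $G_w$ of $T$; since $G_v$ is non-elementary, $G_w$ is not elementary, and it cannot be QH because universally elliptic subgroups of a QH vertex are virtually cyclic by Remark \ref{rem_UEQH}, whereas $G_v$ is not. Thus $G_w$ is rigid in $T$, where $\Phi$ acts as an inner automorphism, so $\rho_v(\Phi)$ is trivial. For a QH vertex the groups in $\calb_v$ are incident edge groups or controlled parabolics, hence universally elliptic; the previous argument shows $\Phi$ acts on each as a conjugation, and together with $\rho_v(\Phi)\in\Out(G_v;\Inc_v)$ from Lemma \ref{automind} this gives $\rho_v(\Phi)\in\Out(G_v;\mk\calb_v)$. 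The parabolic and loxodromic vertices are treated the same way, controlling the action on the universally elliptic incident edge groups to get $\rho_v(\Phi)\in\Out(G_v;\mk\Inc_v)$. \emph{Alternatively}, all of (ii) can be read off $\hat T$ by transporting the modular generators of $\Mod(T)$: a twist along an edge of $\hat T$ sitting inside a QH vertex of $\Tcan$ becomes a Dehn twist, i.e.\ an element of $MCG^0_{\Tcan}(\Sigma)=\Out(G_v;\mk\calb_v)$, while the remaining twists become twists of $\Tcan$ itself.

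The step I expect to be the main obstacle is (i), the invariance $\Phi(\Tcan)=\Tcan$, because by the Remark preceding the theorem $\Phi$ need not preserve $\calh$ and so need not lie in $\Out(G;\calp\cup\calh)$, the group under which $\Tcan$ is manifestly invariant. The route I would take is to show that $\Phi$ still preserves the JSJ deformation space $\cald$ whose tree of cylinders is $\Tcan$. This needs (a) that $\Phi$ maps parabolic subgroups to parabolic subgroups, i.e.\ $\Phi(\calp)$ is conjugate to $\calp$, and (b) that each $\Phi(H_j)$ remains elliptic in $\Tcan$ — which follows automatically once (a) holds, since a non-elementary $H_j$ lies in a rigid vertex of $T$ and is preserved up to conjugacy, whereas an elementary $H_j$ has elementary image $\Phi(H_j)$, hence elliptic in $\Tcan$ by the observation above. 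The content of (a) is to check preservation of $\calp$ on the modular generators: any $P_i$ elliptic in $T$ lies in a single vertex group, where $\Phi$ acts by an inner, mapping-class, or $\mk\Inc$ automorphism and is therefore sent to a conjugate. The genuinely delicate case is a \emph{small}, non–virtually-cyclic $P_i$ that is not in $\calh$ and can fail to be elliptic in $T$ (for instance a solvable Baumslag--Solitar parabolic split by $T$); here I would argue that the action of $P_i$ on $T$ is confined to elementary vertices and edges, on which the modular conditions pin down $\Phi$ tightly enough to preserve $P_i$ up to conjugacy. Once $\Phi$ preserves $\cald$, invariance of the tree of cylinders gives $\Phi(\Tcan)=\Tcan$, and since the canonical types of the finitely many vertices of $\Gcan$ are preserved, a finite-index argument places $\Phi$ in $\Out^0(\Tcan)$, completing the reduction to the vertex analysis above.
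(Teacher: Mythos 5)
Your overall skeleton (a common refinement $\hat T$ of $T$ and $\Tcan$, invariance of $\Tcan$ under $\mo(T)$, then a vertex-by-vertex verification of the modular conditions) is the same as the paper's, but two of your load-bearing steps fail, and they are exactly where the paper's proof does its real work.

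The most serious gap is at the QH vertices of $\Tcan$. You claim that for groups in $\calb_v$ ``the previous argument shows $\Phi$ acts on each as a conjugation'', but that argument required the subgroup to be \emph{non-elementary}: only then is the vertex group of $T$ containing it forced to be non-modular (via Remark \ref{rem_UEQH}), so that membership in $\mo(T)$ gives triviality there. Groups in $\calb_v$ --- in particular the incident edge groups $G_e$ of $\Tcan$ at a QH vertex --- are virtually cyclic, hence elementary; when located in $T$ (or in $\hat T$) they may perfectly well sit inside a QH or elementary vertex group $G_u$ of $T$, where the definition of $\mo(T)$ only gives $\rho_u(\Phi)\in\Out(G_u;\mk\calb_u)$ or $\Out(G_u;\mk\Inc_u)$, and nothing yet places $G_e$ inside a group of $\calb_u$ or $\Inc_u$. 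Establishing precisely such a containment is the heart of the paper's proof: it introduces the point $\hat v\in\hat T$, its image $u\in T$, distinguishes whether $u$ is modular, and in the QH subcase runs an edge-path argument producing an incident edge group $G_{\hat e'}\inc G_{\hat e}$ whose unique maximal elementary overgroup $C$ in $G_u$ then contains $G_e$, so that the modular condition at $u$ finally yields a representative of $\Phi$ equal to the identity on $G_e$. Your fallback of ``transporting the modular generators'' through $\hat T$ cannot replace this: as the paper's Remark following the theorem points out, $\mo(T)$ contains all of $\ker\rho$ and is in general strictly larger than the subgroup $\mo'(T)$ generated by twists and vertex automorphisms, so an argument on generators proves a strictly weaker statement.

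Several of your supporting claims are also false or unjustified, and the errors propagate. The ``structural observation'' that every elementary subgroup of $G$ is elliptic in $\Tcan$ is wrong: a maximal virtually cyclic subgroup generated by an element acting hyperbolically on $\Tcan$ is elementary but not elliptic, and it is neither a cylinder stabilizer nor contained in a rigid vertex group. You use this observation to build $\hat T$ (the paper instead cites the universal compatibility of the tree of cylinders, Theorem 13.1 of \cite{GL3b} --- a theorem, not an ellipticity remark) and again to conclude that $\Phi(H_j)$ is elliptic in $\Tcan$ when $H_j$ is elementary; both uses are gaps. In the rigid case you invoke ``universal ellipticity directly'' to get $G_v$ elliptic in $T$, but rigidity only gives ellipticity in trees relative to $\calp\cup\calh$, while $T$ is relative to $\calh$ alone; the paper bridges this with a dedicated argument through the tree of cylinders $T_c$ of $T$, which \emph{is} relative to $\calp\cup\calh$, together with the fact that groups elliptic in $T_c$ but not in $T$ are elementary. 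Your treatment of a small non-virtually-cyclic $P_i$ not elliptic in $T$ rests on the false premise that its action is confined to elementary vertices and edges (an axis or invariant ray can cross non-elementary vertices); the correct argument is purely algebraic and much simpler: $\Phi(P_i)$ is small and not virtually cyclic, hence elementary, hence parabolic, which is all the invariance claim needs. Finally, your closing ``finite-index argument'' cannot place the individual element $\Phi$ in $\Out^0(\Tcan)$; the paper instead uses that distinct edges of $\Gcan$ at a vertex of $V_0(\Gcan)$ carry groups that are non-conjugate in the vertex group (a tree-of-cylinders property), so the condition $\rho_x(\Phi)\in\Out(G_x;\mk\Inc_x)$ forces $\Phi$ to fix every edge of $\Gcan$.
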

    
    This applies in particular if $G$ is one-ended and no $P_i$ contains $F_2$ (taking $\calh=\es$), or if $G$ is one-ended relative to an arbitrary $\calp$ and we restrict to splittings relative to $\calp$ (taking $\calh=\calp$).

        \begin{rem} Rather than defining $\mo(T)$ by imposing conditions on the action on vertex groups, as we just did, one could define it by giving generators: twists around edges, and certain automorphisms of vertex groups. This would yield a slightly smaller group $\mo'(T)$: its intersection with $\ker\rho$ is $\Tw $, whereas $\mo(T)$ contains all of $\ker \rho$. Theorem \ref{thm_mod}  (and Theorem \ref{thm_modCSA} below) also hold  with this more restrictive definition,  since $\mo'(\Tcan)=\mo(\Tcan)$      by     Assertion 2 of  Lemma \ref{lem_virt_nobitwist}. 
        \end{rem}
    
    There is a similar statement for torsion-free CSA groups (recall that $G$ is CSA if centralizers of non-trivial elements are abelian and malnormal).  We now consider abelian splittings of $G$. A vertex  $v$ is modular if $G_v$ is either abelian or QH as above (in this case $F$ is trivial and $\Sigma$ is a surface). The definition of $\mo(T)$ is the same (with elementary replaced by abelian). The tree $\Tcan $ is the 
 canonical     abelian  JSJ tree relative to non-cyclic abelian subgroups; it is also the  tree of cylinders of the (non-relative) abelian JSJ deformation space (see Theorem 11.1 of \cite{GL3b}). 
    
       \begin{thm}  \label{thm_modCSA}  Let $G$ be a finitely generated, torsion-free, one-ended, CSA group.     If $T$ is any   splitting of $G$ over abelian groups, then $\mo(T)\inc\mo(\Tcan)$, where $\Tcan$ is  the tree of cylinders of the abelian JSJ deformation space.
    \end{thm}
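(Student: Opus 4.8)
The plan is to prove Theorem \ref{thm_modCSA} by the same scheme as Theorem \ref{thm_mod}, with the CSA hypothesis playing the role that almost malnormality of maximal elementary subgroups (Corollary \ref{almar}) plays in the relatively hyperbolic setting. I fix an abelian tree $T$ relative to $\calh$ and an element $\Phi\in\mo(T)$, and aim to show $\Phi\in\mo(\Tcan)$. By the Remark following Theorem \ref{thm_mod} one has $\mo(\Tcan)=\mo'(\Tcan)$, so it is enough to realize $\Phi$ as a product of twists of $\Tcan$ and of modular automorphisms carried by single QH or abelian vertices of $\Tcan$. Since modular automorphisms fix the conjugacy class of every vertex group and $\Tcan$ is canonical (being a tree of cylinders, Subsection \ref{defcyl}), $\Phi$ acts trivially on the finite graph $\Gcan=\Tcan/G$, so $\Phi\in\Out^0(\Tcan)$; the task then reduces to checking that $\rho_v(\Phi)$ is modular, in the sense of the definition of $\mo(\Tcan)$, at each vertex $v$ of $\Tcan$.

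The whole argument rests on universal ellipticity: edge groups of $\Tcan$, together with its rigid and abelian vertex groups, are universally elliptic among abelian splittings, hence elliptic in $T$ and conjugate into vertex groups of $T$. For a rigid vertex $v$, the group $G_v$ is non-abelian, so it cannot lie in an abelian vertex of $T$, and the enclosing property of QH vertices in the abelian JSJ (from \cite{GL3a}) forbids it from lying in a QH vertex of $T$; thus $G_v$ sits in a non-modular vertex group of $T$, on which $\Phi$ is inner, and $\rho_v(\Phi)$ is trivial. For an abelian (maximal abelian) vertex $v$, I would show that $\Phi$ acts as a conjugation on each incident edge group $D$ of $\Tcan$ at $v$, and this is where the CSA hypothesis is decisive: $D$ is universally elliptic and is shared between $G_v$ and a non-abelian vertex group $G_x$ of $\Tcan$; since $G_x$ is elliptic in $T$ and $\Phi$ acts on it as a conjugation, $\Phi$ sends $D$ to a conjugate $gDg\m$, and malnormality of the maximal abelian subgroup containing $D$ forces $g$ into that subgroup, so $\Phi$ acts trivially on $D$. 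Hence $\rho_v(\Phi)\in\Out(G_v;\mk\Inc_v)$, as required.

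The main obstacle is the QH case, where $v$ carries a surface $\Sigma$ and I must show $\rho_v(\Phi)\in\Out(G_v;\mk\calb_v)$, that is, that $\Phi$ acts as a conjugation on each boundary subgroup $B\in\calb_v$. Each such $B$ is universally elliptic, hence peripheral, and the same malnormality argument shows $\Phi$ acts on $B$ as a conjugation; the real work is to account for the part of $\Phi$ coming from the modular structure of $T$. Twists of $T$ around cyclic edge groups that fail to be universally elliptic must be identified with Dehn twists along essential curves of $\Sigma$, and any mapping class realized by $\Phi$ on a QH vertex of $T$ must be transported to $\Sigma$ through the subsurface inclusion furnished by the enclosing property (a self-homeomorphism of a subsurface fixing its boundary extends by the identity). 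Controlling these correspondences simultaneously, namely how non-universally-elliptic cyclic edge groups of $T$ are carried by curves on $\Sigma$ and how QH vertices of $T$ embed as subsurfaces of $\Sigma$, is the heart of the proof and rests on the structure theory of the abelian JSJ and its tree of cylinders (Subsections \ref{JSJ}, \ref{cano}, \ref{defcyl}). Throughout, the CSA hypothesis guarantees that $\ker\rho=\Tw$ (Lemma \ref{lem_virt_nobitwist}) and that twisting elements lie in malnormal abelian centralizers, which is exactly what pins down their effect on $\Tcan$.
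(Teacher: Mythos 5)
Your outline reproduces the parts of the paper's argument that are comparatively easy (rigid vertices via universal ellipticity, the final bookkeeping on $\Gcan$), but the step you yourself call ``the heart of the proof'' --- controlling $\rho_v(\Phi)$ at QH vertices of $\Tcan$ --- is never carried out, and the tool that makes it possible is absent from your proposal: the \emph{compatibility} of $\Tcan$ with $T$. The paper's proof of Theorem \ref{thm_mod} (which it adapts verbatim to the CSA case) begins by invoking \cite{GL3b} to obtain a common refinement $\hat T$ of $T$ and $\Tcan$, and all the delicate work happens inside $\hat T$: if $G_v$ is QH with surface $\Sigma$, its minimal subtree in $\hat T$ is dual to a family of curves on $\Sigma$ (Lemma 7.4 of \cite{GL3a}); one takes the vertex $\hat v$ of that subtree closest to the lift $\hat e$ of an incident edge $e$, and pushes it to a vertex $u$ of $T$. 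When $G_u$ is itself QH in $T$, the definition of $\mo(T)$ only controls $\Phi$ on the \emph{boundary} subgroups of $G_u$, not on arbitrary cyclic subgroups, so one must prove that $G_e$ actually lies in a boundary subgroup of $G_u$; this is done by an edge-path argument in $\hat T$ producing an incident edge group $G_{\hat e'}\subset G_{\hat e}$ of $G_u$, whence $G_{\hat e}\subset C$ for the unique maximal abelian subgroup $C$ of $G_u$ containing $G_{\hat e'}$. Your proposal replaces all of this by ``the same malnormality argument'' plus an appeal to unspecified structure theory; that is precisely the gap, because malnormality gives nothing once the vertex group of $T$ containing the subgroup in question is modular (QH or abelian), since then $\Phi$ is \emph{not} a conjugation on it.

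The same oversight makes your abelian-vertex step incorrect as stated: you argue that an incident edge group $D$ at an abelian vertex of $\Tcan$ is shared with a non-abelian vertex group $G_x$ of $\Tcan$, and that ``$G_x$ is elliptic in $T$ and $\Phi$ acts on it as a conjugation''. This fails when $x$ is a flexible QH vertex: flexible QH vertex groups are by definition not universally elliptic, so $G_x$ need not be elliptic in $T$ at all. A smaller but real gap of the same nature: rigid vertex groups and edge groups of $\Tcan$ are universally elliptic only \emph{relative to non-cyclic abelian subgroups}, whereas $T$ is an arbitrary abelian splitting; the paper bridges this by passing to the tree of cylinders $T_c$ of $T$ (which is relative to non-cyclic abelian subgroups) and using that groups elliptic in $T_c$ but not in $T$ are abelian. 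Finally, your opening claim that $\Phi\in\Out^0(\Tcan)$ is, in the paper, a \emph{conclusion} of the vertex-by-vertex analysis (using that $\Tcan$ is a tree of cylinders, so distinct edges at a vertex of $V_0(\Gcan)$ carry stabilizers that are non-conjugate in that vertex group), not a fact available at the outset; as written, your justification for it is circular.
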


\begin{example}
Let $G$ be the Baumslag-Solitar group $BS(2,4)=\grp{a,b\mid ba^2b\m=a^4}$.
Any splitting of $G$ as a graph of infinite cyclic groups is a cyclic JSJ decomposition of $G$ 
\cite{For_uniqueness,GL3a}.
Its modular group coincides with its group of   twists, and is a finite abelian group (see \cite{Lev_GBS}).
But the JSJ deformation space of $BS(2,4)$ is quite large \cite{Clay_deformation},
and JSJ splittings of $BS(2,4)$ may have   modular groups of arbitrarily large order. In particular, 
  there is no   splitting whose modular group contains all others.
\end{example}

\begin{example}
Even if $G$ is as in Theorem \ref{thm_mod},
one cannot replace $\Tcan$ by an arbitrary tree   in its deformation space:
there exists such trees whose modular group is not maximal (even up to finite index).

Indeed, let  $G=A_1*_{C_1}B*_{C_2} A_2$, where:  
\begin{itemize*}
\item  $A_1$ and $A_2$ are torsion-free hyperbolic groups with no cyclic splitting;
\item  $C_i$ is a maximal infinite cyclic subgroup of $A_i$;
\item  $B$ is torsion-free, hyperbolic relative to a subgroup $\Hat C= C_1\oplus C_2\oplus \bbZ\simeq\Z^3$, and  does not split over an   abelian group. 
\end{itemize*}
The group $G$ is hyperbolic relative to $\Hat C$ by \cite{Dah_combination}.

The graph of groups  
$$\xymatrix{ & B \ar@{-}[d]^{C_1\oplus C_2}\\
A_1\ar@{-}[r]_{C_1\hspace{0.4cm}  }&C_1\oplus C_2 \ar@{-}[r]_{\hspace{0.4cm} C_2}&A_2}$$
is an elementary JSJ decomposition of $G$ (both absolute and relative to $\Hat C$) because its vertex groups are universally elliptic (see Lemma 4.7 of \cite{GL3a}). Given any $z\in \Hat C\setminus (C_1\oplus C_2)$, the automorphism  $\tau$ defined as the identity on $A_1$ and $B$ and as   conjugation by $z$
 on $A_2$   is not an automorphism of this graph of groups. But $\tau$ is a twist of $\Tcan$, which is the Bass-Serre tree of the graph of groups below.
$$\xymatrix{ & B \ar@{-}[d]^{\Hat C}\\
A_1\ar@{-}[r]_{C_1
  }&\Hat C \ar@{-}[r]_{
C_2}&A_2}$$
\end{example}

 \subsection{Proof of Theorem \ref{thm_mod}}

We prove  Theorem \ref{thm_mod}. 
The proof of Theorem \ref{thm_modCSA} is similar and left to the reader.
  The main difference in the context of a general CSA group is that we have no version of Theorem
\ref{thm_sci} saying that a rigid vertex group only has finitely many outer automorphisms.
But the proof  given below  does not use 
Theorem \ref{thm_sci}.

By Theorem 13.1 of \cite{GL3b}, the trees $\Tcan $ and $T$ are compatible: they have a common refinement $\hat T$  (as defined in Subsection \ref{defsp}). We may assume that no edge of $\hat T$ is collapsed in both $\Tcan $ and $T$ (so $\hat T$ is the lcm of $\Tcan $ and $T$ as defined in Section 3 of \cite{GL3b}).  The tree $\hat T$ has elementary edge stabilizers and is relative to $\calh$  since $\Tcan $ and $T$ are (Proposition 3.22 of \cite{GL3b}).

We first claim that $\Tcan $ is $\mo(T)$-invariant. To see this, it suffices to show that the image of an infinite group  $J\in \calp\cup\calh$ by a modular automorphism $\Phi$ has a finite index subgroup which is contained (up to conjugacy) in a group 
belonging to $\calp\cup\calh$.

 If $J$ is a  small  $P_i$, its image is elementary and  not virtually cyclic, so is parabolic. If $J $ is a $P_i$ containing $F_2$, it is contained in a group of $\calh$, so we only have to consider groups   $J\in\calh$. Such a group  fixes a vertex $v$ in $T$, and  $G_v$ is $\Phi$-invariant.
 We distinguish three cases. 

If $v$ is non-modular, $\Phi$ acts trivially on $J$.  
 If $v$ is a flexible QH vertex, then $J$ is contained in a group of $\calb_v$, hence $\Phi$-invariant. Now suppose that $J$ is contained in an elementary $G_v$. If $G_v$ contains $F_2$, it is a $\Phi$-invariant group contained in a group of $\calh$, so the image of $J$ is contained in a group of $\calh$.   The case when $G_v$ is small but not virtually cyclic has been dealt with before. If $G_v$ is virtually cyclic, $\Phi(J)$ has a finite index subgroup contained in $J$.
This completes the proof of  the claim.

Let $\Phi\in\mo(T)$. 
   The heart of the proof  of the theorem is to  study the action of $\Phi$ 
on a non-elementary vertex stabilizer $G_v$ of $\Tcan$ (it is rigid or QH). 
In particular, given an edge $e=vw$ in $\Tcan$, we show that $\Phi$ has a representative $\alpha$ leaving $G_v$ invariant and equal to the identity on $G_e$.

We have defined $\Tcan$ as a JSJ tree relative to $\calp\cup\calh$. When $\Tcan$ is viewed as relative to $ \calh$ only, a flexible QH vertex remains flexible QH. 
It follows from Sections 8  and 13 of \cite{GL3b} that a rigid (non-elementary) vertex stabilizer $G_v$ of $\Tcan$  remains universally elliptic  relative to $\calh$. The argument goes as follows:
if $T$ is an elementary splitting relative to $\calh$,   
then $G_v$ is elliptic in its tree of cylinders $T_c$ because $T_c$ is relative to $\calh\cup\calp$ and $G_v$ is rigid  relative to $\calh\cup\calp$;  since 
 groups elliptic in $T_c$ but not in $T$ are elementary (see Subsection \ref{defcyl}), and 
$G_v$ is not, 
this implies that $G_v$ is elliptic in $T$.

We distinguish two cases.

Case 1: $v$ is rigid.   
Then $G_v$ is universally elliptic,  so fixes a   point $u$ in $T$, 
which is unique because  edge stabilizers are elementary  and $G_v$ is not.  
The group $G_u$ cannot be elementary.  By Remark \ref{rem_UEQH},
it cannot be flexible QH because its subgroup $G_v$ is universally elliptic and non-elementary. 
Thus   $u$ is not modular and $\Phi$ has a representative $\alpha$ equal to the identity on $G_u$, hence on $G_v$. 

Case 2: $v$ is a flexible QH vertex  
of $\Tcan$. 
Let $e $ be an adjacent edge and $\hat e$ its lift to $\hat T$. Recall that $G_e=G_{\hat e}$ is a maximal elementary subgroup of $G_v$. We define a point $\hat v\in \hat T$ as follows. If $G_v$ is   elliptic in $\hat T$, we call $\hat v$ its unique fixed point. If it is not elliptic, its  action   on its minimal subtree in $\hat T$ is dual to a family of 1-suborbifolds of $\Sigma$   (see Lemma 7.4 of \cite{GL3a}).  We let $\hat v$ be the point of that subtree closest to $\hat e$ (possibly an endpoint of $\hat e$).

The stabilizer of $\hat v$ is QH, associated to a   suborbifold $\hat \Sigma$ of $\Sigma$ (if $\hat \Sigma$ contains no essential $1$-suborbifold, $\hat v$ is a rigid vertex of $\hat T$). 
Note that $G_{\Hat v}$ is non-elementary   by Lemma \ref{lem_elemQH}. 
The stabilizer of $\hat e$, and also of edges between $\hat e$ and $\hat v$ if any,  
 is contained in $G_{\hat v}$, in fact in  the preimage of a boundary subgroup of $\pi_1(\Sigma)$ 
and $\pi_1(\hat\Sigma)$. 

Let $u$ be the image of $\hat v$ in $T$. 

Subcase 2a: $u$ is not modular. Then $\Phi$ has a representative $\alpha$ equal to the identity on $G_{u}$, 
hence on $G_{\hat v}$ and  on $G_e=G_{\hat e}$. 
Note that $\alpha$ leaves $G_v$ invariant because $\alpha$ is an automorphism of  $\Tcan$ 
and $v$ is the only vertex of $\Tcan$ fixed by $G_{\hat v}$. 

Subcase 2b: $G_{u}$ is QH with finite fiber. 
Then $G_{u}$ is elliptic in $\Tcan$ (see Proposition 7.13 of \cite{GL3a}, which is valid in a relative setting),
hence in $\hat T$ \cite[Proposition 3.22]{GL3b}, so $G_{u}=G_{\hat v}$. 
 Unfortunately, this argument says nothing about incident edge groups at $u$.

First suppose that $\hat v$ is an endpoint of $\hat e$.
 Choose an edge path   with origin $\hat v$, starting with  $\hat e$, such that all edges except the last one get collapsed to $ u$ in $T$ (this path consists of the single edge $\hat e$ if $\hat e$ 
 is not collapsed to a point in $T$). Call this last edge $\hat e'$, and its initial vertex $a$. We have $G_{\hat e'}\inc G_a\inc G_{u}=G_{\hat v}$, so $G_{\hat e'}\inc G_{\hat e}$.

The group $G_{u}$ is QH with finite fiber, and 
 $G_{\hat e'}$ is   an incident edge group. It is  infinite by one-endedness, so is contained in a unique maximal elementary subgroup $C$ of $G_{u}$ (the preimage of a boundary subgroup of the underlying 2-orbifold). Since 
 $G_{\hat e'}\inc G_{\hat e}$, we have $G_{\hat e}\inc C$. By definition of $\mo(T)$, there is a representative $\alpha$ of $\Phi$ leaving $G_{u}$  invariant and equal to the identity on $C$, hence on $G_e=G_{\hat e}$. As above, $\alpha$ leaves $G_v$ invariant.

If there are edges between $\hat v$ and $\hat e$, call $\hat e'$ the edge that contains $\hat v$. It is not collapsed to a point  in $T$, since it is collapsed in $\Tcan$, so  $G_{\hat e'}$ is   an incident edge group of $G_{u}$. We now have $G_{\hat e }\inc G_{\hat e'}\inc C$ and we argue as in the previous case. This completes the analysis of the action of $\Phi$ on $G_e$   in case 2.
 
Still in case 2 (\ie assuming that $v$ is a flexible QH vertex stabilizer of $\Tcan$), 
we also  need to understand the action of $\Phi$ on an element $B\in\calb_v$ which is not an incident edge stabilizer. 
Such a $B$ contains a conjugate of an $H_j$ with finite index. 

By minimality of $\hat T$, the group $B$ fixes a QH vertex $\hat v\in\hat T$. We then argue as above. 
In subcase 2b, we have $B\in\calb_u$ (up to conjugacy) because $B$ contains a conjugate of $H_j$, so we can find $\alpha$  leaving $G_{u}$  invariant and equal to the identity on $B $ since $\Phi\in \mo(T)$.
  This finishes case 2.

We can now conclude. Consider $\Gcan=\Tcan/G$, and recall that $\Tcan$ is a tree of cylinders, so $\Gcan$ is   bipartite, with edges joining a vertex $x\in V_0(\Gcan)$ carrying a non-elementary group to a vertex $Y\in V_1(\Gcan)$  carrying an elementary group. We know that $\Phi$ fixes each vertex $x\in V_0(\Gcan)$, 
and its action on $G_x$ is trivial if $x$ is not modular, 
in $\Out(G_x;\mk\calb_x)$ if $G_x$ is QH. 

Since $\Tcan$ is a tree of cylinders, distinct edges of $\Gcan$ with origin $x$  in $V_0(\Gcan)$ carry groups which are not conjugate in $G_x$.    As $\rho_x(\Phi)\in\Out(G_x; \mk\Inc_x)$, we deduce that $\Phi$ acts as the identity on edges of $\Gcan$ with origin $x$, hence on the whole of   $\Gcan$. Thus $\Phi\in\Out^0(\Tcan)$. 

There remains to check that 
$\rho_Y(\Phi)\in\Out(G_Y;\mk\Inc_Y)$ for $Y\in V_1(\Gcan)$. If $\varepsilon=(x,Y)$ is an adjacent edge, we have seen that $\Phi$ has a representative $\alpha$ equal to the identity on $G_\varepsilon$. Since $G_\varepsilon$ is infinite, $G_Y$ is the unique maximal elementary subgroup containing it, so $\alpha$ leaves $G_Y$ invariant. This completes the proof. 

\section{Induced automorphisms}  \label{sec_induced}

 In  this section $G$ is hyperbolic relative to $\calp=\{P_1,\dots, P_n\}$, and we study automorphisms of $P_1$ which are induced by automorphisms of $G$. We  then apply this to the case when $G$ is hyperbolic and $H$ is a malnormal quasiconvex subgroup, viewing $G$ as hyperbolic relative to $H$.

\begin{dfn}  Given families of subgroups $\calp$ and $\calh$, and a subgroup $Q$, 
  we say that $\alpha\in\Aut(Q)$ is \emph{extendable to $G$ relative to
    $\calp$ and $\mk\calh$} if it is the restriction  to $Q$ of an automorphism of  $G$ representing an element of
  $\Out(G;\calp,\mk\calh)$.  

 Being extendable only depends   on the image of $\alpha$ in $\Out(Q)$, 
so we   define the \emph{group of extendable automorphisms} $\Out(Q \fleche (G;\calp, \mk\calh))\inc\Out(Q )$. We write $\Out(Q \fleche (G;\calp ))$ when $\calh=\es$, and $\Out(Q \fleche G)$ for   $\Out(Q \fleche (G;\es))=\Out(Q \fleche (G;\{Q \} )) $.

If  $Q$ equals its normalizer (for instance if $Q$ is an infinite maximal parabolic subgroup), there is a map    $ \Out(G;Q)\to\Out(Q)$, and $\Out(Q \fleche G)$ is its image. 
\end{dfn}

Suppose   that 
 $P_1=G_v$ is a vertex group of a splitting of $G$ relative to $\calp=\{P_1,\dots, P_n\}$, 
and $P_1$ contains no conjugate of $P_i$ for $i>1$  (this is automatic if no $P_i$ is   finite). 
Then $\Out(P_1\fleche (G;\calp))$  contains  $\Out (G_v;\mk\Inc_v)$  (see Lemma \ref{automind}). 
The following theorem says that virtually all extendable automorphisms occur in this fashion.

 \begin{thm}\label{caleg_general} 
  Let $G$ be hyperbolic relative to   $\calp=\{P_1,\dots, P_n\}$, with $P_i$ finitely generated and infinite, 
 and  $P_i\ne G$. Let  $\calh$ be a finite family of finitely generated subgroups of $G$. 
If $\Out(P_1\fleche (G;\calp,\mk\calh))$ is infinite,   then:
  \begin{enumerate}
  \item

 $P_1$ is a vertex group $G_v$  in 
an elementary JSJ decomposition  $\Gamma$  relative to $\calp\cup\calh$. Edge groups of $\Gamma$ are finitely generated.

  \item
The group $\Out(P_1\fleche (G;\calp,\mk\calh))\inc\Out(P_1)$ has a finite index subgroup equal to  
$\Out(P_1;\mk\calk)$, where  $\calk=\Inc_v\cup\calh_{|G_v}$  is a finite family   of finitely generated 
subgroups of $P_1$ 
(the family of incident edge groups $\Inc_v$,  and  $\calh_{|G_v}$, are defined in Subsection \ref{ind}). 

  \end{enumerate}
 \end{thm}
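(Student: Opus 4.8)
The plan is to reduce to the case where $G$ is one-ended relative to $\calp\cup\calh$ and there to read off the whole statement from the canonical JSJ tree $\Tcan$ relative to $\calp\cup\calh$. Throughout I use that $P_1$, being infinite and maximal parabolic, equals its own normalizer, so that there is a restriction map $\rho_{P_1}\colon\Out(G;P_1)\to\Out(P_1)$ and $\Out(P_1\fleche(G;\calp,\mk\calh))$ is precisely the image of $\Out(G;\calp,\mk\calh)$ under $\rho_{P_1}$. Two immediate consequences of the hypothesis: since $\Out(P)$ is finite for $P$ virtually cyclic, $P_1$ cannot be virtually cyclic; and since the image is infinite, $\Out(G;\calp,\mk\calh)$ is infinite, so by Theorem~\ref{thm_sci} the group $G$ splits over an elementary subgroup relative to $\calp\cup\calh$.

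For the reduction, if $G$ is not one-ended relative to $\calp\cup\calh$ I would pass to a vertex group $G_w$ of a relative Stallings--Dunwoody decomposition over finite subgroups, chosen so that $P_1\inc G_w$; since $P_1$ is infinite it is elliptic and lies in a unique such vertex group up to conjugacy. By Proposition~\ref{gvqc} and Lemma~\ref{Hrufi}, $G_w$ is hyperbolic relative to $\calp_{|G_w}$, it is one-ended relative to $\calp_{|G_w}\cup\calh_{|G_w}$, and (Definition~\ref{indu}) $P_1$ is a member of $\calp_{|G_w}$. I then claim $\Out(P_1\fleche(G;\calp,\mk\calh))$ is commensurable with $\Out(P_1\fleche(G_w;\calp_{|G_w},\mk(\Inc_w\cup\calh_{|G_w})))$: one inclusion comes from extending an automorphism of $G_w$ ``by the identity'' on the rest of the splitting (Lemma~\ref{automind}), the other from the fact that any extension $\Phi\in\Out(G;\calp,\mk\calh)$ permutes the conjugacy classes of vertex groups of the canonical finite deformation space (Subsection~\ref{defsp}) and therefore, after composing with an inner automorphism, can be taken to preserve the unique $G_w$ containing $P_1$ and to restrict there to an automorphism with the required marking (the finite incident edge groups $\Inc_w$ are controlled up to finite index). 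This replaces $(G,\calp,\calh)$ by a genuinely one-ended triple.

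The heart is the one-ended case. Consider $\Tcan$: it is a JSJ tree relative to $\calp\cup\calh$, so it witnesses part~(1) as soon as $P_1$ is one of its vertex groups, and its edge groups are finitely generated by Lemma~\ref{fingen}. By the dichotomy recalled in Subsection~\ref{cano}, a non-virtually-cyclic maximal parabolic $P_1$ is \emph{either} equal to some vertex group $G_Y$ \emph{or} properly contained in a rigid vertex group $G_x$ while fixing no edge, in which case $P_1\in\calq_x=\Inc_x\cup\calp_{|G_x}$ (Lemma~\ref{rh}). The decisive point is to \emph{exclude} the second alternative: if $P_1\inc G_x$ with $x$ rigid, then the restriction of $\Out^0(G;\calp,\mk\calh)$ to $P_1$ factors as $\Out^0(G;\calp,\mk\calh)\to\Out(G_x;\calq_x)\to\Out(P_1)$ through $\rho_x$, whose image is \emph{finite} by Proposition~\ref{imro_rel}; this would make $\Out(P_1\fleche(G;\calp,\mk\calh))$ finite, contrary to hypothesis. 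Hence $P_1=G_v$ is a vertex group of $\Tcan$, proving~(1).

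Finally, for~(2) I apply the parabolic case of Proposition~\ref{imro_rel} at $v$: the image $\rho_v(\Out^0(G;\calp,\mk\calh))$ contains $\Out(P_1;\mk\Inc_v,\mk\calh_{|G_v})$ with finite index, and every element of the latter extends to $G$ by the ``extension by the identity'' of Lemma~\ref{automind}. Since $\Out^0(G;\calp,\mk\calh)$ has finite index in $\Out(G;\calp,\mk\calh)$, it follows that $\Out(P_1;\mk\calk)$ with $\calk=\Inc_v\cup\calh_{|G_v}$ is a finite-index subgroup of $\Out(P_1\fleche(G;\calp,\mk\calh))$; the members of $\calk$ are finitely generated by Lemma~\ref{fingen} together with the finite generation of $\calh$. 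I expect the reduction to the one-ended case to be the main obstacle: the bookkeeping needed to transport the marking data $\Inc_w\cup\calh_{|G_w}$ back and forth, and to match the induced family $\calk$ computed in $G_w$ with the one computed in $G$, is where the real care lies, whereas the one-ended core is essentially the ``burial $\Rightarrow$ finiteness'' observation combined with the already-established Proposition~\ref{imro_rel}.
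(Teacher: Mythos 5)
Your one-ended case is correct and is the paper's own argument: work in $\Tcan$, rule out the QH alternative because $P_1$ is not virtually cyclic, rule out the rigid alternative because Proposition \ref{imro_rel} makes the image of $\rho_x$ finite (and the restriction to $P_1$ factors through it, $P_1$ being its own normalizer and having a unique fixed point), then read off assertion (2) from the parabolic case of Proposition \ref{imro_rel} together with Lemma \ref{fingen}. The gaps are in the reduction to that case, which is exactly where the paper's proof does its real work when $G$ has torsion.

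First gap: your claim that, after an inner adjustment, any $\Phi\in\Out(G;\calp,\mk\calh)$ restricts on $G_w$ to an automorphism carrying the marking $\mk(\Inc_w\cup\calh_{|G_w})$ ``up to finite index'' is precisely the statement that needs proof, and nothing you say supports it. Such a $\Phi$ preserves only the \emph{deformation space} of the relative Stallings--Dunwoody tree $S$, not $S$ itself; a representative $\alpha$ with $\alpha(G_w)=G_w$ carries $S$ to another reduced tree $\alpha(S)$ in that space, and reduced trees there differ by slide moves, which can change which edges are attached at $w$. Hence the incident structure of $\alpha(S)$ at $w$, which is $\alpha(\Inc_w)$, need not coincide with $\Inc_w$ even up to conjugacy in $G_w$: a priori the restriction of $\alpha$ does not even permute the conjugacy classes in $\Inc_w$, so no finite-index argument can get started. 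Lemma \ref{mar} does give you the conditions on $\calp_{|G_w}$ and $\mk\calh_{|G_w}$, but not on $\Inc_w$. The paper repairs this by choosing $S$ with \emph{minimal valence} at $w$ and invoking Proposition 4.9 of \cite{GL2}, which makes $\Inc_w$ canonical (independent of such a choice, with the incident groups of any other tree contained in conjugates of its members), hence globally preserved by restrictions of automorphisms; only then, because these groups are finite, does passing to the marked subgroup cost only finite index. Some such canonicity statement is the missing idea here.

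Second gap: even granting the commensurability, you never return to $G$. Assertion (1) asks for $P_1$ to be a vertex group of an elementary JSJ decomposition of $G$ itself, which requires refining $S$ at $w$ by the canonical JSJ tree of $G_w$ (Subsection 8.1 of \cite{GL3a}); and assertion (2) asks for $\calk=\Inc_v\cup\calh_{|G_v}$ computed in that decomposition of $G$, not for the family $\Inc_v\cup(\Inc_w\cup\calh_{|G_w})_{|P_1}$ produced by the one-ended case inside $G_w$. These two families agree only if the refinement is built with care: the paper attaches each edge of $S$ at a vertex of $\Tcan$ that is the \emph{unique} fixed point of its stabilizer, and then verifies the identification. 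You explicitly defer this bookkeeping, but deferring it leaves the theorem proved only in the relatively one-ended case (and, via the paper's Grushko argument, in the torsion-free case, where edge groups of $S$ are trivial and both difficulties vanish).
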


 Since we do not assume that $G$ is  one-ended relative to  $\calp\cup\calh$, there is no \emph{canonical} JSJ decomposition.

 \begin{proof}
$\bullet$
First assume that $G$ is   one-ended relative to $ \calp\cup \calh$. 
Consider the canonical elementary JSJ tree $\Tcan$ relative to $ \calp\cup\calh$ as in Subsection \ref{cano}. 

Let $G_v$ be  a vertex stabilizer containing $P_1$. 
It cannot be flexible QH because $P_1$ is   not virtually cyclic (see Remark \ref{rem_UEQH}). 
If it is rigid (non-elementary), we have seen in Subsection \ref{rela} 
that the image of  $\Out^0(G; \calp,\mk\calh)$ in $\Out(G_v)$ is finite
(recall that $\Out^0(G; \calp,\mk\calh)$ is the finite index subgroup of $\Out(G;\calp,\mk\calh)$
acting trivially on   $\Tcan/G$).
 Since $P_1$ equals its normalizer, this implies that  $\Out(P_1\fleche (G;\calp,\mk\calh))$ is finite,  a contradiction. 
Thus $G_v$ is elementary, so $G_v=P_1$.  This proves Assertion 1 in the one-ended case  (edge stabilizers of $\Tcan$ are finitely generated by Lemma \ref{fingen}).
 Assertion 2 is also clear since 
 $\Out(P_1\fleche (G;\calp,\mk\calh))$ is virtually  
 $\Out(P_1;\mk\Inc_{v},\mk\calh_{|G_v})$  by Proposition \ref{imro_rel}. 

$\bullet$
 We now consider the general case,  first assuming that $G$ is torsion-free. 
Let  $F=G_u$ be the vertex stabilizer   containing $P_1$ in a Grushko decomposition    $S$  relative to $\calp\cup\calh$ 
(see Subsection \ref{JSJ}), and let $\calp_{ | F},\calh_{|F} $ be the induced structures (see Definition \ref{indu}); 
 if $\calp\cup\calh=\{P_1\}$, then $F$ is simply the smallest free factor containing $P_1$. 
 
  Since $F$ is hyperbolic relative to $\calp_{ | F} $ by Lemma \ref{Hrufi}, 
and $\Out(P_1\fleche (G;\calp,\mk\calh))=\Out(P_1\fleche (F;\calp_{|F},\mk\calh_{|F}))$ because $F$ is a free factor (or by Remark \ref
 {automind2}),   
 the results of the previous case apply. 
The group $P_1$ is a vertex group $G_v$ of  a splitting $\Gamma_F$ of $F$, which may be used to refine   $S$ to an elementary JSJ decomposition $\Gamma$ of $G$  having $G_v$ as a vertex group 
(see  Subsection 8.1 of \cite{GL3a}). The families $\Inc_v$   and  $\calh_{|G_v}$ are the same for $\Gamma_F$ and $\Gamma$. 

$\bullet$ 
  If $G$ has torsion, we define $F=G_u$ and $\calp_{ | F},\calh_{|F} $ as above, using a  
Stallings-Dunwoody tree $S$   relative to $\calp\cup\calh$. 
The proof is technically more complicated because we cannot neglect the incident edge groups $\Inc_u$.  
  
  All Stallings-Dunwoody trees $S$ have a unique vertex stabilizer $G_{u(S)}$ equal  to $F$, but the incident edge groups may vary. This was studied in Section 4 of \cite{GL2}, where we defined a ``peripheral structure''  for $F$. 
  To state the relevant result, we choose a Stallings-Dunwoody tree $S$ for which  the valence of  $u(S)$ 
in the quotient graph of groups $S/G$ is minimal. 
Since no edge stabilizer is properly contained in a conjugate, 
it follows from Proposition 4.9 of \cite{GL2} that  the incident structure $\Inc_{u(S)}$ 
does not depend on the choice of   such an $S$ (in   trees with non-minimal valence,  there  may be more incident edge groups; 
 such a group is  contained in a group  belonging to $\Inc_{u(S)}$). 
We fix $S$, and from now on we write $u$ rather than $u(S)$, so $F=G_u$.  

Any automorphism representing an element of $\Out(G;\calp,\mk\calh)$ and leaving $P_1$ invariant 
also leaves $F$ invariant.  
Since $P_1$ and $F$ are equal to their   normalizers, $\Out(P_1\fleche (G;\calp,\mk\calh))$ is the image of the map  $p:\Out(G;\calp,\mk\calh)\to\Out(P_1)$, and $p$ factors through $\rho_u:\Out(G;\calp,\mk\calh)\to\Out(F)$.
By Remark \ref{automind2}, the image of $\rho_u$ contains $\Out(F;\mk \Inc_u,\calp_{ | F},\mk\calh_{|F})$
and is contained in $\Out(F;\calp_{ | F},\mk\calh_{|F})$.

 Our choice of $S$ implies that automorphisms in the image of $\rho_u$  preserve $\Inc_{u}$ globally. Since  
$\Inc_{u}$ consists of finitely many finite subgroups of $F$ (well-defined up to conjugacy),
the index of $\Out(F;\mk \Inc_u,\calp_{ | F},\mk\calh_{|F})$
 in $\Out(F;\calp_{ | F},\mk\calh_{|F})$ is finite. It therefore suffices to 
 study the image of 
$q:\Out(F;\mk \Inc_u,\calp_{ | F},\mk\calh_{|F})\to\Out(P_1)$, and to show that it is virtually $\Out(P_1;\mk\calk)$.

The group   $F=G_u$  is hyperbolic relative to the family $\calp_{ | F}$ (see Lemma \ref{Hrufi}), 
and one-ended relative to $\calp_{ | F}\cup\calh_{|F}$. Since the image of $q$ is infinite,  we have seen that $P_1$ is a vertex group $G_v$ in the canonical  elementary JSJ decomposition $\Gcan$ of $F$ relative to $\calp_{ | F}\cup\calh_{|F}$. One obtains an elementary JSJ tree $T$ of $G$ relative to  $\calp\cup\calh$ by refining $S$ 
using  
JSJ decompositions of vertex groups    (see Subsection 8.1 of \cite{GL3a}), so Assertion 1 is proved. 

Moreover, $\Out(P_1\fleche (G;\calp,\mk\calh))$ is virtually $\Out(P_1;\mk{\calk'})$, where $\calk'$ is the union of   $\Inc_v$ (the incident edge groups of $P_1$ in $\Tcan$) and  $(\Inc_u\cup\calh_{|F})_{ | P_1}$. 
We  now  show that $\calk'=\calk$   if we construct $T$ carefully.

   When $S$ is refined to yield $T$, the vertex $u$ is   replaced   by $\Tcan$. There is some freedom in the
way edges of $S$ containing $u$ are attached to $\Tcan$: an edge $e$ may be attached to any vertex of $\Tcan$ which is fixed by $G_e$.
We may therefore assume that,
 if $e$ is an edge of $T\setminus\Tcan$   attached to $v$, 
then $v$ is the only fixed point of $G_e$ in $\Tcan$.

The family 
$(\calh_{|F})_{ | P_1}$ is defined viewing $F$ as a vertex group of $S$, and then $P_1$ as a vertex group of $\Tcan$. Since groups in $\calh$ are infinite and edge stabilizers of $S$ are finite, $(\calh_{|F})_{ | P_1}$ equals $\calh _{ | P_1}$, defined viewing $P_1$ as a vertex group of $T$. We complete the proof by showing that $\Inc_v\cup(\Inc_u )_{ | P_1}$ is the family of incident edge groups  in $P_1=G_v$ viewed as a vertex stabilizer of $T$. 

There are two types of incident edge groups of  $G_v$ 
in $T$. Those fixing edges in $\Tcan$ are precisely those in $\Inc_v$. Because of the way we contructed $T$, those fixing edges in $T\setminus \Tcan$ have $v$ as unique fixed point in $\Tcan$, they are the groups in $(\Inc_u )_{ | P_1}$ (see Definition \ref{indu}).
\end{proof}

 If $G$ is (absolutely) hyperbolic, and $P$ is a subgroup,  
then $G$ is hyperbolic relative to $\{P\}$ 
if (and only if) $P$ is quasiconvex and almost malnormal,  
  see \cite[Theorem 7.11]{Bow_relhyp} or \cite{Osin_elementary}. 
If so,  Theorem \ref{caleg_general}   applies and describes   $\Out (P\fleche G)$, the   automorphisms of $P$ which extend to $G$.

\begin{cor}\label{cor_induit} 
Let $P$ be a   
quasiconvex, almost malnormal subgroup of a hyperbolic group $G$, with  $P\ne G$. 
\begin{itemize} 
 \item
 If $\Out (P\fleche G)$ is infinite, then $P$ is a vertex group in a  splitting of $G$
 with finitely generated edge  groups, and 
$\Out (P\fleche G)$
 is virtually 
  $\Out(P;\mk\calk)$ with $\calk$   
the family of incident edge groups
(a finite family   of finitely generated subgroups of $P$). 

\item
If   $P$ is torsion-free, then $\Out (P\fleche G)$  has a finite index subgroup with a finite classifying space.
\end{itemize} 
\end{cor}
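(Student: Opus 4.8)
The plan is to recognize this corollary as the special case of Theorem~\ref{caleg_general} in which the parabolic structure is the single subgroup $P$ and there is no auxiliary family $\calh$. First I would invoke the characterization of relative hyperbolicity recalled just above the statement: since $P$ is quasiconvex and almost malnormal in the hyperbolic group $G$ with $P\ne G$, Bowditch's theorem \cite[Theorem 7.11]{Bow_relhyp} (or \cite{Osin_elementary}) shows that $G$ is hyperbolic relative to $\{P\}$. Being quasiconvex in a hyperbolic group, $P$ is finitely generated and intrinsically hyperbolic, so the hypotheses of Theorem~\ref{caleg_general} are in force once we check that $P$ is infinite; but if $P$ were finite then $\Out(P)$, hence $\Out(P\fleche G)$, would be finite, so we may assume $P$ infinite whenever the first bullet is at issue.

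For the first bullet I would apply Theorem~\ref{caleg_general} with $\calp=\{P\}$ and $\calh=\es$. By definition $\Out(P\fleche G)=\Out(P\fleche(G;\{P\}))$, so the hypothesis that $\Out(P\fleche G)$ is infinite is exactly the hypothesis of that theorem. Assertion~1 then gives that $P$ is a vertex group $G_v$ in an elementary JSJ decomposition of $G$ relative to $\{P\}$ with finitely generated edge groups; forgetting the JSJ structure yields the desired splitting. Assertion~2 gives a finite index subgroup of $\Out(P\fleche G)$ equal to $\Out(P;\mk\calk)$ with $\calk=\Inc_v\cup\calh_{|G_v}=\Inc_v$ (since $\calh=\es$), which is precisely the family of incident edge groups, a finite family of finitely generated subgroups of $P$. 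This settles the first bullet.

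For the second bullet, assume $P$ torsion-free; then $P$ is a torsion-free hyperbolic group. If $\Out(P\fleche G)$ is finite, its trivial subgroup is of finite index and admits a point as a finite classifying space. If $\Out(P\fleche G)$ is infinite, the first bullet shows it is virtually the (generalized) McCool group $\Out(P;\mk\calk)$; by the results of \cite{GL_McCool}, a McCool group of a torsion-free hyperbolic group has a finite index subgroup with a finite classifying space, and such a subgroup is then of finite index in $\Out(P\fleche G)$ as well. This proves the second bullet.

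The genuine content beyond bookkeeping is imported: the deep input for the first bullet is Theorem~\ref{caleg_general} (which itself rests on Theorem~\ref{thm_sci} and Rips theory), and the deep input for the second bullet is the type-VF statement for McCool groups of torsion-free hyperbolic groups from \cite{GL_McCool}. I therefore expect the only points requiring care to be the (routine but necessary) verification that the hypotheses of Theorem~\ref{caleg_general} hold---the passage from absolute to relative hyperbolicity and the identification $\Out(P\fleche G)=\Out(P\fleche(G;\{P\}))$---together with the observation that the finite case of the second bullet must be treated separately, so that the existence of a finite classifying space is not vacuously demanded of a nontrivial finite group.
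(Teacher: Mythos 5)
Your proof is correct and follows essentially the same route as the paper: the first bullet is exactly Theorem~\ref{caleg_general} applied with $\calp=\{P\}$ and $\calh=\es$ (via Bowditch's characterization of relative hyperbolicity), and the second bullet is the type-VF result for McCool groups of torsion-free hyperbolic groups from \cite{GL_McCool}. Your additional bookkeeping (checking $P$ is infinite and finitely generated, and treating the finite case of the second bullet separately) is sound and only makes explicit what the paper leaves implicit.
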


\begin{proof}
 The first assertion follows   from Theorem \ref{caleg_general}. 
 Being quasiconvex, $P$ is a hyperbolic group.
It is proved in \cite{GL_McCool} that, if $P$ is a torsion-free hyperbolic group and $\calk$ is an arbitrary family of subgroups, then $\Out(P;\mk\calk)$   has a finite index subgroup with a finite classifying space.
\end{proof}

If $G=F_n$, every finitely generated subgroup is quasiconvex  (it is a virtual retract by \cite{Hall_1949}),
so we get:

\begin{cor} If $P\inc F_n$ is finitely generated and malnormal, 
then  $\Out (P\fleche F_n)$ is virtually $\Out(P;\mk\calk)$ for some finite family $\calk$ of finitely generated subgroups of $P$. It  has a finite index subgroup with a finite classifying space. \qed
\end{cor}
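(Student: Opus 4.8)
The plan is to specialize Corollary \ref{cor_induit} to $G=F_n$; the only real work is to check its hypotheses and to treat the cases not covered by the literal statement of its first assertion. First I would observe that $F_n$ is a torsion-free hyperbolic group, so that $P\subset F_n$ is itself torsion-free. The feature special to free groups is that every finitely generated subgroup of $F_n$ is a virtual retract by Hall's theorem \cite{Hall_1949}, hence quasiconvex. Moreover a malnormal subgroup is almost malnormal, since a trivial intersection is in particular finite. Thus $P$ is quasiconvex and almost malnormal, as required by Corollary \ref{cor_induit}.

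Assuming first that $P\ne F_n$, I would distinguish two cases. If $\Out(P\fleche F_n)$ is infinite, the first assertion of Corollary \ref{cor_induit} applies directly: $P$ is a vertex group of a splitting of $F_n$ with finitely generated edge groups, and $\Out(P\fleche F_n)$ has a finite index subgroup equal to $\Out(P;\mk\calk)$, with $\calk$ the finite family of (finitely generated) incident edge groups. If $\Out(P\fleche F_n)$ is finite, I would take $\calk=\{P\}$: then $\Out(P;\mk\calk)=\Out(P;\mk P)$ is trivial, and the trivial subgroup has finite index in the finite group $\Out(P\fleche F_n)$, so $\Out(P\fleche F_n)$ is again virtually $\Out(P;\mk\calk)$. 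The degenerate case $P=F_n$ is dispatched by taking $\calk=\es$, since then $\Out(P\fleche F_n)=\Out(F_n)=\Out(F_n;\mk\es)=\Out(P;\mk\calk)$.

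The second assertion is immediate from the second assertion of Corollary \ref{cor_induit}: as $P$ is torsion-free, $\Out(P\fleche F_n)$ has a finite index subgroup with a finite classifying space. Concretely, $P$ is a torsion-free hyperbolic group (being quasiconvex in $F_n$), so $\Out(P;\mk\calk)$ has such a subgroup by \cite{GL_McCool}, while in the finite case the trivial subgroup works. I expect no serious obstacle; the one point that must not be overlooked is that infiniteness of $\Out(P\fleche F_n)$ is not part of the hypothesis, so the finite case (and the degenerate case $P=F_n$) must be handled separately, exactly as above.
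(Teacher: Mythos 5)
Your proof is correct and is essentially the paper's own argument: the paper derives this corollary directly from Corollary \ref{cor_induit}, noting only that finitely generated subgroups of $F_n$ are quasiconvex because they are virtual retracts by Hall's theorem \cite{Hall_1949} (and malnormal trivially implies almost malnormal). Your explicit treatment of the degenerate cases ($\Out(P\fleche F_n)$ finite, taking $\calk=\{P\}$, and $P=F_n$, taking $\calk=\es$) is a careful spelling-out of what the paper leaves implicit, not a different route.
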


This is a partial answer to a question that was asked by D.\ Calegari. Note that the proof uses JSJ decompositions over groups which are not small.

\begin{example} Let $P\inc F_n$ be a characteristic subgroup of finite index, with $n\ge3$. 
Then $\Out (P\fleche G)$ is not virtually of the form $\Out(P;\mk\calk)$ because there exist automorphisms of $F_n$ with no nontrivial periodic conjugacy class. 
There are similar exemples with $P$ of infinite index. 
\end{example}

\section{Groups with infinitely many automorphisms}\label{outinfi}

In this section, we characterize those relatively hyperbolic groups whose automorphism group is infinite.

 In the first subsection, we point out that   determining whether $\Out(G)$ is infinite or not is relatively easy when $G$ is torsion-free or one-ended. In particular, we  give  a complete answer for toral relatively hyperbolic groups.

The most  interesting   case is thus when  $G$ has torsion  and  splits over a finite group. For instance, virtually free groups with $\Out$ finite were 
determined by M.\ Pettet \cite{Pettet_virtually}. We will give a different characterization (see Example \ref{Pett}). 

If $G$ is hyperbolic relative to $\calp$,  we will show in Subsection \ref{sec_infini_marked} that 
 the group $\Out(G;\mk \calp)$ of automorphisms which act   trivially on each parabolic subgroup is infinite if and only if $G$ has an elementary splitting relative to $\calp$ whose group of twists is infinite.

In Subsection \ref{sec_infini_unmarked}, we   get a   characterization for the full group $\Out(G;\calp)$ being infinite: 
this happens if and only if $G$ has an elementary splitting relative to $\calp$ whose group of twists is infinite,
or in which  a maximal parabolic subgroup $P$ occurs as a vertex group  and $P$ has infinitely many outer automorphisms acting trivially on incident edge groups (such automorphisms extend to $G$).

 When $G$ is hyperbolic,  we show in Subsection \ref{hyp} that $\Out(G)$ being infinite is equivalent to   $G$ having a splitting  over a  maximal virtually cyclic group with infinite center; 
this is decidable algorithmically.

\subsection{Torsion free groups}

 We first note:
\begin{lem} \label{freep}
If a torsion-free, finitely generated, group $G$ is a non-trivial free product,
then $\Out(G)$ is infinite.
\end{lem}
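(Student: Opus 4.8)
The plan is to produce, inside $\Out(G)$, an injective image of an infinite group. Write $G=A*B$ as a non-trivial free product, so $A,B\ne 1$; both factors are finitely generated and torsion-free. The elementary fact I will use repeatedly is that in a free product the centralizer of any non-trivial subgroup of a free factor is contained in that factor (in particular $Z_G(A)=Z(A)\subset A$ and $Z_G(B)=Z(B)\subset B$), and that a cyclically reduced element of free-product length $\ge 2$ is never conjugate into a factor. These facts are what make the automorphisms constructed below genuinely non-inner rather than merely non-trivial in $\Aut(G)$.

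First I would consider partial conjugations, i.e.\ twists of the free-product splitting. For $g\in A$, let $\tau_g$ be the automorphism equal to the identity on $B$ and to conjugation by $g$ on $A$ (an automorphism, with inverse $\tau_{g\m}$). Then $g\mapsto[\tau_g]$ is a homomorphism $A\to\Out(G)$ whose kernel is exactly $Z(A)$: if $\tau_g=\mathrm{inn}_h$, then $h$ centralizes $B$ so $h\in Z(B)\subset B$, while $g\m h$ centralizes $A$ so $g\m h\in Z(A)\subset A$; hence $h\in A\cap B=1$ and $g\in Z(A)$. Thus $A/Z(A)\hookrightarrow\Out(G)$, and symmetrically $B/Z(B)\hookrightarrow\Out(G)$. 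If $A$ (or $B$) is non-abelian then $A/Z(A)$ is infinite: by Schur's theorem $[A:Z(A)]<\infty$ would force $[A,A]$ finite, hence trivial since $A$ is torsion-free, contradicting non-commutativity. This settles the case where some factor is non-abelian.

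It remains to treat the case where $A$ and $B$ are both abelian, say $A\cong\bbZ^m$ and $B\cong\bbZ^n$ with $m,n\ge 1$; here the partial conjugations degenerate. I would then use two further families of automorphisms. Extending a factor automorphism $\psi\in\Aut(A)$ by the identity on $B$ gives a map $\Aut(A)\to\Out(G)$ which is again injective: if $\mathrm{inn}_h$ restricts to $\psi$ on $A$ and to the identity on $B$, then $h\in Z(B)\subset B$, and $hah\m\in A$ for all $a\in A$ forces $h=1$ (a word $hah\m$ with $h\ne1\ne a$ has length $3$), so $\psi=\id$. When some factor has rank $\ge 2$ this yields $GL_m(\bbZ)\hookrightarrow\Out(G)$, which is infinite. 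Finally, when both factors are infinite cyclic, $G=\bbZ*\bbZ$; writing $A=\langle a\rangle$ and choosing $b\in B\setminus\{1\}$, the transvection sending $a\mapsto ab$ and fixing $B$ is an automorphism whose $n$-th power sends $a$ to $ab^n$, an element of cyclically reduced length $2$ not conjugate to $a$ for $n\ne 0$; hence it has infinite order in $\Out(G)$.

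The routine verifications are the centralizer and conjugacy computations in the free product, which are standard. The only point requiring care is organizing the case analysis so that every finitely generated torsion-free factor is covered (non-abelian, abelian of rank $\ge 2$, or infinite cyclic), and checking injectivity into $\Out(G)$ rather than mere non-triviality in $\Aut(G)$. I expect the main (and fairly mild) obstacle to be the all-abelian case, where the twists vanish and one must instead invoke factor automorphisms together with a transvection.
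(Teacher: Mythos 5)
Your proof is correct and follows essentially the same route as the paper's: partial conjugations together with Schur's theorem force both factors to be abelian, factor automorphisms then force both factors to be infinite cyclic, and the case $G=\Z*\Z$ is handled last. The differences are only presentational: you run a case analysis instead of a contradiction, make the injectivity of the twist and factor-automorphism maps into $\Out(G)$ explicit, and prove that $\Out(\Z*\Z)$ is infinite via a transvection where the paper simply cites this fact.
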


\begin{proof} Write $G=A*B$. If $a\in A$ is not central, the automorphism of $G$ equal to conjugation by $a$ on $A$ and to the identity on $B$ has infinite order in $\Out(G)$.  Assuming that  $\Out(G)$ is finite, we deduce that $Z(A)$ has finite index in $A$, so $A$ is abelian because $[A,A]$ is finite by a result due to Schur 
\cite[10.1.4]{Robinson_course}. Similarly, $B$ is abelian. Moreover,  $\Out(A)$ and $\Out(B)$ are finite, so $A=B=\Z$. This is a contradiction since $\Out(\Z*\Z)$ is infinite. 
\end{proof}

Thus, for torsion-free groups,  infiniteness of $\Out(G)$ is only interesting for one-ended groups.
One can get a similar result in a relative setting.
    
\begin{prop}\label{prop_freep_rel}
 Let $G$ be a finitely generated, non-cyclic, torsion-free group, and $ \calh$ a finite collection of finitely generated subgroups.
If the Grushko decomposition of $G$ relative to $ \calh$ is non-trivial, and not an amalgam  $G=A_1*A_2$ with $A_1, A_2$ abelian, 
then   $\Out(G;\mk\calh)$ is infinite.

\end{prop}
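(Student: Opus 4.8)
The plan is to realize $G$ as a nontrivial free product relative to $\calh$ and to produce inside $\Out(G;\mk\calh)$ an infinite group of twists (partial conjugations). Since the Grushko decomposition relative to $\calh$ is non-trivial, I would write $G=G_1*\dots*G_q*F_r$ relative to $\calh$, where the $G_i$ are non-cyclic and freely indecomposable relative to their induced peripheral structure, $F_r$ is free of rank $r$, and each $H_j$ is conjugate into some $G_i$. Every collapse of this decomposition to a one-edge or one-loop splitting is again relative to $\calh$ (each $H_j$ lies in a vertex group), so its group of twists sits in $\ker\rho\subset\Out(T;\mk\calh)\subset\Out(G;\mk\calh)$ by the discussion in Subsection \ref{arb}. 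It therefore suffices to locate one such collapse whose twists form an infinite group.

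First case: $r\geq1$. I would write $G=A*\Z=\grp{A,t}$ with $A=G_1*\dots*G_q*F_{r-1}$; this $A$ is nontrivial (hence infinite, being torsion-free) precisely because $G$ is non-cyclic. The loop carries the trivial group, so each $a\in A$ gives a twist $\tau_a$ with $\tau_a(t)=ta$ and $\tau_a|_A=\id$. Since $\tau_a$ is the identity on $A\supseteq H_j$, it lies in $\Out(G;\mk\calh)$. The map $a\mapsto\tau_a$ is a homomorphism $A\to\Aut(G)$, and I would check that $a\mapsto[\tau_a]$ is injective into $\Out(G)$: if $\tau_a$ were conjugation by some $g$, then $g$ would centralize the nontrivial factor $A$, whence $g\in A$, and a normal-form computation in $A*\Z$ then forces $g=a=1$. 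Thus $\{[\tau_a]\mid a\in A\}\cong A$ is infinite. (This subcase already covers $G=F_r$ with $r\geq2$.)

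Second case: $r=0$, so $G=G_1*\dots*G_q$ with $q\geq2$ by non-triviality. If some $G_i$ is non-abelian, or if $q\geq3$, then $G$ admits a free splitting $G=A*B$ relative to $\calh$ in which $B$ is non-abelian: take $B=G_i$ in the first situation and $B=G_2*\dots*G_q$ (a free product of at least two nontrivial groups, hence non-abelian) in the second. With trivial edge group, the amalgam computation of Subsection \ref{arb} identifies the group of twists with $A/Z(A)\times B/Z(B)$, and $B/Z(B)$ is infinite because $B$ is torsion-free and non-abelian, exactly as in the proof of Lemma \ref{freep} (Schur). The only remaining possibility is $q=2$ with $G_1,G_2$ both abelian, i.e.\ $G=A_1*A_2$ with $A_1,A_2$ abelian, which is excluded by hypothesis.

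The one point deserving real care—and the place where the hypothesis is used—is to see that the excluded amalgam $A_1*A_2$ with both factors abelian is exactly the configuration left uncovered: when $r\geq1$ the loop twist always works, and when $r=0$ a non-abelian free factor survives unless $q=2$ with both factors abelian. The rest is bookkeeping: checking that each collapse stays relative to $\calh$ (automatic, since each $H_j$ is conjugate into a $G_i$ inside the chosen vertex group) and that the exhibited twists generate an \emph{infinite} subgroup of $\Out(G;\mk\calh)$, which is what the injectivity argument in the loop case and the identification of $B/Z(B)$ in the separating case provide.
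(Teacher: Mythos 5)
Your proof is correct and uses essentially the same ingredients as the paper's: one-edge collapses of the relative Grushko decomposition, the twist-group computations for trivial-edge HNN extensions and amalgams, and the Schur/torsion-free argument showing that a non-abelian factor forces infinitely many twists. The only difference is organizational—you argue directly, while the paper argues by contraposition (assuming $\Out(G;\mk\calh)$ is finite and deducing that the decomposition must be an amalgam of two abelian groups)—so the mathematics is the same.
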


 \begin{rem} \label{pab}
  If the 
Grushko decomposition $\Gamma$  relative to $ \calh$ is    $G=A_1*A_2$ with $A_1, A_2$ abelian, then $\Out(G; \mk\calh)$ is finite if and only if, for $i=1,2$, the subgroup of $A_i$ generated by subgroups conjugate to a group in $\calh$ has finite index. This is because $\Gamma$ is $\Out(G; \mk\calh)$-invariant by  \cite{For_deformation} (its Bass-Serre tree   is the unique reduced tree 
in its deformation space). Twists are trivial because $A_1$ and $A_2$ are abelian, so $\Out(G; \mk\calh)$ is infinite if and only if $A_1$ or $A_2$ has infinitely many automorphisms acting trivially on $\calh_{|A_i}$.
\end{rem}

\begin{proof}
 
Assume that $\Out(G; \mk\calh)$ is finite, and 
let $\Gamma$ be a reduced Grushko decomposition of $G$ relative to $ \calh$. We assume that $\Gamma$ is non-trivial and we show that it is an amalgam as in the proposition.

We first note  that $G$ cannot split relative to $ \calh$ as an HNN extension $G=A*_{\{1\}}$ over the trivial
group. Indeed, 
the group of twists of this HNN extension is isomorphic to   $(A\times A)/Z(A)$,
with $Z(A)$ embedded diagonally, so contains
the infinite group $A$, a contradiction.
It follows that $\Gamma$ is a tree of groups.

The proof of Lemma \ref{freep} shows that, whenever $G$ splits as a free product $A*B$ relative to $ \calh$,
then $A$ and $B$ are abelian: otherwise the group of twists of the splitting is infinite.
Since $\Gamma$ is reduced, it follows that it is an amalgam $G=A_1*A_2$ with $A_1, A_2$ abelian:  if $\Gamma$ has more than one edge, collapsing an edge   provides a decomposition 
with a non-abelian vertex group.
\end{proof}

 Let  now $G$ be hyperbolic relative to  
  $\calp=\{P_1,\dots,P_n\}$,  with $P_i$    finitely
    generated, not virtually cyclic. 

If $G$ is one-ended relative to $\calp$,     one can   read infiniteness of
$\Out(G;\calp)$ from 
the JSJ decomposition thanks to
Theorem \ref{thm_struct_m}:   
$\Out(G;\calp)$ is finite if and only if the canonical elementary JSJ decomposition relative to $\calp$ has no flexible  QH vertex  with infinite mapping class group, the parabolic subgroups $P_j$ appearing as vertex stabilizers have 
$\Out(P_j;\mk\Inc_{P_j})$ finite, and the group of twists is finite. 
 We may be more specific under additional conditions on the parabolic subgroups.

 \begin{prop} \label{rht} Let $G$ be a non-abelian  toral relatively hyperbolic group. The following are equivalent:
 \begin{enumerate}
 \item  $\Out(G)$ is finite.
 \item Every non-trivial abelian one-edge splitting of $G$   is an amalgam $A*_CB$ with $C$ of finite index in $A$ or $B$.
 \item   $G$ has no non-trivial splitting over an abelian subgroup stable under taking roots.
 \item    $G$ is freely indecomposable and its canonical abelian JSJ decomposition $\Gcan$ relative to non-cyclic abelian subgroups satisfies the following:
 \begin{itemize*}
 \item $\Gcan$ consists of a central   vertex, possibly  connected to terminal vertices carrying an abelian group;
 \item   the central vertex is  rigid, or QH with underlying surface $\Sigma$ homeomorphic to a pair of pants or a twice punctured projective plane;
 \item at each terminal vertex, the incident edge group has finite index in the vertex group. 
 \end{itemize*}
 \end{enumerate}
 \end{prop}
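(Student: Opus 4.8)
The plan is to establish the cycle $(1)\Rightarrow(4)\Rightarrow(3)\Rightarrow(2)\Rightarrow(4)$ together with $(4)\Rightarrow(1)$, so that all four conditions become equivalent; the backbone is $(1)\Leftrightarrow(4)$, read off from the exact sequence of Corollary \ref{limgp}, while $(2)$ and $(3)$ are reached through the canonicity of $\Gcan$ and root-closure arguments in the torsion-free CSA group $G$. Throughout I may assume that no $P_i$ is cyclic, so that $\Out(G;\calp)$ has finite index in $\Out(G)$ and the two finiteness questions coincide. For $(1)\Rightarrow(4)$, if $\Out(G)$ is finite then $G$ is freely indecomposable, since a non-trivial free product has infinite $\Out$ by Lemma \ref{freep}; as $G$ is torsion-free, finitely generated and non-abelian, it is therefore one-ended, and Corollary \ref{limgp} applies. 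Finiteness of $\Out(G)$ forces each factor of the quotient to be finite and the free abelian group $\Tw$ to be trivial, and I then translate these into the shape of $\Gcan$.

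Recalling from the proof of Corollary \ref{limgp} that $\Tw\cong\prod_v (G_v)^{|E_v|-1}$, the product running over the abelian vertices $v$ of $\Gcan$, triviality of $\Tw$ means every abelian vertex has valence one; as $\Gcan$ is bipartite with non-elementary vertices on one side and abelian ones on the other, connectedness forces a single central non-elementary vertex joined to terminal abelian vertices, i.e.\ the star shape. Finiteness of each $GL_{r_k,n_k}(\bbZ)$ means $r_k=0$, i.e.\ the unique incident edge group at a terminal vertex $M_k\cong\Z^{a_k}$ has finite index; and finiteness of each $MCG^0(\Sigma_i)$ forces a flexible central vertex to carry a pair of pants or a twice punctured projective plane, the only hyperbolic surfaces with finite $MCG^0$. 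This is precisely condition $(4)$, and reading the same computation backwards gives $(4)\Rightarrow(1)$.

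For $(4)\Rightarrow(3)$, suppose the star shape holds yet $G$ splits non-trivially over a root-closed abelian $C$, with Bass-Serre tree $T'$. The central rigid group $R$ is universally elliptic, hence elliptic in $T'$; each terminal $M_i$ is elliptic as well, since otherwise its finite-index incident edge group $C_i$ would be non-elliptic in $T'$ while contained in the elliptic group $R$. Hence $\Gcan$ dominates $T'$, so $T'$ is a one-edge collapse of $\Gcan$ and its edge group satisfies $C_i\subseteq C\subseteq Z_G(C_i)=M_i$ for some $i$; root-closedness of $C$ together with $[M_i:C_i]<\infty$ forces $C=M_i$ and the splitting to be trivial, a contradiction. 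When the center is a twice punctured projective plane one adds that every essential $2$-sided curve is boundary-parallel or bounds a M\"obius band, so its edge group is an incident edge group or fails to be root-closed. For $(3)\Rightarrow(2)$ I argue contrapositively: if $(2)$ fails there is a non-trivial abelian one-edge splitting that is an HNN extension or an amalgam $A*_CB$ with $C$ of infinite index in both factors; replacing $C$ by its root closure $\sqrt C$, a root-closed abelian subgroup with $[\sqrt C:C]<\infty$ because $G$ is torsion-free CSA, and enlarging the edge group accordingly, yields a non-trivial splitting over $\sqrt C$ (the HNN or infinite-index structure survives a finite extension of the edge group), contradicting $(3)$.

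Finally $(2)\Rightarrow(4)$ is again contrapositive. If $G$ is freely decomposable the free splitting already violates $(2)$; if $G$ is one-ended but $\Gcan$ is not of the star form, I collapse $\Gcan$ to a single edge exhibiting the defect --- an abelian vertex of valence $\geq 2$, two distinct non-elementary vertices, an incident edge group of infinite index, or an essential curve on a large central surface, locating an edge carrying an infinite twist group by Lemma \ref{lem_collapse_twist} when needed --- and each such collapse is an HNN extension or an amalgam with edge group of infinite index in both factors, contradicting $(2)$. The step I expect to be most delicate is the dictionary underlying $(4)\Rightarrow(3)$ and $(3)\Rightarrow(2)$: showing that the finite-index condition on incident edge groups genuinely kills every root-closed abelian splitting, and justifying the root-closure enlargement of edge groups while preserving non-triviality, both of which rest on the CSA and malnormality properties of maximal abelian subgroups of $G$ rather than on the exact sequence.
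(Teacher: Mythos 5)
Most of your cycle is sound and close to the paper: your $(3)\Rightarrow(2)$ (root closure plus folding, with ellipticity of $\sqrt C$ and non-triviality checked) and your $(2)\Rightarrow(4)$ (collapses of $\Gcan$ at an offending abelian vertex, and the splitting dual to an essential curve at a large QH vertex) are exactly the paper's arguments, and your $(1)\Leftrightarrow(4)$ read off from the exact sequence of Corollary \ref{limgp} is a legitimate variant of the paper's route (the paper instead proves $(1)\Rightarrow(2)$ by a bare-hands twist argument and only uses the exact sequence for $(4)\Rightarrow(1)$). One small point there: finiteness of $GL_{r_k,n_k}(\Z)$ does not by itself force $r_k=0$, since $GL_{1,0}(\Z)$ is finite; you must add that $n_k\geq 1$ because incident edge groups are infinite by one-endedness. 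The real problem is that in your scheme the \emph{only} bridge from $\{(1),(4)\}$ to $\{(2),(3)\}$ is $(4)\Rightarrow(3)$, and that argument has a genuine gap.

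Condition (3) quantifies over \emph{all} splittings over root-closed abelian subgroups, whereas universal ellipticity of the rigid vertex of $\Gcan$ only applies to abelian splittings \emph{relative to non-cyclic abelian subgroups} --- that is the class of trees over which this JSJ is taken. A splitting over a root-closed abelian subgroup need not be relative: in $G=\Z^2*\Z=\langle a,b,c\mid [a,b]=1\rangle$, the HNN splitting with vertex group $F(a,c)$, stable letter $b$ and edge group $\langle a\rangle$ has root-closed edge group, yet the maximal abelian subgroup $\langle a,b\rangle\cong\Z^2$ is hyperbolic in it. This phenomenon is not an artifact of free decomposability: whenever a maximal non-cyclic abelian subgroup $P$ is non-elliptic in a one-edge abelian splitting of a toral relatively hyperbolic group, the edge group is (up to conjugacy) the kernel of the action of $P$ on its axis, hence a corank-one direct summand of $P$, hence \emph{automatically} root-closed; so root-closedness of $C$ can never by itself rule out this case. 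Your appeal to universal ellipticity is therefore circular: relativity of $T'$ amounts to ellipticity of the maximal abelian subgroups, i.e.\ essentially to the ellipticity statements you are trying to establish (note that if some $M_i$ is non-elliptic, then so is its finite-index subgroup $C_i$, hence so is $R$, and rigidity of $R$ gives nothing because the induced splitting of $R$ is not relative to $\Inc_R$). A secondary error compounds this: even granting ellipticity of all vertex groups, ``$\Gcan$ dominates $T'$'' does not imply that $T'$ is a collapse of $\Gcan$; domination only yields, via an equivariant map, that some edge group $C_i$ of $\Gcan$ is conjugate into $C$, whence $C_i\subseteq C\subseteq M_i$ and $C=M_i$ by root-closedness --- and you must then still exclude a genuine non-trivial splitting of $G$ over the full maximal abelian subgroup $M_i$, which is not automatic. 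Both holes are plugged in the paper by the twist argument behind $(1)\Rightarrow(2)$: an HNN extension over an abelian group, or an amalgam whose abelian edge group has infinite index on both sides, has an infinite group of twists (or an abelian vertex group with infinitely many extendable automorphisms), with no relativity hypothesis needed; after that, $(2)\Rightarrow(3)$ is the two-line root-closure observation. To repair your proof, replace $(4)\Rightarrow(3)$ by this implication $(1)\Rightarrow(2)$ (which you may then combine with your $(1)\Leftrightarrow(4)$ and the easy $(2)\Rightarrow(3)$).
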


 A subgroup $A$ is stable under taking roots if  $g\in A$  whenever $g^n\in A$ for some $n\ge2$   (this is also called pure, or isolated).   

 The pair of pants and the twice punctured projective plane appear in this statement because they are the only compact hyperbolic surfaces with finite mapping class group (see the end of Subsection \ref{autar}).  The fundamental group of  a pair of pants is rigid. 
The fundamental group of a twice punctured projective plane has   two (incompatible) cyclic splittings
relative to the boundary 
(it is flexible), but none over a maximal cyclic subgroup. 

 Automorphisms of toral relatively hyperbolic groups were considered in \cite{DaGr_isomorphism}, and some of the equivalences in Proposition \ref{rht} 
  follow from their results  (note for instance that a splitting as in (3) is an essential splitting in the sense of their Definition 3.30).

\begin{proof}
 If $G$ is a free product, (2) and (3) are false, and so is (1) by 
  Lemma  \ref{freep}. We  therefore assume that   $G$ is freely indecomposable. 

We prove $(1)\imp (2)$ by assuming that (2) does not hold, and  showing that $\Out(G)$ is infinite. 
If $G$ is an HNN extension over an abelian group, or an amalgam with $A$ and $B$ non-abelian, the group of twists of the splitting is infinite. 
If $G=A*_CB$ with $A$ abelian   containing $C$ with infinite index, $\Out(G)$ is infinite because $A$ has nontrivial automorphisms equal to the identity on $C$. 

It is clear that (2) implies (3).  To prove that (3) implies (2), first   assume that $G=A*_C B$ 
with $C$ abelian of infinite index in both $A$ and $B$.
Let $\Hat C$ be the set of all roots of elements of $C$, an abelian subgroup containing $C$ with finite index    (recall that all abelian subgroups of $G$ are cyclic or parabolic, hence finitely generated).
Since $\hat C$ is elliptic  in the amalgam, up to exchanging the role of $A$ and $B$, we can assume that $\Hat C<A$.
Then $G=A*_{\hat C}\grp{B,\hat C}$ is a decomposition contradicting (3).
The case of an HNN extension is similar, but we do not need the hypothesis that $C$ has infinite index in $A$.

If (4) does not hold,  we construct a splitting contradicting (2).
If $\Gcan $  has a flexible QH vertex, and if the underlying surface is not a twice punctured projective plane,
then
one   
simply considers the cyclic splitting dual to a  2-sided  essential simple closed curve not bounding a M\"obius band.
The other possibility is that $\Gcan $  
  has a vertex $v$  carrying an  abelian group such that either $v$ has   valence $\ge2$,   or $v$ is terminal with  the edge group of infinite index in $G_v$.  One gets the required splitting by   collapsing edges of $\Gcan$. 

 We have proved   $(1)\imp (2)\imp (4)$ and $(2)\Leftrightarrow(3)$ .
We conclude by  deducing from   the exact sequence of Corollary \ref{limgp} that $\Out(G)$ is finite if (4) holds. 
  The groups  $GL_{r_k,n_k}(\Z)$ are trivial because $r_k=0$ by the finiteness assumption at terminal vertices. 
 The mapping class group of $\Sigma$ is finite. 
 Twists are trivial because terminal vertices carry an abelian group.
\end{proof}

 \begin{prop}  Let $G$ be torsion-free, hyperbolic relative to nilpotent subgroups. Assume that $G$ is not nilpotent. If $\Out(G)$ is finite, then    $G$ is freely indecomposable and
  its canonical   JSJ decomposition $\Gcan$ over nilpotent groups relative to non-cyclic nilpotent  subgroups consists of a central vertex
which is 
 rigid, or QH with underlying surface $\Sigma$ homeomorphic to a pair of pants or a twice punctured projective plane,
possibly  connected to terminal vertices carrying a nilpotent group.   
 \end{prop}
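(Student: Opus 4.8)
The plan is to run the scheme of the proof of Proposition \ref{rht}, replacing the ``infinite--index edge group'' mechanism of the abelian case by an argument exploiting the non-trivial center of a nilpotent group. Let $\calp$ denote the family of maximal parabolic subgroups; after deleting the cyclic ones (which preserves relative hyperbolicity, see Subsection \ref{gene}) we may assume every $P_i$ is non-cyclic nilpotent, hence infinite, finitely generated, small, and not virtually cyclic. Since $G$ is torsion-free, a subgroup is nilpotent if and only if it is elementary: a nilpotent subgroup is small, hence elementary, and conversely an elementary subgroup is cyclic or parabolic, hence nilpotent. Thus the ``JSJ over nilpotent groups relative to non-cyclic nilpotent subgroups'' coincides with the canonical elementary JSJ relative to $\calp$, so $\Gcan=\Tcan/G$ with $\Tcan$ the bipartite tree of cylinders of Subsection \ref{cano}.

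First I would settle free indecomposability: were $G$ a non-trivial free product, Lemma \ref{freep} would make $\Out(G)$ infinite, so $\Out(G)$ finite forces $G$ freely indecomposable, hence one-ended relative to $\calp$ (the $P_i$ are elliptic and $G$ admits no free splitting). Since $\Out(G;\calp)$ is a subgroup of the finite group $\Out(G)$ it is finite, and so is its finite-index subgroup $\Out^1(G;\calp)$. The exact sequence of Theorem \ref{thm_struct_m} then forces both the group of twists $\Tw$ of $\Tcan$ and every factor $MCG^0_{\Tcan}(\Sigma_i)$ to be finite. Finiteness of the mapping class groups pins down the flexible QH vertices: by the classification recalled at the end of Subsection \ref{autar}, the only compact hyperbolic surfaces with finite mapping class group are the pair of pants (which we treat as rigid) and the twice punctured projective plane. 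Hence every non-elementary vertex of $\Tcan$ is rigid or is QH with underlying surface a pair of pants or a twice punctured projective plane (the fiber is trivial, $G$ being torsion-free).

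It remains to deduce the star shape from finiteness of $\Tw$, and this is the heart of the matter. I claim $\Tcan$ has no interior elementary vertex. Suppose instead that some $Y\in V_1(\Tcan)$, carrying an elementary (cyclic or nilpotent parabolic) group $G_Y$, has valence at least two, with an incident edge $\varepsilon$ to a non-elementary vertex $x$ and a further non-elementary vertex $x'$ beyond $Y$. Collapsing every edge of $\Tcan$ except the orbit of $\varepsilon$ yields a one-edge splitting of $G$ across $\varepsilon$ whose sides $A,B$ satisfy $G_x\subset A$ and $G_Y,G_{x'}\subset B$; in particular both $A$ and $B$ are non-elementary. Now choose $z\in Z(G_Y)\setminus\{1\}$, which exists because $G_Y$ is a non-trivial nilpotent (possibly cyclic) group. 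As $z$ centralizes $G_\varepsilon$, the automorphism equal to the identity on $A$ and to conjugation by $z$ on $B$ is a twist of $\Tcan$ around $\varepsilon$ near $Y$. Since $A$ and $B$ are non-elementary and $G$ is torsion-free, their centralizers $Z_G(A)$ and $Z_G(B)$ are trivial (anything centralizing a non-elementary subgroup would centralize an infinite-order element, hence lie in an elementary subgroup, a contradiction); a direct computation then shows this twist has infinite order in $\Out(G)$, contradicting finiteness of $\Tw$.

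Consequently no $V_1$ vertex is interior, so all are terminal; as $\Tcan$ is bipartite and connected, this leaves a single orbit of $V_0$ vertices, the central vertex, to which the terminal $V_1$ vertices (which carry nilpotent groups, being elementary) are attached. Together with the second paragraph this gives exactly the asserted structure of $\Gcan$. The \emph{main obstacle} is the infinite-twist computation of the third paragraph: one must verify that the central-element twist is genuinely of infinite order, and this is precisely where torsion-freeness, the non-triviality of the center of a nilpotent group, and the fact that centralizers of infinite-order elements are elementary all enter. This is the input that replaces the infinite-index edge-group argument available in the abelian setting, and it is what allows the terminal-vertex finite-index hypothesis of Proposition \ref{rht} to be dropped here.
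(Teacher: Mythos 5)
Your overall route is the paper's: delete the cyclic parabolics, get free indecomposability from Lemma \ref{freep}, feed finiteness of $\Out^1(G;\calp)$ into the exact sequence of Theorem \ref{thm_struct_m} to force the group of twists and the mapping class factors to be finite (whence the pair-of-pants / twice-punctured-projective-plane restriction on QH vertices), and obtain the star shape by showing that an elementary vertex of $\Gcan$ of valence $\ge 2$ would create twists of infinite order, the key input being that an infinite torsion-free nilpotent group has infinite center. The paper performs this last step in one line by quoting the computation from the proof of Corollary \ref{limgp}: with the notation of Subsection \ref{arb}, $\Tw(\Gcan)$ contains $\prod_Y \bigl(Z(G_Y)\bigr)^{|E_Y|-1}$, the product running over elementary vertices. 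Your centralizer argument ($Z(A)=Z(B)=1$ because a non-trivial element centralizing a non-elementary subgroup would force that subgroup into an elementary one), your identification of elementary with nilpotent subgroups, and your deduction of the star shape from bipartiteness are all correct; using only the inclusion $\Out(G;\calp)\subset\Out(G)$ rather than equality is also fine for this direction of the statement.

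The gap is in your collapsing step. You collapse all edges of $\Gcan$ except the orbit of $\varepsilon=(x,Y)$ and assert that this produces an amalgam with ``sides'' $A\supset G_x$ and $B\supset G_Y,G_{x'}$. That presupposes $\varepsilon$ is separating in $\Gcan$. The graph $\Gcan$ is bipartite, hence has no loops, but it may contain cycles; in particular there may be two distinct edge orbits joining $x$ to $Y$, in which case \emph{every} edge incident to $Y$ is non-separating, the collapse is an HNN extension $G=A*_{G_\varepsilon}$ with $G_x$ and $G_Y$ inside the same vertex group, and there is no second side $B$ for your argument to act on. Such graphs genuinely arise (e.g.\ from a one-ended hyperbolic HNN extension $R*_{\langle c\rangle}$ of a rigid group over a maximal cyclic subgroup whose two inclusions are non-conjugate: its $\Gcan$ has one rigid vertex and one cyclic vertex joined by two edges), and since you argue by contradiction from $|E_Y|\ge 2$ you must cover this configuration. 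The repair is routine: in the HNN case the kernel of $Z_A(C)\times Z_A(\phi(C))\to\Out(G)$ is generated by the vertex relations $(a,a)$, $a\in Z(A)$, and the edge relations $(c,\phi(c))$, $c\in Z(C)$, so $Z(A)=1$ (which your argument gives, as $A\supset G_x$ is non-elementary) again makes the twist $t\mapsto tz^n$ non-trivial for every $n\neq0$, hence of infinite order. Alternatively, argue as the paper does directly on the presentation of $\Tw(\Gcan)$ recalled in Subsection \ref{arb}, which is insensitive to whether edges separate: since $Z(G_x)=1$ at non-elementary vertices, the image of $\prod_{e\in E_Y} Z(G_Y)$ in $\Tw$ is $\bigl(Z(G_Y)\bigr)^{|E_Y|-1}$, infinite as soon as $|E_Y|\ge2$.
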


\begin{proof}  
 We may assume that  no $P_i$ is cyclic, so   $\Out(G)=\Out(G;\calp)$. As above, $G$ is freely indecomposable and there is no    QH vertex
other than  those mentioned. 
Recall that an infinite torsion-free nilpotent group has infinite center.  As in the proof of Corollary \ref{limgp}, the group of twists of $\Gcan$ contains the direct product $\prod (Z(G_v))^{ | E_v | -1}$ taken over vertices carrying a nilpotent  group, so is infinite as soon as there is a vertex with valence $ | E_v |\ge2$. 
\end{proof}

\subsection{Infinity of marked automorphisms}\label{sec_infini_marked}

\begin{thm}\label{thm_marked}
  Let $G$ be hyperbolic relative to $\calp=\{P_1,\dots, P_n\}$, with each $P_i$ 
  finitely generated.
 Then $\Out(G;\mk\calp)$ is infinite if and only if 
there is   a splitting of $G$ over  finitely generated elementary subgroups,  relative to $\calp$,
 with an infinite group of   twists  $\calt$ (see Subsection \ref{arb}).
 
 More generally,   if $\calq$ is a finite family of finitely generated subgroups  with $\calp\inc\calq$, 
 the same characterization holds for $\Out(G; \mk\calq)$, with a splitting relative to $ \calq$. 
\end{thm}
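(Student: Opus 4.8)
The plan is to prove the converse directly and the forward implication by induction on relative complexity (controlled by accessibility), the base case being the relatively one-ended situation, where Theorem~\ref{thm_struct_m} applies. Throughout I may assume each $P_i\neq G$, since otherwise $\Out(G;\mk\calp)$ is trivial.

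\emph{The converse.} Suppose $T$ is a splitting of $G$ relative to $\calp$ with $\Tw(T)$ infinite. Each $P_i$ is elliptic, so every twist acts on $P_i$ as a conjugation; hence $\Tw(T)\subseteq\ker\rho\subseteq\Out(T;\mk\calp)\subseteq\Out(G;\mk\calp)$ (Subsection~\ref{arb}), and $\Out(G;\mk\calp)$ is infinite. This direction uses neither elementarity nor finite generation of edge groups.

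\emph{The relatively one-ended case.} Assume $\Out(G;\mk\calp)$ is infinite and $G$ is one-ended relative to $\calp$. By Theorem~\ref{thm_struct_m} the finite-index subgroup $\Out^1(G;\mk\calp)$ fits in $1\to\Tw(\Tcan)\to\Out^1(G;\mk\calp)\to\prod_i MCG^0_{\Tcan}(\Sigma_i)\to 1$, so either $\Tw(\Tcan)$ is infinite or some $MCG^0_{\Tcan}(\Sigma_i)$ is. In the first case $\Tcan$ is the required splitting: it is elementary, relative to $\calp$, with finitely generated edge groups (Lemma~\ref{fingen}). In the second case the orbifold $\Sigma_i$ of a flexible QH vertex $v$ carries an essential two-sided simple closed curve not bounding a M\"obius band (Subsection~\ref{autar}), whose Dehn twist has infinite order in $\Out(G_v;\mk\calb_v)\subseteq\Out(G_v)$. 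The dual splitting of $\pi_1(\Sigma_i)$ lifts to a splitting of $G_v$ over a virtually cyclic group relative to $\calb_v=\Inc_v\cup\calp_{|G_v}$; by Lemma~\ref{extens} it refines $\Tcan$ to a splitting $\Lambda$ of $G$ relative to $\calp$ over finitely generated elementary subgroups. The Dehn twist, extended by the identity, is a twist of $\Lambda$, and its infinite order is detected through $\rho_v$ on $\Out^0(\Tcan)$; hence $\Tw(\Lambda)$ is infinite.

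\emph{The general case and the main obstacle.} When $G$ is infinitely-ended relative to $\calp$, the plan is to descend through a canonical splitting over finite subgroups. The Stallings--Dunwoody deformation space over finite groups relative to $\calp$ is $\Out(G;\mk\calp)$-invariant, so its collapsed tree of cylinders $T_0$ (Subsection~\ref{defcyl}) is an $\Out(G;\mk\calp)$-invariant tree over finite subgroups relative to $\calp$. When $T_0$ is non-trivial, $\Out^0(T_0;\mk\calp)$ has finite index in $\Out(G;\mk\calp)$, hence is infinite; since edge groups are finite, $\Tw(T_0)$ has finite index in $\ker\rho$ (Lemma~\ref{lem_virt_nobitwist}). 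If $\Tw(T_0)$ is infinite we are done; otherwise $\rho_v(\Out^0(T_0;\mk\calp))$ is infinite for some vertex $v$, which by Lemma~\ref{automind} forces $\Out(G_v;\mk\calq_v)$ to be infinite, where $\calq_v=\Inc_v\cup\calp_{|G_v}$. The vertex group $G_v$ is a proper relatively hyperbolic subgroup (Lemma~\ref{Hrufi}, the finite groups $\Inc_v$ being adjoined to the marked family), so by induction it admits a splitting relative to $\calq_v$ over finitely generated elementary subgroups with infinite twists; refining $T_0$ at $v$ (Lemma~\ref{extens}) and applying Lemma~\ref{lem_extension_twist} produces the desired splitting of $G$. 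The genuinely delicate point, and the main obstacle, is the degenerate case in which $T_0$ is trivial although $G$ is infinitely-ended relative to $\calp$: there all finite edge groups collapse into a single cylinder, and one must instead exhibit an infinite group of twists arising from an infinite centralizer of a finite subgroup (Lemma~\ref{lem_twist0}). Making this dichotomy---a non-trivial invariant finite splitting, or an infinite group of twists---precise and uniform is the heart of the matter, and is where the tree-of-cylinders machinery of \cite{GL4} enters (cf.\ Corollary~\ref{propc_induct}); Linnell accessibility then guarantees that the induction terminates. Finally, the generalization to $\calq\supseteq\calp$ is immediate: $G$ remains hyperbolic relative to $\calp$ and $\Out(G;\mk\calq)=\Out(G;\calp,\mk\calq)$, so every step applies verbatim with $\calq$-relative decompositions in place of $\calp$-relative ones (using Theorem~\ref{thm_struct_m_rel} at the base of the induction), and the converse is identical since the groups of $\calq$ are elliptic in the resulting splitting.
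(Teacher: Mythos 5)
Your converse and your relatively one-ended case essentially reproduce the paper's argument: the paper also reduces to Theorem \ref{thm_struct_m_rel} applied to the canonical JSJ tree relative to $\calq$, and in the QH case it produces a splitting of the orbifold group over a maximal virtually cyclic subgroup with infinite center (citing Proposition 3.1 of \cite{DG2}) and concludes with Lemmas \ref{ti} and \ref{lem_extension_twist}. Your curve-plus-Dehn-twist formulation is the torsion-free (surface) version of this; when $G$ has torsion the QH vertex carries a $2$-orbifold, so the statement you quote from Subsection \ref{autar} does not literally apply and must be replaced by the orbifold statement of \cite{DG2}. You should also include the paper's preliminary reduction to the case where all groups of $\calq$ are infinite, since Theorem \ref{thm_struct_m} assumes the parabolic groups are infinite. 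These are minor repairs.

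The genuine gap is in the infinitely-ended case, in how you produce the invariant tree $T_0$. You take ``the collapsed tree of cylinders of the Stallings--Dunwoody deformation space relative to $\calp$'', but in this paper (and in \cite{GL4}) the tree of cylinders over finite groups is only defined when $\cale$ consists of finite subgroups of a \emph{fixed} order $k$ with $\sim$ being equality (Subsection \ref{defcyl}, case 1); equality is not an admissible relation on the family of all finite subgroups, because a finite edge stabilizer may properly contain another one. Stallings--Dunwoody trees typically have edge stabilizers of several different orders, so their tree of cylinders in this sense is not defined, and Proposition \ref{prop_induct} and Corollary \ref{propc_induct} --- whose hypothesis is precisely that edge stabilizers have constant finite cardinality $k$ --- cannot be applied to them. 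This is exactly why the paper does \emph{not} use the Stallings--Dunwoody space: it lets $k$ be the smallest cardinality of a group over which $G$ splits relative to $\calq$, and takes a reduced JSJ tree over subgroups of cardinality $\le k$ (which exists by Linnell accessibility since $G$ is finitely generated); minimality of $k$ forces every edge stabilizer of such a reduced tree to have order exactly $k$, so Corollary \ref{propc_induct} applies and yields either a collapse with infinite group of twists or a nontrivial invariant tree $T'$ with finite edge stabilizers, after which the recursion into vertex groups proceeds as you describe. Relatedly, your appeal to Linnell accessibility for termination is misplaced: in the paper the recursion terminates because an infinite sequence of trees relative to $\calq$ with finite edge stabilizers, each strictly dominating the previous one, contradicts the existence of a Stallings--Dunwoody decomposition relative to $\calq$ (this uses finite presentation of $G$ relative to $\calq$ and \cite{DicksDunwoody_groups}); Linnell's theorem enters only to provide the bounded-order JSJ at each stage.
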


   By Lemma \ref{lem_collapse_twist}, the splittings may be assumed to
  have only one edge.   The proof will be given  at the end of the subsection. 

 \begin{example} \label{Pett}
Consider the virtually free group  
$G= D_4*_CD_6 *_C D_4$,
where $D_n$ is the dihedral group of order $n$ and $C$ has order 2  (note that $C$ is  central in $D_4$, but equal to its centralizer in $D_6$). For this  two-edge splitting $\Gamma$,
and any splitting obtained by collapsing an edge, the group of twists is finite.
 The one-edge splitting $\Delta$ given by the amalgam $G=D_6*_C \left(D_4*_C D_4\right)$, however,
has a   twist of infinite order. 
  The Bass-Serre tree of $\Delta$ is the tree of cylinders of the  Bass-Serre  tree $T$ of $\Gamma$, and 
Assertion 1 of Proposition \ref{prop_induct} holds in this case. Compare \cite{Pettet_virtually}.
\end{example}

When $G$ is hyperbolic,  the group of twists of the splitting provided by Theorem \ref{thm_marked} contains an element of infinite order 
(see also  Theorem \ref{thm_twist_hyp}).  
The following example shows that this does not hold for general relatively hyperbolic groups.

 \begin{example}\label{ex2}
  Let $G=B_1*B_2$ be the free product of 
 two infinite torsion groups with trivial center.
 It is hyperbolic relative to $\calp=\{B_1, B_2\}$, and $\Out(G;\mk\calp)$ is infinite. But no splitting over elementary subgroups has a twist of infinite order, as we now show. By way of contradiction, suppose that some $Z_{G_{o(e)}}(G_e)$ contains an element   of infinite order. 
   The group $G_e$ is trivial, or  contains a torsion element $g\ne1$, or is isomorphic to $\Z$. It cannot be trivial since $G_{o(e)}$ would then be a torsion group. The existence of  a torsion element 
 $g$ also leads to a contradiction   since the centralizer of  such a $g$ is a torsion group. We conclude by observing that $G$ 
  does not split over $\Z$: if it does,  an equivariant  map from the Bass-Serre tree of the amalgam $B_1*B_2$ to that of the splitting maps vertices to vertices and must be locally injective, hence globally injective, a contradiction.
   \end{example}

Before proving
Theorem \ref{thm_marked}, we note the following fact, which follows from the presentation of $\Tw$ recalled in Subsection \ref{arb}.

\begin{lem} \label{ti} Let $G$ be a relatively hyperbolic group. Let $\Gamma$ be
  a one-edge  splitting  of $G$ over   a virtually cyclic group $G_e$ with infinite center, with $G_e$ not parabolic. 
Any element of  infinite order   in $Z(G_e)$ defines a   twist which  has infinite order in $\Out(G)$, unless $\Gamma$ is an amalgam and one of the vertex groups is virtually cyclic with infinite center. \qed
\end{lem}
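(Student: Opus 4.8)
The plan is to compute directly the order of the twist $\tau$ determined by $c$, by deciding for which $n\geq 1$ the automorphism $\tau^n$ is inner. Since $G_e$ is virtually cyclic and not parabolic, $c$ is a loxodromic element; I would fix the maximal elementary subgroup $E=E(c)$ containing it (Lemma \ref{borne}) and use throughout that $E$ equals its normalizer and is almost malnormal (Corollary \ref{almar}), that $Z_G(c)\inc E$, and that $Z_G(c)$, containing the infinite-order central element $c$, is virtually cyclic with infinite center. In particular its torsion elements form a finite subgroup and there is a homomorphism $\pi\colon Z_G(c)\to\bbZ$ with finite kernel and $\pi(c)\neq 0$; any element of $Z_G(c)$ killed by $\pi$ has finite order.

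First I would treat the amalgam $G=A*_{G_e}B$, taking $\tau$ to act as conjugation by $c$ on $A$ and as the identity on $B$ (the twist by $c$ near $B$ is its inverse in $\Out(G)$, by the edge relation, so it has the same order). Restricting $\tau^n=\mathrm{inn}_g$ to $B$ and to $A$ shows that $\tau^n$ is inner if and only if $c^n\in Z_G(B)\,Z_G(A)$, and both centralizers lie in $Z_G(c)$ because $A,B\supseteq G_e\ni c$. The decisive point is a dichotomy: $Z_G(A)$ is infinite if and only if $A$ is virtually cyclic with infinite center. Indeed, if $Z_G(A)$ is infinite it contains a loxodromic $d\in E$ centralizing $A$, forcing $A\inc Z_G(d)\inc E$, so $A$ is virtually cyclic of finite index in $E$ and some power $d^m\in A$ is an infinite-order central element of $A$; the converse is immediate. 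Hence, outside the exceptional case, $Z_G(A)$ and $Z_G(B)$ are finite, so every element of $Z_G(B)Z_G(A)$ is killed by $\pi$, whereas $\pi(c^n)=n\,\pi(c)\neq0$; thus $c^n\notin Z_G(B)Z_G(A)$ for all $n\geq1$ and $\tau$ has infinite order. In the exceptional case, if (say) $A$ is virtually cyclic with infinite center then $Z(A)$ has finite index in $A$, so $c^n\in Z(A)\inc Z_G(A)$ for some $n$, and $\tau^n$ is inner.

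For the HNN extension $G=A*_{G_e}$ with stable letter $t$, the twist $\tau$ fixes $A$ and sends $t\mapsto tc$ (with $c\in G_e\inc A$), so $\tau^n$ sends $t\mapsto tc^n$. Then $\tau^n=\mathrm{inn}_g$ forces $g\in Z_G(A)$ together with $t\m g t=c^n g$. Since $c\in A$ we get $g\in Z_G(A)\inc Z_G(c)\inc E$, and then $t\m g t=c^n g\in E$ as well, so $g\in E\cap tEt\m$. Because $t$ acts hyperbolically on the Bass--Serre tree while every element of $E$ is elliptic (a power lies in the edge group $G_e$, and in a tree an element with an elliptic power is elliptic), we have $t\notin E=N_G(E)$; almost malnormality then makes $E\cap tEt\m$ finite, so $g$ has finite order. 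Consequently $c^n=(t\m g t)g\m$ is a product of torsion elements of $Z_G(c)$, hence lies in its finite torsion subgroup, contradicting that $c^n$ has infinite order. Thus $\tau$ has infinite order with no exception, exactly as stated.

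I expect the HNN case to be the main obstacle: one must rule out that the powers of $c$ accumulated on the stable letter could be absorbed by an inner automorphism. The clean resolution is the almost-malnormality step, which forces the conjugator $g$ to be torsion, and its delicate ingredient is the verification that $t\notin E$ via ellipticity of $E$ on the Bass--Serre tree. In the amalgam case the subtlety is rather the centralizer dichotomy, which pins down that infiniteness of $Z_G(A)$ is detected \emph{precisely} by $A$ being virtually cyclic with infinite center, thereby isolating the single exceptional configuration.
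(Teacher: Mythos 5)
Your proof is correct. The paper gives no written argument---it presents the lemma as an immediate consequence of the presentation of the group of twists $\Tw$ recalled in Subsection \ref{arb} (vertex relations coming from $Z(A)$, $Z(B)$ and the edge relation coming from $Z(G_e)$)---so your proof is essentially the fleshed-out version, with two real differences in route. First, instead of quoting that presentation you re-derive the relevant part of it by hand: your criteria for $\tau^n$ to be inner ($c^n\in Z_G(B)\,Z_G(A)$ for an amalgam; $g\in Z_G(A)$ with $t\m gt=c^ng$ for an HNN extension) are exactly equivalent to membership in the kernel of $\prod_{e}Z_{G_{o(e)}}(G_e)\to\Tw$. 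Second, and more substantively, you make explicit the relative-hyperbolicity input that the one-line citation leaves implicit and that any complete proof needs: since $c$ is loxodromic, $Z_G(c)\inc E(c)$ is virtually cyclic with infinite center, hence finite-by-$\bbZ$, so $c^n$ can never be a product of torsion elements of $Z_G(c)$; and the dichotomy ``$Z_G(A)$ infinite $\Leftrightarrow$ $A$ virtually cyclic with infinite center'', which is precisely what confines the exceptional configuration to amalgams. Your HNN argument via $t\notin E(c)$ (every element of $E(c)$ is elliptic in the Bass--Serre tree, while $t$ is hyperbolic) combined with almost malnormality (Corollary \ref{almar}) is a clean alternative to what the presentation-based route would give, namely that the edge relation forces $c^n=\phi(z)\m z$ with $z\in Z(G_e)$ and $\phi(z)\in Z(A)$, which is excluded when $Z(A)$ is finite because $\phi$ preserves orders, and when $Z(A)$ is infinite because $G$ itself would then be elementary. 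The only cosmetic point: your opening assertion that $c$ is loxodromic deserves its one-line justification---$G_e\inc E(c)$ by the uniqueness statement of Lemma \ref{borne}, so if $c$ were parabolic then $E(c)$, and hence $G_e$, would be parabolic, contrary to hypothesis.
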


The following result is a key step in the proof of Theorem \ref{thm_marked}.

\begin{prop}\label{prop_induct}
  Let $  T$ be a  non-trivial tree with edge stabilizers of constant finite cardinality $k$.
Let $T_c$ be its tree of cylinders for the equality equivalence relation (see Subsection \ref{defcyl}).
Then at least one of the following holds:
\begin{enumerate}
\item $T_c$ is nontrivial and its edge stabilizers   are finite;
\item $T$ has a collapse $T'$ which is nontrivial and  has an infinite group of twists $\Tw(T')$;
\item  $T$ has a collapse $T'$ which is nontrivial and invariant under $\Out(\cald(T))$.\end{enumerate}
\end{prop}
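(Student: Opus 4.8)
The plan is to read everything off the tree of cylinders $T_c$. For the equality relation on order-$k$ subgroups the cylinders correspond to the edge stabilizers $C$ (all of order $k$), with $G_Y=N_G(C)$ for the cylinder $Y$ of $C$. First I would record two facts from Subsection \ref{defcyl}: $T_c$ is dominated by $T$ and is invariant under $\Out(\cald(T))$, and the stabilizer of an edge $\varepsilon=(x,Y)$ of $T_c$ is $G_\varepsilon=G_x\cap G_Y=N_{G_x}(C)$. Since $C$ is finite, $N_{G_x}(C)$ is finite if and only if $Z_{G_x}(C)$ is finite. Hence if $T_c$ is nontrivial and all of its edge stabilizers are finite, alternative (1) holds and we are done.

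So assume $T_c$ is either trivial or carries an infinite edge stabilizer. I claim that in both situations $T$ has an edge $e$ with $N_{G_{o(e)}}(G_e)$ infinite. If $N_{G_x}(C)=G_\varepsilon$ is infinite, take for $e$ an edge of the cylinder $Y$ with $o(e)=x$ (one exists since $x\in Y$); then $G_e=C$ and $N_{G_{o(e)}}(C)=N_{G_x}(C)$ is infinite. If $T_c$ is trivial, it is a single vertex, which cannot be a $V_0$-vertex (else $G$ would fix a vertex of $T$, contradicting nontriviality); since two distinct cylinders meet, this forces $T$ to have a single cylinder, i.e.\ all edges share one stabilizer $C$, which is therefore normal in $G$. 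Then $N_{G_{o(e)}}(C)=G_{o(e)}$ for every $e$, infinite as soon as some vertex group is infinite.

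Now fix such an $e$ with $o(e)=v$ and $G_e=C$ finite, so $Z_{G_v}(C)$ is infinite (the Note after Lemma \ref{lem_twist0}). This yields the dichotomy driving the proof. If $Z(G_v)$ is finite, then $Z(G_e)=Z(C)$ and $Z(G_v)$ are finite while $Z_{G_v}(G_e)$ is infinite, so Lemma \ref{lem_twist0} gives that $\Tw(T)$ is infinite, and Lemma \ref{lem_collapse_twist} produces a one-edge collapse $T'$ of $T$ with $\Tw(T')$ infinite; as $G$ is infinite, $T'$ is nontrivial and alternative (2) holds. I would run the same argument on collapses of $T$: whenever a collapse has a vertex group $A$ with $Z_A(C)$ infinite and $Z(A)$ finite (for instance an infinite, non-virtually-cyclic virtually free group produced by collapsing an HNN part), the same two lemmas give alternative (2).

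The remaining, and hardest, case is when $Z(A)$ is infinite for every relevant vertex group, so the twists are absorbed by vertex relations and $\Tw$ stays finite on all collapses; here I must instead produce a nontrivial $\Out(\cald(T))$-invariant collapse, landing in alternative (3). The structural fact I would lean on is that, since all edge stabilizers of $T$ have the \emph{same} order $k$, a slide move --- which requires one edge stabilizer to be contained in an adjacent one --- can only occur between edges with \emph{equal} stabilizer, hence inside a single cylinder. Thus the reduced trees of the deformation space $\cald(T)$ (related by slides, see Subsection \ref{defsp}) differ only by moves internal to cylinders, so their ``transverse'' structure is rigid; since $T_c$ is invariant and records precisely this transverse structure, the plan is to normalize the within-cylinder slides to a canonical reduced tree $T'\in\cald(T)$ and check that $T'$ is fixed by $\Out(\cald(T))$ and is a genuine collapse of $T$. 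In the degenerate subcase where all vertex groups of $T$ are finite, $G$ is virtually free, and one either finds a collapse with an infinite non-virtually-cyclic vertex group (returning to alternative (2) above) or $G$ is virtually cyclic and a one-edge collapse is canonical. The main obstacle is exactly this last step: pinning down the canonical invariant collapse and verifying that it is literally a collapse of $T$ rather than merely a tree dominated by $T$ (the subtlety being that $T_c$ itself is only dominated by $T$), which is where the equal-order hypothesis and the invariance of $T_c$ must be combined with care.
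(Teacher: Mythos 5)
You have alternative (1) right, the easy half of alternative (2) right (Lemmas \ref{lem_twist0} and \ref{lem_collapse_twist}), and you have correctly isolated the one consequence of the equal-order hypothesis that drives invariance: a slide move requires an inclusion of edge stabilizers, hence equality, hence takes place inside a single cylinder. But your proof stops exactly where the proposition becomes nontrivial: the ``remaining case'' is a plan, not an argument, and it is missing the two ideas that make the paper's proof work. The first is a trick that shrinks the hard case drastically. Suppose some edge $\eps=(x,Y)$ of $T_c$ has infinite stabilizer and the cylinder $Y$ contains edges from at least two $G$-orbits; take $T$ reduced, pick an edge $e\inc Y$ with origin $x$, and collapse the orbit of $e$. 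Reducedness gives $G_x\subsetneq G_{x'}$ for the image $x'$ of $x$, and $Y$ still provides an edge $e'$ incident to $x'$ with $G_{e'}=G_e$ (all edges of a cylinder have the \emph{same} stabilizer for the equality relation), so $Z_{G_{x'}}(G_{e'})$ is infinite. The point is that $Z(G_{x'})$ is automatically finite: an infinite $Z(G_{x'})$ would centralize, hence normalize, the infinite group $G_\eps\inc G_x\inc G_{x'}$, hence fix the unique fixed point $x$ of $G_\eps$ (edge stabilizers of $T$ are finite), and would fix only $x$; since $G_{x'}$ normalizes its center, $G_{x'}$ would then fix $x$ as well, contradicting $G_x\subsetneq G_{x'}$. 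So Lemma \ref{lem_twist0} applies to the collapsed tree and yields (2) \emph{with no hypothesis on centers} --- this is what your dichotomy lacks. The second idea is that, once every ``first-type'' cylinder (one meeting an edge of $T_c$ with infinite stabilizer) may thus be assumed to consist of a single edge orbit, the invariant collapse of (3) is explicit: collapse all edges lying in the remaining (``second-type'') cylinders. A slide moves an edge over an edge with equal stabilizer, hence stays inside one cylinder, and it cannot happen inside a first-type cylinder (a slide needs two orbits); so any two reduced trees of $\cald(T)$ give the \emph{same} collapsed tree $T'$, which is therefore $\Out(\cald(T))$-invariant, nontrivial, and literally a collapse of $T$. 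Nothing needs to be ``normalized'': the cylinders in which slides can occur are precisely the ones being collapsed, which resolves the subtlety you flagged at the end.

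Your treatment of the case $T_c$ trivial also contains an error. There all edges of $T$ share one finite normal stabilizer $A$. If there is a single orbit of edges, no slide is possible, $T$ is the unique reduced tree in $\cald(T)$, and (3) holds with $T'=T$; your slide observation covers this, but you never draw the conclusion. When there are several orbits and all vertex groups are finite, your dichotomy (``either a collapse with an infinite non-virtually-cyclic vertex group, or $G$ virtually cyclic'') is false: if $T/G$ is a wedge of two loops with every group equal to $A$, then $G/A\cong F_2$ is not virtually cyclic, while every one-edge collapse of $T$ has vertex group $A\rtimes\Z$, infinite virtually cyclic \emph{with infinite center}, so Lemma \ref{lem_twist0} never applies and your mechanism for (2) stalls. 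Conclusion (2) is nevertheless true here, but it must be verified directly from the presentation of the group of twists: for the one-edge collapse $G=B*_A$ with $B=A\rtimes\Z$, the quotient of $Z_B(A)\times Z_B(A)$ by the vertex and edge relations is infinite. The paper does exactly this, after first using Lemma \ref{lem_twist0} and reducedness of $T$ to show that failure of (2) forces this precise configuration.
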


$\cald(T)$ denotes the   deformation space of $T$ over groups of cardinality   $\le k$. All reduced trees in $\cald(T)$ have edge stabilizers of order $k$  (see Subsection \ref{defsp}). Also note that $T_c$ is invariant under $\Out(\cald(T))$,   and dominated by $T$, so we get:

\begin{cor} \label{propc_induct}
$T$ has a collapse   which is nontrivial and  has an infinite group of twists, or   $T$ dominates a nontrivial tree $T'$ with finite edge stabilizers which is invariant under $\Out(\cald(T))$. \qed
\end{cor}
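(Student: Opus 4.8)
The plan is to extract the trichotomy from the tree of cylinders $T_c$. Recall from Subsection~\ref{defcyl} that $T_c$ is $\Out(\cald(T))$-invariant and dominated by $T$, and that the stabilizer of an edge $\eps=(x,Y)$ of $T_c$ is $G_\eps=G_x\cap G_Y=N_{G_x}(C)$, where $C$ is the common order-$k$ stabilizer of the edges of the cylinder $Y$ and $G_Y=N_G(C)$. If $T_c$ is nontrivial and every $N_{G_x}(C)$ is finite, then $T_c$ itself is conclusion~(1). The remaining work is to handle the case where some $N_{G_x}(C)$ is infinite and the case where $T_c$ is trivial; in both I would aim for conclusion~(2), retreating to~(3) only in a degenerate subcase.

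Suppose first that some edge stabilizer $N_{G_x}(C)$ of $T_c$ is infinite. Since $C$ is finite, $N_{G_x}(C)/Z_{G_x}(C)$ embeds in $\Aut(C)$, so $Z_{G_x}(C)$ is infinite. Collapsing $T$ onto the $G$-orbit of a single edge $e$ of $Y$ incident to $x$ yields a one-edge splitting over $C$ in which $x$ lies in a vertex group $G_{\bar x}\supseteq G_x$. If $Z(G_{\bar x})$ is finite, then $Z(G_e)=Z(C)$ and $Z(G_{\bar x})$ are finite while $Z_{G_{\bar x}}(C)\supseteq Z_{G_x}(C)$ is infinite, so Lemma~\ref{lem_twist0} gives an infinite group of twists; by Lemma~\ref{lem_collapse_twist} one may keep a single edge, which is conclusion~(2). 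If instead $Z(G_{\bar x})$ is infinite, I would use that a central element has $G_{\bar x}$-invariant fixed set or axis: were the minimal $G_{\bar x}$-subtree of $T$ not a line, every central element would act trivially on it and hence lie in an edge stabilizer, making $Z(G_{\bar x})$ finite. So $G_{\bar x}$ acts on a line and, the center being infinite, through a translation subgroup; thus $G_{\bar x}$ splits as an HNN extension over an order-$k$ group carried by that line, and this HNN is a collapse of $T$. Its stable-letter twists form an infinite group --- just as in the computation $(A\times A)/Z(A)$ in the proof of Proposition~\ref{prop_freep_rel} --- giving conclusion~(2).

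Now suppose $T_c$ is trivial. Then $T$ is a single cylinder, all edge stabilizers coincide with one normal subgroup $C\normal G$, and $\bar G=G/C$ acts nontrivially on $T$ with trivial edge stabilizers, hence is a nontrivial free product (possibly with a free part). If this free splitting can be collapsed to a loop with infinite vertex group, or to an amalgam $A*_CB$ with $Z(A)$ finite and $Z_A(C)$ infinite, then the twist computations above (HNN twists survive even when centers are infinite) produce an infinite group of twists and we are in conclusion~(2). Otherwise all groups of twists are finite; then the canonical Grushko factor system of $\bar G$ --- or, when $T$ is a line, its axis --- determines a nontrivial $\Out(\cald(T))$-invariant tree with order-$k$ edge stabilizers dominated by $T$, which is conclusion~(3).

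The step I expect to be the main obstacle is producing the infinite twists: a one-edge \emph{amalgam} collapse can have its twist absorbed by an infinite center through the vertex relation $Z(G_v)$, so Lemma~\ref{lem_twist0} does not apply directly. The resolution is the dichotomy isolated above: with finite edge stabilizers an infinite center can only come from a line (hence HNN) action, and HNN twists are never killed by the vertex relations; thus ``finite center'' routes through Lemma~\ref{lem_twist0} and ``infinite center'' routes through an ascending HNN, which together make conclusion~(2) robust. Finally, Corollary~\ref{propc_induct} is immediate: conclusions~(1) and~(3) each furnish a nontrivial $\Out(\cald(T))$-invariant tree with finite edge stabilizers dominated by $T$ (namely $T_c$, respectively the tree just constructed), while conclusion~(2) is exactly the infinite-twist alternative.
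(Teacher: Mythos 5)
Your overall skeleton (read the trichotomy off the tree of cylinders) is the paper's, but the execution fails at the decisive step, and the failure is not patchable because the conclusion you aim for there is false. In the case where $T_c$ is nontrivial and some edge stabilizer $N_{G_x}(C)$ is infinite, you claim conclusion~(2) always holds; your dichotomy in Case~B (``either the minimal $G_{\bar x}$-subtree is a line, or $Z(G_{\bar x})$ is finite'') silently assumes that this minimal subtree contains an edge. If $G_{\bar x}$ is \emph{elliptic} in $T$, the minimal subtree is a point, central elements need not lie in any edge stabilizer, and $Z(G_{\bar x})$ can be infinite with no line and no HNN structure in sight. This happens, and conclusion~(2) genuinely fails: take $G=(\Z\times S_3)*_C S_3$ with $C\cong\Z/2$ generated by a transposition, and $T$ the Bass--Serre tree of this amalgam. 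Then $N_A(C)=Z_A(C)=\Z\times C$ is infinite (where $A=\Z\times S_3$), so $T_c$ is nontrivial with an infinite edge stabilizer; but $T$ has a single orbit of edges, so its only nontrivial collapse is $T$ itself, and
$\Tw(T)\cong \bigl(Z_A(C)\times Z_B(C)\bigr)/\bigl\langle Z(A)\times 1,\ 1\times Z(B),\ \Delta Z(C)\bigr\rangle\cong \Z/2$
is finite. Here $G_{\bar x}=A$ is elliptic with infinite center $\Z$ --- exactly the configuration your dichotomy omits. (Even when the minimal subtree is a line, your assertion that the resulting HNN splitting of the vertex group $G_{\bar x}$ ``is a collapse of $T$'' is unjustified: a splitting of a vertex group is not a collapse of the $G$-tree $T$, and turning it into one requires a refinement-and-collapse argument you do not supply.)

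The missing idea is the mechanism the paper uses precisely because twists cannot always be produced in this case: retreat to conclusion~(3) via a rigidity argument. The paper first shows that (2) does hold when a cylinder carrying an infinite edge stabilizer of $T_c$ contains edges from at least two orbits (there, $Z(G_{x'})$ infinite contradicts reducedness, so Lemma~\ref{lem_twist0} applies); when every such cylinder has a single orbit of edges, it collapses all remaining (``second type'') cylinders and proves the resulting tree $T'$ is $\Out(\cald(T))$-invariant by showing $T'$ is unchanged if $T$ is replaced by any other reduced tree of $\cald(T)$: two reduced trees in a deformation space over finite groups differ by slide moves, a slide forces the two edges to lie in a common (second type) cylinder, and such cylinders are collapsed in $T'$. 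Nothing in your proposal plays this role, and your fallback for the trivial-$T_c$ case has the same defect: ``the canonical Grushko factor system of $\bar G$ determines a nontrivial invariant tree'' is not valid, since a canonical deformation space contains no canonical tree, so it does not yield a tree invariant under $\Out(\cald(T))$. The paper instead observes that a one-orbit tree is the \emph{unique} reduced tree in $\cald(T)$ (hence invariant), and that with two or more orbits the group $G/A$ is forced to be free of rank $2$ and twists are then always infinite.
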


\begin{proof}[Proof of  Proposition \ref{prop_induct}]
We can assume that $T$ is reduced.  
  We first consider the case when $T_c$ is trivial. 
Since edges of $T$ belong to the same cylinder if and only if they have the same stabilizer,   all edges of $T$ have the same stabilizer, a finite normal subgroup $A$. 

If there is only one orbit of edges, $T $ is the unique reduced tree in $\cald(T)$  because no edge stabilizer may be properly contained in another 
 \cite{Lev_rigid}.
It follows that $T'=  T$ is invariant under $\Out(\cald(T))$.

Assume that there is more than one orbit of edges. 
 Since $A$ is normal  and finite, its centralizer in $G$ has finite index, so   for any vertex $v$ in an arbitrary  collapse  $T'$ of $T$ (including $T$ itself),
and any edge $e$  of $T'$  incident to $v$,  the group $Z_{G_v}(G_e)$ is infinite as soon as $G_v$ is infinite.
By Lemma \ref{lem_twist0},  if Assertion 2 does not hold, then all vertex stabilizers of  nontrivial collapses $T'$
are either  finite or infinite with infinite center. 
Since $T$ is reduced, this is possible only if $T$ has  one orbit of vertices,
all vertex stabilizers equal to $A$,
and only two orbits of edges 
  (this is respectively because a non-trivial amalgam $H_1*_A H_2$,
an HNN extension  $H_1*_A$ with $H_1\neq A$, and 
a double HNN extension $(A*_A)*_A$, have finite center).
In particular, $G/A$ is free of rank $2$. 
One easily checks that collapsing the orbit of any edge gives a tree $T'$ with  $\Tw (T')$ infinite, 
though Lemma \ref{lem_twist0} does not apply.
This concludes the case when $T_c$ is trivial.

From now on, we assume that $T_c$ is nontrivial and some edge $\eps=(x,Y)$ of $T_c$ has infinite stabilizer.
View the cylinder $Y$ as a subtree of $T$ containing $x$, and consider an edge $e\inc Y$ with origin $x$. Then  $G_Y=N_G(G_e)$ and $G_\eps=G_Y\cap G_x=N_{G_x}(G_e)$. We know that $Z_{G_x}(G_e)$ is infinite, but we cannot apply Lemma \ref{lem_twist0} since we do not know that $Z(G_x)$ is finite.

Assume for a moment that $Y$ contains edges from at least two $G$-orbits. Consider $  T'$ obtained from $T$ by collapsing all edges in the orbit of $e$,
and denote by $  x'$ the image of $x$  in $T'$. Since $T$ is reduced, $G_x\subsetneq G_{ x'}$.
By the assumption on $Y$, there is an edge $e'$ of $  T'$ incident to $  x'$ with $G_{e'}=G_e$.
Since $Z_{G_{ x'}}(G_{e'})$ is infinite, we are done if  $Z(G_{ {x'}})$ is finite. We now show that  $Z(G_{ {x'}})$ being infinite leads to a contradiction. 
Edge stabilizers of $T$ being finite, $x$ is the unique point of $T$ fixed by $G_\eps$. 
 Since $G_\varepsilon\inc G_x\inc G_{ x'}$,  the group $Z(G_{ {x'}})$ normalizes $G_\eps$, hence also fixes $x$, 
and only $x$ since it is infinite.
This implies that $G_{ x'}$ fixes $x$, a contradiction to $G_x\subsetneq G_{ x'}$.

Returning to the general case, there is a $G$-invariant partition of the set of cylinders: those for which there is an edge $(x,Y)$ of $T_c$ with infinite   stabilizer, and the others. Thanks to the previous argument, we may   assume that all edges contained in a given  cylinder  of the first type belong to the same $G$-orbit. 
Let now $T'$ be the tree obtained from $T$ by collapsing all edges in  cylinders of the second type. It is nontrivial (but $ T'=T$ is possible). 
We show that $T'$ does  not change if we replace $T$ by another reduced tree $T_1$ in $\cald(T)$. This implies that $T'$ is invariant under $\Out(\cald(T))$.

One may join $T$ and $T_1$    by slide moves (see Subsection \ref{defsp}).  In a slide move, an edge $e$ slides over an edge $f$   belonging to a different orbit, with $G_e\inc G_f$. Here one must have $G_e=G_f$, so $e$ and $f$   belong to the same cylinder, necessarily of the second type. The slide move does not change $T'$ since the cylinder gets collapsed. 
\end{proof}

\begin{proof}[Proof of Theorem \ref{thm_marked}]
 We prove the ``only if '' direction (the other direction is clear since twists act by conjugations on vertex groups).
We may assume that all groups in $  \calq$ are infinite.  
We first suppose that $G$ is one-ended relative to $ \calq$.
Let $T$ be the canonical  elementary JSJ tree relative to  $ \calq=\calp\cup\calh$ as in Subsection \ref{cano}.  Its edge stabilizers are finitely generated by
Lemma \ref{fingen}. 
By Theorem 
\ref{thm_struct_m_rel},
if $\Out(G; \mk\calq)$ is infinite, then either $\Tw(T)$ is infinite and we are done,
or $T$ has at least a non-rigid QH vertex $v$ with  $\Out(G_v;\calb_v)$ infinite.

The underlying orbifold  group  $\pi_1(\Sigma)$ splits, relative to its boundary subgroups, over a maximal  virtually cyclic subgroup with infinite center  (see  \cite{DG2}, Proposition  3.1).   This induces an elementary  splitting of $G_v$ which extends to a splitting of $G$. 
By Lemma  \ref {extens}, this splitting of $G$ is relative to $ \calq$ (because the intersection of $G_v$ with a conjugate of a group in $ \calq$  projects into a boundary subgroup in $\pi_1(\Sigma)$), and  has an infinite group of   twists  by Lemmas \ref{ti} and \ref{lem_extension_twist}.
 This   proves the theorem if $G$ is one-ended relative to $\calq$. 

In the general case, 
let $k\in \bbN$ be the smallest number such that $G$ splits relative to $ \calq$ over a group of cardinality $k$.
Let $T$ be a reduced JSJ tree  relative to $ \calq$ over subgroups of cardinality $k$ (see Subsection \ref{JSJ}). 
Its deformation space   is invariant under $\Out(G; \mk\calq)$.
By Corollary \ref{propc_induct}, either some collapse of $T$ has an infinite group of   twists (and we are done), 
or $T$ dominates  a nontrivial  tree $T'$ with finite edge stabilizers 
  which is invariant under  
$  \Out(G;\mk\calq)$.
Note that the groups  in $ \calq$ are elliptic in $T'$, since they are elliptic in $T$ and $T$ dominates $T'$.

We may   assume that $\Tw(T')$ is finite. Let $ A_0\inc \Out(G; \mk\calq)$ be the finite index subgroup  acting trivially on the graph $T'/G$.
 By  Assertion 1 of Lemma \ref{lem_virt_nobitwist}, there exists a vertex group $G_v$ of $T'$ such that 
$\rho_v(A_0)\subset \Out(G_v)$ is infinite. In particular, $G_v$ is infinite.

The group $G_v$ is   hyperbolic relative to the family  $\calp_{|G_v}$ (see Lemma \ref{Hrufi}).
By Lemma \ref{automind}, 
we have $\rho_v(A_0)\inc \Out(G_v; \mk\calq_{|G_v})$.

 If  the theorem holds for $G_v$,
we get a graph of groups decomposition $\Gamma_0$ of $G_v$ relative to $ \calq_{|G_v}$ having an infinite group of   twists. Since $T'$ has finite edge stabilizers, $\Gamma_0$ is relative to $\Inc_v$ and one may refine $T'/G$
 to 
  a graph of groups $\Lambda$  by using $\Gamma_0$.
By Lemma \ref{lem_extension_twist}, the splitting  $\Lambda$ has an infinite group of twists. It is relative to $ \calq$ by Lemma \ref{extens}. 

 If  the theorem does not hold for $G_v$, we repeat the construction. If the process stops after finitely many steps, we get a splitting $\Lambda$ as in the previous case. If it does not stop, we get an infinite sequence of trees $T_i$ relative to $\calq$ with finite edge stabilizers, with $T_{i+1}$ strictly dominating $T_i$.  Since $G$ is finitely presented relative to $\calq$, there is  a Stallings-Dunwoody decomposition  relative to  $\calq$ (see Subsection \ref{JSJ}), and we reach a contradiction (see \cite[p.\ 130]{DicksDunwoody_groups}).
\end{proof}

\subsection{Infinity of unmarked automorphisms}  \label{sec_infini_unmarked}

 Using Theorem \ref{caleg_general}, we now
characterize relatively hyperbolic groups for which $\Out(G;\calp)$
 is infinite.
 
We first note the following consequence of Theorem \ref{thm_marked}.

\begin{cor} \label{cor_ker} 
 Let  $G$ be  hyperbolic relative to  $\calp=\{P_1,\dots,P_n\}$, where each $P_i$ is infinite and finitely generated, and let $\calh$ be a finite family of finitely generated subgroups.

If $\Outu(G;\calp,\mk\calh )$ is infinite, there is an elementary splitting relative to $\calp\cup\calh$ 
with an infinite group of twists, or there is an $i$ such that the natural map $\Outu(G;\calp,\mk\calh )\to\Outu(P_i)$ (defined because $P_i$ equals its normalizer) has infinite image. 
 \end{cor}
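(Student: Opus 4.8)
The plan is to reduce the statement to Theorem~\ref{thm_marked} by separating off the induced action on the maximal parabolic subgroups. Since each $P_i$ is infinite, it is a maximal elementary subgroup (Subsection~\ref{gene}), hence equals its own normalizer (the remark preceding Corollary~\ref{almar}); this is exactly what makes the natural restriction map $\rho_i\colon\Out(G;\calp,\mk\calh)\to\Out(P_i)$ well-defined, as in the definition opening Section~\ref{sec_induced}. First I would form the product $\rho=(\rho_1,\dots,\rho_n)$ and split into two cases. If some image $\rho_i(\Out(G;\calp,\mk\calh))$ is infinite, we are immediately in the second alternative of the statement and there is nothing more to do.

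So assume every $\rho_i$ has finite image. The key step is to identify $\ker\rho$. An outer automorphism $\Phi$ lies in $\ker\rho_i$ precisely when it acts on $P_i$ as conjugation by an element of $G$: indeed, if a representative $\alpha$ sends $P_i$ to $P_i^{g_i}=g_iP_ig_i\m$ and the induced automorphism $x\mapsto g_i\m\alpha(x)g_i$ of $P_i$ is inner, then $\alpha$ restricted to $P_i$ agrees with conjugation by some element of $G$. By the characterization of marked automorphisms in Subsection~\ref{sec_relaut}, this says exactly $\Phi\in\Out(G;\mk{P_i})$. Intersecting over all $i$ gives $\ker\rho=\Out(G;\calp,\mk\calh)\cap\Out(G;\mk\calp)=\Out(G;\mk\calp,\mk\calh)=\Out(G;\mk{(\calp\cup\calh)})$.

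Since each $\rho_i$ has finite image, $\ker\rho$ is an intersection of finitely many finite-index subgroups, hence has finite index in $\Out(G;\calp,\mk\calh)$. As the latter is infinite by hypothesis, the marked group $\Out(G;\mk{(\calp\cup\calh)})$ is infinite as well. At this point I would invoke the general form of Theorem~\ref{thm_marked} with $\calq=\calp\cup\calh$, a finite family of finitely generated subgroups containing $\calp$: infiniteness of $\Out(G;\mk\calq)$ produces a splitting of $G$ over finitely generated elementary subgroups, relative to $\calp\cup\calh$, whose group of twists is infinite. This is the first alternative, completing the proof.

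The whole argument is a bookkeeping reduction, so no genuine difficulty arises once Theorem~\ref{thm_marked} is available; the one point that must be handled with care is the identification of $\ker\rho$ with the marked group $\Out(G;\mk{(\calp\cup\calh)})$. This in turn rests on each infinite $P_i$ being self-normalizing, which is what guarantees both that the maps $\rho_i$ exist and that their kernels are precisely the subgroups acting trivially on $P_i$.
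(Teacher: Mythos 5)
Your proposal is correct and follows essentially the same route as the paper: the paper's proof likewise observes that if every restriction map $\Out(G;\calp,\mk\calh)\to\Out(P_i)$ has finite image, then the intersection of their kernels, namely $\Out(G;\mk\calp,\mk\calh)$, has finite index and is therefore infinite, and then invokes Theorem~\ref{thm_marked} with $\calq=\calp\cup\calh$. Your write-up merely makes explicit the bookkeeping (self-normalization of the $P_i$, identification of $\ker\rho$ with the marked group) that the paper leaves implicit.
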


 \begin{proof}
   If all maps $\Outu(G;\calp,\mk\calh )\to\Out(P_i)$ have finite image, then
the intersection of their kernels, namely $\Out(G;\mk\calp,\mk\calh)$, is infinite.
We can now apply Theorem \ref{thm_marked}.
 \end{proof}

 \begin{cor}
   \label{cor_outu_infini}
 Assume furthermore that $G$ is non-elementary (\ie 
$P_i\ne G$).
Then 
  $\Out(G;\calp,\mk\calh)$ is infinite if and only if  $G$ has an  
  elementary splitting as a graph of groups $\Lambda$ relative to $ \calp\cup\calh$ 
such  that one of the following holds:
 \begin{itemize}
\item 
  the group of   twists of $\Lambda$ is infinite, 
\item
 or $\Lambda$ has a vertex $v$ such that $G_v=P_i$ is a maximal parabolic subgroup  and
$\Out(G_v;\mk\Inc_v,\mk\calh_{|G_v})  $ is infinite.
\end{itemize}
 \end{cor}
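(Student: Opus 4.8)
The plan is to prove the two implications separately, deriving the harder ``only if'' direction from Corollary \ref{cor_ker} together with Theorem \ref{caleg_general}, and the ``if'' direction from the extension properties recorded in Subsection \ref{arb} and Lemma \ref{automind}.

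First I would treat the ``if'' direction, where a splitting $\Lambda$ of the stated form is given; write $T$ for its Bass--Serre tree, which is relative to $\calp\cup\calh$. If the group of twists of $\Lambda$ is infinite, then since $T$ is relative to both $\calp$ and $\calh$, every twist lies in $\ker\rho\subset\Out(T;\mk\calp,\mk\calh)\subset\Out(G;\calp,\mk\calh)$ (recall $\Tw\subset\ker\rho$, and that $\ker\rho\subset\Out(T;\mk\calq)$ whenever $T$ is relative to $\calq$); hence $\Out(G;\calp,\mk\calh)$ is infinite. If instead $\Lambda$ has a vertex $v$ with $G_v=P_i$ maximal parabolic and $\Out(G_v;\mk\Inc_v,\mk\calh_{|G_v})$ infinite, I would note that $G_v=P_i$ fixes no edge of $T$, so the induced family $\calp_{|G_v}$ reduces to $\{G_v\}$ and imposes no condition; the left-hand inclusion of Lemma \ref{automind} then gives $\Out(G_v;\mk\Inc_v,\mk\calh_{|G_v})\subset\rho_v(\Out^0(T;\calp,\mk\calh))$, so $\Out^0(T;\calp,\mk\calh)$, and a fortiori $\Out(G;\calp,\mk\calh)$, is infinite.

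For the ``only if'' direction, assume $\Out(G;\calp,\mk\calh)$ is infinite and apply Corollary \ref{cor_ker}. In the first alternative there is already an elementary splitting relative to $\calp\cup\calh$ with infinite group of twists, which is exactly the first case of the statement. In the second alternative, for some $i$ the natural map $\Out(G;\calp,\mk\calh)\to\Out(P_i)$ has infinite image; by definition this image is $\Out(P_i\fleche(G;\calp,\mk\calh))$, so that group is infinite. Since each $P_i$ is infinite, finitely generated, and different from $G$, Theorem \ref{caleg_general} applies: $P_i$ is then a vertex group $G_v$ of an elementary JSJ decomposition $\Gamma$ relative to $\calp\cup\calh$, and $\Out(P_i\fleche(G;\calp,\mk\calh))$ has a finite index subgroup equal to $\Out(P_i;\mk\calk)$ with $\calk=\Inc_v\cup\calh_{|G_v}$. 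As the ambient group is infinite, so is this McCool group $\Out(G_v;\mk\Inc_v,\mk\calh_{|G_v})$; and since every infinite $P_i$ is maximal parabolic (Subsection \ref{gene}), $\Gamma$ exhibits the second case of the statement.

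The argument is essentially an assembly of the two principal theorems, so there is no isolated hard computation; the real content has already been spent in Corollary \ref{cor_ker} (hence in Theorem \ref{thm_marked}) and in Theorem \ref{caleg_general}. The only points demanding care are the bookkeeping of marked families across these invocations: checking that $\calp_{|G_v}$ contributes no constraint precisely when $G_v$ is the parabolic group $P_i$ itself, and verifying that the McCool group $\Out(P_i;\mk\Inc_v,\mk\calh_{|G_v})$ produced by Theorem \ref{caleg_general} is exactly the group named in the second alternative.
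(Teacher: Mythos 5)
Your proposal is correct and follows essentially the same route as the paper: the ``only if'' direction combines Corollary \ref{cor_ker} with Theorem \ref{caleg_general} exactly as the paper does, while the ``if'' direction (which the paper dismisses as clear) is justified by the same observations you spell out, namely that twists lie in $\Out(G;\mk\calp,\mk\calh)$ when the tree is relative to $\calp\cup\calh$, and that Lemma \ref{automind} extends the McCool group of a parabolic vertex group. Your extra bookkeeping showing that $\calp_{|G_v}$ imposes no condition when $G_v=P_i$ is a worthwhile elaboration but does not change the argument.
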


 \begin{proof}   As in Theorem \ref{thm_marked},   the ``if'' direction is clear, so assume that $\Out(G;\calp,\mk\calh)$ is infinite.  
   By Corollary \ref{cor_ker}, and up to renumbering, we can assume that 
 $\Out(P_1\fleche (G;\calp,\mk\calh))$ is infinite.
By Theorem \ref{caleg_general}, $P_1$ is a vertex group in some  elementary decomposition 
of $G$ relative to $\calp\cup\calh$ 
 and $\Out(P_1;\mk\Inc_v,\mk\calh_{|G_v})$ has finite index in 
 $\Out(P_1\fleche (G;\calp,\mk\calh))$.
In particular, $\Out(P_1;\mk\Inc_v,\mk\calh_{|G_v})$ is infinite.
 \end{proof}

\subsection{Hyperbolic groups} \label{hyp}

We apply the results of the previous subsections to the case when $G$ is a hyperbolic group. 
We say that a 
 subgroup of $G$ is $\Zmax$ if it is  maximal for inclusion among virtually cyclic subgroups
with infinite center.  Note that any virtually cyclic subgroup $C$ with infinite center is contained in a unique $\Zmax$ subgroup $\hat C$ (the pointwise stabilizer of $\partial C$).

 Given any splitting of a hyperbolic group over a $\Zmax$ subgroup $C$, any central element $c\in C$ of infinite order defines
a twist of infinite order in $\Out(G)$.

\begin{thm}\label{thm_twist_hyp} 
  If $G$ is hyperbolic,   $\Out(G)$ is infinite if and only if there is  a  non-trivial splitting  of  $G$  
 over a $\Zmax$  subgroup  (such a splitting always has a    twist of infinite order). 

If  $\calh$ is a finite family of finitely generated subgroups,  $\Out (G;\mk\calh)$ is infinite if and only if there is a non-trivial
splitting   over a $\Zmax$ subgroup
relative to $\calh$.
\end{thm}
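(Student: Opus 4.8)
The plan is to observe first that the two assertions are really one: since $\Out(G;\mk\es)=\Out(G)$ and ``relative to $\es$'' imposes no condition, the first statement is the case $\calh=\es$ of the second, so I would prove the second for an arbitrary finite family $\calh$ of finitely generated subgroups. The ``if'' direction is immediate from tools already in place. Given a nontrivial splitting over a $\Zmax$ subgroup relative to $\calh$, I collapse all edges but one to obtain a one-edge splitting whose edge group $\hat C$ is still $\Zmax$ and which is still relative to $\calh$. Picking $c\in Z(\hat C)$ of infinite order and noting that $\hat C$ is not parabolic because $G$ is hyperbolic, Lemma \ref{ti} yields a twist of infinite order, unless the splitting is an amalgam with a vertex group that is virtually cyclic with infinite center; but such a vertex group would be a virtually cyclic group with infinite center properly containing the edge group $\hat C$, contradicting the maximality of $\hat C$. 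This twist acts as a conjugation on every vertex group, hence fixes up to conjugacy each $H_j$ (which is elliptic), so it lies in $\Out(G;\mk\calh)$, which is therefore infinite.

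For the converse I would apply Theorem \ref{thm_marked} with $\calp=\es$ and $\calq=\calh$. Since the elementary subgroups of a hyperbolic group are exactly the virtually cyclic ones, infiniteness of $\Out(G;\mk\calh)$ produces a nontrivial splitting over finitely generated virtually cyclic subgroups, relative to $\calh$, with infinite group of twists; by Lemma \ref{lem_collapse_twist} I may collapse it to a one-edge splitting over a virtually cyclic group $C$, still relative to $\calh$ and with $\Tw$ infinite. Reading the presentation of $\Tw$ from Subsection \ref{arb}, infinite twists force $Z_{G_v}(C)$ to be infinite for some endpoint $v$; I choose $g\in Z_{G_v}(C)$ of infinite order. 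Fixing an infinite-order $c\in C$, the element $g$ lies in the virtually cyclic group $Z_G(c)$, and $C$ cannot contain an element inverting $c$: such an element, commuting with $g\in Z_G(c)$, would force $g$ to be torsion. Hence $C$ has infinite center, so it is contained in a unique $\Zmax$ subgroup $\hat C$ (with $C$ of finite index in $\hat C$), and since $g$ centralizes $C$ it fixes $\partial C$ pointwise, giving $g\in\hat C$.

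It then remains to realize $\hat C$ as an edge group of a splitting relative to $\calh$. The key point is that $\hat C$ is elliptic, since $\langle g\rangle$ has finite index in $\hat C$ and $g\in G_v$ is elliptic. I would arrange that $\hat C$ fixes the vertex $v$ carrying the edge $e$ (with $G_e=C\subseteq\hat C$) and refine that splitting at $v$ by the non-reduced one-edge decomposition $G_v=G_v*_{\hat C}\hat C$, attaching $e$ to the vertex carrying $\hat C$ (legitimate because $C\subseteq\hat C$). By Lemma \ref{extens} this refinement is relative to $\calh$, and it exhibits a genuine edge group equal to the $\Zmax$ subgroup $\hat C$, the edge being essential because its $\hat C$-vertex also carries $e$. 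This is the desired nontrivial splitting over a $\Zmax$ subgroup relative to $\calh$.

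The step I expect to be the main obstacle is exactly this last realization: a priori $\hat C$ lives only as a subgroup of the vertex group $G_v$, and one must guarantee that it is elliptic precisely at the vertex incident to $e$ (not merely at some other vertex of the tree), so that the refinement turns it into an essential, non-collapsible edge group. Controlling these fixed points is where I would invoke the almost-malnormality of maximal virtually cyclic subgroups (Corollary \ref{almar}), which makes them self-normalizing and pins down the fixed-point set of $\hat C$ to a unique vertex or edge, after first replacing the splitting by a reduced (or cylinder-reduced) representative in its deformation space. The genuinely degenerate configurations — an ascending HNN over $C$, or a vertex group that is itself virtually cyclic, i.e.\ the exceptional case of Lemma \ref{ti} — would be handled separately by hand.
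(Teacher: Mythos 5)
Your overall strategy follows the paper's: reduce the first statement to the second, extract a one-edge splitting with infinite group of twists from Theorem \ref{thm_marked} and Lemma \ref{lem_collapse_twist}, then promote its edge group to a $\Zmax$ subgroup; your ``if'' direction (Lemma \ref{ti} plus maximality of $\hat C$ to exclude the exceptional case, then ellipticity of each $H_j$ to land in $\Out(G;\mk\calh)$) is also exactly the paper's. But there is a genuine gap in the converse. Theorem \ref{thm_marked} produces a splitting over \emph{elementary} subgroups, which for a hyperbolic group means virtually cyclic \emph{possibly finite} subgroups, and your argument breaks precisely there: the sentence ``Fixing an infinite-order $c\in C$'' silently assumes $C$ is infinite. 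When $C$ is finite there is no such $c$, no boundary to fix, and no canonical $\Zmax$ subgroup containing $C$ (there may be none, or many), so the whole passage from $C$ to $\hat C$ has no content. This is not a peripheral corner case that can be deferred: for infinitely-ended groups with torsion --- the case the paper singles out as the interesting one, e.g.\ virtually free groups, or Example \ref{Pett} with $G=D_4*_CD_6*_CD_4$ and $|C|=2$ --- the splitting furnished by Theorem \ref{thm_marked} is typically over a finite group, and this situation is not among the ``degenerate configurations'' (ascending HNN, virtually cyclic vertex group) that you propose to handle by hand.

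The paper's proof devotes its central step to exactly this case: if $C$ is finite, infiniteness of the group of twists forces, say, $Z_A(C)/Z(A)$ to be infinite, whence $Z(A)$ is finite (otherwise $A$ would be virtually cyclic and $Z_A(C)/Z(A)$ finite) and $Z_A(C)$ contains an element $t$ of infinite order; one then performs the fold $G=A*_{\langle C,t\rangle}\langle B,t\rangle$, obtaining a splitting over a virtually cyclic group with infinite center, still relative to $\calh$. Only then does one pass to the $\Zmax$ subgroup by ellipticity and a second fold --- and there, nontriviality of the resulting splitting is \emph{not} secured by the valence observation you give (``its $\hat C$-vertex also carries $e$''), but by the fact that the vertex groups have finite center, hence cannot be conjugate to $\hat C$; that finite-center claim is itself forced by infiniteness of the twist group, and it is precisely what disposes of your second deferred case. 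Your analysis in the infinite-$C$ case (using the centralizing element $g$ of infinite order to show $C$ has infinite center) is sound, and is a reasonable variant of the paper's remark that an infinite $C$ with finite center has finite centralizer, hence finite twist group; but as proposed, the proof is missing the step that carries the theorem in the presence of torsion.
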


It  was proved independently by M.\ Carette \cite{Carette_automorphism}  that $\Out(G)$ is infinite if and only if
$G$ has a splitting over 
 a finite group or a (maybe non maximal) 
virtually cyclic group with infinite center,
with a twist of infinite order 
  (see \cite{Lev_automorphisms,DG2} for the one-ended case).

\begin{proof}   The ``if'' direction is clear, so we assume that   $\Out(G;\mk\calh)$ is infinite. All splittings considered in this proof will be relative to $\calh$. 

Theorem \ref{thm_marked} and Lemma \ref{lem_collapse_twist} say that $G$ has a one-edge splitting over a  (possibly finite) virtually cyclic group $C$, whose group of twists is infinite.   We assume that this splitting is an amalgam $G=A*_C B$; the case of an HNN extension is similar.
We first explain  how to replace this amalgam over $C$ by one over a (possibly non-maximal) virtually cyclic  subgroup  $C'$ with infinite center.

 If $C$ is infinite with finite center,   its centralizer in $G$ is finite  and this forces the group of twists to be finite.
So assume that $C$ is finite.  

  If both $Z_A(C)/Z(A)$ and $Z_B(C)/Z(B)$ are finite, the group of twists is finite, so assume for instance that $Z_A(C)/Z(A)$ is infinite. 
Note that $Z(A)$ has to be finite, since otherwise  $A$ would be  virtually cyclic, 
and $Z_A(C)/Z(A)$ would be finite.
Consider an element of infinite order $t\in Z_A(C)$,  and perform a fold to get  $G=A*_{\grp{C,t}} \grp{B,t}$. This is a splitting over a virtually cyclic  subgroup  $C'$ with infinite center, and it is relative to $\calh$.
The twist defined by $t$
  has infinite order in $\Out(G)$   by Lemma \ref{ti}. 

Now suppose that an amalgam $G=A'*_{C'} B'$ has an infinite group of twists, with $C'$ virtually cyclic     with infinite center.
Then $A',B'$ have finite center. The $\Zmax$ subgroup $\Hat C'$ containing $C'$
is elliptic in the amalgam, and one can perform a fold to get  an amalgam  over $\Hat C'$. This splitting is non-trivial because $\Hat C'$ 
is not conjugate to  $A'$ or $B'$ since they have finite center. The group of twists of the new splitting is clearly infinite.
\end{proof}

\begin{thm} \label{thm_MC_infini}
  There is an algorithm which, given a hyperbolic group $G$, decides whether $\Out(G)$ is infinite  or not. More generally,  if $\calh$ is a finite family of finitely generated subgroups, one may   decide whether 
  $\Out(G; \mk\calh)$ is infinite. 
\end{thm}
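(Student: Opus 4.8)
The plan is to convert the splitting characterization of Theorem \ref{thm_twist_hyp} into a decision procedure. By that theorem, $\Out(G;\mk\calh)$ is infinite if and only if $G$ admits a non-trivial splitting over a $\Zmax$ subgroup relative to $\calh$, so it suffices to decide whether such a splitting exists, starting from a finite presentation of $G$ (from which, $G$ being hyperbolic, one can effectively produce a hyperbolicity constant by searching for a certificate) and generators for the $H_j$. The strategy is to compute the relevant canonical decompositions and then test, on the resulting finite graphs of groups, the explicit numerical conditions isolated by the structure theorems of Sections \ref{gen} and \ref{outinfi}. The two deep effectivity inputs will be: (i) computability of the canonical virtually cyclic JSJ decomposition of a one-ended hyperbolic group relative to $\calh$, in the spirit of the work of Dahmani--Groves and Dahmani--Guirardel; and (ii) computability of splittings over finite subgroups via Linnell and Dunwoody accessibility (Subsection \ref{JSJ}).

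First I would treat the case where $G$ is one-ended relative to $\calh$ (taking the parabolic structure $\calp=\es$, so that elementary just means virtually cyclic). Here there is a single invariant tree $\Tcan$ to compute, and Theorem \ref{thm_struct_m_rel} reduces infiniteness of $\Out(G;\mk\calh)$ to two conditions readable off $\Gcan=\Tcan/G$: either the group of twists $\Tw$ is infinite, or some flexible QH vertex has infinite mapping class group $MCG^1_{\Tcan}(\Sigma_i)$. Both are decidable from the finite data of the decomposition. Whether $\Tw$ is infinite is checked by computing the finitely many centralizers $Z_{G_v}(G_e)$ (centralizers of infinite virtually cyclic subgroups of a hyperbolic group are again virtually cyclic and computable) together with the vertex and edge relations of Subsection \ref{arb}, and applying the criterion of Lemma \ref{lem_twist0}; and a $2$-orbifold has finite mapping class group only in the finitely many exceptional cases recalled at the end of Subsection \ref{autar}, which is decidable from its Euler characteristic and boundary pattern.

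Next I would reduce the general, possibly infinitely-ended, case to the one-ended one, following the proof of Theorem \ref{thm_marked}. Using accessibility I would compute a reduced tree $T$ with finite edge stabilizers relative to $\calh$; this simultaneously decides relative one-endedness (the test above applies when $T$ is trivial). Passing to its tree of cylinders, Corollary \ref{propc_induct} says that either some collapse of $T$ already has an infinite group of twists, in which case $\Out(G;\mk\calh)$ is infinite and we stop, or $T$ dominates a non-trivial $\Out(\cald(T))$-invariant tree with finite edge stabilizers. In the latter case the infinite vertex groups $G_v$ are quasiconvex, hence hyperbolic (Proposition \ref{gvqc}, Lemma \ref{Hrufi}), and one-ended relative to the induced family $\Inc_v\cup\calh_{|G_v}$; I would recurse into each such $G_v$, run the one-ended test on $\Out(G_v;\mk(\Inc_v\cup\calh_{|G_v}))$, and combine the answers using Lemmas \ref{lem_extension_twist} and \ref{lem_collapse_twist}, since an infinite twist group inside a vertex group propagates to an infinite $\Tw$ of a refinement of $G$, and conversely. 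Termination of the recursion follows from accessibility, exactly as at the end of the proof of Theorem \ref{thm_marked}.

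The hard part will be the effectivity of these decompositions in the \emph{relative} setting, namely computing the canonical JSJ and the finite-group splittings relative to $\calh$ when the $H_j$ are merely finitely generated and need not be quasiconvex in $G$. Verifying that a candidate splitting is relative to $\calh$ amounts to deciding ellipticity of each $H_j$, and, more delicately, one must produce the canonical decomposition itself relative to this family; this is where the external computability results must be invoked and adapted, and where the bulk of the technical work lies. In the absolute case $\calh=\es$ these difficulties disappear, since the relevant canonical JSJ and Stallings--Dunwoody decompositions are then directly computable by the cited results. Once the relative decompositions are available, every remaining check---virtual cyclicity of a subgroup, whether it has infinite center, computation of centralizers and of orbifold Euler characteristics---is standard and effective in hyperbolic groups, so that the two complementary tests (``infinite'' witnessed by an explicit infinite-order twist or an infinite mapping class group, ``finite'' witnessed by the failure of both conditions on the canonical decomposition) together decide the question.
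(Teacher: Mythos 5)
There is a genuine gap, and it sits exactly where you placed the ``bulk of the technical work.'' Your scheme is a direct decision procedure whose every branch requires effectively \emph{computing} canonical decompositions of $G$ relative to $\calh$ (the Stallings--Dunwoody decomposition and the canonical virtually cyclic JSJ relative to $\calh$), but no such computability results are established in the generality of the theorem, and you do not supply them. The subgroups $H_j$ are only assumed finitely generated: they need not be quasiconvex, so even the membership problem for $H_j$ in $G$ is undecidable in general, and hence your step ``verifying that a candidate splitting is relative to $\calh$ amounts to deciding ellipticity of each $H_j$'' is not a decision at all --- one can only \emph{semi}-decide that a splitting is relative to $\calh$ (by enumerating conjugates of a generating set), which suffices for a halting certificate of infiniteness but not for the two-sided tests your recursion needs. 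This is load-bearing for both answers in your scheme: in particular you have no mechanism whatsoever to certify \emph{finiteness} of $\Out(G;\mk\calh)$ without the computed canonical decomposition. It is telling that the paper's very next result (the Proposition on $\Out(G;\calp,\mk\calh)$ for torsion-free $G$) does carry out a strategy of your type, and must assume the parabolic subgroups are \emph{locally quasiconvex} precisely so that Touikan's algorithm applies and the relevant subgroup problems become tractable; this is strong evidence that your route cannot be pushed through for arbitrary finitely generated $\calh$ (and for $G$ with torsion).

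The paper's proof avoids computing any decomposition: it runs two complementary semi-algorithms in parallel. One halts if and only if $\Out(G;\mk\calh)$ is finite: by Theorem 8.1 of \cite{DG2} one can compute a finite generating set of $\Out(G;\mk\calh)$, and the word problem there is solvable since it amounts to the simultaneous conjugacy problem in $G$ \cite{BriHow_conjugacy}; one then computes balls $B_R$ in the Cayley graph of $\Out(G;\mk\calh)$ and waits for $B_R=B_{R+1}$. The other halts if and only if $\Out(G;\mk\calh)$ is infinite: using \cite[Lemma 2.8]{DG2} to recognize $\Zmax$ subgroups, one enumerates all one-edge splittings of $G$ over $\Zmax$ subgroups, semi-checking relativeness to $\calh$ by enumerating conjugates, and by Theorem \ref{thm_twist_hyp} such a splitting exists if and only if $\Out(G;\mk\calh)$ is infinite. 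To repair your proposal you would either have to prove the missing effectivity statements for relative canonical decompositions (a substantial and, in this generality, open undertaking), or restructure the argument into one-sided certificates as the paper does; the characterization of Theorem \ref{thm_twist_hyp} is then used only as the \emph{infiniteness} certificate, never to compute a JSJ.
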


\begin{proof}  We start with the first assertion. 

We first construct an algorithm that stops if and only if $\Out(G)$ is finite.
  By Theorem 8.1 of \cite{DG2}, one can compute a finite generating set of $\Out(G)$.
Moreover, one can solve the word problem in $\Out(G)$ as this amounts to solving  (uniformly)
the simultaneous
conjugacy problem in $G$ (see \cite{BriHow_conjugacy} for a solution).   
Thus, for each $R>0$, one can  determine  the ball $B_R$ of radius $R$ in the Cayley graph of $\Out(G)$. Checking whether $B_R=B_{R+1}$ for some $R$ gives  the required algorithm.

It now suffices to construct an algorithm that stops if $\Out(G)$ is infinite. By \cite[Lemma 2.8]{DG2}, 
one can decide whether a   subgroup of $G$ (given by generators)   is $\Zmax$ or not. 
One can therefore 
enumerate all decompositions of $G$ as an amalgam or HNN extension over
$\Zmax$ subgroups.
By Corollary \ref{thm_twist_hyp}, this provides an algorithm that stops if $\Out(G)$ is infinite.

 The argument to decide whether 
  $\Out(G; \mk\calh)$ is infinite is similar. 
 The first algorithm is the same since Theorem 8.1 of \cite{DG2} provides generators for $\Out(G;\mk\calh)$. 
For the second algorithm, one has to restrict to splittings relative to $\calh$, so one needs an algorithm that, given a splitting, stops if the splitting is relative to $\calh$. This is done by choosing a generating set $S_i$ for each $H_i\in\calh$, enumerating all conjugates of $S_i$, and comparing them with words written using the generators of a vertex group.
\end{proof}

  In general, we do not know how to decide whether 
  $\Out(G; \calh)$ is infinite  (see Remark \ref{rips}). The following is an   answer  when $G$ is hyperbolic relative to $\calh$.

\begin{prop} There is an algorithm which, given 
a torsion-free hyperbolic group $G$, a finite family $\calp$ 
of finitely generated   locally quasiconvex subgroups   $P_i$ such that $G$ is hyperbolic relative to $\calp$,
and a finite family $\calh$ of finitely generated subgroups,
decides whether $\Out(G;\calp,\mk\calh)$ is infinite.
 \end{prop}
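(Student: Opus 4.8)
The plan is to run two semi-algorithms in parallel, one halting precisely when $\Out(G;\calp,\mk\calh)$ is finite and the other halting precisely when it is infinite; together they decide the question. After discarding any trivial $P_i$ and checking the degenerate case $P_i=G$ (where one falls back on Theorem \ref{thm_MC_infini}), we may assume the hypotheses of Corollary \ref{cor_outu_infini}, which is the characterization underlying the second semi-algorithm: $\Out(G;\calp,\mk\calh)$ is infinite if and only if $G$ admits an elementary splitting $\Lambda$ relative to $\calp\cup\calh$ whose group of twists is infinite, or which has a vertex $v$ with $G_v=P_i$ maximal parabolic and $\Out(G_v;\mk\Inc_v,\mk\calh_{|G_v})$ infinite.

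For the semi-algorithm that halts when the group is finite, I would proceed exactly as in the proof of Theorem \ref{thm_MC_infini}: first compute a finite generating set of $\Out(G;\calp,\mk\calh)$ using the algorithms available for hyperbolic groups (\cite{DG2}, using that the $P_i$ are quasiconvex and the $H_j$ finitely generated), solve its word problem by reduction to the simultaneous conjugacy problem in $G$ (\cite{BriHow_conjugacy}), and then compute the balls $B_R$ in the resulting Cayley graph, halting as soon as $B_R=B_{R+1}$. Since a generating set is available and this McCool-type group is finitely generated, the stabilization of balls detects exactly finiteness.

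For the semi-algorithm that halts when the group is infinite, I would enumerate the elementary (cyclic or parabolic) splittings of $G$ relative to $\calp\cup\calh$, and test the two conditions of Corollary \ref{cor_outu_infini} on each candidate $\Lambda$. The twist condition is checked by computing the centralizers $Z_{G_{o(e)}}(G_e)$ in the vertex groups together with the vertex and edge relations (Subsection \ref{arb}): in the torsion-free hyperbolic setting centralizers of finitely generated subgroups are computable and the resulting group is virtually abelian, so one decides effectively whether the quotient is infinite. The parabolic condition is the key point. For a vertex $v$ with $G_v=P_i$, the induced families $\Inc_v$ and $\calh_{|G_v}$ are finite families of finitely generated subgroups of $P_i$; and because $P_i$ is quasiconvex in $G$ it is itself a torsion-free hyperbolic group, so I may apply Theorem \ref{thm_MC_infini} \emph{directly to} $P_i$ to decide whether the marked McCool group $\Out(P_i;\mk\Inc_v,\mk\calh_{|G_v})$ is infinite. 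This is an application of the already-established decision procedure, not a recursion into the present statement, so there is no termination issue. If either condition is ever met the semi-algorithm halts, and by Corollary \ref{cor_outu_infini} it halts if and only if $\Out(G;\calp,\mk\calh)$ is infinite.

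The main obstacle is making the enumeration of elementary splittings relative to $\calp\cup\calh$ genuinely effective: unlike the purely hyperbolic case of Theorem \ref{thm_MC_infini}, where one enumerates $\Zmax$ subgroups and tests for splittings, here one must detect splittings over cyclic \emph{and} parabolic subgroups relative to two families, and compute the induced edge and peripheral data that feed into the two tests. This is where local quasiconvexity of the $P_i$ is used, as it is what makes the relevant splitting-detection and JSJ algorithms (and the centralizer computations entering the twist group) terminate. A cleaner alternative I would also consider is to bypass blind enumeration altogether: compute a relative Grushko decomposition of $G$ with respect to $\calp\cup\calh$, then the canonical elementary JSJ of each one-ended factor, and read off the conditions of Corollary \ref{cor_outu_infini} from that decomposition, at the cost of invoking the heavier algorithmic results on computing JSJ decompositions of (relatively) hyperbolic groups.
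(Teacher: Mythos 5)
Your primary route has a genuine gap on the ``finite'' side. The ball-stabilization semi-algorithm needs, as input, a computable finite generating set of $\Out(G;\calp,\mk\calh)$, and you cite \cite{DG2} for it. But Theorem 8.1 of \cite{DG2}, as used in the proof of Theorem \ref{thm_MC_infini}, provides generators only for $\Out(G)$ and for \emph{marked} McCool groups $\Out(G;\mk\calh)$; it does not cover groups cut out by an \emph{unmarked} constraint, i.e.\ automorphisms required to send each $P_i$ to a conjugate in an arbitrary way. This is precisely the difficulty the paper flags (Remark \ref{rips} and the sentence preceding the Proposition, where it is stated that deciding infiniteness of $\Out(G;\calh)$ is open in general). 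In the present setting $\Out(G;\calp,\mk\calh)$ is indeed finitely generated, but the only visible way to \emph{produce} generators is via Theorem \ref{thm_struct_m_rel} applied to the canonical JSJ decomposition --- and once you have computed that decomposition, finiteness can be read off directly from the structure theorem, making the parallel semi-algorithms redundant. So your first semi-algorithm rests on an unavailable ingredient, and patching it forces you back to the paper's route.

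On the ``infinite'' side, your logic via Corollary \ref{cor_outu_infini} is sound (in particular, applying Theorem \ref{thm_MC_infini} to the hyperbolic group $P_i$ to test $\Out(P_i;\mk\Inc_v,\mk\calh_{|G_v})$ is legitimate and non-circular), but the step you call ``the main obstacle'' --- effectively enumerating elementary splittings relative to $\calp\cup\calh$, certifying that edge groups are elementary, and computing the induced data $\Inc_v$, $\calh_{|G_v}$ --- is exactly where the paper's actual work lies, and you leave it unresolved. The paper does not enumerate-and-test at all: it first decides relative free decomposability with Touikan's Theorem A (handling that case by Proposition \ref{prop_freep_rel} and Remark \ref{pab}), then in the relatively one-ended case applies Touikan's Theorem C, whose hypothesis (algorithmic tractability of the $P_i$) is verified using local quasiconvexity via Kapovich's generation-detection algorithm \cite{Kapovich_detecting}; it iterates this to a maximal elementary splitting, recognizes QH vertices to obtain the canonical JSJ, and only then tests the twist group (an explicit abelian computation) and the parabolic McCool groups, computing $\calh_{|G_v}$ by quasiconvexity of vertex groups. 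Your ``cleaner alternative'' in the last sentence is in fact the paper's proof, but as stated it is only a gesture: without the Touikan/Kapovich ingredients it does not constitute an algorithm.
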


 Since $G$ is assumed to be hyperbolic relative to $\calp$, each $P_i$ is quasiconvex in $G$.
In particular, $P_i$ is itself a hyperbolic group. Local quasiconvexity of $P_i$ means that its finitely generated subgroups are quasiconvex (in $P_i$, hence also in the hyperbolic group $G$).

 \begin{proof}
First, using Touikan's algorithm \cite[Theorem A]{Touikan_finding},
one can decide whether $G$ splits as a free product relative to $\calp\cup\calh$.
If it does, it is easy to decide whether  $\Out(G;\calp,\mk\calh)$  is infinite using Proposition \ref{prop_freep_rel} and Remark \ref{pab}.

So assume that $G$ is one-ended relative to $\calp\cup\calh$.  We may also assume that no $P_i$ is cyclic. 
We use \cite[Theorem C]{Touikan_finding} to decide whether $G$  splits in a suitable way.
For this, we need our parabolic groups $P_i$ to be algorithmically tractable 
in the sense of \cite[Definition 1.13]{Touikan_finding}. 

Since $P_i$ is locally quasiconvex, it is hyperbolic and the conjugacy problem is solvable in $P_i$.
Moreover, local quasiconvexity  of  $P_i$ implies that 
one can decide whether a finite  subset $S\subset P_i$ generates $P_i$  
or not, by checking whether a given generating set of $P_i$ lies in the quasiconvex subgroup $\grp{S}$ \cite{Kapovich_detecting}. 
This says that $P_i$ is algorithmically tractable. 

Applying \cite[Theorem C]{Touikan_finding}, one can decide whether there exists 
an elementary splitting of $G$ (viewed as a relatively hyperbolic group) relative
to $\calp\cup\calh$ with finitely generated edge groups, and if so find one. 
By local  quasiconvexity of  $P_i$, edge groups of the splitting are quasiconvex in the hyperbolic group $G$, 
and so are vertex groups (see Subsection \ref{qconv}).

Iterating this process, one can compute a maximal elementary splitting $\Gamma$ of $G$ relative
to $\calp\cup\calh$ (\ie a splitting that cannot be refined non-trivially into an elementary
splitting relative to $\calp\cup\calh$).
 Arguing as in \cite[Section 6]{DaGr_isomorphism}
and \cite[Lemma 2.34]{DG2}, one may then   
recognize the QH subgroups in $\Gamma$, and 
  find the canonical elementary JSJ decomposition of $G$ relative to $\calp\cup\calh$
  (see also \cite[Theorem 3.12]{DaTo_isomorphism}).

By Theorem \ref{thm_struct_m_rel},  $\Out(G;\calp,\mk\calh)$ is infinite if and only if
  the group of twists $\calt$ is infinite, or there is a vertex $v$ such that   $\Out(G_v;\mk\Inc_v,\mk\calh_{|G_v})$ is infinite.
  
Recall that $\calt$ is isomorphic to a quotient of $\prod_{e\in E} Z_{G_{o(e)}}(G_e)$ by edge and vertex relations.
Each group $Z_{G_{o(e)}}(G_e)$ is either trivial or infinite cyclic, and is computable.
Edge relations and vertex relations are generated by embeddings of groups $Z(G_e)$ and $Z(G_v)$ in this product.
Since one can compute the corresponding subgroups of the abelian group $\prod_{e\in E} Z_{G_{o(e)}}(G_e)$,
one can decide whether the group of twists is infinite or not.

To decide whether $\Out(G_v;\mk\Inc_v,\mk\calh_{|G_v})$ is infinite, 
  we can apply   \cite{DaGr_isomorphism} (or more explicitly  \cite[Corollary 3.4]{DG2}), 
or Theorem \ref{thm_MC_infini}, 
but we  need to determine $\calh_{|G_v}$ (see Definition \ref{indu}).
Given $H\in \calh$ generated by a finite set $S$, one can decide whether there is $g\in G$ such that $S^g\subset G_v$
in the same way as above because $G_v$ is quasiconvex. 
One can similarly decide whether $H$ fixes an edge, which allows to compute $\calh_{|G_v}$.
 \end{proof}

\section{Fixed subgroups} \label{fixed}

 In this section, we use   JSJ decompositions to study fixed   subgroups  of automorphisms.
This is     inspired by  arguments due to  Sela \cite{Sela_Nielsen}. The proof of the Scott conjecture (Theorem \ref{scott}) given below is not  really new, but   using the relative JSJ decomposition makes  the argument more direct. Using Theorem \ref{somrig}, we will prove in \cite{GL_McCool} that, given a toral relatively hyperbolic group $G$, there are only finitely many possibilities for fixed subgroups of automorphisms of $G$, up to isomorphism. This was proved by   Shor \cite{Shor_Scott} for $G$ torsion-free hyperbolic. 

\begin{thm}[\cite{BH_tt}]\label{scott} Let $\alpha$ be an automorphism of a  free group $F_n$. Its fixed subgroup $\Fix\, \alpha$ has rank at most $n$. 
\end{thm}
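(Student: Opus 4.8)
The plan is to induct on $n$ and to read the rank of $H=\Fix\,\alpha$ off a cyclic JSJ decomposition relative to $H$. If $H$ has rank $\le 1$ there is nothing to prove, so assume $H$ is nonabelian; then its centralizer in $F_n$ is trivial, a fact used repeatedly below. First I would reduce to the one-ended case. The Grushko decomposition of $F_n$ relative to $\{H\}$ (Subsection \ref{JSJ}) is canonical, hence invariant under $\Out(F_n;\{H\})$, and $\alpha\in\Aut(F_n;\{H\})$ since it fixes $H$; as $H$ is nonabelian it lies in a unique smallest free factor $G_1$, and malnormality of free factors forces $\alpha(G_1)=G_1$. If $G_1\neq F_n$ then $\operatorname{rk}(G_1)<n$; every element of $G_1$ fixed by $\alpha$ lies in $H\inc G_1$, so $\Fix(\alpha|_{G_1})=H$, and the inductive hypothesis applied to $G_1$ gives $\operatorname{rk}(H)\le\operatorname{rk}(G_1)\le n$. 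Hence we may assume $F_n$ is one-ended relative to $H$.

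Next I would bring in the canonical tree. Let $\Tcan$ be the cyclic JSJ tree of $F_n$ relative to $\{H\}$ as in Subsection \ref{cano} (there are no parabolics, and all fibers are trivial since $F_n$ is torsion-free, so flexible vertices are genuine surface groups and all edge groups are infinite cyclic). It is $\alpha$-invariant because it is canonical. Since $H$ is nonabelian and edge stabilizers are cyclic, $H$ fixes a unique vertex $v$; invariance then gives $\alpha(G_v)=G_v$ and, as above, $\Fix(\alpha|_{G_v})=H$. Writing $\beta=\alpha|_{G_v}$, I have reduced the problem to bounding $\operatorname{rk}(\Fix\,\beta)$ inside the single vertex group $G_v$. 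Because edge groups are cyclic, the reduced Euler characteristic is additive over $\Gcan=\Tcan/G$, and all vertex contributions are nonnegative (surface and free vertex groups have $\bar\chi\ge0$, cyclic ones have $\bar\chi=0$); hence $\bar\chi(G_v)\le\bar\chi(F_n)=n-1$, i.e.\ $\operatorname{rk}(G_v)\le n$. It therefore suffices to show $\operatorname{rk}(\Fix\,\beta)\le\operatorname{rk}(G_v)$.

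The vertex $v$ is either QH or rigid. If $v$ is QH, then $G_v=\pi_1(\Sigma)$ for a compact surface $\Sigma$ and $\rho_v(\alpha)$ is a mapping class; $\beta$ is induced by a self-homeomorphism of $\Sigma$, and Scott's theorem for surface groups bounds the rank of its fixed subgroup by that of $\pi_1(\Sigma)$. If $v$ is rigid, I would use Theorem \ref{thm_sci} applied to $G_v$, which is free, hence hyperbolic, relative to $\Inc_v$: rigidity of $v$ means $G_v$ has no cyclic splitting relative to $\Inc_v\cup\{H\}$, so $\Out(G_v;\mk\Inc_v,\mk{\{H\}})$ is finite. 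Since $\beta$ fixes $H$ and acts on each incident edge group as a conjugation, its class lies in this finite group; thus some power $\beta^k$ is inner, say conjugation by $g\in G_v$, and $\beta^k$ fixes $H$ pointwise. As $H$ is nonabelian, $g$ centralizes $H$ and so $g=1$, i.e.\ $\beta$ is periodic. The theorem of Dyer and Scott on periodic automorphisms then shows that $\Fix\,\beta=H$ is a free factor of $G_v$, whence $\operatorname{rk}(H)\le\operatorname{rk}(G_v)\le n$. (When $\Tcan$ is trivial this is exactly the case $G_v=F_n$, so the argument also covers groups that are rigid relative to $H$.)

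The main obstacle is the rigid case: turning \emph{rigid plus fixes $H$} into \emph{periodic} requires the relative version of Paulin's theorem (Theorem \ref{thm_sci}) together with the centralizer argument, and then the classical input that periodic automorphisms have free-factor fixed subgroups. The QH case rests on the surface version of Scott's conjecture, and the global step rests on the additivity of the reduced Euler characteristic over the cyclic JSJ; the only delicate bookkeeping is checking that $H$ localizes in a single vertex group and that each step preserves $\Fix(\alpha|_{\,\cdot\,})=H$, which is where malnormality of edge and free-factor groups and triviality of the centralizer of the nonabelian $H$ are used.
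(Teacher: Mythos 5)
Your route is essentially the paper's: reduce to the case where $F_n$ is one-ended relative to $H=\Fix\alpha$, pass to the canonical cyclic JSJ tree relative to $\{H\}$, observe that $H$ (being nonabelian, while edge stabilizers are cyclic) fixes a unique vertex $v$, so $G_v$ is $\alpha$-invariant and has rank $\le n$, and in the rigid case combine Theorem \ref{thm_sci} with the centralizer argument and Dyer--Scott. (Your explicit upgrade from finite order in $\Out(G_v)$ to finite order in $\Aut(G_v)$, via triviality of the centralizer of the nonabelian $H$, is a detail the paper leaves implicit, and your Euler-characteristic bound replaces the paper's abelianization remark; both are fine.) There are, however, two genuine gaps. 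First, everything you do requires $H$ to be finitely generated: the existence of the cyclic JSJ relative to $\{H\}$ rests on relative finite presentation of $F_n$ with respect to $\{H\}$, and Theorem \ref{thm_sci} is stated (and its proof via the Bestvina--Paulin/Rips machinery only works) for finitely generated $\calh$. Finite generation of $\Fix\alpha$ is Gersten's theorem, a nontrivial input which the paper cites explicitly at exactly this point -- and then shows how to avoid, by running the argument on a finitely generated free factor of $\Fix\alpha$ of rank $>n$ and deriving a contradiction. As written, your proof silently assumes it.

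Second, your QH case is both unnecessary and not actually proved. It cannot occur: the JSJ tree is relative to $\{H\}$, so by Definition \ref{dqh} (equivalently Remark \ref{rem_UEQH}) the subgroup $H\inc G_v$ of a flexible QH vertex with trivial fiber would have image contained in a boundary subgroup of $\pi_1(\Sigma)$, hence be cyclic, contradicting nonabelianness of $H$; this is precisely how the paper disposes of the case, leaving only the rigid one. Your substitute argument -- that $\beta$ is ``induced by a self-homeomorphism of $\Sigma$'' and that ``Scott's theorem for surface groups'' applies -- has two unverified steps: an element of $\Out(G_v)$ is geometric only if it preserves the peripheral structure $\calb_v$, which you do not check, and the fixed subgroup depends on the actual automorphism rather than its outer class, so the surface-theoretic statement you need (a bound for fixed subgroups of geometric automorphisms) is neither formulated precisely nor cited. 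Fortunately this whole branch can simply be deleted. A last cosmetic point: you assert $\rho_v(\alpha)\in\Out(G_v;\mk\Inc_v)$, but invariance of the tree only gives $\rho_v(\alpha)\in\Out(G_v;\Inc_v)$ (Lemma \ref{automind}); since Theorem \ref{thm_sci} applies just as well with $\calp=\Inc_v$ acting up to conjugacy, this costs nothing.
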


\begin{proof} The smallest free factor containing $\Fix\, \alpha$ is $\alpha$-invariant.   Replacing $G$ by this free factor,   we may assume that $F_n$ is one-ended (freely indecomposable) relative to $\Fix\, \alpha$. We also assume that $\Fix\, \alpha$ is not cyclic. 

Let $T$ be the canonical cyclic JSJ tree relative to $\Fix\, \alpha$ (see Subsection \ref{cano}), and let $G_v$ be the vertex stabilizer containing $\Fix\, \alpha$. It is $\alpha$-invariant because $T$ is invariant and $v$ is the only vertex fixed by $\Fix\alpha$, and abelianizing shows   that it  has rank $\le n$. 
By Remark \ref{rem_UEQH} it cannot be flexible QH because $\Fix\, \alpha$ is not cyclic, so it is rigid. By standard arguments due to Paulin and Rips, 
 $\alpha$ has finite order in $\Out(G_v)$: otherwise, applying Theorem \ref{thm_sci} with $\calp $ consisting of incident edge groups and $\calh$ consisting of $\Fix\, \alpha$ (which is finitely generated) yields a cyclic splitting of $G_v$ which contradicts rigidity.
  By Dyer-Scott \cite{DySc_periodic}, $\Fix\, \alpha$ is a free factor of $G_v$ so has rank $\le n$. 
 
 If we do not wish to use Gersten's result that $\Fix\, \alpha$ is finitely generated  \cite{Gersten_fixed_adv}, 
we argue  by contradiction as follows. Let $H$ be a finitely generated free factor of $\Fix\, \alpha$ of rank $>n$. 
  We claim  that $F_n$ is  one-ended 
  relative to $H$ (see \cite{Perin_elementary}
 and Lemma 7.6 of \cite{GL3b} for more general statements).
Otherwise, let $\Hat H$ be the smallest free factor of $F_n$ containing $H$.
Then $\Hat H$ is $\alpha$-invariant, and $\Fix\alpha\cap\Hat H$ has rank at most $n-1$ 
(assuming that the theorem holds in $\Hat H$ by induction on $n$), a contradiction since 
$\Fix\alpha\cap\Hat H$ retracts onto $H$.

Define $G_v$ as above, using the cyclic JSJ splitting   of $F_n$ relative to $H$. The fixed subgroup of $\alpha_{ | G_v}$ is a subgroup of $\Fix\, \alpha$ which has rank $\le n$ and contains $H$. This is a contradiction since $H$ is a retract of $\Fix\, \alpha$. 
\end{proof}

  This proof uses \cite{DySc_periodic}, which is specific to free groups. Applying the same argument to  (relatively) hyperbolic groups only yields:

\begin{thm} \label{somrig}
  Let $G$ be hyperbolic relative to    a    finite family  $\calp$  of slender subgroups. 
 Consider  $\alpha\in \Aut(G;\calp)$
such that $\Fix\alpha$ is not elementary (\ie not 
virtually cyclic or parabolic).
Then   $\Fix\alpha$ is contained in  an   $\alpha$-invariant  vertex  group $G_v $ of a splitting of $G$ over elementary subgroups relative
to $\calp$,    and   $\alpha_{|G_v}$ has finite order in $\Out(G_v)$.
\end{thm}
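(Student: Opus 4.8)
The plan is to follow the strategy of the proof of Theorem \ref{scott}, with $G$ in place of the free group and finite generation of $\Fix\alpha$ in place of Gersten's theorem, omitting the final Dyer--Scott step (which has no analogue here). Set $\calh=\{\Fix\alpha\}$. Since $\alpha$ fixes $\Fix\alpha$ pointwise and maps each $P_i$ to a conjugate, $\alpha$ represents an element of $\Out(G;\calp,\mk\calh)\subseteq\Out(G;\calp\cup\calh)$. Assuming $\Fix\alpha$ is finitely generated, I would first reduce to the case where $G$ is one-ended relative to $\calp\cup\calh$: if $G$ splits over a finite group relative to $\calp\cup\calh$, then $\Fix\alpha$, being non-elementary (hence infinite and contained in no finite edge group), is elliptic and fixes a unique vertex; passing to the corresponding vertex group $F$ of a relative Stallings--Dunwoody decomposition, which is $\alpha$-invariant and again relatively hyperbolic with slender parabolics by Lemma \ref{Hrufi}, and iterating (this terminates by relative accessibility), I reduce to the one-ended case. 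The splitting produced below inside $F$ is then pushed back up to a splitting of $G$ using Lemma \ref{extens}.

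Now assume $G$ is one-ended relative to $\calp\cup\calh$ and let $\Tcan$ be the canonical elementary JSJ tree relative to $\calp\cup\calh$ of Subsection \ref{cano}. It is invariant under $\Out(G;\calp\cup\calh)$, hence $\alpha$-invariant. Because $\Tcan$ is relative to $\calh$, the group $\Fix\alpha$ is elliptic; being non-elementary it fixes no edge (edge stabilizers are elementary), so it fixes a unique vertex $v$. Let $G_v$ be its stabilizer. The equivariant map $f_\alpha$ carries $v$, the unique vertex fixed by $\Fix\alpha=\alpha(\Fix\alpha)$, to itself, so $G_v$ is $\alpha$-invariant and $\Fix\alpha\subseteq G_v$. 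Among the four possibilities for $v$ (Subsection \ref{cano}), $G_v$ is neither maximal parabolic nor maximal loxodromic, since these are elementary and cannot contain the non-elementary group $\Fix\alpha$; and $G_v$ is not flexible QH, for then Definition \ref{dqh} applied to $\Fix\alpha$, a member of the relative family contained in $G_v$, would force the image of $\Fix\alpha$ in $\pi_1(\Sigma)$ to be finite or contained in a boundary subgroup, making $\Fix\alpha$ virtually cyclic, a contradiction. Hence $G_v$ is rigid.

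The heart of the argument is to show that $\alpha_{|G_v}$ has finite order in $\Out(G_v)$. By Lemma \ref{rh}, $G_v$ is hyperbolic relative to the finite family of finitely generated subgroups $\calq_v=\Inc_v\cup\calp_{|G_v}$ (finite generation by Lemma \ref{fingen}). Since $f_\alpha$ fixes $v$, the restriction $\alpha_{|G_v}$ preserves $\calq_v$ up to conjugacy and fixes $\Fix\alpha$ pointwise, so $\alpha_{|G_v}\in\Aut(G_v;\calq_v,\mk\calh)$. If $\alpha_{|G_v}$ had infinite order in $\Out(G_v)$, then $\Out(G_v;\calq_v,\mk\calh)$ would be infinite (it contains all powers of $\alpha_{|G_v}$), and Theorem \ref{thm_sci}, applied to $G_v$ with parabolic structure $\calq_v$ and family $\calh=\{\Fix\alpha\}$, would yield an elementary splitting of $G_v$ relative to $\calq_v\cup\calh$. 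By Lemma \ref{extens} this refines $\Tcan$ to an elementary splitting of $G$ relative to $\calp\cup\calh$ in which $G_v$ is not elliptic, contradicting the universal ellipticity (rigidity) of $G_v$. Thus $\alpha_{|G_v}$ has finite order, and $\Tcan$, which is relative to $\calp$, is the required splitting.

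The main obstacle is the finite generation of $\Fix\alpha$: this is what lets us place it in the relative family $\calh$ and thereby invoke both the existence of the relative JSJ and Theorem \ref{thm_sci}. It is the analogue of Gersten's theorem used in Theorem \ref{scott}, and for hyperbolic $G$ it follows from quasiconvexity of fixed subgroups. I expect this to be the delicate point, because one cannot simply replace $\Fix\alpha$ by an arbitrary finitely generated non-elementary subgroup $H\subseteq\Fix\alpha$: the argument would then produce a rigid vertex group containing $H$ on which $\alpha$ has finite order, but would not guarantee that all of $\Fix\alpha$ lies in that vertex group, which is exactly what the statement asserts.
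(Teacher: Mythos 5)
Your argument follows the paper's proof almost step for step: the canonical elementary JSJ tree relative to $\calp\cup\{\Fix\alpha\}$, the unique vertex $v$ fixed by $\Fix\alpha$ whose stabilizer is consequently $\alpha$-invariant, the exclusion of elementary and QH vertices, and the appeal to Lemma \ref{rh}, Theorem \ref{thm_sci} and Lemma \ref{extens} to contradict rigidity if $\alpha_{|G_v}$ had infinite order in $\Out(G_v)$. The reduction to the relatively one-ended case via a relative Stallings--Dunwoody tree is also the paper's. Two small remarks there: no iteration is needed, since any splitting of a vertex group over a finite subgroup is automatically relative to the (finite) incident edge groups, so vertex groups of the relative Stallings--Dunwoody decomposition are already one-ended relative to the induced structure; and the $\alpha$-invariance of the vertex group $F$ needs a word of justification, since the tree $S$ itself is not canonical --- the paper argues that the \emph{deformation space} of $S$ is $\alpha$-invariant, so $\alpha(G_u)$ is elliptic in $S$, and since $\Fix\alpha\subset\alpha(G_u)$ fixes only $u$, one gets $\alpha(G_u)=G_u$.

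The genuine gap is exactly the point you flagged: finite generation of $\Fix\alpha$ is assumed, not proved, and the theorem cannot be left conditional on it. Without finite generation you may not place $\Fix\alpha$ in the family $\calh$: the existence of the relative JSJ decomposition rests on $G$ being finitely presented relative to $\calp\cup\calh$, and Theorem \ref{thm_sci} requires the groups in $\calh$ to be finitely generated. You correctly observe that the device used in the proof of Theorem \ref{scott} to bypass Gersten's theorem (working with a finitely generated subgroup of $\Fix\alpha$) does not transfer, since it would not force all of $\Fix\alpha$ into the rigid vertex group. The paper closes the gap in one line: $\Fix\alpha$ is relatively quasiconvex by \cite{MiOs_fixed}, and a relatively quasiconvex subgroup of a relatively hyperbolic group whose parabolic subgroups are slender is finitely generated by \cite[Cor.\ 9.2]{Hruska_quasiconvexity}. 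Note that this is the only place where the slenderness hypothesis on $\calp$ enters the proof; the fact that your argument never invokes slenderness is the telltale sign of the missing step.
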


\begin{proof} 
 The proof is similar to the one above. Note that $\Fix\alpha$ is finitely generated  by \cite[Cor.\ 9.2]{Hruska_quasiconvexity}   
because it is relatively quasiconvex \cite{MiOs_fixed} and groups in $\calp$ are slender.  
First assume that $G$ is one-ended relative to $\calp\cup\{\Fix\alpha\}$. 
    Let $\Tcan$ be the canonical elementary JSJ decomposition
of $G$ relative to $\calp\cup\{\Fix\alpha\}$. 
Let $v$ be the unique  vertex of $\Tcan$ fixed by $\Fix\alpha$. As above, $G_v$ is $\alpha$-invariant; 
  it  cannot be  QH   because it contains the universally elliptic subgroup $\Fix\alpha$
which is not virtually cyclic,
so $G_v$  is rigid: it has no elementary splitting relative to 
$\Inc_v\cup\calp_{|G_v}\cup\{\Fix\alpha\}$.
By Lemma \ref{rh}, $G_v$ is hyperbolic relative to $\Inc_v\cup\calp_{|G_v}$.
By Theorem \ref{thm_sci}, the group  $\Out(G_v;\Inc_v,\calp_{|G_v}, \mk{\{\Fix \alpha\}})$ is finite. 
It contains the class of $\alpha_{|G_v}$ by Lemma \ref{automind}, so 
  $\alpha_{|G_v}$ has finite order in $\Out(G_v)$.

If $G$ is not relatively one-ended, we consider   a reduced Stallings-Dunwoody tree   $S$ relative to $\calp\cup\{\Fix\alpha\}$ (see Subsection \ref{JSJ}).
Since $\Fix\alpha$ is infinite, it fixes a unique vertex $u\in S$.
The deformation space of $S$ is $\alpha$-invariant, so $\alpha(G_u)$ fixes a vertex $u'\in S$.
Since $\Fix\alpha\subset \alpha(G_u)$ fixes only $u$, we have $u'=u$ and $\alpha(G_u)=G_u$.
We now apply the previous analysis to the restriction of $\alpha$ to $G_u$, which    is hyperbolic relative to $\calp_{|G_u}$ by Lemma \ref{Hrufi}. We get a splitting $\Lambda$ of $G_u$ relative to $\calp_{|G_u}$, and 
we obtain  the desired splitting of $G$ by refining   $S/G$   using $\Lambda$  (see Lemma \ref{extens}).
\end{proof}

\section{Rigid groups have finitely many automorphisms}
 \label{sec_Rips}

The goal of this section is to prove Theorem \ref{thm_sci}. 
Let us first recall its statement.

\begin{UtiliseCompteurs}{thm_sci}  
\begin{thm}  Let $G$ be hyperbolic relative to finitely generated subgroups $\calp=\{P_1,\dots,P_n\}$, 
 with $P_i\ne G$. 
Let $\calh=\{H_1,\dots,H_q\}$ be another family of finitely generated subgroups. If $\Out(G;\calp,\mk\calh)$ is infinite, then $G$ splits over an elementary subgroup relative to $\calp\cup\calh$.
\end{thm}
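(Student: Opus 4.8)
The plan is to follow the two-step Bestvina--Paulin strategy sketched after the statement, adapted to the relative setting: first produce from the infinitude of $\Out(G;\calp,\mk\calh)$ a nontrivial isometric action of $G$ on an $\R$-tree $T$, then feed $T$ into a suitably generalized Rips machine to extract a simplicial splitting.

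\emph{Constructing the tree.} I would fix the proper geodesic $\delta$-hyperbolic space $X$ on which $G$ acts as in Subsection \ref{gene}, a finite generating set $S$, and a basepoint. Choosing representatives $\alpha_n\in\Aut(G;\calp,\mk\calh)$ of infinitely many distinct outer classes, I consider the twisted actions $g\cdot_n x=\alpha_n(g)x$ and set $\lambda_n=\inf_{x\in X}\max_{s\in S}d(x,\alpha_n(s)x)$. A pigeonhole argument using properness of the action shows $\lambda_n\to\infty$ (otherwise infinitely many $\alpha_n$ would be conjugate). Rescaling $X$ by $1/\lambda_n$ and passing to an equivariant Gromov--Hausdorff (or ultra-)limit produces an isometric $G$-action on an $\R$-tree $T$: hyperbolicity of $X$ forces the limit to be a tree, and the choice of $\lambda_n$ forces the action to be minimal with no global fixed point. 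This is where I would invoke the Belegradek--Szczepa\'nski extension \cite{BeSz_endomorphisms} of \cite{Pau_arboreal} to relatively hyperbolic groups.

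\emph{Properties of the limit action.} Three facts need checking. First, arc stabilizers are elementary: this is inherited from relative hyperbolicity, a subgroup fixing a nondegenerate arc having orbits that fellow-travel long geodesics outside the horoballs, which forces it to be virtually cyclic or parabolic. Second, each $P_i$ is elliptic: since $\alpha_n(P_i)$ is a conjugate of $P_i$ it stabilizes a horoball, so the $\cdot_n$-orbits of $P_i$ have diameter negligible compared with $\lambda_n$, and $P_i$ fixes a point of $T$. Third, each $H_j$ is elliptic: as $\alpha_n|_{H_j}$ is conjugation by some $g_{j,n}$, the $\cdot_n$-orbits of $H_j$ stay boundedly close to the $g_{j,n}$-orbit, again yielding ellipticity. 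Hence $T$ is a nontrivial elementary $(\cala,\calp\cup\calh)$-tree in the sense of Subsection \ref{defsp}.

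\emph{Extracting the splitting.} Finally I would apply the Rips machine to $T$. The genuine obstacle, and the reason this is postponed to a dedicated section, is that $G$ need not be finitely presented and, when the $P_i$ are not slender, the action need not be \emph{stable} in the sense of \cite{BF_stable}: chains of arc stabilizers may fail to stabilize. I would replace stability by the weaker property of hypostability, verify that the limit action is hypostable (using that arc stabilizers are elementary, so the relevant chains of parabolic subgroups are controlled), and prove a version of the Bestvina--Feighn structure theorem valid for hypostable actions of groups finitely presented relative to $\calp$. Decomposing $T$ into simplicial and non-simplicial pieces, nontriviality of the action then yields a nontrivial splitting of $G$ over an elementary arc stabilizer relative to $\calp\cup\calh$. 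Setting up hypostability and pushing the Rips machine through in this relatively finitely presented, non-stable generality is the heart of the difficulty.
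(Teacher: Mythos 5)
Your proposal follows essentially the same route as the paper: the Belegradek--Szczepa\'nski/Bestvina--Paulin limit argument produces a non-trivial $G$-action on an $\bbR$-tree with elementary arc stabilizers in which $\calp\cup\calh$ is elliptic (Theorem \ref{th_BS}), and the splitting is then extracted exactly via the paper's notion of hypostability and its extension of the Bestvina--Feighn theorem to hypostable actions of relatively finitely presented groups (Lemma \ref{lem_elem_hypostable} and Theorem \ref{bf2}). The one point to correct is your justification of ellipticity: the orbits of the infinite groups $P_i$, $H_j$ are certainly not of ``diameter negligible compared with $\lambda_n$''; instead one argues, as the paper does, with translation lengths --- for $g\in P_i$ the isometries $\alpha_n(g)$ are parabolic, and for $h\in H_j$ the isometries $\alpha_n(h)$ are conjugate to $h$, so in both cases the translation lengths in $X$ remain bounded independently of $n$, hence no such element can become hyperbolic in the limit tree, and ellipticity of each finitely generated $P_i$, $H_j$ then follows from Serre's lemma.
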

\end{UtiliseCompteurs}

 As mentioned earlier, the proof uses \Rt s. All actions on \Rt s considered here are by isometries. An arc is a subset isometric to an interval $[a,b]\inc \R$ with $a\ne b$. 
 As in the simplicial case, an action on an \Rt\ $T$ is \emph{relative} to   subgroups $H_i$ if each $H_i$ is elliptic (fixes a point) in $T$. 

Because the parabolic groups are not assumed to be slender, we
will need to analyze actions on $\bbR$-trees which are not quite stable.

\subsection{Constructing an \Rt}

\begin{thm}[\cite{BeSz_endomorphisms}]\label{th_BS}  Let $G,\calp,\calh$ be as in Theorem \ref{thm_sci}. 
 If $\Out(G;\calp,\mk\calh)$ is infinite, then $G$ has a non-trivial action on an $\bbR$-tree $T$ relative to $\calp\cup\calh$ such that arc stabilizers are elementary. 
\end{thm}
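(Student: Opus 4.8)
The plan is to run the Bestvina--Paulin construction in the relative setting, following \cite{Pau_arboreal} as adapted to relatively hyperbolic groups in \cite{BeSz_endomorphisms}. First I would fix a finite generating set $S$ of $G$, enlarged so that it contains generating sets of each $P_i$ and each $H_j$, together with the proper geodesic $\delta$-hyperbolic space $X$ from Subsection \ref{gene}, on which $G$ acts properly discontinuously and cocompactly off the union of horoballs $\calb$. Choosing representatives $\alpha_k\in\Aut(G;\calp,\mk\calh)$ of infinitely many distinct outer classes, I twist the action by $g\cdot_k x=\alpha_k(g)x$, consider the displacement $x\mapsto\max_{s\in S}d(x,\alpha_k(s)x)$, and let $\lambda_k$ be its infimum, realized up to a bounded error at a point $x_k$. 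The first point to check is that $\lambda_k\to\infty$: if $\lambda_k$ stayed bounded, then after translating the $x_k$ into a fixed region (using cocompactness off horoballs and the fact that the $\alpha_k$ preserve the parabolic structure) proper discontinuity would leave only finitely many possible outer classes for the $\alpha_k$, a contradiction.

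Next I would rescale the metric by $1/\lambda_k$ and pass to a based Gromov--Hausdorff (equivalently, ultrafilter) limit of the pointed spaces $(X,x_k,\tfrac1{\lambda_k}d)$ carrying the twisted $G$-actions. Since $\delta/\lambda_k\to0$, the limit is an $\R$-tree $T$ with an isometric $G$-action, and the $x_k$ converge to a point $x_\infty$ with $\max_{s\in S}d_T(x_\infty,sx_\infty)=1$. Non-triviality (no global fixed point) follows from this normalization together with the near-minimality in the choice of $x_k$: a global fixed point would let one decrease the normalized displacement below the infimum, which is $1$ in the limit.

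Then comes relativity. For the marked family $\calh$ this is clean: since $\alpha_k$ acts on $H_j$ as conjugation by some $g_{j,k}$, every $h\in H_j$ satisfies $\ell_X(\alpha_k(h))=\ell_X(h)$ because conjugate isometries have equal translation length, so after rescaling $\ell_T(h)=0$ and $h$ is elliptic. As $H_j$ is finitely generated and \emph{every} element is elliptic, the fixed-point subtrees $\Fix(h)$ intersect pairwise (two elliptics with disjoint fixed sets have a hyperbolic product), so by the Helly property of subtrees they share a common point, whence $H_j$ fixes a point of $T$. For the parabolic family $\calp$ the translation-length trick is unavailable, since $\alpha_k$ only sends $P_i$ to a \emph{conjugate} by an arbitrary isomorphism. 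Here I would use the horoball geometry directly: the twisted $P_i$-action preserves the horoball $g_{i,k}B_i$, and pushing a basepoint deep into that horoball shrinks the displacement of a fixed finite generating set of $P_i$ to $o(\lambda_k)$ (horoballs are exponentially distorted) while keeping the rescaled distance to $x_\infty$ bounded; this forces $P_i$ to be elliptic in $T$.

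Finally, I would show arc stabilizers are elementary by the standard thin-rectangle argument: a nondegenerate arc of $T$ is a limit of long segments along which two orbit points fellow-travel in $X$, and $\delta$-hyperbolicity together with the uniform almost-malnormality of maximal elementary subgroups (Corollary \ref{almar}) forces the stabilizer of such a configuration to be virtually cyclic (fellow-traveling quasi-geodesics) or parabolic (fellow-traveling inside a horoball), hence elementary. The hard part is exactly this last pair of steps in the relative setting: controlling the parabolic subgroups and the arc stabilizers when the $P_i$ need not be slender and $G$ need not be finitely presented, so that the intrinsic geometry of the horoballs, rather than a properness argument, has to do the work. This is precisely the content of \cite{BeSz_endomorphisms}, on which I would ultimately rely.
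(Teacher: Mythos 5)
Your skeleton---Bestvina--Paulin rescaling of the cusped space, convergence to an $\bbR$-tree with the construction, the divergence $\lambda_k\to\infty$, and the control of arc stabilizers all delegated to \cite{BeSz_endomorphisms} (plus Lemma \ref{borne} to pass from locally elementary to elementary)---matches the paper's proof, and your treatment of $\calh$ is essentially the paper's: both arguments rest on Lemme 9.2.2 of \cite{CDP}, which shows that if $h$ were hyperbolic in $T$ then $\phi_k(h)$ would be loxodromic in $X$ with translation length tending to infinity, contradicting $\ell_X(\phi_k(h))=\ell_X(h)$; finite generation and Serre's argument then give ellipticity of each $H_j$.

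The divergence, and the gap, is in your treatment of $\calp$. First, your claim that ``the translation-length trick is unavailable'' for the $P_i$ is wrong, and this is precisely where the paper is simpler than your proposal: if $g\in P_i$ were hyperbolic in $T$, the same CDP lemma makes $\phi_k(g)$ loxodromic in $X$ for large $k$; but $\phi_k(g)$ lies in $\phi_k(P_i)$, which is a \emph{conjugate of $P_i$}, so $\phi_k(g)$ is a parabolic element of $G$ and is never loxodromic. No comparison of translation lengths is needed, only the parabolic/loxodromic dichotomy; one then concludes for the whole group $P_i$ exactly as you do for $H_j$. Second, your substitute argument via horoball geometry has an unproved step at exactly the point you flag, namely ``keeping the rescaled distance to $x_\infty$ bounded''. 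For deep points of $g_{i,k}B_i$ to survive in the based limit you need (i) that the horoball $g_{i,k}B_i$ passes within $O(d_k)$ of the basepoint $x_k$, and (ii) that the depth required to reduce the displacement of the twisted generators to $o(d_k)$ is itself $O(d_k)$. Point (ii) is delicate: the restriction of $\phi_k$ to $P_i$ is conjugation by $g_{i,k}$ composed with an \emph{arbitrary} automorphism $\beta_k$ of $P_i$, so the elements $\beta_k(s)$ whose displacement you must kill have unbounded size as $k\to\infty$, and you must compare the needed depth with $d_k$. Both points can be extracted from quantitative displacement estimates for parabolic isometries in the cusped space, but you state neither. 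Finally, you cannot discharge this onto \cite{BeSz_endomorphisms}: the paper explicitly presents the ellipticity of the $P_i$ and $H_j$ as a remark it \emph{adds} to the argument of \cite{BeSz_endomorphisms}, so this is exactly the part of the proof you are responsible for.
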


The proof is essentially in \cite{BeSz_endomorphisms}, noting that a locally elementary subgroup is elementary    by Lemma \ref{borne}.
We also add the remark that  the groups $P_i,H_j$ are elliptic  in $T$. 

\begin{proof} 
Let $\varphi_k$ be automorphisms representing distinct elements of $\Out(G;\calp,\mk\calh)$.
  Let $X $ be  a  $\delta$-hyperbolic space on which $G$ acts as in  Subsection \ref{gene}.
Consider a finite generating set $S$ of $G$, and the minimal displacement  
$d_k=\inf_{x\in X}\max_{s\in S}d_X(x,\phi_k(s).x)$.
Choose  a point $x_k\in X$ where $\max_{s\in S}d_X(x_k,\phi_k(s).x_k)\leq d_k+\frac 1k$. 

Using the Bestvina-Paulin method, it is shown in \cite{BeSz_endomorphisms} that $d_k$ goes to infinity, the   rescaled pointed metric spaces $X_k =(\frac{1}{d_k}X,x_k)$ converge to an \Rt\ $T$ (after taking a subsequence), and the action of $G$ on $X_k$ twisted by $\varphi_k$ converges to a non-trivial isometric action of $G$ on $T$ with locally elementary (hence elementary) 
 arc stabilizers.

We now prove that the action is   relative to $\calp\cup\calh$. Since groups in $\calp\cup\calh$ are finitely generated, it suffices to show that any element $g$ belonging to $P_i$ or $H_j$ is elliptic in $T$. 

Suppose $g $ acts hyperbolically in $T$. Then    there exists $a\in T$ such that $d_T(a,g^2a)=2d_T(a,ga)>0$.
If $a_k$ is an approximation point of $a$ in  $X_k =\frac{1}{d_k}X$, then 
$$\displaystyle \frac{d_X(a_k,\phi_k(g^2)a_k)- d_X(a_k,\phi_k(g)a_k)} {d_k}$$ converges to $d_T(a,g^2a)-d_T(a,ga)=d_T(a,ga)>0$,
so for $k$ large enough  $$d_X(a_k,\phi_k(g^2)a_k)- d_X(a_k,\phi_k(g)a_k)>  2\delta+ \frac{d_k}{2}d_T(a,ga).     
$$
Lemme 9.2.2 of \cite{CDP} implies that $\phi_k(g)$ acts loxodromically  on $X$, with translation length going to infinity since $d_k\to\infty$. This is a contradiction if $g\in P_i$, since   every $\phi_k(g)$ is parabolic in this case. 
If $g\in H_j$, all elements $\varphi_k(g)$ are conjugate so have the same translation length in $X$, also a contradiction.  
\end{proof}

\begin{rem} One can show that a group $H\in\calh $ is elliptic in $T$, even if it is not    assumed to be  finitely generated. We know that every $h\in H$ is elliptic. If $H$ is not elliptic, it fixes an end of $T$, so every finitely generated subgroup of $H$ fixes a ray.  This implies that finitely generated subgroups of $H$ are parabolic, so $H $ is parabolic and therefore elliptic in $T$. 

 On the other hand, Theorem \ref{bf} below requires finite generation.
\end{rem}

\begin{rem}  \label{lent} The hypothesis that automorphisms act trivially on $H_j$ may be weakened. It is sufficient to assume that their  growth under iteration is   slower on $H_j$ than on $G$. 
\end{rem}

\subsection{Hypostability}

 To deduce a splitting as in Theorem \ref{thm_sci}  from the action on the $\bbR$-tree of Theorem \ref{th_BS}, 
we    will generalize 
 the following basic fact (see Theorem \ref{bf2}):

\begin{thm}[{\cite[Thm 9.6]{BF_stable}}] \label{bf}
  Let $G$ be a finitely presented group, and let  $\calq$ 
  be a finite family of finitely generated subgroups.
Assume that $G$ has a non-trivial  stable action on an $\bbR$-tree $T$ relative to $\calq$.
Then $G$ splits relative to $\calq$ over a group $K$ which is an extension $1\ra A\ra K \ra \bbZ^k\ra 1$, where $A$ fixes an arc   of $T$ and $k\geq 0$.
\end{thm}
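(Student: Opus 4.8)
The plan is to invoke the Rips machine, since Theorem \ref{bf} is exactly the structure theorem for stable actions on \Rt s proved in \cite{BF_stable}; what follows is the strategy one would carry out to establish it. First I would exploit finite presentability of $G$ to \emph{resolve} the action on $T$. Fixing a finite presentation realizes $G$ as the fundamental group of a finite $2$-complex $K_0$; choosing a $G$-equivariant map from the universal cover of $K_0$ to $T$ and pulling back the transverse structure of $T$ yields a \emph{band complex} (a finite $2$-complex carrying a system of bands, equivalently a measured foliation) whose associated leaf space maps onto $T$. Finiteness of the presentation is precisely what guarantees that this complex carries only finitely many bands, and hence that the subsequent analysis terminates.

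Next I would run the Rips machine on this band complex. After a finite sequence of elementary moves (collapses, subdivisions, and the standard Rips moves) one reaches a normal form in which the complex decomposes into finitely many minimal components, each of one of four standard types: (i) \emph{simplicial}, where the dual action is discrete; (ii) \emph{surface}, arising from a measured foliation on a compact surface; (iii) \emph{axial} (toral), where $G$ acts on a line by translations with dense image, factoring through a finitely generated abelian quotient $\bbZ^k$; and (iv) \emph{thin} (Levitt) components. The decisive use of the \emph{stability} hypothesis is to exclude type (iv): in a thin component the arc stabilizers are forced to shrink along an infinite nested sequence of arcs, which contradicts stability. Thus only types (i)--(iii) survive.

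Finally, from any surviving minimal component I would read off the desired splitting, the point being that the action dual to a suitable transverse family of leaves or curves is non-trivial. A simplicial component gives a splitting over an arc stabilizer $K=A$, so $k=0$. A surface component gives a splitting over the stabilizer $K$ of a two-sided essential simple closed curve; such a $K$ fits in an extension $1\ra A\ra K\ra\bbZ\ra 1$ with $A$ the stabilizer of the underlying leaf, so $k=1$. An axial component gives a splitting over a group $K$ with $K/A\cong\bbZ^k$ equal to its abelian image on the line and $A$ an arc stabilizer. In every case $A$ fixes an arc of $T$ and $K/A\cong\bbZ^k$ with $k\ge 0$, as required. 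Relativity to $\calq$ is automatic: each $H\in\calq$ fixes a point of $T$ by hypothesis, hence is carried to a point of the leaf space and is elliptic in the dual splitting.

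The hard part will be the Rips machine itself: putting the band complex into normal form, controlling the combinatorics of the bands through the elementary moves, and above all the delicate argument that stability is incompatible with thin components. This is exactly where the work of \cite{BF_stable} is concentrated. By contrast, the resolution step and the extraction of splittings from the standard components are comparatively routine once the structure theorem is available, and the relativity bookkeeping amounts only to tracking that elliptic subgroups remain elliptic under the dual correspondence.
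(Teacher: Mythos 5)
Your overall architecture (resolve the action by a band complex using finite presentability, run the Rips machine, read splittings off the minimal components) is indeed the Bestvina--Feighn strategy, but your account of where stability enters is wrong, and the error is not cosmetic. Stability does \emph{not} exclude thin (Levitt/exotic) components: stable actions can perfectly well have thin components --- for instance any action dual to a thin band complex with trivial arc stabilizers is stable --- and in the actual structure theory these components are one of the \emph{sources} of splittings (a splitting over the arc-fixing group, i.e.\ the case $k=0$), not a contradiction; see \cite[Theorem 9.4(3)]{BF_stable}, \cite[Proposition 7.2]{Gui_approximation}, and the way exotic components are handled in the proof of Theorem \ref{thm_RipsZmax} in this paper. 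The genuine role of stability is different: it controls arc stabilizers inside minimal components, so that the fiber $A$ of the output extension fixes an arc of $T$ itself and not merely an arc of the resolving tree. Concretely, as this paper points out in its proof of the generalization (Theorem \ref{bf2}), stability is used only to prove that an element fixing an arc in the subtree $T_{\Gamma_v}\subset T$ corresponding to a minimal component fixes the whole of $T_{\Gamma_v}$ (Proposition 4.3 of \cite{Gui_approximation}). Indeed, if stability ruled out thin components, the very point of Theorem \ref{bf2} --- replacing stability by the weaker hypostability while keeping the same conclusion --- would make no sense.

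There is a second gap: relativity to $\calq$ is not ``automatic.'' Ellipticity does not pull back along a resolution: a subgroup elliptic in $T$ need not be elliptic in the tree dual to the band complex, since the resolution is merely an equivariant map onto $T$ and nothing forces a fixed point upstairs. To obtain a splitting relative to $\calq$ one must \emph{construct} the resolution relative to $\calq$: choose the finite subtree $K$ so that it contains a fixed point $p_i$ of each $Q_i\in\calq$, include a generating set of each $Q_i$ among the generators defining the bands, and arrange that the relators of $Q_i$ are represented by loops in leaves. This is precisely the bookkeeping carried out at the start of the proof of Theorem \ref{bf2}, and it is where finiteness of $\calq$ and finite generation of its members are used (in the relative setting, where $G$ is only finitely presented relative to $\calq$, this step is the crux, not a formality).
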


  Recall that an arc $J$ is stable if any subarc of $J$ has the same stabilizer as $J$. An action is \emph{stable} if every arc $I$ contains a stable subarc $J$.

\begin{cor} \label{po}
   Theorem \ref{thm_sci} holds if every $P_i$ is slender and $G$ is finitely presented.
\end{cor}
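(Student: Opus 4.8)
The plan is to feed the $\R$-tree produced by Theorem~\ref{th_BS} into the Bestvina--Feighn machine of Theorem~\ref{bf}. The only gap to fill is that Theorem~\ref{bf} requires a \emph{stable} action, whereas Theorem~\ref{th_BS} only guarantees elementary arc stabilizers; so the heart of the matter is to upgrade ``elementary arc stabilizers'' to ``stable action'' using the extra hypothesis that the $P_i$ are slender, and then to check that the edge group supplied by Theorem~\ref{bf} is elementary.

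First I would invoke Theorem~\ref{th_BS}: since $\Out(G;\calp,\mk\calh)$ is infinite, $G$ acts non-trivially on an $\R$-tree $T$, relative to $\calp\cup\calh$, with elementary arc stabilizers. Because every $P_i$ is slender, every parabolic subgroup is slender (a subgroup of a slender group is slender), and every virtually cyclic subgroup is slender; hence every arc stabilizer, being elementary, is slender, that is, satisfies the maximal condition (ACC) on subgroups.

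Next I would prove the action is stable. Fix an arc $I$; it suffices to find a subarc $J_0\subseteq I$ whose stabilizer is maximal in the collection $\{\Stab(J) : J\subseteq I\ \text{a subarc}\}$, for then any subarc $J_1\subseteq J_0$ satisfies $\Stab(J_0)\subseteq\Stab(J_1)$ with equality by maximality, so $J_0$ is stable. To produce such a maximal element it is enough to rule out an infinite strictly ascending chain $\Stab(J_1)\subsetneq\Stab(J_2)\subsetneq\cdots$. The cardinalities of these elementary groups strictly increase, hence eventually exceed the constant $M$ of Lemma~\ref{borne}; from that index on, each $\Stab(J_i)$ lies in a unique maximal elementary subgroup $E(\Stab(J_i))$, and since the groups are nested these envelopes coincide, giving one maximal elementary subgroup $E$ containing the whole tail. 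But $E$ is elementary, hence slender, hence satisfies ACC, contradicting the strict ascension. This is the step I expect to be the main obstacle, and its crux is exactly the use of Lemma~\ref{borne} to force an ascending chain of elementary arc stabilizers into a single slender maximal elementary subgroup.

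Finally I would apply Theorem~\ref{bf} with $\calq=\calp\cup\calh$, a finite family of finitely generated subgroups, $G$ being finitely presented by hypothesis: $G$ splits relative to $\calp\cup\calh$ over a group $K$ sitting in an extension $1\ra A\ra K\ra \bbZ^k\ra 1$, where $A$ fixes an arc of $T$. Such an $A$ is contained in an arc stabilizer, hence is elementary and therefore slender; as $\bbZ^k$ is slender and the class of groups with ACC is closed under extension, $K$ is slender, hence small (a slender group contains no $F_2$), hence elementary, since small subgroups of a relatively hyperbolic group are elementary. Thus $G$ splits over an elementary subgroup relative to $\calp\cup\calh$, which is precisely the conclusion of Theorem~\ref{thm_sci}.
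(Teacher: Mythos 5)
Your proof is correct and follows essentially the same route as the paper: upgrade the $\R$-tree from Theorem \ref{th_BS} to a stable action using slenderness of elementary subgroups, then apply Theorem \ref{bf} with $\calq=\calp\cup\calh$ and note that $K$ is slender (since $A$ is), hence elementary. Your ascending-chain argument via the envelopes $E(H)$ of Lemma \ref{borne} is precisely the detail the paper leaves implicit in its one-line assertion that elementary subgroups satisfy the ascending chain condition.
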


\begin{proof}    Assume that every $P_i$ is slender. In this case a subgroup  of $G$ is elementary if and only if it is  slender. In particular, elementary subgroups satisfy   the ascending chain condition, so the action on the $\bbR$-tree
$T$ provided by  Theorem \ref{th_BS} is   stable.
  If furthermore $G$ is finitely presented,  Theorem \ref{bf} (applied with $\calq=\calp\cup\calh$) gives a splitting
that satisfies the conclusion of Theorem \ref{thm_sci}    (note that  $K$ is slender because    $A$ is). 
\end{proof}

In general, however, $G$ is only finitely presented relative to $\calp$, and the action on $T$ only satisfies a weaker property  than stability, which we call hypostability
  (see \cite{Kapovich_sequences} for a different property called semistability).

\begin{dfn}\label{hs}
Let $G$ be a group acting on an $\bbR$-tree $T$.
  The action is \emph{hypostable}
if, for each arc $I\subset T$, there exists a subarc $J\subset I$
satisfying the following {hypostability condition}: if $g\in G$   acts hyperbolically in $T$ and  $gJ\cap J$
is an arc, then $\Stab J=\Stab (gJ)$ (equivalently, $g$ normalizes $\Stab J$).
\end{dfn}

Hypostability is weaker than stability, because any stable arc $J$ satisfies the hypostability condition: if $gJ\cap J$ is an arc,  $\grp{\Stab(J),\Stab(gJ)}$ is contained in
the stabilizer of $gJ\cap J$, which coincides with  $\Stab(J)$ and $\Stab(gJ)$ 
by stability of $J$ and $gJ$.

\begin{lem}\label{lem_elem_hypostable} Let $G$ be hyperbolic relative to finitely generated subgroups $\calp=\{P_1,\dots,P_n\}$. Any action of $G$ on an \Rt\ $T$ 
  relative to $\calp$ with elementary arc stabilizers 
is hypostable.
\end{lem}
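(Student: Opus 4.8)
The plan is to produce, for each arc $I\subset T$, a subarc $J$ satisfying the hypostability condition of Definition \ref{hs}, exploiting the uniform constant $M$ furnished by Lemma \ref{borne}. The starting observation is that stabilizers of subarcs only grow as the subarcs shrink, which suggests a dichotomy: either (A) some subarc $J\subseteq I$ has the property that every sub-subarc has stabilizer of cardinality at most $M$, or (B) every subarc of $I$ contains a further subarc whose stabilizer has cardinality $>M$. In case (A) the stabilizers of subarcs of $J$ are finite of order bounded by $M$, hence cannot form an infinite strictly increasing chain; choosing a subarc whose stabilizer has maximal cardinality gives a \emph{stable} arc (every sub-subarc then has exactly the same stabilizer), and a stable arc is automatically hypostable by the remark following Definition \ref{hs}. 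So (A) is essentially free.

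Case (B) carries the real content. Here I would fix a subarc $J_0\subseteq I$ with $|\Stab J_0|>M$ and let $E=E(\Stab J_0)$ be the unique maximal elementary subgroup containing it (Lemma \ref{borne}). The first step is to check that every subarc $J'\subseteq J_0$ has $\Stab J'\subseteq E$: indeed $\Stab J'\supseteq\Stab J_0$ has cardinality $>M$, so by the uniqueness in Lemma \ref{borne} its maximal elementary overgroup is again $E$. The second step is that, for any $J\subseteq J_0$ and any hyperbolic $g$ with $gJ\cap J$ an arc, necessarily $g\in E$. This is because $gJ\cap J$ is a subarc of $J_0$, so $g\,\Stab(J)\,g\m=\Stab(gJ)\subseteq\Stab(gJ\cap J)\subseteq E$; as this conjugate has cardinality $>M$ and its maximal elementary overgroup is $gEg\m$, uniqueness forces $gEg\m=E$, whence $g\in N_G(E)=E$, using that maximal elementary subgroups equal their normalizers.

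It then remains to split according to the type of $E$, which is either parabolic or virtually cyclic. If $E$ is parabolic, then since the action is relative to $\calp$ the subgroup $E$ is elliptic and so contains no hyperbolic element; by the step above there is \emph{no} hyperbolic $g$ with $gJ\cap J$ an arc, so the hypostability condition holds vacuously for any $J\subseteq J_0$. If $E$ is virtually cyclic, then $E$ is Noetherian (it satisfies the ascending chain condition on subgroups), so exactly as in case (A) — building subarcs with strictly larger stabilizers would produce an infinite ascending chain in $E$ — some subarc $J\subseteq J_0$ is stable, hence hypostable.

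The main obstacle to anticipate is precisely the failure of the ascending chain condition for arc stabilizers when the $P_i$ are not slender: this is why one cannot expect a stable arc in general and must settle for hypostability. The resolving observation is that whenever arc stabilizers fail ACC their common maximal elementary overgroup $E$ is parabolic, hence elliptic in $T$, so no hyperbolic element can overlap $J$ with itself and the condition becomes vacuous. Verifying this needs only Lemma \ref{borne}, the self-normalizing property of maximal elementary subgroups, and the ellipticity of parabolic subgroups in a relative action, so no delicate analysis of how stabilizers vary along an arc is required.
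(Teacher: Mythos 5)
Your proof is correct and takes essentially the same route as the paper's: the dichotomy based on the uniform constant of Lemma \ref{borne}, a stable subarc via the ascending chain condition when the maximal elementary overgroup $E$ is virtually cyclic (or when stabilizers stay uniformly finite), and vacuousness of the hypostability condition in the parabolic case because any overlapping element must normalize, hence lie in, the elliptic subgroup $E$. The only cosmetic difference is that the paper deduces $g\in E$ by observing that $\grp{H,H^g}$ fixes the arc $gJ\cap J$ and is therefore elementary, whereas you conjugate stabilizers into $E$ and invoke uniqueness; both hinge on exactly the same facts (uniqueness of $E(H)$ and self-normalization of maximal elementary subgroups).
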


\begin{proof}    Let $C$ be such that any elementary subgroup $H$ of $G$ of cardinality $>C$ is contained in a unique maximal elementary subgroup $E(H)$ (see Lemma \ref{borne}). 
Let $I\subset T$ be an arc. If the stabilizer of every subarc has cardinality at most $C$, then 
a subarc $J\subset I$ whose stabilizer has the greatest cardinality is stable and we are done.

Otherwise, consider $J\subset I$ whose stabilizer $H=\Stab(J)$ has cardinality $>C$.
The stabilizer of every subarc of $J$ is elementary, so is contained in $E(H)$.
If subgroups of $E(H)$ satisfy the ascending chain condition, in particular if  $E(H)$ is  virtually cyclic, then $J$ contains a stable subarc.
Thus we can assume that $E(H)$ is parabolic. 

We prove hypostability by showing that
any $g$ such that $gJ\cap J$ contains an arc is elliptic in $T$. Indeed, $\grp{H,H^g}$   fixes an arc in $T$, so is elementary.
It follows that $\grp{H,H^g}\subset E(H)$, so $E(H)=E(H^g)=E(H)^g$.
Since $E(H)$ is its own normalizer, we get $g\in E(H)$. But $T$ is relative to $\calp$, so  $g$ is elliptic.
\end{proof}

\begin{example}  
  We sketch the construction of   an   action as in  
   Lemma \ref{lem_elem_hypostable} which is not stable.  
Let $G$ be
the free product $G=P*\Z$, with   $P$ a  (non-slender) finitely generated group
containing a copy of the free abelian group on a countable basis $\bbZ^{(\bbQ)}$.  Note that $G$ is hyperbolic relative to $\{P,\Z\}$. Informally, identifying the edge of the free product with $[0,1]$, 
one can produce an \Rt\   from the Bass-Serre tree of this splitting
by folding the group  $\bbZ^{([0,\frac pq]\cap\bbQ)}$ on a length $\frac pq$ for all $0<\frac pq<1$.
The stabilizer of an arc $[a,b]\subset [0,1]$ is then $\bbZ^{([0,b]\cap\bbQ)}$. This action is hypostable but unstable.
\end{example}

\begin{thm} \label{bf2} 
Theorem \ref{bf} holds if the action on the $\bbR$-tree is only assumed to be hypostable,
and the group $G$ is only assumed to be finitely presented relative to $\calq$.
\end{thm}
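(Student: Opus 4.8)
The plan is to retrace the proof of Theorem \ref{bf} of Bestvina--Feighn, replacing its two hypotheses (absolute finite presentation and stability) by the present weaker ones (finite presentation relative to $\calq$ and hypostability), and to check that each step where the original hypotheses were used still goes through. First I would encode the relative presentation geometrically. Writing $G$ as a quotient of $Q_1*\cdots*Q_m*F$ by the normal closure of finitely many words, where $\calq=\{Q_1,\dots,Q_m\}$ and $F$ is free of finite rank, I would build a finite complex of groups (a graph of groups carrying the $Q_i$ as vertex groups, together with finitely many $2$-cells coming from the relators) with fundamental group $G$. Since the action on $T$ is relative to $\calq$, each $Q_i$ fixes a point, so these vertex groups map to points of $T$; lifting and mapping equivariantly to $T$ produces a $G$-equivariant map from a finite-type object whose ``moving part'' is controlled only by $F$ and the finitely many $2$-cells. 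This is exactly the input needed to construct a finite band complex (system of isometries) resolving the action, just as in the absolute case: the elliptic subgroups $Q_i$ are carried along as vertex groups and never obstruct the construction.

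Next I would run the Rips machine on this band complex. Following Bestvina--Feighn and the structure theory of systems of isometries of Gaboriau--Levitt--Paulin, the machine decomposes the complex into finitely many minimal components of \emph{simplicial}, \emph{surface}, and \emph{thin} (axial/Levitt) type. Simplicial components yield honest edges, hence splittings over arc stabilizers; surface components produce splittings over boundary (arc) stabilizers; and it is the thin/axial components that give rise to a vertex group which is an extension $1 \ra A \ra K \ra \bbZ^k \ra 1$, with $A$ an arc stabilizer and $k\ge 0$. Reading off the resulting graph of groups yields a non-trivial splitting of $G$ relative to $\calq$ over a subgroup $K$ of the required form.

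The heart of the matter, and the main obstacle, is to verify that hypostability substitutes for stability throughout. Stability is used in two essential places: to guarantee that the iterated folding and collapsing moves of the machine terminate (so that only finitely many component types survive), and to ensure that folds performed along overlapping arcs do not enlarge arc stabilizers uncontrollably. The hypostability condition of Definition \ref{hs} is tailored precisely to the second point: the relevant folds identify an arc $J$ with a translate $gJ$ where $g$ acts hyperbolically and $gJ\cap J$ is an arc, and hypostability supplies exactly the equality $\Stab J=\Stab(gJ)$ that stability would have given. For termination I would replace the accessibility argument of the absolute case by \emph{relative} accessibility: because $G$ is finitely presented relative to $\calq$ it admits a relative Stallings--Dunwoody decomposition (Subsection \ref{JSJ}), which bounds the complexity of the splittings that can arise and rules out an infinite strictly-refining sequence.

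The delicate technical point will be to check that the hypostability property is \emph{inherited} by the subarcs and sub-band-complexes produced along the way, so that the condition remains available at every application of a fold, particularly in the normalization of the thin components where the original argument leans most heavily on stability. Here I expect the key observation to be the one already used in Lemma \ref{lem_elem_hypostable}: any element that would threaten to destabilize an arc is forced to be elliptic, hence cannot participate in the hyperbolic folds to which the hypostability condition is applied. Once this inheritance is established, the remainder of the Bestvina--Feighn argument transfers verbatim, giving the splitting of $G$ relative to $\calq$ over $K$ as claimed.
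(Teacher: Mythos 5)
Your overall skeleton does match the paper's: the relative presentation is handled by collapsing each $Q_i$ to a fixed point (the paper builds the suspension over a finite subtree $K$ containing the fixed points $p_1,\dots,p_q$, so that the possibly infinite relator sets of the $Q_i$ are automatically carried by leaves), and the resulting band complex is then processed by Rips theory as in \cite{BF_stable,Gui_approximation}. That part is sound. The genuine gap is in the step you yourself flag as delicate: \emph{why} hypostability can substitute for stability. In the paper's argument, stability is needed at exactly one point of \cite{Gui_approximation} (its Proposition 4.3): to show that an element fixing an arc in the subtree $T_{\Gamma_v}\subset T$ corresponding to a minimal component fixes all of $T_{\Gamma_v}$. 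The substitute is not a statement about folds; it is Corollary \ref{cor_hypostable}, whose proof requires two nontrivial inputs: (i) the image of a minimal component is an \emph{indecomposable} subtree of $T$ (Proposition 1.25 and Lemma 1.19 of \cite{Gui_actions}), and (ii) Lemma \ref{lem_superpos}, which shows by an orientation-and-composition trick that in an indecomposable subtree every arc $L$ admits an $I$-covering $1=g_1,\dots,g_r$ all of whose consecutive transition elements $g_ig_{i-1}^{-1}$ are \emph{hyperbolic}. Input (ii) is indispensable, because Definition \ref{hs} constrains only overlaps $gJ\cap J$ with $g$ hyperbolic; overlapping translates by elliptic elements do occur in minimal components, and hypostability says nothing about them.

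Your proposal has no counterpart to Lemma \ref{lem_superpos}. The assertion that ``the relevant folds identify an arc $J$ with a translate $gJ$ where $g$ acts hyperbolically'' is precisely what needs to be proved, and the ``key observation'' you borrow from Lemma \ref{lem_elem_hypostable} --- that any element threatening to destabilize an arc is elliptic --- is a fact about relatively hyperbolic groups with elementary arc stabilizers acting relative to $\calp$ (it uses that maximal elementary subgroups are their own normalizers); it is simply not available under the bare hypotheses of Theorem \ref{bf2}, which concern an arbitrary hypostable action of a relatively finitely presented group. Your other repair is also off target: in the Bestvina--Feighn machine, stability enters through chain conditions on arc stabilizers, and under mere hypostability arc stabilizers can strictly increase along nested arcs (see the paper's example with stabilizers $\bbZ^{([0,b]\cap\bbQ)}$), so no ascending-chain argument can work; relative accessibility of $G$ bounds splittings of $G$, not chains of arc stabilizers in a band complex, so it cannot substitute. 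The paper's route avoids any such chain condition by isolating the use of stability in the single indecomposable-subtree statement above, and that statement (with its proof via hyperbolic coverings) is the missing idea in your argument.
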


 The proof will be given in the next subsection.
The following corollary is an immediate consequence of Lemma \ref{lem_elem_hypostable} and Theorem \ref{bf2}.

\begin{cor}\label{cor_RipsRH}
Let $G$ be a relatively hyperbolic group,
 with $\calp,\calh$   as in Theorem \ref{thm_sci}. If $G$ acts non-trivially on an \Rt\ $T$ 
  relative to  $\calp\cup\calh$ with elementary arc stabilizers,
then $G$ splits  over an elementary subgroup relative to $\calp\cup\calh$.  \qed
\end{cor}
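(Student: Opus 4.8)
The plan is to read the statement as an immediate combination of Lemma~\ref{lem_elem_hypostable} and Theorem~\ref{bf2}, the only residual work being to verify that the edge group produced by the $\bbR$-tree machinery is elementary.

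First I would set $\calq=\calp\cup\calh$ and check that the hypotheses of Theorem~\ref{bf2} are met for $\calq$. Since the action of $G$ on $T$ is relative to $\calp\cup\calh$, it is in particular relative to $\calp$, and its arc stabilizers are elementary; hence Lemma~\ref{lem_elem_hypostable} applies and the action is hypostable. Moreover $G$ is finitely presented relative to $\calp$ \cite{Osin_relatively}, and adjoining the finitely many finitely generated subgroups $H_j$ keeps this property (see the preliminaries), so $G$ is finitely presented relative to $\calq$. Applying Theorem~\ref{bf2} with this $\calq$ to the non-trivial hypostable action on $T$ yields a non-trivial splitting of $G$ relative to $\calp\cup\calh$ over a subgroup $K$ fitting in an exact sequence $1\to A\to K\to\bbZ^k\to 1$, where $A$ fixes an arc of $T$ and $k\ge 0$.

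It then remains to show that $K$ is elementary, which I would do by the dichotomy of Lemma~\ref{borne} applied to the (elementary) arc stabilizer $A$. If $|A|>M$, then $A$ lies in a unique maximal elementary subgroup $E(A)$; since $A\normal K$, every $g\in K$ satisfies $A^g=A$, whence $E(A)^g=E(A^g)=E(A)$ by uniqueness, so $K\inc N_G(E(A))=E(A)$ because maximal elementary subgroups are their own normalizers (Subsection~\ref{gene}); thus $K$ is elementary. If instead $|A|\le M$, then $A$ is finite and $K$ is an extension of $\bbZ^k$ by $A$; a non-abelian free subgroup of $K$ would meet the finite normal subgroup $A$ trivially and hence embed in $\bbZ^k$, which is impossible, so $K$ is small and therefore elementary (Subsection~\ref{gene}). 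This produces the desired elementary splitting relative to $\calp\cup\calh$.

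The genuine difficulty lies not in this corollary but upstream, in Theorem~\ref{bf2}: dropping slenderness of the $P_i$ means the action on $T$ need not be stable, only hypostable, and $G$ need only be finitely presented relative to $\calq$ rather than finitely presented outright. The hard part, deferred to the next subsection, is therefore to rerun the Rips--Bestvina--Feighn analysis of \cite{BF_stable} with stability of arcs replaced by the weaker hypostability condition and with absolute finite presentability replaced by finite presentability relative to $\calq$; once that machine is in place, the corollary follows as above.
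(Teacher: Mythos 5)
Your proposal is correct and follows exactly the paper's route: the paper derives Corollary \ref{cor_RipsRH} as an immediate consequence of Lemma \ref{lem_elem_hypostable} (hypostability of the action) and Theorem \ref{bf2}, which is precisely your argument. The details you add --- that relative finite presentability passes from $\calp$ to $\calq=\calp\cup\calh$ because the $H_j$ are finitely generated, and that the group $K$ in the exact sequence $1\to A\to K\to\bbZ^k\to 1$ is elementary (via Lemma \ref{borne} and self-normalization of maximal elementary subgroups when $A$ is large, and via smallness of a finite-by-$\bbZ^k$ group when $A$ is small) --- are exactly the routine verifications the paper leaves implicit.
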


Theorem \ref{thm_sci} follows immediately 
from this corollary, using the $\bbR$-tree provided by Theorem \ref{th_BS}. 
A refinement of Corollary \ref{cor_RipsRH} will be given in Subsection \ref{zm}.

\begin{rem} Let $G$ and $T$ be as in   Theorem \ref{bf}. If $T$ is not a line, one can get a splitting over a group $K$ which is an  extension of $\bbZ$ or $\bbZ/2\bbZ$ by
a group $A$ fixing an arc in $T$  (see \cite{BF_stable}). 
One can also approximate $T$ (in the equivariant Gromov topology) by simplicial trees with  controlled edge stabilizers,  as in  \cite{Gui_approximation}. The same facts are true under the assumptions of Theorem \ref{bf2}.
\end{rem}

\subsection{Proof of Theorem \ref{bf2}}  

Recall that a subtree $Y\subset T$ is \emph{indecomposable}  \cite{Gui_actions} if, given  arcs $I,L\subset Y$,  there exist  $g_1,\dots,g_n\in G$ such that $L\subset g_1I\cup\dots\cup g_nI$,
and every $g_iI\cap g_{i+1}I$ is an arc. We call $g_1,\dots,g_n $ an \emph{$I$-covering} of $L$. 
 
\begin{lem}\label{lem_superpos}
  Assume that $Y\subset T$ is an indecomposable subtree.
Given 
  two arcs $I,L\subset Y$ with $I\subset L$,
there exists an $I$-covering $ g_1,\dots,g_r $ of $L$ such that $g_1=1$ and 
every   $ g_{i}g_{i-1}\m$  is hyperbolic in $T$.
\end{lem}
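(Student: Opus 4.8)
The plan is to prove Lemma \ref{lem_superpos} by starting from an arbitrary $I$-covering of $L$ (which exists by indecomposability) and modifying it so that consecutive elements differ by a hyperbolic isometry, while also arranging that the first element is the identity.

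\textbf{Setup.} By indecomposability of $Y$, there is some $I$-covering $h_1,\dots,h_s$ of $L$: that is, $L\subset h_1I\cup\dots\cup h_sI$ with each $h_iI\cap h_{i+1}I$ an arc. First I would normalize so that $g_1=1$: since $I\subset L$ and $L\subset\bigcup h_jI$, and the covering may be reordered/traversed so that it starts at one endpoint of $L$, I can prepend a segment of the covering that reaches $I$ itself, or simply relabel by a global translate. Concretely, because $I\subset L$, I can insert $I$ (i.e.\ the group element $1$) at the front and connect it to $h_1I$ by a sub-covering, so without loss of generality $g_1=1$. The real content is the hyperbolicity of the increments $g_ig_{i-1}^{-1}$.

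\textbf{Main step: forcing hyperbolic increments.} Consider a consecutive pair $g_{i-1}I, g_iI$ whose intersection $K=g_{i-1}I\cap g_iI$ is an arc. The increment $t=g_ig_{i-1}^{-1}$ maps $g_{i-1}I$ to $g_iI$, and these two arcs overlap in the arc $K$. If $t$ is already hyperbolic, I keep it. The problem is when $t$ is elliptic. In that case I want to interpolate an extra translate between $g_{i-1}I$ and $g_iI$ so that both new increments are hyperbolic. The key tool is that in an indecomposable tree there is an abundance of hyperbolic elements whose axes cross a given arc: indecomposability forces the action to be ``mixing,'' so for the overlap arc $K$ I can find $g\in G$ hyperbolic such that $gK\cap K$ (hence $gI\cap I$, after adjusting by $g_{i-1}$) is a genuine arc, and such that $g$ is hyperbolic and $tg^{-1}$ (or $g^{-1}t$) is also hyperbolic. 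I would insert the translate $g_{i-1}g^{-1}$ (or a similar conjugate) between positions $i-1$ and $i$, replacing the single elliptic increment $t$ by two increments, each of which I can choose hyperbolic by picking $g$ in general position. The point is that the set of $g$ making $gK\cap K$ an arc is large, and the elliptic elements form a ``thin'' set (an elliptic $t$ fixes a point, so the condition that both $g$ and $tg^{-1}$ be hyperbolic excludes only finitely many fixed-point constraints), so a suitable $g$ exists; one can also absorb isolated elliptic increments by composing with a fixed hyperbolic element of the group, which exists because $T$ is non-trivial and $Y$ is indecomposable (indecomposable trees are not points and carry hyperbolic isometries).

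\textbf{Bookkeeping and the main obstacle.} After replacing each elliptic increment by a pair of hyperbolic ones, I relabel the (now longer) sequence as $g_1,\dots,g_r$ with $g_1=1$; by construction every $g_ig_{i-1}^{-1}$ is hyperbolic, and consecutive translates still overlap in arcs, so it remains an $I$-covering of $L$. The main obstacle I anticipate is verifying that the interpolation can be done \emph{without breaking the covering property}: inserting a translate $gI$ between $g_{i-1}I$ and $g_iI$ requires $g_{i-1}I\cap gI$ and $gI\cap g_iI$ to both be arcs, not just points, and simultaneously requires the two increments to be hyperbolic. Ensuring all these conditions hold at once is where indecomposability must be used quantitatively—I would invoke the fact that for a fixed arc the elements $g$ with $gK\cap K$ of positive length that are hyperbolic form a set rich enough to meet both the overlap constraint with $g_{i-1}I$ and with $g_iI$. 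Handling the degenerate possibility that $I$ is too short to overlap the interpolant in an arc may require first passing to a slightly smaller common subarc; that technical point, rather than any deep idea, is the delicate part of the argument.
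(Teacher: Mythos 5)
Your overall scheme (take an arbitrary $I$-covering, normalize $g_1=1$, then repair the bad increments) has the right general shape, but the repair step contains a genuine gap: you never exhibit a mechanism that certifies hyperbolicity, and the claims you substitute for one are unjustified. Concretely, you assert that indecomposability makes the action ``mixing,'' so that there is an abundance of hyperbolic $g$ with $gK\cap K$ an arc, and that elliptic elements form a ``thin'' set cut out by ``finitely many fixed-point constraints,'' so that $g$ and $tg^{-1}$ can simultaneously be made hyperbolic by general position. There is no topology, measure, or finiteness in play that gives such genericity arguments any meaning here: a priori, every element $g$ with $gI\cap I\neq\emptyset$ could be elliptic, and the elliptic elements of a group acting on an $\bbR$-tree can be an enormous set, not the complement of finitely many conditions. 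Producing even a single hyperbolic element that moves a subarc of $I$ to an overlapping (or nearby) subarc of $I$ is precisely the difficulty of the lemma, so positing a rich supply of them, closed under the extra constraint that $tg^{-1}$ also be hyperbolic, comes close to assuming the conclusion.

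The missing idea is the orientation argument, which is how the paper converts overlap information into hyperbolicity. The paper first shows, by a flipping trick (if two translates $gI,g'I$ overlap with reversed orientations, set $g_0=g'^{-1}g$, $J=I\cap g_0I$, take a $J$-covering, and multiply selected elements by $g_0$ to make orientations match), that the covering can be chosen \emph{orientation-preserving}. For such coverings there is a dichotomy: an increment $a_ia_{i-1}^{-1}$, mapping $a_{i-1}I$ to $a_iI$ compatibly with orientations, is either hyperbolic or the identity on the overlap arc. Note that without the orientation hypothesis this dichotomy is false -- an elliptic ``reflection'' of an arc about its midpoint moves the overlap nontrivially yet fixes a point -- which is exactly why your ``absorb elliptic increments by composing with a fixed hyperbolic element'' step cannot be justified as stated. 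The same dichotomy produces one hyperbolic element $h$ carrying a subarc $J'\subset I$ to a different subarc of $I$ orientation-preservingly, and then a pigeonhole finishes the proof: choose $g_i\in\{a_i,a_ih^{-1}\}$ inductively so that $g_ig_{i-1}^{-1}$ is not the identity on the overlap (both choices cannot fail, since otherwise $a_ih^{-1}a_i^{-1}$ would fix an arc pointwise, contradicting hyperbolicity of $h$), whence every increment is hyperbolic. Your interpolation scheme could only be carried out after this orientation machinery is in place, at which point it is no longer needed.
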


\begin{proof}

Given any $I$-covering of $L$, there exists $i$ such that $I\cap g_i I$ is an arc, and therefore $1,g_i,g_{i-1}, \dots, g_1,g_2,\dots, g_n$ is an $I$-covering starting with 1. From now on, we only consider coverings starting with 1. 
The interval covered will always be $L$. 

We fix an orientation of $I$. It induces an orientation of $gI$ for any $g\in G$. If $I\cap gI$ is an arc, the orientations of $I$ and $gI$ may agree or disagree on   this arc. 
  We first claim that there exists     an  $I$-covering $ 1=a_1,\dots,a_p $ of $L$ 
  such that, for each $i$, 
 the orientation of $a_iI$ agrees with that  of $a_{i+1}I$ on their
intersection (we say that such an $I$-covering is orientation-preserving).

If not, we can find arcs $gI,g'I$ whose intersection is an arc on which the orientations disagree. 
Define $g_0=g'^{-1}g$ and $J=I\cap g_0 I$;   the orientations of $I$ and $g_0I$ disagree on $J$.
Now consider    a $J$-covering $1=b_1,\dots,b_p $ of $L$. 
It is also an $I$-covering   of $L$. Since  $J= I\cap g_0 I$, we get another $I$-covering of $L$
if we replace some of the $b_i$'s by $b_ig_0$. Starting with $a_1=b_1=1$, we then define $a_i$ inductively as either $b_i$ or $b_ig_0$, 
making sure that orientations agree. This proves the claim.

 Next note that  there exists a hyperbolic element $h\in G$   mapping an arc $J'\subset I$ to a different arc $h(J')\subset I$ in an orientation-preserving way. To see this, first choose $h_1$ mapping an arc $J_1\inc I$ to a disjoint arc $J_2\inc I$. If orientation is reversed, choose $h_2$ mapping an arc $J_3\inc J_2$ to an arc $J_4\inc I$ different from  $ J_1$ and $J_3$. Then take $h$ equal to $h_2$ or $h_2h_1$.

 We can now conclude.
Let $1=a_1,\dots,a_r$ be an orientation-preserving $J'$-covering   of $L$. 
Since $J'\inc I\cap h\m I$, 
we get another orientation-preserving $I$-covering if we replace some of the $a_i$'s by $a_ih\m$.
If  $a_{i}a_{i-1}\m $ is not hyperbolic,  it is the identity on  $a_{i-1}J'\cap a_{i} J'$.
We therefore get 
the required $I$-covering of $L$ by defining $g_i$ inductively as $a_i$ or $a_ih\m$ so that $g_{i}g_{i-1}\m $ is not the identity on  $a_{i-1}J'\cap a_{i} J'$.
\end{proof}

\begin{cor}\label{cor_hypostable}
Let $T$ be hypostable, and let $Y\subset T$ be an indecomposable subtree.
Any element $g\in G$ fixing an arc in $Y$ fixes the whole of $Y$.
In particular, any arc in $Y$ is stable.
\end{cor}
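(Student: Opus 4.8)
The plan is to prove the main assertion first---that any $g$ fixing an arc of $Y$ fixes all of $Y$---and then to deduce stability of arcs as a formal consequence. For the latter, if $I'\subset I\subset Y$ and $g\in\Stab(I')$, then $g$ fixes the arc $I'\subset Y$, so by the main assertion $g$ fixes $Y$, and in particular $g\in\Stab(I)$; since $\Stab(I)\subset\Stab(I')$ trivially, we get $\Stab(I)=\Stab(I')$, i.e. $I$ is stable.

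So fix $g$ fixing some arc $I_0\subset Y$. By hypostability, I would choose a subarc $J\subset I_0$ satisfying the hypostability condition of Definition \ref{hs}; since $g$ fixes $I_0\supset J$, we have $g\in\Stab(J)$. The first thing to record is the key consequence of hypostability in the form: \emph{if $k\in G$ is hyperbolic and $kJ\cap J$ is an arc, then $\Stab(kJ)=\Stab(J)$}. As $\Stab(kJ)=k\,\Stab(J)\,k\m$, this says that $k$ normalizes $\Stab(J)$ and, crucially, that $\Stab(J)$ is unchanged along such a hyperbolic overlap.

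The heart of the argument propagates this along the coverings produced by Lemma \ref{lem_superpos}. Given any arc $L\subset Y$ with $J\subset L$, take a $J$-covering $1=g_1,\dots,g_r$ of $L$ with every $g_ig_{i-1}\m$ hyperbolic. For each $i$ the element $k_i=g_{i-1}\m g_i$ is conjugate to the hyperbolic element $g_ig_{i-1}\m$, hence hyperbolic, and $J\cap k_iJ=g_{i-1}\m(g_{i-1}J\cap g_iJ)$ is an arc; the hypostability consequence then gives $\Stab(J)=\Stab(k_iJ)$, which after conjugating by $g_{i-1}$ reads $\Stab(g_{i-1}J)=\Stab(g_iJ)$. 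Inducting from $g_1=1$ yields $\Stab(g_iJ)=\Stab(J)$ for all $i$. In particular every element of $\Stab(J)$---so in particular $g$---fixes each $g_iJ$, hence fixes $L=\bigcup_i g_iJ$; moreover $\Stab(L)=\Stab(J)$. Thus $g$ fixes every arc of $Y$ containing $J$, and all such arcs have stabilizer exactly $\Stab(J)$.

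The remaining, and main, difficulty is to pass from ``arcs containing $J$'' to all of $Y$: because an indecomposable tree branches at interior points of $J$, not every arc of $Y$ can be enlarged to contain $J$. I would handle this using the $G$-action. Since $\Fix(g)$ is a closed convex subtree containing $J$, it suffices to show it is open in $Y$: for $c\in\Fix(g)\cap Y$ and a germ $[c,z]\subset Y$, I must produce a nondegenerate subarc of $[c,z]$ fixed by $g$. Using indecomposability of $Y$ one covers $[c,z]$ by translates of $J$ with consecutive arc-overlaps, and then upgrades this to a covering anchored at $J$ with hyperbolic consecutive ratios---re-running the orientation/folding construction in the proof of Lemma \ref{lem_superpos}, whose production of a suitable hyperbolic element only uses $J\subset Y$. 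The stabilizer propagation of the previous paragraph then shows each translate in the chain has stabilizer $\Stab(J)$ and is therefore fixed by $g$, so $g$ fixes a subarc of $[c,z]$. Openness, together with closedness, convexity and connectedness, then forces $\Fix(g)\supset Y$. I expect the technical crux to be exactly this upgrade of an arbitrary $J$-covering of a branch germ to a hyperbolically-connected one rooted at $J$; everything else is bookkeeping with stabilizers.
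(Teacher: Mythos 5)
Your first three paragraphs reproduce the paper's argument exactly: the deduction of stability, the choice of a hypostable subarc $J$, and the propagation $\Stab(g_{i-1}J)=\Stab(g_iJ)$ along a covering from Lemma \ref{lem_superpos} with hyperbolic consecutive ratios are all correct. The gap is in your final paragraph, at precisely the point you flag as the crux, and it is a genuine one. You assert that a $J$-covering of a germ $[c,z]$ can be upgraded to one \emph{anchored at $J$} with hyperbolic consecutive ratios by ``re-running'' the proof of Lemma \ref{lem_superpos}. But that proof's anchoring step---arranging $g_1=1$---uses the hypothesis $I\subset L$ in an essential way: since the translates $g_iI$ cover $L\supset I$, some $g_iI$ must meet $I$ in an arc, and the chain can then be threaded through $I$ itself (the same hypothesis is used again to anchor the $J$-covering in the orientation claim, where $J=I\cap g_0I\subset L$). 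When $[c,z]$ lies in a branch of $Y$ attached at an interior point of $J$, nothing forces any translate occurring in a $J$-covering of $[c,z]$ to overlap $J$ in an arc, so your propagation has no starting point; connecting $J$ to such a covering by hyperbolic steps is exactly what remains to be proved, and it does not follow by repeating the lemma's construction.

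The missing idea is an order-of-quantifiers switch that makes the whole openness machinery unnecessary: do not fix $J$ once and for all. Since $g$ fixes the entire arc $I_0$, for each target point $x\in Y$ you may first shrink $I_0$: letting $p$ be the point of $I_0$ closest to $x$, replace $I_0$ by a subarc $I'$ having $p$ as an endpoint, so that $L=I'\cup[p,x]$ is an arc containing both $I'$ and $x$. Only then invoke Definition \ref{hs} to get a hypostable subarc $J\subset I'$; it is still fixed by $g$, it is contained in $L$, and Lemma \ref{lem_superpos} now applies verbatim, so your propagation argument fixes $L$ and hence $x$. This is the paper's proof, and it needs no connectedness argument at all. (Alternatively, your framework could be repaired keeping $J$ fixed: apply Lemma \ref{lem_superpos} inside the bridge arc $L$ joining a subarc $J_1\subset J$, chosen to end at the projection of $[c,z]$ onto $J$, to a subarc of $[c,z]$; an arc overlap $g_{i-1}J_1\cap g_iJ_1$ forces $g_{i-1}J\cap g_iJ$ to be an arc, so hypostability of $J$ still gives $\Stab(g_iJ)=\Stab(J)\ni g$, and $g$ fixes each $g_iJ\supset g_iJ_1$, hence a nondegenerate subarc of $[c,z]$; convexity of the fixed-point set then concludes. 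But the shrink-first argument is much shorter.)
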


\begin{proof}
 Assume that $g$ fixes an arc $I\subset Y$.  Given $x\in Y$, we aim to prove that $g$ fixes $x$. 
After making $I$ smaller, we can assume that there is an arc $L$ containing $x$ and $I$.
Definition \ref{hs} provides a subarc $J\inc I$ satisfying the hypostability condition.   Consider a $J$-covering
$1= g_1,\dots,g_r $ of $L$ as in 
Lemma \ref{lem_superpos}.    Since  $g_{i+1}g_i\m $, hence also $g_i\m g_{i+1}$, is hyperbolic, hypostability of $J$ implies that all arcs $g_iJ$ have the same stabilizer. The element $g$ fixes $g_1J=J$, so it fixes every $g_iJ$ and therefore $L$. In particular, $g$ fixes $x$.
\end{proof}

\begin{proof}[Proof of Theorem \ref{bf2}]  
  We explain how to adapt the arguments in \cite{BF_stable,Gui_approximation}.

  Let $\calq=\{Q_1,\dots,Q_q\}$.
Let $ \grp{S_i\mid\calr_i}$ be a presentation of   $Q_i$, with
$S_i$ a finite generating set and $\calr_i$ a possibly infinite set of relators.
Let $\grp{S\mid\calr}$ be a presentation of $G$ such that $S$ is a finite generating set of $G$ containing each $S_i$,
and $\calr$ is the union of $\calr_1\cup\dots\cup\calr_q$   with finitely many additional relators.

Consider a finite subtree $K\subset T$, \ie the convex hull of finitely many points.  
We explain how to choose $K$ large enough so   as to yield  a resolution of $T$
as in \cite[Definition 2.2]{Gui_approximation}, even though $G$ is only relatively finitely presented.

For each $s\in S$, we consider $K_s=K\cap s\m K$ and the restriction  $\phi_s:K_s \ra sK_s$ of $s$ (we may assume that no $K_s$ is empty). We then define the suspension $\Sigma$ as the foliated $2$-complex obtained by gluing 
foliated bands $K_s\times [0,1]$ to $K$, where we glue $(x,0)$ to $x$ and $(x,1)$ to $\phi_s(x)$.
Note that $\pi_1(\Sigma)$ is naturally
identified with the free group on $S$.

Next,  we  need all relators of $\calr$ to be represented by
loops contained in leaves of $\Sigma$. Since  each   $Q_i$ fixes a point $p_i$ in $T$, and $S$ contains $S_i$,   requiring that  
 $K$ contains $p_1,\dots,p_q$ takes care of $\calr_1\cup\dots\cup\calr_q$. 
There remain    finitely many other relators,
and, as in \cite{BF_stable,Gui_approximation}, one can choose $K$   so that
they   also are  represented by loops contained in leaves.

The complex $\Sigma$ provides  a resolution of $T$ in the sense of Definition 2.2 of \cite{Gui_approximation}
(as pointed out in  \cite{Gui_approximation}, the set $\calc$ of curves contained in leaves mentioned in Definition 2.2 is not assumed to be finite).
 Obtaining a resolution  is the only place where finite presentation is used in \cite{Gui_approximation}.

As for stability, it  is used only in Proposition 4.3 of \cite{Gui_approximation} 
to prove that, if an element fixes an arc 
in the subtree $T_{\Gamma_v}\subset T$ corresponding to a
minimal component of $\Sigma$, then it fixes the whole of $T_{\Gamma_v}$.
By \cite[Proposition 1.25]{Gui_actions}, the geometric $\bbR$-tree dual to a minimal component of $\Sigma$ is indecomposable,
and by Lemma 1.19(1) of \cite{Gui_actions} its image $T_{\Gamma_v}\subset T$
is an indecomposable subtree of $T$.
 Corollary \ref{cor_hypostable} then replaces Proposition 4.3 of \cite{Gui_approximation}
 under our hypostability assumption.

The rest of the argument of \cite{Gui_approximation} applies without modification.
As in Proposition 4.1 of \cite{Gui_approximation},
the tree dual to $\Sigma$ is a graph of actions on $\bbR$-trees $T(\calg')$,
such that  arc stabilizer of the vertex actions lie in the kernel 
of these actions. Applying Propositions 5.2, 7.2 and 8.1 of \cite{Gui_approximation}, 
one can replace these vertex actions by actions on simplicial trees 
whose stabilizers are abelian modulo the kernel. 
By Bass-Serre theory, this provides a splitting of $G$ over the extension
of an abelian group $A$ by the kernel $K$ of a vertex action. 
This splitting is relative to $\calq$ as in the Reduction Lemma in \cite[\S 4]{Gui_approximation}.
\end{proof}

\subsection{$\Zmax$ splittings} \label{zm}

Say that a subgroup of a relatively hyperbolic group is $\Zmax$
if it is maximal for inclusion among non-parabolic virtually cyclic subgroups with infinite center. 

\begin{thm}\label{thm_RipsZmax}
Let $G$ be hyperbolic relative to $\calp=\{P_1,\dots,P_n\}$, with $P_i$  finitely generated. 
Let $\calh=\{H_1,\dots H_q\}$ be a (possibly empty)   family of finitely generated subgroups.
Assume that $G$ acts non-trivially on an \Rt\ $T$ 
  relative to $\calp\cup\calh$, and that arc stabilizers are either finite, parabolic or $\Zmax$.

Then $G$ splits relative to $\calp\cup\calh$ over a finite, parabolic or $\Zmax$ subgroup.
\end{thm}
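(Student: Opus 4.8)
The plan is to re-run the Rips-theory machinery behind Corollary \ref{cor_RipsRH}, but to keep track of the isomorphism type of the edge group it produces. Since finite, parabolic and $\Zmax$ subgroups are all elementary, Lemma \ref{lem_elem_hypostable} shows that the action of $G$ on $T$ is hypostable, so Theorem \ref{bf2} applies with $\calq=\calp\cup\calh$. When $T$ is not a line, the refined form of that theorem recorded in the Remark following Corollary \ref{cor_RipsRH} (valid under the hypotheses of Theorem \ref{bf2}) yields a non-trivial splitting of $G$ relative to $\calp\cup\calh$ whose edge group $K$ is an extension $1\to A\to K\to L\to 1$ with $L$ equal to $\bbZ$ or $\bbZ/2\bbZ$ and $A$ the stabilizer of an arc of $T$; by hypothesis $A$ is finite, parabolic or $\Zmax$. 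I would then determine $K$ according to these three possibilities for $A$.

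The two infinite cases are handled by self-normalization of maximal elementary subgroups (Lemma \ref{borne} and Corollary \ref{almar}). If $A$ is infinite parabolic, it lies in a unique maximal parabolic $E(A)$; since $A\normal K$, every element of $K$ normalizes $A$, whence $K\subseteq N(A)\subseteq E(A)$, so $K$ is parabolic. If $A$ is $\Zmax$, the same argument gives $K\subseteq N(A)=E(A)=A$, so $K=A$ and we split directly over a $\Zmax$ subgroup. If instead $A$ is finite, then $K$ is virtually cyclic: for $L=\bbZ/2\bbZ$ it is finite, and for $L=\bbZ$ the group $A$ is acted on through a finite quotient of $\bbZ$ (as $\Aut(A)$ is finite), so a finite-index subgroup of the $\bbZ$ is central and $K$ has infinite centre. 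Thus in this last situation $K$ is either parabolic (and we are done) or loxodromic with infinite centre, in which case $K$ is contained in a unique $\Zmax$ subgroup $\hat K$, which is elliptic in the splitting, and I would fold the splitting to a non-trivial one over $\hat K$ exactly as in the proof of Theorem \ref{thm_twist_hyp}.

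It remains to treat the case where $T$ is a line, where $G$ acts by isometries on $\R$ with kernel $A$ an arc stabilizer. A parabolic or $\Zmax$ kernel would be self-normalizing and force $G=A$ to be elliptic, contradicting non-triviality; hence $A$ is finite, the image of $G$ in $\Isom(\R)$ is finitely generated with translation subgroup of rank at least one, and if that rank were $\ge 2$ then $G$ would be virtually $\bbZ^{k}$, hence parabolic and therefore elliptic relative to $\calp$, again impossible. So the translation rank is one, $G$ is virtually cyclic, and discretizing the line produces a splitting over the finite group $A$ relative to $\calp\cup\calh$. The main obstacle I anticipate is entirely concentrated in the finite-$A$, $L=\bbZ$ subcase: passing from the loxodromic edge group $K$ to the maximal $\Zmax$ subgroup $\hat K$ requires checking that $\hat K$ is elliptic and that the fold neither collapses the splitting nor destroys relativeness to $\calp\cup\calh$. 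This is precisely the bookkeeping already carried out for amalgams and HNN extensions in Theorem \ref{thm_twist_hyp}, so I expect to reuse that argument, and it is the one step that calls for genuine care rather than routine verification.
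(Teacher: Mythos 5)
Your argument has a genuine gap, and it is located exactly where you placed your confidence: in the black-box use of Theorem \ref{bf2}. That theorem (and the Remark following Corollary \ref{cor_RipsRH}) produces a splitting over $K$ with $1\to A\to K\to L\to 1$ where $A$ \emph{fixes} an arc of $T$, i.e.\ $A$ is contained in an arc stabilizer --- not ``$A$ the stabilizer of an arc'', as you write. Under the hypotheses of Theorem \ref{thm_RipsZmax} this only tells you that $A$ is finite, parabolic, or a \emph{possibly non-maximal} virtually cyclic subgroup with infinite center, so the trichotomy on which your case analysis rests is not available. The bad case is the last one: then $K\subset E(A)$ is virtually cyclic and non-parabolic, and since $[E(A):A]<\infty$ forces $K/A\cong\Z/2\Z$, the group $K$ may be of dihedral type (finite center); such a $K$ is none of finite, parabolic, or $\Zmax$, and no folding trick turns a splitting over a dihedral-type group into one over a $\Zmax$ group. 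The same error infects your ``$A$ is $\Zmax$'' case: $N(A)\subseteq E(A)$ is correct, but $E(A)=A$ is false in general, because $E(A)$ is the maximal \emph{elementary} subgroup containing $A$ and may contain the $\Zmax$ group $A$ with index $2$; so there too $K$ can be dihedral rather than equal to $A$. Nothing in the statement of Theorem \ref{bf2} excludes this, and it cannot be excluded from outside the black box: one needs to know which component of the resolving complex produced $K$, information that is lost once only the statement is retained.

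This is precisely why the paper's proof reopens the proof of Theorem \ref{bf2} instead of quoting it. For minimal components it shows the arc stabilizer $F$ of the corresponding subtree is \emph{finite} (normality of $F$ in the non-elliptic vertex group, combined with Lemma \ref{borne} and Corollary \ref{almar}, rules out infinite parabolic or loxodromic $F$); hence axial components are impossible, exotic components give splittings over the finite group $F$, and surface components give splittings over honest $\Zmax$ curve groups. For the simplicial part --- where edge groups only fix arcs and may indeed be non-maximal loxodromic groups --- it invokes the Levitt--Paulin strong-convergence exhaustion \cite{LP} to replace the resolution by a larger one whose edge stabilizers are \emph{full} arc stabilizers, hence $\Zmax$ by hypothesis. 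Your folding step has a secondary gap as well: in Theorem \ref{thm_twist_hyp}, non-triviality of the folded splitting is deduced from the vertex groups having finite center, which there comes from the infinite-group-of-twists hypothesis; no such hypothesis is available here, so the fold could a priori be trivial (when a vertex group is itself the $\Zmax$ group $\hat K$), and ``reusing that argument'' is not automatic. (Your treatment of the line case is essentially sound, modulo the same kernel-versus-arc-stabilizer conflation.)
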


   Corollary \ref{cor_RipsRH} provides a splitting over an elementary group $A$. Here we assume that every loxodromic arc stabilizer of $T$ is $\Zmax$, and we   claim that the same is true for $A$: if it is loxodromic, then it is $\Zmax$. 

\begin{proof} 
$\bullet$   Assume first that the foliated $2$-complex $\Sigma$  constructed in the proof of Theorem \ref{bf2}  
has a minimal component $\Sigma_v$. Let  $G_v$ be the image of its fundamental group in $G$,
and let $T_{G_v}\subset T$ be the corresponding subtree of $T$.
In particular, $G_v$ is not elliptic in $T$.
By \cite[Theorem 5.13]{BF_stable} or \cite[Theorem 3.1]{Gui_approximation},
$G_v$ is a vertex group in a decomposition of $G$ as a graph of groups $\Gamma$  relative to $\calp\cup\calh$.
 All arcs in $T_{G_v}$ have the same stabilizer  $F$, 
 a normal
subgroup of  $G_v$. 

We claim that $F$ is finite. Otherwise,  there are two cases. 
If $F$ is   non-parabolic, hence  virtually cyclic, 
it   has finite index in its normalizer and therefore in $G_v$, so $G_v$ is elliptic, a contradiction.
If $F$ is infinite and contained in a maximal parabolic group $P$, then 
almost malnormality of $P$ implies that the normalizer of $F$ is contained in $P$,
so $G_v\subset P$. Since $P$ is elliptic in $T$, this is also a contradiction.
Thus $F$ is finite. 

 We now distinguish several cases, depending on the nature of the minimal component $\Sigma_v$.

First,  $\Sigma_v$ cannot be a homogeneous  (axial,  toral) component since $G_v$ would then be
virtually $\bbZ^k$ for some $k\geq 2$ (\cite[Theorem 9.4(2)]{BF_stable} or \cite[section 5.1]{Gui_approximation}), 
hence parabolic, contradicting  ellipticity of   parabolic groups  in $T$.

If $\Sigma_v$ is an exotic  (Levitt, thin) minimal component, 
one obtains a splitting of $G$  over $F$, and we are done (\cite[Proposition 7.2]{Gui_approximation}, \cite[Theorem 9.4(3)]{BF_stable}).

If $\Sigma_v$ is a surface  (IET) component,
then by \cite[Theorem 9.4(1)]{BF_stable} or \cite[section 8]{Gui_approximation}, 
after performing some moves, one can assume that $\Sigma_v$ is a surface with boundary, and $G_v/F$
is the fundamental group of a $2$-orbifold with conical singularities supporting a measured foliation with dense leaves.
Moreover, 
 $G_v$ is a QH vertex group (with fiber $F$) of $\Gamma$. 

Let $A\subset  G_v$ be the preimage of the fundamental group of 
an essential two-sided simple closed curve not bounding a M\"obius band, and not boundary parallel.  Then $G$ splits over $A$ 
relative to  $\calp\cup\calh$. We check that $A$ is $\Zmax$.
Clearly, $A$ is virtually cyclic with infinite center, and is maximal among virtually cyclic subgroups of $G_v$. 
Since $G_v$ is QH and $A$ is not conjugate into a boundary subgroup, it easily follows that
$A$ is $\Zmax$ in $G$.

$\bullet$ 
The remaining case is  when  $\Sigma$ has no minimal component.
In this case, all leaves of $\Sigma$ are finite, and the dual tree $T_\Sigma$ is simplicial.
Its edge stabilizers fix an arc in $T$, so we are done if one of these edge stabilizers
is finite or parabolic.
Otherwise, arc stabilizers of $T_\Sigma$ are virtually cyclic with infinite center but may fail to be $\Zmax$.
 If this happens, we have to enlarge  the   finite tree $K$ used to construct $\Sigma$. 

By \cite{LP}, we can find an exhaustion of $T$ by an increasing sequence of finite subtrees $K_k$
such that the corresponding dual trees $T_{\Sigma_k}$ strongly converge to $T$. We refer to \cite{LP} for
the definition of strong convergence; we will only use the fact that, if $A$ is a finitely generated
group fixing an arc in $T$, then $A$ fixes an edge in $T_{\Sigma_k}$ for large enough $k$.

We can assume that all dual trees $T_{\Sigma_k}$ are simplicial, and that their edge stabilizers are
infinite and not parabolic. Let $A_0$ be an edge stabilizer of $T_{\Sigma_0}$.
Then $A_0$ fixes an arc $I$ in $T$.  By the hypothesis on arc stabilizers of $T$, the stabilizer
of $I$ is a $\Zmax$ subgroup $A\supset A_0$,
which fixes an edge $e$ of $T_{\Sigma_k}$ for $k$ large enough.   Since $G_e$ contains $ A$ and fixes an arc in $T$, it is equal to $A$, so 
$T_{\Sigma_k}$ provides the desired splitting.
\end{proof}

A similar proof yields the following results.

\begin{thm}\label{thm_Rips_parab}
Let $G$ be hyperbolic relative to $\calp=\{P_1,\dots,P_n\}$,  with $P_i$ slender,
and let $\calh=\{H_1,\dots H_q\}$ be a    family of finitely generated subgroups.
 Assume that 
 $G$ acts non-trivially on an \Rt\ $T$ 
 relative to $\calp\cup\calh$, with elementary arc stabilizers.

Then $G$ splits relative to $\calp\cup\calh$  over a $\Zmax$ subgroup or  over the stabilizer of an arc of $T$.  \qed
\end{thm}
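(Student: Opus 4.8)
The plan is to mirror the proof of Theorem~\ref{thm_RipsZmax} just given, which simplifies here because the conclusion allows splitting over an arbitrary arc stabilizer. Since every $P_i$ is slender, every elementary subgroup of $G$ is slender, hence satisfies the ascending chain condition; as in the proof of Corollary~\ref{po} this makes the action on $T$ stable. I would therefore run the Rips machinery exactly as in the proof of Theorem~\ref{bf2}, producing the foliated $2$-complex $\Sigma$ resolving $T$, and then distinguish cases according to whether $\Sigma$ has a minimal component.

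If $\Sigma$ has no minimal component, then all its leaves are finite and the dual tree $T_\Sigma$ is simplicial and relative to $\calp\cup\calh$, with edge stabilizers fixing arcs of $T$. Since arc stabilizers of $T$ are elementary, hence slender and in particular finitely generated, I can invoke the strong-convergence exhaustion of \cite{LP} exactly as in the end of the proof of Theorem~\ref{thm_RipsZmax} to arrange that some edge group equals the full stabilizer of an arc of $T$. This step is simpler than before, since we need not verify that the group is $\Zmax$; it yields the desired splitting directly.

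Now suppose $\Sigma$ has a minimal component $\Sigma_v$, with image $G_v\inc G$ and corresponding indecomposable subtree $T_{G_v}\subset T$; by \cite{BF_stable,Gui_approximation} the group $G_v$ is a non-elliptic vertex group of a graph of groups $\Gamma$ relative to $\calp\cup\calh$, and all arcs of $T_{G_v}$ share a common stabilizer $F\normal G_v$. By Corollary~\ref{cor_hypostable} every element fixing an arc of $T_{G_v}$ fixes $T_{G_v}$, so $F$ is exactly the stabilizer of each such arc. The first key step is to show $F$ is finite, as in Theorem~\ref{thm_RipsZmax}: if $F$ were infinite loxodromic, then $N_G(F)$ would normalize the maximal elementary subgroup $E(F)$ of Lemma~\ref{borne}, which is virtually cyclic and almost malnormal (Corollary~\ref{almar}), forcing $G_v$ to be virtually cyclic and hence elliptic, impossible since an indecomposable tree is not a line; if $F$ were infinite parabolic, contained in a maximal parabolic $P$, almost malnormality of $P$ would give $N_G(F)\inc P$, whence $G_v\inc P$ is elliptic. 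Both contradict non-ellipticity of $G_v$.

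With $F$ finite I would analyze the type of $\Sigma_v$, following \cite{BF_stable,Gui_approximation}. A homogeneous (axial) component would make $G_v$ virtually $\bbZ^k$ with $k\ge2$, hence parabolic and elliptic, a contradiction. An exotic (Levitt) component yields a splitting of $G$ over $F$ itself, which is a full arc stabilizer, so we are done. Finally, if $\Sigma_v$ is a surface component then $G_v$ is QH with finite fiber $F$, and the preimage $A\inc G_v$ of the fundamental group of an essential two-sided simple closed curve that neither bounds a M\"obius band nor is boundary parallel gives a splitting relative to $\calp\cup\calh$; here $A$ is virtually cyclic with infinite center and, because $G_v$ is QH and $A$ is not boundary parallel, $A$ is $\Zmax$ in $G$. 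I expect the main obstacle to be precisely this last verification that $A$ is genuinely $\Zmax$ in the ambient $G$ rather than merely maximal virtually cyclic in $G_v$, but it is handled just as in the surface subcase of Theorem~\ref{thm_RipsZmax}.
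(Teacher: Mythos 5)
Your outline correctly mirrors the paper's intended route (the paper derives this theorem by "a similar proof" to Theorem \ref{thm_RipsZmax}), and your minimal-component case is essentially sound; in particular, the observation via Corollary \ref{cor_hypostable} that $F$ is the \emph{full} stabilizer of any arc of $T_{G_v}$, so that the exotic case produces a splitting over an arc stabilizer, is exactly the adaptation this theorem requires. The genuine gap is in the case where $\Sigma$ has no minimal component. You claim the strong-convergence argument of \cite{LP} applies "exactly as in Theorem \ref{thm_RipsZmax}" and is "simpler since we need not verify that the group is $\Zmax$". This is backwards: in that proof the $\Zmax$ hypothesis on arc stabilizers is precisely what closes the loop. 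There, $A_0=G_{e_0}$ fixes an arc $I$, the stabilizer $A=\Stab(I)$ is $\Zmax$ hence finitely generated, $A$ fixes an edge $e$ of $T_{\Sigma_k}$, and then $G_e$ fixes some arc $J$, giving $A\subset G_e\subset \Stab(J)$; since $\Stab(J)$ is again $\Zmax$ and contains the $\Zmax$ group $A$, maximality forces $A=\Stab(J)=G_e$. Under your hypotheses $\Stab(J)$ is merely elementary, and there is no reason to exclude $\Stab(I)\subsetneq G_e\subsetneq \Stab(J)$, so a single application of strong convergence does not produce an edge group equal to a full arc stabilizer --- which is what the conclusion demands (a subgroup merely \emph{fixing} an arc does not suffice).

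The missing step is exactly where slenderness must be used. Iterate the construction: $B_0=\Stab(I_0)\supset A_0$ is elementary, hence slender, hence finitely generated, so it fixes an edge $e_1$ of some $T_{\Sigma_{k_1}}$; then $G_{e_1}$ fixes an arc $I_1$ and $B_1=\Stab(I_1)\supset G_{e_1}\supset B_0$; continuing yields an increasing chain $B_0\subset B_1\subset\cdots$ of arc stabilizers of $T$. This chain must stabilize: its union is locally elementary, hence elementary by Lemma \ref{borne}, hence slender (virtually cyclic, or parabolic with the $P_i$ slender) and so finitely generated. Once $B_k=B_{k+1}$, the sandwiched edge group $G_{e_{k+1}}$ equals the full arc stabilizer $\Stab(I_{k+1})$, and collapsing all other orbits of edges of $T_{\Sigma_{k+1}}$ gives the required splitting. (Equivalently, use this chain condition to choose $\Stab(I)$ maximal among arc stabilizers containing $A_0$ before invoking \cite{LP}.) A smaller point: in ruling out $F$ infinite loxodromic you appeal to "an indecomposable tree is not a line", which is not available --- axial components have lines as dual trees, and the paper treats those as indecomposable (it excludes the axial case by parabolicity, not by indecomposability). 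The correct argument is the paper's: $G_v\subset E(F)$ and $[E(F):F]<\infty$, so the elliptic group $F$ has finite index in $G_v$, forcing $G_v$ to be elliptic, a contradiction.
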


\begin{cor} Let $G$ be a  toral 
  relatively hyperbolic group. Consider  a non-trivial action of $G$ 
on an $\bbR$-tree  relative to non-cyclic abelian subgroups. If arc stabilizers are abelian  and stable under taking roots, then $G$ splits (relative to non-cyclic abelian subgroups) over an abelian subgroup  stable under taking roots.  \qed
\end{cor}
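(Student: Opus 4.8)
The plan is to reduce the statement to Theorem \ref{thm_Rips_parab}, which already packages all the Rips-machine content, and then to translate its conclusion into the language of root-closed abelian groups. First I would fix the parabolic structure. Since $G$ is toral relatively hyperbolic, every abelian subgroup is small, hence elementary, so a non-cyclic abelian subgroup is necessarily parabolic and therefore contained in a conjugate of a maximal parabolic subgroup, which (being non-cyclic) has rank $\ge 2$. I would thus take $\calp$ to be a set of representatives of the conjugacy classes of maximal non-cyclic abelian subgroups; each $P_i$ is finitely generated abelian, hence slender, and $G$ is hyperbolic relative to $\calp$. With this choice, being relative to non-cyclic abelian subgroups is exactly being relative to $\calp$, so the given non-trivial action on the $\bbR$-tree $T$ is relative to $\calp$. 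Moreover the arc stabilizers, being abelian, are small, hence elementary (see Subsection \ref{gene}). So the hypotheses of Theorem \ref{thm_Rips_parab} hold with $\calh=\es$.

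Applying Theorem \ref{thm_Rips_parab} then yields a non-trivial splitting of $G$, relative to $\calp$ (equivalently, relative to non-cyclic abelian subgroups), over either a $\Zmax$ subgroup or the stabilizer of an arc of $T$. It remains to check that in both cases the edge group is abelian and stable under taking roots. In the arc-stabilizer case this is immediate from the hypothesis on $T$.

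In the $\Zmax$ case I would argue as follows. Since $G$ is torsion-free, a $\Zmax$ subgroup $A$ is a maximal loxodromic infinite cyclic subgroup, hence abelian; I must show it is stable under taking roots. First I would verify that $A$ is in fact a maximal abelian subgroup: if $B\supseteq A$ is abelian, then $B$ contains a loxodromic element, so $B$ lies in the centralizer of such an element, which is cyclic in a toral relatively hyperbolic group; thus $B$ is virtually cyclic and $A=B$ by maximality of $A$. Since a toral relatively hyperbolic group is CSA, its maximal abelian subgroups are malnormal and root-closed, so $A$ is stable under taking roots. This completes the proof.

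The genuinely hard work is entirely delegated to Theorem \ref{thm_Rips_parab}, so the only point requiring care is the last paragraph. The subtlety there is that a subgroup of a root-closed group need not itself be root-closed, which is precisely why one must identify the edge group with a full $\Zmax$ (equivalently, maximal abelian) subgroup and invoke the CSA property, rather than settling for an arbitrary loxodromic cyclic subgroup of $G$.
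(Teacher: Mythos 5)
Your proof is correct and takes exactly the route the paper intends: the corollary is stated there without proof as an immediate consequence of Theorem \ref{thm_Rips_parab}, applied with $\calp$ the non-cyclic maximal parabolic (= maximal non-cyclic abelian) subgroups and $\calh=\es$. Your write-up simply makes explicit the two points the paper leaves to the reader --- that ``relative to non-cyclic abelian subgroups'' means relative to $\calp$, and that a $\Zmax$ subgroup of a toral relatively hyperbolic group is maximal abelian, hence root-closed by the CSA property --- and both verifications are sound.
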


\begin{bibdiv}
\begin{biblist}

\bib{BeSz_endomorphisms}{article}{
  author={Belegradek, Igor},
  author={Szczepa{\'n}ski, Andrzej},
  title={Endomorphisms of relatively hyperbolic groups},
  date={2008},
  issn={0218-1967},
  journal={Internat. J. Algebra Comput.},
  volume={18},
  number={1},
  pages={97\ndash 110},
  url={http://dx.doi.org/10.1142/S0218196708004305},
  note={With an appendix by Oleg V. Belegradek},
  review={\MR {MR2394723 (2009a:20069)}},
}

\bib{BF_bounding}{article}{
  author={Bestvina, Mladen},
  author={Feighn, Mark},
  title={Bounding the complexity of simplicial group actions on trees},
  date={1991},
  issn={0020-9910},
  journal={Invent. Math.},
  volume={103},
  number={3},
  pages={449\ndash 469},
  review={\MR {92c:20044}},
}

\bib{BF_stable}{article}{
  author={Bestvina, Mladen},
  author={Feighn, Mark},
  title={Stable actions of groups on real trees},
  date={1995},
  issn={0020-9910},
  journal={Invent. Math.},
  volume={121},
  number={2},
  pages={287\ndash 321},
}

\bib{BH_tt}{article}{
  author={Bestvina, Mladen},
  author={Handel, Michael},
  title={Train tracks and automorphisms of free groups},
  date={1992},
  issn={0003-486X},
  journal={Ann. of Math. (2)},
  volume={135},
  number={1},
  pages={1\ndash 51},
}

\bib{Bo_cut}{article}{
  author={Bowditch, Brian~H.},
  title={Cut points and canonical splittings of hyperbolic groups},
  date={1998},
  issn={0001-5962},
  journal={Acta Math.},
  volume={180},
  number={2},
  pages={145\ndash 186},
}

\bib{Bow_relhyp}{article}{
  author={Bowditch, Brian~H.},
  title={Relatively hyperbolic groups},
  date={2012},
  issn={0218-1967},
  journal={Internat. J. Algebra Comput.},
  volume={22},
  number={3},
  pages={1250016, 66},
  url={http://dx.doi.org/10.1142/S0218196712500166},
  review={\MR {2922380}},
}

\bib{BH_metric}{book}{
  author={Bridson, Martin~R.},
  author={Haefliger, Andr{\'e}},
  title={Metric spaces of non-positive curvature},
  series={Grundlehren der Mathematischen Wissenschaften [Fundamental Principles of Mathematical Sciences]},
  publisher={Springer-Verlag},
  address={Berlin},
  date={1999},
  volume={319},
  isbn={3-540-64324-9},
  review={\MR {2000k:53038}},
}

\bib{BriHow_conjugacy}{article}{
  author={Bridson, Martin~R.},
  author={Howie, James},
  title={Conjugacy of finite subsets in hyperbolic groups},
  date={2005},
  issn={0218-1967},
  journal={Internat. J. Algebra Comput.},
  volume={15},
  number={4},
  pages={725\ndash 756},
  url={http://dx.doi.org/10.1142/S0218196705002529},
  review={\MR {2160576 (2006e:20080)}},
}

\bib{BuKhMi_isomorphism}{article}{
  author={Bumagin, Inna},
  author={Kharlampovich, Olga},
  author={Miasnikov, Alexei},
  title={The isomorphism problem for finitely generated fully residually free groups},
  date={2007},
  issn={0022-4049},
  journal={J. Pure Appl. Algebra},
  volume={208},
  number={3},
  pages={961\ndash 977},
  review={\MR {MR2283438 (2007j:20037)}},
}

\bib{Carette_automorphism}{article}{
  author={Carette, Mathieu},
  title={The automorphism group of accessible groups},
  date={2011},
  issn={0024-6107},
  journal={J. Lond. Math. Soc. (2)},
  volume={84},
  number={3},
  pages={731\ndash 748},
  url={http://dx.doi.org/10.1112/jlms/jdr029},
  review={\MR {2855799}},
}

\bib{Clay_deformation}{article}{
  author={Clay, Matt},
  title={Deformation spaces of {$G$}-trees and automorphisms of {B}aumslag-{S}olitar groups},
  date={2009},
  issn={1661-7207},
  journal={Groups Geom. Dyn.},
  volume={3},
  number={1},
  pages={39\ndash 69},
  review={\MR {MR2466020}},
}

\bib{CDP}{book}{
  author={Coornaert, M.},
  author={Delzant, T.},
  author={Papadopoulos, A.},
  title={G\'eom\'etrie et th\'eorie des groupes},
  series={Lecture Notes in Mathematics},
  publisher={Springer-Verlag},
  address={Berlin},
  date={1990},
  volume={1441},
  isbn={3-540-52977-2},
  note={Les groupes hyperboliques de Gromov.},
  review={\MR {MR1075994 (92f:57003)}},
}

\bib{CuVo_moduli}{article}{
  author={Culler, Marc},
  author={Vogtmann, Karen},
  title={Moduli of graphs and automorphisms of free groups},
  date={1986},
  issn={0020-9910},
  journal={Invent. Math.},
  volume={84},
  number={1},
  pages={91\ndash 119},
}

\bib{Dah_classifying}{article}{
  author={Dahmani, Fran{\c {c}}ois},
  title={Classifying spaces and boundaries for relatively hyperbolic groups},
  date={2003},
  issn={0024-6115},
  journal={Proc. London Math. Soc. (3)},
  volume={86},
  number={3},
  pages={666\ndash 684},
  review={\MR {MR1974394 (2004b:20061)}},
}

\bib{Dah_combination}{article}{
  author={Dahmani, Fran{\c {c}}ois},
  title={Combination of convergence groups},
  date={2003},
  issn={1465-3060},
  journal={Geom. Topol.},
  volume={7},
  pages={933\ndash 963 (electronic)},
  review={\MR {MR2026551}},
}

\bib{DaGr_isomorphism}{article}{
  author={Dahmani, Fran{\c {c}}ois},
  author={Groves, Daniel},
  title={The isomorphism problem for toral relatively hyperbolic groups},
  date={2008},
  issn={0073-8301},
  journal={Publ. Math. Inst. Hautes \'Etudes Sci.},
  volume={107},
  pages={211\ndash 290},
  review={\MR {MR2434694}},
}

\bib{DG2}{article}{
  author={Dahmani, Fran{\c {c}}ois},
  author={Guirardel, Vincent},
  title={The isomorphism problem for all hyperbolic groups},
  date={2011},
  issn={1016-443X},
  journal={Geom. Funct. Anal.},
  volume={21},
  number={2},
  pages={223\ndash 300},
  url={http://dx.doi.org/10.1007/s00039-011-0120-0},
  review={\MR {2795509}},
}

\bib{DaTo_isomorphism}{misc}{
  author={Dahmani, Fran{\c {c}}ois},
  author={Touikan, Nicholas},
  title={Isomorphisms using dehn fillings: the splitting case},
  date={2013},
  note={arXiv:1311.3937 [math.GR]},
}

\bib{DicksDunwoody_groups}{book}{
  author={Dicks, Warren},
  author={Dunwoody, M.~J.},
  title={Groups acting on graphs},
  series={Cambridge Studies in Advanced Mathematics},
  publisher={Cambridge University Press},
  address={Cambridge},
  date={1989},
  volume={17},
  isbn={0-521-23033-0},
  review={\MR {1001965 (91b:20001)}},
}

\bib{DrSa_tree-graded}{article}{
  author={Dru{\c {t}}u, Cornelia},
  author={Sapir, Mark},
  title={Tree-graded spaces and asymptotic cones of groups},
  date={2005},
  issn={0040-9383},
  journal={Topology},
  volume={44},
  number={5},
  pages={959\ndash 1058},
  note={With an appendix by Denis Osin and Sapir},
  review={\MR {MR2153979}},
}

\bib{DrSa_groups}{article}{
  author={Dru{\c {t}}u, Cornelia},
  author={Sapir, Mark~V.},
  title={Groups acting on tree-graded spaces and splittings of relatively hyperbolic groups},
  date={2008},
  issn={0001-8708},
  journal={Adv. Math.},
  volume={217},
  number={3},
  pages={1313\ndash 1367},
  url={http://dx.doi.org/10.1016/j.aim.2007.08.012},
  review={\MR {2383901 (2009a:20070)}},
}

\bib{Dun_accessibility}{article}{
  author={Dunwoody, M.~J.},
  title={The accessibility of finitely presented groups},
  date={1985},
  issn={0020-9910},
  journal={Invent. Math.},
  volume={81},
  number={3},
  pages={449\ndash 457},
}

\bib{DySc_periodic}{article}{
  author={Dyer, Joan~L.},
  author={Scott, G.~Peter},
  title={Periodic automorphisms of free groups},
  date={1975},
  issn={0092-7872},
  journal={Comm. Algebra},
  volume={3},
  pages={195\ndash 201},
  review={\MR {0369529 (51 \#5762)}},
}

\bib{For_deformation}{article}{
  author={Forester, Max},
  title={Deformation and rigidity of simplicial group actions on trees},
  date={2002},
  issn={1465-3060},
  journal={Geom. Topol.},
  volume={6},
  pages={219\ndash 267 (electronic)},
  review={\MR {1 914 569}},
}

\bib{For_uniqueness}{article}{
  author={Forester, Max},
  title={On uniqueness of {JSJ} decompositions of finitely generated groups},
  date={2003},
  issn={0010-2571},
  journal={Comment. Math. Helv.},
  volume={78},
  number={4},
  pages={740\ndash 751},
  url={http://dx.doi.org/10.1007/s00014-003-0780-y},
  review={\MR {2016693 (2005b:20075)}},
}

\bib{Fujiwara_outer}{article}{
  author={Fujiwara, Koji},
  title={On the outer automorphism group of a hyperbolic group},
  date={2002},
  issn={0021-2172},
  journal={Israel J. Math.},
  volume={131},
  pages={277\ndash 284},
  review={\MR {MR1942313 (2003i:20074)}},
}

\bib{Gersten_fixed_adv}{article}{
  author={Gersten, S.~M.},
  title={Fixed points of automorphisms of free groups},
  date={1987},
  issn={0001-8708},
  journal={Adv. in Math.},
  volume={64},
  number={1},
  pages={51\ndash 85},
  url={http://dx.doi.org/10.1016/0001-8708(87)90004-1},
  review={\MR {879856 (88f:20042)}},
}

\bib{GroMan_dehn}{article}{
  author={Groves, Daniel},
  author={Manning, Jason~Fox},
  title={Dehn filling in relatively hyperbolic groups},
  date={2008},
  issn={0021-2172},
  journal={Israel J. Math.},
  volume={168},
  pages={317\ndash 429},
  url={http://dx.doi.org/10.1007/s11856-008-1070-6},
  review={\MR {MR2448064 (2009h:57030)}},
}

\bib{Gui_approximation}{article}{
  author={Guirardel, Vincent},
  title={Approximations of stable actions on {$\mathbb {R}$}-trees},
  date={1998},
  issn={0010-2571},
  journal={Comment. Math. Helv.},
  volume={73},
  number={1},
  pages={89\ndash 121},
}

\bib{Gui_limit}{article}{
  author={Guirardel, Vincent},
  title={Limit groups and groups acting freely on {$\Bbb R\sp n$}-trees},
  date={2004},
  issn={1465-3060},
  journal={Geom. Topol.},
  volume={8},
  pages={1427\ndash 1470},
  review={\MR {MR2119301 (2005m:20060)}},
}

\bib{Gui_actions}{article}{
  author={Guirardel, Vincent},
  title={Actions of finitely generated groups on {$\Bbb R$}-trees},
  date={2008},
  issn={0373-0956},
  journal={Ann. Inst. Fourier (Grenoble)},
  volume={58},
  number={1},
  pages={159\ndash 211},
  review={\MR {MR2401220}},
}

\bib{GL2}{article}{
  author={Guirardel, Vincent},
  author={Levitt, Gilbert},
  title={Deformation spaces of trees},
  date={2007},
  issn={1661-7207},
  journal={Groups Geom. Dyn.},
  volume={1},
  number={2},
  pages={135\ndash 181},
  review={\MR {MR2319455}},
}

\bib{GL1}{article}{
  author={Guirardel, Vincent},
  author={Levitt, Gilbert},
  title={The outer space of a free product},
  date={2007},
  issn={0024-6115},
  journal={Proc. Lond. Math. Soc. (3)},
  volume={94},
  number={3},
  pages={695\ndash 714},
  review={\MR {MR2325317}},
}

\bib{GL3a}{unpublished}{
  author={Guirardel, Vincent},
  author={Levitt, Gilbert},
  title={{JSJ} decompositions: definitions, existence and uniqueness. {I}: The {JSJ} deformation space.},
  date={2009},
  note={arXiv:0911.3173 v2 [math.GR]},
}

\bib{GL3b}{unpublished}{
  author={Guirardel, Vincent},
  author={Levitt, Gilbert},
  title={{JSJ} decompositions: definitions, existence and uniqueness. {II}: Compatibility and acylindricity.},
  date={2010},
  note={arXiv:1002.4564 v2 [math.GR]},
}

\bib{GL4}{article}{
  author={Guirardel, Vincent},
  author={Levitt, Gilbert},
  title={Trees of cylinders and canonical splittings},
  date={2011},
  issn={1465-3060},
  journal={Geom. Topol.},
  volume={15},
  number={2},
  pages={977\ndash 1012},
  url={http://dx.doi.org/10.2140/gt.2011.15.977},
  review={\MR {2821568 (2012k:20052)}},
}

\bib{GL_McCool}{unpublished}{
  author={Guirardel, Vincent},
  author={Levitt, Gilbert},
  title={{McCool} groups of toral relatively hyperbolic groups},
  note={In preparation},
}

\bib{Hall_1949}{article}{
  author={Hall, Marshall, Jr.},
  title={Subgroups of finite index in free groups},
  date={1949},
  issn={0008-414X},
  journal={Canadian J. Math.},
  volume={1},
  pages={187\ndash 190},
  review={\MR {0028836 (10,506a)}},
}

\bib{Hruska_quasiconvexity}{article}{
  author={Hruska, G.~Christopher},
  title={Relative hyperbolicity and relative quasiconvexity for countable groups},
  date={2010},
  issn={1472-2747},
  journal={Algebr. Geom. Topol.},
  volume={10},
  number={3},
  pages={1807\ndash 1856},
  url={http://dx.doi.org/10.2140/agt.2010.10.1807},
  review={\MR {2684983 (2011k:20086)}},
}

\bib{Kapovich_detecting}{incollection}{
  author={Kapovich, Ilya},
  title={Detecting quasiconvexity: algorithmic aspects},
  date={1996},
  booktitle={Geometric and computational perspectives on infinite groups ({M}inneapolis, {MN} and {N}ew {B}runswick, {NJ}, 1994)},
  series={DIMACS Ser. Discrete Math. Theoret. Comput. Sci.},
  volume={25},
  publisher={Amer. Math. Soc.},
  address={Providence, RI},
  pages={91\ndash 99},
  review={\MR {MR1364182 (97e:20051)}},
}

\bib{Kapovich_quasiconvexity}{article}{
  author={Kapovich, Ilya},
  title={Quasiconvexity and amalgams},
  date={1997},
  issn={0218-1967},
  journal={Internat. J. Algebra Comput.},
  volume={7},
  number={6},
  pages={771\ndash 811},
  url={http://dx.doi.org/10.1142/S0218196797000344},
  review={\MR {1482968 (98k:20057)}},
}

\bib{Kapovich_sequences}{incollection}{
  author={Kapovich, Michael},
  title={On sequences of finitely generated discrete groups},
  date={2010},
  booktitle={In the tradition of {A}hlfors-{B}ers. {V}},
  series={Contemp. Math.},
  volume={510},
  publisher={Amer. Math. Soc.},
  address={Providence, RI},
  pages={165\ndash 184},
  review={\MR {2581836 (2011c:20102)}},
}

\bib{KhMy_effective}{incollection}{
  author={Kharlampovich, Olga},
  author={Myasnikov, Alexei~G.},
  title={Effective {JSJ} decompositions},
  date={2005},
  booktitle={Groups, languages, algorithms},
  series={Contemp. Math.},
  volume={378},
  publisher={Amer. Math. Soc.},
  address={Providence, RI},
  pages={87\ndash 212},
  review={\MR {MR2159316 (2006m:20045)}},
}

\bib{Korkmaz_MCG}{article}{
  author={Korkmaz, Mustafa},
  title={Mapping class groups of nonorientable surfaces},
  date={2002},
  issn={0046-5755},
  journal={Geom. Dedicata},
  volume={89},
  pages={109\ndash 133},
  review={\MR {MR1890954 (2002k:57049)}},
}

\bib{Lev_automorphisms}{article}{
  author={Levitt, Gilbert},
  title={Automorphisms of hyperbolic groups and graphs of groups},
  date={2005},
  issn={0046-5755},
  journal={Geom. Dedicata},
  volume={114},
  pages={49\ndash 70},
  review={\MR {MR2174093 (2006m:20051)}},
}

\bib{Lev_rigid}{incollection}{
  author={Levitt, Gilbert},
  title={Characterizing rigid simplicial actions on trees},
  date={2005},
  booktitle={Geometric methods in group theory},
  series={Contemp. Math.},
  volume={372},
  publisher={Amer. Math. Soc.},
  address={Providence, RI},
  pages={27\ndash 33},
  review={\MR {MR2139674 (2006d:20047)}},
}

\bib{Lev_GBS}{article}{
  author={Levitt, Gilbert},
  title={On the automorphism group of generalized {B}aumslag-{S}olitar groups},
  date={2007},
  issn={1465-3060},
  journal={Geom. Topol.},
  volume={11},
  pages={473\ndash 515},
  url={http://dx.doi.org/10.2140/gt.2007.11.473},
  review={\MR {2302496 (2008h:20061)}},
}

\bib{LP}{article}{
  author={Levitt, Gilbert},
  author={Paulin, Fr{\'e}d{\'e}ric},
  title={Geometric group actions on trees},
  date={1997},
  issn={0002-9327},
  journal={Amer. J. Math.},
  volume={119},
  number={1},
  pages={83\ndash 102},
}

\bib{Linnell}{article}{
  author={Linnell, P.~A.},
  title={On accessibility of groups},
  date={1983},
  issn={0022-4049},
  journal={J. Pure Appl. Algebra},
  volume={30},
  number={1},
  pages={39\ndash 46},
  review={\MR {85c:20021}},
}

\bib{McCool_fp}{article}{
  author={McCool, James},
  title={Some finitely presented subgroups of the automorphism group of a free group},
  date={1975},
  issn={0021-8693},
  journal={J. Algebra},
  volume={35},
  pages={205\ndash 213},
  review={\MR {MR0396764 (53 \#624)}},
}

\bib{MiOs_fixed}{unpublished}{
  author={Minasyan, Ashot},
  author={Osin, Denis},
  title={Fixed subgroups of automorphisms of relatively hyperbolic groups},
  note={arXiv:1007.2361v2 [math.GR]},
}

\bib{Osin_elementary}{article}{
  author={Osin, Denis~V.},
  title={Elementary subgroups of relatively hyperbolic groups and bounded generation},
  date={2006},
  issn={0218-1967},
  journal={Internat. J. Algebra Comput.},
  volume={16},
  number={1},
  pages={99\ndash 118},
  url={http://dx.doi.org/10.1142/S0218196706002901},
  review={\MR {2217644 (2007b:20092)}},
}

\bib{Osin_relatively}{article}{
  author={Osin, Denis~V.},
  title={Relatively hyperbolic groups: intrinsic geometry, algebraic properties, and algorithmic problems},
  date={2006},
  issn={0065-9266},
  journal={Mem. Amer. Math. Soc.},
  volume={179},
  number={843},
  pages={vi+100},
  review={\MR {MR2182268 (2006i:20047)}},
}

\bib{Pau_arboreal}{incollection}{
  author={Paulin, Fr{\'e}d{\'e}ric},
  title={Outer automorphisms of hyperbolic groups and small actions on {$\mathbb {R}$}-trees},
  date={1991},
  booktitle={Arboreal group theory (berkeley, ca, 1988)},
  publisher={Springer},
  address={New York},
  pages={331\ndash 343},
}

\bib{Perin_elementary}{article}{
  author={Perin, Chlo{\'e}},
  title={Elementary embeddings in torsion-free hyperbolic groups},
  date={2011},
  issn={0012-9593},
  journal={Ann. Sci. \'Ec. Norm. Sup\'er. (4)},
  volume={44},
  number={4},
  pages={631\ndash 681},
  review={\MR {2919979}},
}

\bib{Pettet_virtually}{article}{
  author={Pettet, Martin~R.},
  title={Virtually free groups with finitely many outer automorphisms},
  date={1997},
  issn={0002-9947},
  journal={Trans. Amer. Math. Soc.},
  volume={349},
  number={11},
  pages={4565\ndash 4587},
  review={\MR {MR1370649 (98b:20056)}},
}

\bib{Robinson_course}{book}{
  author={Robinson, Derek J.~S.},
  title={A course in the theory of groups},
  edition={Second},
  series={Graduate Texts in Mathematics},
  publisher={Springer-Verlag},
  address={New York},
  date={1996},
  volume={80},
  isbn={0-387-94461-3},
  url={http://dx.doi.org/10.1007/978-1-4419-8594-1},
  review={\MR {1357169 (96f:20001)}},
}

\bib{Sela_Nielsen}{article}{
  author={Sela, Z.},
  title={The {N}ielsen-{T}hurston classification and automorphisms of a free group. {I}},
  date={1996},
  issn={0012-7094},
  journal={Duke Math. J.},
  volume={84},
  number={2},
  pages={379\ndash 397},
}

\bib{Sela_structure}{article}{
  author={Sela, Z.},
  title={Structure and rigidity in ({G}romov) hyperbolic groups and discrete groups in rank $1$ {L}ie groups. {I}{I}},
  date={1997},
  issn={1016-443X},
  journal={Geom. Funct. Anal.},
  volume={7},
  number={3},
  pages={561\ndash 593},
}

\bib{Sela_hopf}{article}{
  author={Sela, Z.},
  title={Endomorphisms of hyperbolic groups. {I}. {T}he {H}opf property},
  date={1999},
  issn={0040-9383},
  journal={Topology},
  volume={38},
  number={2},
  pages={301\ndash 321},
}

\bib{Sela_diophantine1}{article}{
  author={Sela, Zlil},
  title={Diophantine geometry over groups. {I}. {M}akanin-{R}azborov diagrams},
  date={2001},
  issn={0073-8301},
  journal={Publ. Math. Inst. Hautes \'Etudes Sci.},
  volume={93},
  pages={31\ndash 105},
  review={\MR {2002h:20061}},
}

\bib{Shor_Scott}{misc}{
  author={Shor, J.},
  title={A {S}cott conjecture for hyperbolic groups},
  date={1999},
  note={preprint},
}

\bib{Touikan_finding}{unpublished}{
  author={Touikan, Nicholas~W.M.},
  title={Finding tracks in 2-complexes},
  note={arXiv:0906.3902v2 [math.GR]},
}

\end{biblist}
\end{bibdiv}

\begin{flushleft}
Vincent Guirardel\\
Institut de Recherche Math\'ematique de Rennes\\
Universit\'e de Rennes 1 et CNRS (UMR 6625)\\
263 avenue du G\'en\'eral Leclerc, CS 74205\\
F-35042  RENNES C\'edex\\
\emph{e-mail:}\texttt{vincent.guirardel@univ-rennes1.fr}\\[8mm]

Gilbert Levitt\\
Laboratoire de Math\'ematiques Nicolas Oresme\\
Universit\'e de Caen et CNRS (UMR 6139)\\
BP 5186\\
CS 14032\\
14032 CAEN cedex 5\\
France\\
\emph{e-mail:}\texttt{levitt@unicaen.fr}\\

\end{flushleft}

\end{document}